\documentclass[]{amsart}
\usepackage[utf8]{inputenc}
\usepackage[mathscr]{eucal} 
\usepackage{stmaryrd} 
\usepackage{amsmath,amsthm,amsfonts,amssymb,amscd} 
\usepackage[dvipsnames]{xcolor} 
\usepackage{xspace} 
\usepackage{fancyhdr} 
\usepackage{graphicx} 
\usepackage{listings} 
\usepackage[]{hyperref} 
\usepackage{enumitem} 
\usepackage{relsize} 
\usepackage{tikz-cd} 
\usepackage{mdwlist} 
\usepackage{multicol} 
\usepackage{float} 
\usepackage{adjustbox} 
\usepackage{tikz} 
\usepackage[noabbrev,nameinlink]{cleveref} 
\Crefformat{section}{#2\S#1#3}
\usepackage{setspace} 
\usepackage{bbm} 
\usepackage{mathtools} 
\usepackage{epigraph} 
\usetikzlibrary{matrix, calc, arrows}

\hypersetup{
    colorlinks=true, 
    linktoc=all,     
    linkcolor=Brown,citecolor=Brown,urlcolor=MidnightBlue,  
}
\setlist[enumerate]{label=(\alph*)} 
\usetikzlibrary{graphs,decorations.pathmorphing,decorations.markings}
\tikzcdset{scale cd/.style={every label/.append style={scale=#1},
        cells={nodes={scale=#1}}}}

\newcommand{\Hom}{\operatorname{Hom}}
\newcommand{\Aut}{\operatorname{Aut}}

\newcommand{\tr}{\operatorname{tr}}

\newcommand{\inv}{^{-1}}

\newcommand{\End}{\operatorname{End}}
\newcommand{\op}{^{\operatorname{op}}}
\newcommand{\Res}{\operatorname{Res}}
\newcommand{\Ind}{\operatorname{Ind}}
\newcommand{\Inf}{\operatorname{Inf}}

\newcommand{\id}{\operatorname{id}}

\newcommand{\im}{\operatorname{im}}

\newcommand{\Syl}{\operatorname{Syl}}

\newcommand{\Spc}{\operatorname{Spc}}
\newcommand{\Pic}{\operatorname{Pic}}

\newcommand{\open}{\operatorname{open}}
\newcommand{\cone}{\operatorname{cone}}

\newcommand{\on}[1]{\operatorname{#1}}

\newcommand{\calB}{\mathcal{B}}
\newcommand{\calC}{\mathcal{C}}

\newcommand{\calH}{\mathcal{H}}

\newcommand{\calM}{\mathcal{M}}
\newcommand{\calN}{\mathcal{N}}
\newcommand{\calO}{\mathcal{O}}

\newcommand{\catB}{\mathscr{B}}

\newcommand{\catD}{\mathscr{D}}

\newcommand{\catF}{\mathscr{F}}

\newcommand{\catH}{\mathscr{H}}
\newcommand{\catI}{\mathscr{I}}

\newcommand{\catK}{\mathscr{K}}
\newcommand{\catL}{\mathscr{L}}
\newcommand{\catM}{\mathscr{M}}

\newcommand{\catP}{\mathscr{P}}

\newcommand{\catS}{\mathscr{S}}
\newcommand{\catT}{\mathscr{T}}

\newcommand{\frakp}{\mathfrak{p}}
\newcommand{\frakq}{\mathfrak{q}}

\newcommand{\bbC}{\mathbb{C}}

\newcommand{\bbF}{\mathbb{F}}

\newcommand{\bbN}{\mathbb{N}}

\newcommand{\bbR}{\mathbb{R}}

\newcommand{\bbZ}{\mathbb{Z}}
\newcommand{\bbone}{\mathbbm{1}}

\newcommand{\Weyl}[2]{{#1}/\!\!/{#2}}
\newcommand{\ch}{\on{char}}
\newcommand{\comp}{\on{comp}}

\makeatletter
\newcommand*{\doublerightarrow}[2]{\mathrel{
  \settowidth{\@tempdima}{$\scriptstyle#1$}
  \settowidth{\@tempdimb}{$\scriptstyle#2$}
  \ifdim\@tempdimb>\@tempdima \@tempdima=\@tempdimb\fi
  \mathop{\vcenter{
    \offinterlineskip\ialign{\hbox to\dimexpr\@tempdima+1em{##}\cr
    \rightarrowfill\cr\noalign{\kern.5ex}
    \rightarrowfill\cr}}}\limits^{\!#1}_{\!#2}}}
\newcommand*{\triplerightarrow}[1]{\mathrel{
  \settowidth{\@tempdima}{$\scriptstyle#1$}
  \mathop{\vcenter{
    \offinterlineskip\ialign{\hbox to\dimexpr\@tempdima+1em{##}\cr
    \rightarrowfill\cr\noalign{\kern.5ex}
    \rightarrowfill\cr\noalign{\kern.5ex}
    \rightarrowfill\cr}}}\limits^{\!#1}}}
\makeatother

\newcommand{\supp}{\on{supp}}
\newcommand{\sbull}{{\scriptscriptstyle\bullet}}

\newcommand{\Spec}{\on{Spec}}

\newcommand{\Proj}{\on{Proj}}

\newtheorem{theorem}{Theorem}[section]
\newtheorem{lemma}[theorem]{Lemma}
\newtheorem{proposition}[theorem]{Proposition}
\newtheorem{corollary}[theorem]{Corollary}

\newtheorem*{theorem*}{Theorem}

\theoremstyle{definition}
\newtheorem{construction}[theorem]{Construction}

\theoremstyle{remark}
\newtheorem{remark}[theorem]{Remark}
\newtheorem{example}[theorem]{Example}
\newtheorem{observation}[theorem]{Observation}
\newtheorem{recollection}[theorem]{Recollection}
\newtheorem{notation}[theorem]{Notation}

\theoremstyle{definition}
\newtheorem{definition}[theorem]{Definition}

\makeatletter
\newcommand{\xhookdoubleheadrightarrow}[2][]{%
  \lhook\joinrel
  \ext@arrow 0359\rightarrowfill@ {#1}{#2}%
  \mathrel{\mspace{-15mu}}\rightarrow
}
\makeatother

\newtheorem{innercustomthm}{Theorem}
\newenvironment{customthm}[1]
  {\renewcommand\theinnercustomthm{#1}\innercustomthm}
  {\endinnercustomthm}

\AddToHook{env/corollary/begin}{\crefalias{theorem}{corollary}}
\AddToHook{env/proposition/begin}{\crefalias{theorem}{proposition}}
\AddToHook{env/lemma/begin}{\crefalias{theorem}{lemma}}
\AddToHook{env/definition/begin}{\crefalias{theorem}{definition}}
\AddToHook{env/example/begin}{\crefalias{theorem}{example}}
\AddToHook{env/remark/begin}{\crefalias{theorem}{remark}}
\AddToHook{env/observation/begin}{\crefalias{theorem}{observation}}
\AddToHook{env/construction/begin}{\crefalias{theorem}{construction}}
\AddToHook{env/recollection/begin}{\crefalias{theorem}{recollection}}
\AddToHook{env/notation/begin}{\crefalias{theorem}{notation}}

\begin{document}
    \title{The classification of integral endotrivial complexes}
    \author{Juan Omar G\'omez}
    \author{Sam K. Miller}

    \makeatletter
\patchcmd{\@setaddresses}{\indent}{\noindent}{}{}
\patchcmd{\@setaddresses}{\indent}{\noindent}{}{}
\patchcmd{\@setaddresses}{\indent}{\noindent}{}{}
\patchcmd{\@setaddresses}{\indent}{\noindent}{}{}
\makeatother

\address{Juan Omar G\'omez, Fakultat f\"ur Mathematik, Universit\"at Bielefeld, D-33501 Bielefeld, Germany}
\email{jgomez@math.uni-bielefeld.de}
\urladdr{https://sites.google.com/cimat.mx/juanomargomez/home}

\address{Sam K. Miller, Department of Mathematics, University of Georgia, Athens GA 30602, United States of America}
\email{sam.miller@uga.edu}
\urladdr{https://www.samkmiller.com/}
    \subjclass[2020]{20C10, 20J05, 18E35, 18G80, 18M05, 18N60} 
    \keywords{Permutation modules, endotrivial, integral coefficients, tensor-triangular geometry, Picard group, descent, forerunner} 
    \begin{abstract}
        We describe the group of endotrivial complexes, i.e., the Picard group, of the derived category of permutation modules for a finite group over a commutative Noetherian ring. As a result, we deduce that not every endotrivial with integer coefficients arises from a homotopy representation, i.e., an invertible genuine equivariant spectrum. Along the way, we establish a descent result with respect to subgroups of prime-power order, show that oriented endotrivial complexes are line bundles, that is, locally trivial with respect to an open cover of the Balmer spectrum, and provide a topological construction of forerunner homomorphisms, answering a question of the second-named author. 
    \end{abstract}

    \maketitle
    
    \epigraph{\textit{\hfill The Mayor of The Merry Error-Makers’\\
    \hfill error\\
    \hfill made\\
    \hfill The Mayor of The Merry Error-Makers\\
    \hfill merry.}}{Greyson Meyer \cite{Mey25}}
    
    \section*{Introduction}

    Let $G$ be a finite group and $k$ a field of prime characteristic $p$. The tensor-triangular geometry of permutation $kG$-modules - that is, modules obtained by $k$-linearizing $G$-sets - and their direct summands, the $p$-permutation modules, has garnered considerable attention \cite{BG25}. This interest stems in part from the fact that they recover the wild modular representation theory of $kG$ \cite{BG23a}, but their significance extends well beyond the domain of representations.
    These objects interact with many areas of equivariant mathematics through the so-called \emph{big derived category of permutation modules} $\catD(G;k)$, whose compact objects form the category $\catK(G;k) \coloneqq \catD(G;k)^c$. Indeed, this category admits several different incarnations, arising in the contexts of Artin motives, cohomological Mackey functors, and modules over equivariant spectra; see \cite{BG23b, F25} for further details. At the same time, $\catK(G;k)$ has a more down to earth description: it is simply the bounded homotopy category of $p$-permutation modules.
    
    Miller (the second-named author) deduced the Picard group of $\catD(G;k)$ \cite{Mil25b}, i.e., the group $\Pic(\catK(G;k))$ of (necessarily compact) invertible objects of $\catK(G;k)$ using purely representation-theoretic techniques. These objects are termed \emph{endotrivial complexes}, following the established nomenclature for the invertible objects in the stable module category of $kG$-modules, endotrivial modules. Endotrivial complexes play a significant role in the tt-geometry of $\catD(G;k)$: Balmer--Gallauer defined a \emph{`permutation twisted cohomology ring'} for elementary abelian $p$-groups using certain endotrivials to deduce the topology of the Balmer spectrum $\Spc(\catK(G;k))$ for any finite group. The second-named author extended this construction to all $p$-groups \cite{Mil25c}, generalizing numerous geometric results of Balmer--Gallauer about endotrivials and the `remixed' twisted cohomology ring. At present however, a complete description of the geometry of $\Spc(\catK(G;k))$ via the remixed twisted cohomology ring alone hinges on the conjectural Noetherianity of this ring. 
    
    Of course, permutation modules for $G$ can be defined over any arbitrary commutative base ring. G\'omez (the first-named author) initiated the study of `integral' permutation modules \cite{Gom25} by developing the theory of fiberwise stratification, then with Dubey, deduced the tt-geometry of $\catK(G;R)$ with $R$ a commutative Noetherian ring \cite{DG25}. These results continue a recent area of inquiry, the challenge of understanding classical tt-geometric situations over arbitrary Noetherian coefficient rings \cite{Lau23, BBIKP25}. 

    In this paper, we extend the classification of endotrivials and deduce the Picard group $\Pic(\catK(G;R))$ for any commutative Noetherian ring $R$. We say that a commutative ring $R$ is \emph{connected} if $\Spec(R)$ is connected, or equivalently, if $R$ is indecomposable.

    \begin{customthm}{A}[\Cref{thm:mainthm}, \Cref{prop:fingen}]
        Let $G$ be a finite group and $R$ a commutative connected Noetherian ring. Then \[\Pic(\catK(G;R)) = \Pic(R) \times \on{CF}_b^{gl}(G;R) \times \catM(G;R),\] where $\on{CF}_b^{gl}(G;R)$ is a tuple of Borel-Smith functions `glued' at the trivial subgroup (see \Cref{def:borel_smith}, \Cref{rmk:refined_h_mk_func}, \Cref{not:glued_borel_smith_all_grps} for definitions) and $\catM(G;R) \leq \Hom(G,R^\times)$ is a subgroup consisting of all group homomorphisms $\varphi\colon G\to R^\times$ for which $R_\varphi$ is a direct summand of a permutation module (see \Cref{thm:D} for the criterion). Moreover, $\Pic(\catK(G;R))$ is finitely generated if and only if $\Pic(R)$ is. 
    \end{customthm}

    It suffices to assume that $R$ is connected, since a decomposition $R = R_1 \times R_2$ affords a categorical decomposition $\catK(G;R) \simeq \catK(G;R_1) \times \catK(G;R_2)$ of tt-categories, hence a decomposition of Picard groups. This result in fact recovers the \emph{Picard spectrum} $\on{pic}(\catD(G;R))$, since the higher homotopy groups are fairly uninteresting. 
    
    When $R = \bbZ$, one has the following alternative characterization. Moreover, we can deduce that not every integral endotrivial comes from a \emph{homotopy representation}, i.e., an invertible object in the stable homotopy category $\on{SH}(G)$ of (necessarily finite) genuine $G$-spectra. 

    \begin{customthm}{B}[\Cref{cor:mainthmforZ}]
        Let $G$ be a finite group. Let $\on{CF}_b(G,\Phi)$ denote the group of Borel-Smith superclass functions valued on prime power subgroups of $G$. Then \[\Pic(\catK(G;\bbZ)) \cong \on{CF}_b(G,\Phi).\] In particular, $\Pic(\catK(G;\bbZ))$ is finitely generated. Moreover, if $G$ is non-nilpotent, the induced map \[\Pic(\on{SH}(G)^c) \to \Pic(\catK(G;\bbZ))\] need not be surjective, that is, not every integral endotrivial complex is induced from a homotopy representation. 
    \end{customthm}

    The analogous question of studying the Picard group over integral coefficients has been considered by Krause and Grodal, see e.g. \cite{Kra25} and \cite[Integral endotrivial modules]{Ober23}. 
    
    Unlike the case of coefficients over a field, arriving to our conclusion requires a significant amount of machinery and techniques arising from representation theory, homotopy theory, and both algebraic and tt-geometry. We now indulge the reader in the details. 
    
    \subsection*{Homotopical considerations} In Section \Cref{sec:infty_cat_stuff}, we show that $\catD(G;R)$, viewed as a symmetric monoidal stable $\infty$-category, is an $\infty$-categorical limit via group restriction. To our knowledge this has not yet been stated in the literature, but is possibly known to the experts. Perhaps miraculously, this limit descends to an isomorphism of Picard groups of homotopy 1-categories, thus allowing us to complete the classification in Section \Cref{sec:final}. 

    \begin{customthm}{C}[\Cref{prop-reduction-pgroups}, \Cref{coro-lim-picard}, \Cref{cor:limit_of_pic_groups}]
        Let $G$ be a finite group and let $R$ be a commutative ring. Let $\mathcal{O}_\Phi(G)\subseteq \mathcal{O}(G)$
        denote the full subcategory of the orbit category $\calO(G)$ consisting of those orbits whose isotropy groups have prime power order. We have an equivalence of symmetric monoidal stable $\infty$-categories
        \[
            \catD(G;R)\xrightarrow{\simeq}
            \lim_{G/H\in \mathcal{O}_{\Phi}(G)^\mathrm{op}} \catD(H;R),
        \]
        hence an equivalence of Picard spectra 
        \[
        \mathrm{pic}(\catD(G;R))
        \xrightarrow{\simeq}
        \lim_{G/H\in \mathcal{O}_{\Phi}(G)^\mathrm{op}}
        \mathrm{pic}(\catD(H;R)),
        \] which descends to an isomorphism of abelian groups 
        \[
            \mathrm{Pic}(\catK(G;R))
            \xrightarrow{\simeq}
            \lim_{G/H\in \mathcal{O}_{\Phi}(G)^\mathrm{op}}
            \mathrm{Pic}(\catK(H;R)).
        \]
    \end{customthm}

    To make any headway towards classifying endotrivials over arbitrary rings, we require examples. One source of endotrivials alluded to in \cite{Mil25b} and flushed out in detail in \cite{Mil25c} is \emph{representation spheres}, 1-point compactifications of real representations. In Section \Cref{sec:rep_spheres}, we continue this story over arbitrary coefficients. To each representation sphere one obtains an endotrivial chain complex of permutation modules via Bredon homology.

    \begin{customthm}{D}[\Cref{prop:hmkisdimfunc}, \Cref{prop:def_of_B}, \Cref{thm:picard_grp_p_grps}]
        Let $R$ be a commutative ring, let $G$ be a finite group, and let $S(V)$ be a representation sphere for $G$. Then the cellular chain complex \[\tilde{C}(S(V)^1; R) \in \catK(G;R)\] is endotrivial. This induces a group homomorphism \[\catB\colon \on{RO}(G) \to \Pic(\catK(G;R)),\] which is surjective when the following conditions hold: $G$ is a $p$-group, $\Pic(R)$ is trivial, and $\Spec(R/p)$ is connected.
    \end{customthm}

    We note that representation spheres provide only one model for this construction; there are also algebraic or `base-free' approaches. For example, as mentioned before, endotrivials can likewise be obtained from Kaledin's \emph{derived Mackey functors} \cite{K11} or from homotopy representations, invertible $G$-spectra. However even in these cases, a canonical source of invertible objects is representation spheres, as one has a homomorphism $\on{RO}(G) \to \Pic(\on{SH}(G)^c)$, which $\catB$ factors through. The question of classifying homotopy representations is classical \cite{tDP82, Bau89}, recently Krause linked them to integral endotrivial modules \cite{Kra25}. 
    
    For representation spheres, the associated chain complexes of free modules over the orbit category $\calO(G)$ have been studied in detail by Yal\c{c}in et al. \cite{HY14, Y17, GY21}. This is closely related to the extensively studied area of actions of groups on spheres; see \cite{HY26} for a recent survey.
    
    \subsection*{Extending modular fixed points and h-marks} One of the essential tools for studying permutation modules is Balmer--Gallauer's \emph{modular fixed points} (classically known as Brauer quotients \cite{Bro79}). For every $p$-subgroup $H$ of $G$, there is a tt-functor $\Psi^H\colon \catK(G;k) \to \catK(\Weyl{G}{H};k)$ sending $k(X)$ to $k(X^H)$, where $\Weyl{G}{H}\coloneqq N_G(H)/H$ denotes the \emph{Weyl subgroup} at $H$. Miller used these functors to define the \emph{h-marks} of an endotrivial complex $x$ over $k$: a Borel-Smith superclass function $h_x$ that completely determines the isomorphism class of an endotrivial over a $p$-group, and up to a degree 1 representation for an arbitrary finite group. This in turn defines a \emph{h-mark homomorphism} out of the Picard group with small kernel, and with image the group of Borel-Smith functions. 

    The situation over a general ring $R$ is more subtle. Every prime $\frakp \in \Spec(R)$ induces a base-change tt-functor $\catK(G;R) \to \catK(G;k(\frakp))$ along the fiber $R\to k(\frakp)$, and hence gives rise to a `fiberwise' h-mark function at each prime. As in the field case, an endotrivial whose fiberwise h-marks vanish identically must be isomorphic to an invertible $RG$-module concentrated in a single degree. This is \Cref{thm:ker_h_marks}. Proving this fact takes more work and requires a reduction to the case in which $R$ is reduced. 

    \begin{customthm}{E}[\Cref{thm:endotrivials_lift_from_reduction}]
        Let $R$ be a commutative Noetherian ring, let $\overline{R} = R/N$ where $N \subseteq \calN$ is contained in the nilradical of $R$, and let $G$ be a finite group. Then the base change $\overline{R} \otimes_R -\colon \catK(G;R) \to \catK(G;\overline{R})$ induces a group isomorphism \[\Pic(\catK(G;R)) \cong \Pic(\catK(G;\overline{R})).\] 
    \end{customthm}
    
    The upshot is that determining $\Pic(\catK(G;R))$ amounts to:

    \begin{enumerate}
        \item classifying the invertible $\natural$-permutation $RG$-modules;
        \item deducing whether any relationships exist between fiberwise h-marks.
    \end{enumerate}
    
    The former is easier to answer and is resolved in Section \Cref{sec:kernel_of_hmks} with a bit of representation theory. A new consideration that arises is that not every $R$-free rank 1 $RG$-module is $\natural$-permutation, to which we deduce a nice criterion.

    \begin{customthm}{F}[\Cref{thm:rk1ppermsforpgrp}, \Cref{cor:detect_rk1_pperms_for_all_grps}] \label{thm:D}
        Let $\varphi \in \Hom(G, R^\times)$. If $G$ is a $p$-group, the $R$-free $RG$-module $R_\varphi$ is a direct summand of a permutation module if and only if $R = I^\varphi_G + pR$, where $I^\varphi_G$ denotes the annihilator ideal of $\{1 - \varphi(g) \mid g \in G\}\subseteq R$. If $G$ is a finite group, $R_\varphi$ is a direct summand of a permutation module if and only if $\Res^G_S (R_\varphi)$ is for every Sylow subgroup $S$ of $G$.
    \end{customthm}

    To begin deducing the latter, we introduce in Section \Cref{sec:fixedpts_hmks} a form of modular fixed points over rings, $\Psi^H_p \colon \catK(G;R) \to \catK(\Weyl{G}{H}; R/p)$ for $H$ a $p$-subgroup of $G$, defined over any prime $p$ not invertible in $R$. The construction is analogous to Balmer--Gallauer's, and leverages the fact that many of the good properties of modular fixed points hold so long as the base ring has characteristic $p$. When $R/p$ has connected spectrum, this defines a `generalized' h-mark function, which we use to verify that if two primes $\frakp_1, \frakp_2 \in \Spec(R)$ live in the same connected component in the closed subspace $\Spec(R/p)\subseteq \Spec(R)$ (which need not be connected even if $R$ is), their associated fiberwise h-mark functions coincide. 

    \subsection*{Localization and gluing} In fact, each connected component of $R/p$, as $p$ ranges through the primes, essentially provides another `$p$-local degree of freedom' for $\Pic(\catK(G;R))$. To each connected component of $R/p$, as $p$ runs through the primes not invertible in $R$, pick a Borel-Smith function. Then as long as all the Borel-Smith functions share the same value at the trivial subgroup of $G$, one can define an endotrivial with these associated fiberwise h-marks. To show this, we relate the tt-geometry of $\catK(G;R)$ to the geometry of $\Spec(R)$, extending Thomason's classical results \cite{Tho97}. This relies on Balmer's comparison map \cite{Bal10}, 
    \[
    \comp\colon \Spc(\catK(G;R))\to \Spec(R)
    \]
    noting the endomorphism ring of the tensor unit of $\catK(G;R)$ is simply $R$.

    \begin{customthm}{G}[\Cref{prop:localization_identification}]\label{thm:F}
        Let $G$ be a finite group and $R$ a commutative Noetherian ring. Let $f \in R$ be non-nilpotent and consider the associated localization $ R_f$. Let $D(f)$ denote the principal open $\{\frakp \mid f\not\in\frakp\}\subseteq \Spec(R)$, and set $\open(f) = \comp\inv(D(f))$. Then we have an equivalence of tt-categories \[\catK(G;R_f) \cong \catK(G;R)(\open(f)).\]
    \end{customthm}

    From here, in Section \Cref{sec:gluing} we construct for $p$-groups an open cover of $\Spc(\catK(G;R))$ associated to certain well-chosen elements $f$ of $R$ such that each open contains at most one connected component of $R/p$. This cover allows us to employ gluing techniques introduced by Balmer--Favi \cite{BF07}; on each open, we choose an endotrivial, and so long as all the `global' homology coincides, the local endotrivials can be glued without issue.  

    \begin{customthm}{H}[\Cref{thm:gluing_etrivs_p_grps}]
        Let $G$ be a $p$-group and $R$ a commutative connected Noetherian ring. The image of the reduced fiberwise h-mark homomorphism, \[\Pic(\catK(G;R)) \to \prod_{\frakp \in [\pi_0(\Spec(R/p))]} \on{CF}_b(G,\ch(k(\frakp)))\] (see \Cref{rmk:refined_h_mk_func}) is precisely $\on{CF}_b^{gl}(G)$, the tuples of Borel-Smith functions which agree on the trivial subgroup of $G$.
    \end{customthm}
    
    \subsection*{tt-line bundles} 

    Miller deduced in \cite{Mil25c} that when $G$ is a $p$-group, every endotrivial $x \in \Pic(\catK(G;k))$ is a \emph{tt-line bundle}: there exists an open cover of $\Spc(\catK(G;k))$ under which every $x$ is locally isomorphic to a shift of the tensor unit. This result underlies how permutation twisted cohomology works (when Noetherian); locally, it identifies with the usual cohomology ring of a tt-category, and under this open cover, $\catK(G;k)$ becomes a Dirac scheme in the sense of \cite{HP23}. 

    We show in Section \Cref{sec:line_bundles} an analogous result here for $p$-groups using similar but improved techniques. For endotrivials $x\in \Pic(\catK(G;R))$ arising from representation spheres, if the sphere is \emph{oriented} (see \Cref{def:oriented}), one may construct a \emph{forerunner map}
    \[
    \iota_x^H\colon R \to x[-h_x(H)]
    \]
    for which $\Psi^H_p(\iota^H_x)$ is a quasi-isomorphism; see \Cref{cons:forerunners}. In fact, our construction answers a question posed by the second-named author, \cite[Remark 4.4(b)]{Mil25c}, on whether a `natural' topological construction exists.
    
    Inverting certain open sets of the spectrum associated to these forerunners verifies that such endotrivials are line bundles. Additionally, line bundles of $R$ are endotrivial as well, and an application of \Cref{thm:F} concludes they are line bundles in the tt-geometric sense. 

    \begin{customthm}{I}[\Cref{thm:linebundle}]
        Let $G$ be a $p$-group and $R$ a commutative Noetherian ring. Then every oriented endotrivial $x \in \Pic(\catK(G;R))$ is a line bundle. If $\Pic(R)$ is finitely generated, then moreover, there exists an open cover of $\Spc(\catK(G;R))$ that verifies every such endotrivial is a line bundle. 
    \end{customthm}

    This paves the way for constructing a twisted cohomology theory over arbitrary Noetherian bases, but we postpone that endeavor for another day. Finally, we list some open questions at the very end of the paper. 

    We hope this paper should be comprehendible by both a representation theorist less comfortable with tensor-triangular geometry and a tensor-triangular geometer less comfortable with representation theory. As such, we include a section recalling the basics of tensor-triangular geometry (Section \Cref{sec:ttgprelims}), a section recalling the basics of permutation modules both over fields and arbitrary rings (Section \Cref{sec-perm}), and a section recalling the basics of invertible objects and endotrivial complexes (Section \Cref{sec:etriv_review}). Additionally, we assume no familiarity with $\infty$-categories, and as such, quarantine their use to Section \Cref{sec:infty_cat_stuff}.

    \subsection*{Conventions} 
    Throughout this paper, $G$ denotes a finite group, occasionally assumed to be of prime-power order, and $R$ denotes a commutative ring, occasionally assumed to be Noetherian or connected. We also adopt the following conventions:
    \begin{enumerate}
        \item For an element $r \in R$, we write $R_r$ as shorthand for $R[1/r]$, and $R/r$ for the quotient ring $R/rR$.
        \item Given a homomorphism $f\colon R \to S$ of commutative rings, we write $f^\ast$ for the functor induced by base change in the corresponding context.
        \item  We often abbreviate `tensor-triangular' or `tensor-triangulated' by `tt' for short.  
    \end{enumerate}

    \subsubsection*{Acknowledgments} We thank Yorick Fuhrmann, Henning Krause, and Robert Boltje for their comments on previous iterations of this paper. We also thank Isaac Moselle for spotting a crucial error in an argument in Section 9 and providing an alternative argument. 
    This project got off the ground at the Mathematisches Forschungsinstitut Oberwolfach, and we thank the MFO for their hospitality. G\'omez was supported by the Deutsche Forschungsgemeinschaft (Project-ID 491392403 – TRR 358). Miller was partially supported by an AMS-Simons travel grant and a US Junior Oberwolfach Fellowship (NSF grant DMS-2230648). Finally, we thank Greyson Meyer for permitting us to share his poem.

    \section{Tensor-triangular geometric preliminaries} \label{sec:ttgprelims}

    To begin, we recall some basic tensor-triangular geometry. Primary references for these are \cite{Bal05, Bal10b, Ste18}.
   
    \begin{recollection}
        A \emph{tensor-triangulated category} is a symmetric monoidal triangulated category with an exact tensor product in each variable separately. A \emph{big tt-category} $\catT$ is a rigidly-compactly generated tt-category; that is, it is compactly generated as a triangulated category and its \emph{compact objects} coincide with its \emph{rigid} (i.e., \emph{dualizable}) objects. Note this is a ``unital algebraic stable homotopy category'' in the sense of \cite{HPS97}. We denote by $\catT^c$ the compact/dualizable part of $\catT$, which is necessarily essentially small. A \emph{tt-functor} is a triangulated functor between tt-categories which is strong monoidal. A tt-functor between big tt-categories is \emph{geometric} if it preserves coproducts. Note that in particular a geometric functor preserves compact objects.

        A full triangulated subcategory $\catI$ of a tt-category $\catK$ is \emph{thick} if it is closed under direct summands, a \emph{thick $\otimes$-ideal} if in addition it has ideal closure ($\catI \otimes\catK \subseteq \catI$), and \emph{prime} if furthermore, it is proper and satisfies the following primality conditions: $x \otimes y \in \catI$ implies $x \in \catI$ or $y \in \catI$. If $\catK$ is essentially small (e.g. $\catK = \catT^c$ for a big tt-category $\catT$), the \emph{Balmer spectrum} $\Spc(\catK)$ \cite{Bal05} is the set of prime thick $\otimes$-ideals, 
        \[\Spc(\catK) \coloneqq \{\catP \subseteq \catK \mid \catP \text{ is a prime thick $\otimes$-ideal}\}\] 
        endowed with the topology whose basis of \emph{closed} sets is given by supports of objects $x \in \catK$, 
        \[\supp(x) \coloneqq  \{\catP \in \Spc(\catK)\mid x \not\in \catP\}.\] 
        In fact, $\Spc(\catK)$ is a spectral space; that is, it is homeomorphic to the Zariski spectrum of a commutative ring. 
        The Balmer spectrum is contravariantly functorial: given a tt-functor $F\colon \catK\to\catL$, one obtains a continuous map on spectra 
        \[\Spc(F)\colon \Spc(\catL) \to \Spc(\catK)\]
        given by $\Spc(F)(\catP)\coloneqq F^{-1}(\catP)$.
    \end{recollection}
    
    \begin{recollection}
        We next recall the \emph{presheaf of triangulated categories} following \cite{BF07}. This is also due to Balmer (see \cite{Bal02}), and predates the construction of the spectrum \cite{Bal05}. With $\catK$ as before, let $U\subseteq \Spc(\catK)$ be a quasi-compact open subset and let $Z \coloneqq \Spc(\catK) \setminus U$. We define the thick subcategory 
        \[\catK_{Z}\coloneqq  \{x \in \catK \mid \supp(x) \subseteq Z\},\]
        i.e., the objects which should vanish on $U$. The category 
        \[\catK(U)\coloneqq (\catK/\catK_Z)^\natural\] 
        is the idempotent completion of the Verdier quotient $\catK/\catK_Z$. The composition $\catK \twoheadrightarrow\catK/\catK_Z \hookrightarrow \catK(U)$ yields a restriction functor $\catK\to\catK(U)$, and the induced map on Balmer spectra is nothing more than the open immersion $U \hookrightarrow \Spc(\catK)$. This is a genuine presheaf: the restrictions are compatible with intersections, see \cite{BF07} for details.
    \end{recollection}

    \begin{example}
        An important example due to Thomason \cite{Tho97} is as follows. For $\catK = \on{D}_{\text{perf}}(X)$ for $X$ a quasi-compact quasi-separated scheme, one has a homeomorphism $\Spc(\catK) \cong X$. Then for any quasi-compact open $U \subseteq X$, one has $\catK(U) \cong \on{D}_{\text{perf}}(U)$ via the restriction functor $\catK\to \catK(U)$ is given by the restriction functor to the open $U$. 
    \end{example}

    \begin{recollection}
        Finally, we recall Balmer's \emph{comparison map} \cite{Bal10} in the generality that we require. With $\catK$ as before, the `graded cohomology ring' 
        \[
        \End^\sbull_\catK(\bbone)\coloneqq \bigoplus_{n\in \bbZ}\End^n_\catK(\bbone) = \bigoplus_{n\in \bbZ}\Hom_\catK(\bbone, \bbone[n])
        \]
        of $\catK$ is the $\bbZ$-graded endomorphism ring of the monoidal unit $\bbone\in \catK$.
        Note that occasionally one only considers the $\bbN$-graded variant.  Then, we have a continuous map, the \emph{(graded) comparison map}:
        \begin{align*}
            \comp\colon \Spc(\catK) &\to \Spec^h(\End_\catK^\sbull(\bbone)) \\
            \catP &\mapsto \langle f \in \End_\catK^\sbull(\bbone) \mid \cone(f) \not\in \catP\rangle.
        \end{align*}
    \end{recollection}

    The comparison map is a homeomorphism in numerous classical examples.

    \begin{example}
        Let $G$ be a finite group (scheme) and $k$ a field of characteristic $p$. Set $\on{D}_b(kG) \coloneqq \on{D}_b(\mathsf{mod}(kG))$, then the comparison map induces a homeomorphism \[\Spc(\on{D}_b(kG)) \cong \Spec^h(\on{H}^\sbull(G;k))\] which, under the localization $\mathsf{stmod}(kG) \cong \on{D}_b(kG)/\on{D}_b(\mathsf{proj}(kG))$ (see \cite{Ric89}) reduces to a homeomorphism \[\Spc(\mathsf{stmod}(kG)) \cong \Proj(\on{H}^\sbull(G;k)).\] This result is classically due to Benson--Carlson--Rickard \cite{BCR97} for groups and Friedlander--Pevtsova \cite{FP07} for group schemes, however the presentation here is due to Balmer's framework \cite{Bal10}. 
    \end{example}

    \begin{example}
        For a commutative ring $R$, the comparison map induces a homeomorphism 
        \[\Spc(\on{D}_{\mathsf{perf}}(R)) \cong \Spec(R),\] 
        however, while one also has that $\Spc(\on{D}_{\mathsf{perf}}(X))$ is homeomorphic to the underlying topological space $X$ for any quasi-compact quasi-separated scheme \cite{Tho97}, the comparison map is not a homeomorphism in general; indeed, this already fails for the projective line. See \cite[Section 8]{Bal10} for discussion. 
    \end{example}

    \section{Permutation modules}\label{sec-perm}

    Throughout, $R$ is always assumed to be a commutative ring, which we may often assume is Noetherian. We next review tensor-triangular and representation-theoretic preliminaries about permutation modules, both over general rings and over fields of positive characteristic. 
    
    \begin{recollection}
        Let $G$ be a finite group. A \emph{permutation $RG$-module} is a $RG$-module admitting a $G$-stable $R$-basis. In fact, any permutation $RG$-module is isomorphic to a coproduct of \emph{transitive} permutation modules, i.e., those of the form $R(G/H)$, with $H$ a subgroup of $G$. Observe that $R(G/H) \cong \Ind^G_H (R)$.
        
        We refer to direct summands of permutation modules as \emph{$\natural$-permutation modules.} Any summand is necessarily still $R$-projective. When $R$ is either a field of characteristic $p$ or a complete discrete valuation ring with characteristic $p$ residue field, then being a $\natural$-permutation (or in this case, \emph{$p$-permutation}) module is equivalent to being permutation upon restriction to a Sylow $p$-subgroup of $G$. In those cases, if $G$ is a $p$-group, then the indecomposable ($p$-)permutation modules are precisely those of the form $R(G/H)$, and direct summands of permutation modules are again permutation. We refer the reader to \cite{Li18a, La23} for a representation-theoretic overview of $\natural$-permutation modules over fields and to \cite{BG23b, BG25} for the tensor-triangular-geometric perspective. 
        
        We denote the additive category of finitely generated permutation $RG$-modules by $\mathsf{perm}(G;R)$. Recall that the tensor product over the ground ring $R$ equipped with the diagonal $G$-action gives us a symmetric monoidal structure on the additive category of finitely generated modules, where the monoidal unit corresponds to the trivial module $R$. By the double coset formula, one  deduces that this monoidal structure descends to $\mathsf{perm}(G;R)$. Now, set 
        \[
        \catK(G;R) \coloneqq \on{K}_b(\mathsf{perm}(G;R)^\natural) = \on{K}_b(\mathsf{perm}(G;R))^\natural,
        \]
        where the superscript $\natural$ denotes idempotent completion. Note that $\mathsf{perm}(G;R)^\natural$ is the additive monoidal category of finitely generated $\natural$-permutation $RG$-modules, and each module is an \emph{$R$-lattice}, i.e., finite projective as $R$-module. We denote the shift functor by $(-)[1]$ as opposed to $\Sigma$.
        Then $\catK(G;R)$ is an essentially small rigid tensor-triangulated category with the monoidal structure induced by the one on $\mathsf{perm}(G;R)$. In particular, the internal dual functor is given by the internal hom functor, $x^\ast  = \Hom_R(x, R[0])$. 

        The \emph{(big) derived category of permutation modules} $\catD(G;R)$, is defined as the smallest \textit{localizing} subcategory of the homotopy category  $\on{K}(\mathsf{Mod}(RG))$ containing  $\catK(G;R)$; that is, the smallest triangulated subcategory which closed under coproducts. In fact, $\catD(G;R)$ is a big tt-category whose compact part corresponds to $\catK(G;R)$; see \cite[Section 3]{BG23b} for details. For this paper, $\catD(G;R)$ will not play as much of a role until the end. 
    \end{recollection} 

    \begin{recollection}
        Given a group homomorphism $\alpha\colon G' \to G$, restriction along $\alpha$ induces a geometric functor $\Res_\alpha \colon \catD(G;R) \to \catD(G';R)$ (in particular, it preserves compacts). In particular, if $\alpha$ is injective, we set $\Res^G_{G'} \coloneqq \Res_\alpha$ and if $\alpha$ is surjective, we set $\Inf^G_{G'} \coloneqq \Res_\alpha$, and call the resulting operation \emph{inflation}. 
        Finally, given any subgroup $H \leq G$, (co)induction induces a triangulated (but non-monoidal) functor $\Ind^G_H \colon \catD(H;R) \to \catD(G;R)$. One has $\Ind^G_H R(H/K) \cong R(G/K)$ for any subgroups $K \leq H \leq G$. The usual (co)induction/restriction adjunction statements hold. 
    \end{recollection}

    \begin{remark}
        Over fields, every indecomposable $p$-permutation module is a direct summand of a transitive permutation module, due to the Krull-Schmidt theorem holding. However, over arbitrary rings, this no longer need be the case for $\natural$-permutation modules. For instance, the \emph{Swan modules} are projective $\bbZ G$-modules which may arise as summands of free $\bbZ G$-modules of rank higher than 1, and which can become free after taking successive direct sums. Recently, Hofmann--Nicholson \cite{HN25} deduced examples of non-free, stably free Swan modules, answering an old question of Dyer. We refer the reader to the article for more details. 
    \end{remark}

    \begin{recollection}\label{rec:extension-scalars-perm}
        Let $f\colon R\to S$ be a ring map. It is straightforward to verify that extension of scalars along $f$ preserves permutation modules and hence it induces a  geometric functor 
        \[
        f^\ast\coloneqq S\otimes_R-\colon \catD(G;R)\to \catD(G;S).
        \]
        As any geometric functor, one obtains a triple of adjunctions $(f^\ast,f_\ast,f^!)$, where $f_\ast$ is induced by restriction of scalars. Since $f^\ast$ preserves compacts, we obtain a tt-functor 
        \[
         f^\ast\colon \catK(G;R)\to \catK(G;S).
        \]
    \end{recollection}

    \begin{proposition}
        Let $R$ be a commutative ring. Assume that  $R = R_1 \times R_2$ for nonzero commutative rings $R_1, R_2$. Let $\on{pr}_i\colon R\to R_i$ denote the projection map, for $i=1,2$. Then base change along $\mathrm{pr}_i$ induces an equivalence
        \[
       \mathrm{pr}_1^\ast\times \mathrm{pr}_2^\ast\colon \catK(G;R) \xrightarrow[]{\simeq} \catK(G;R_1) \times \catK(G;R_2)
        \]
        as tensor-triangulated categories. Here the tensor-triangulated structure on the right hand side is given pointwise.
    \end{proposition}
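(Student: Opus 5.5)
The plan is to use the idempotents $e_1 = (1,0)$ and $e_2 = (0,1)$ of $R$, which are central and $G$-invariant, to split everything at the level of the additive category $\mathsf{perm}(G;R)^\natural$ first. Only afterwards do I pass to bounded homotopy categories.

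First, I will show that base change gives an equivalence of additive symmetric monoidal categories
\[\mathsf{perm}(G;R)^\natural \xrightarrow{\simeq} \mathsf{perm}(G;R_1)^\natural \times \mathsf{perm}(G;R_2)^\natural, \qquad M \mapsto (R_1\otimes_R M, R_2 \otimes_R M) = (e_1M, e_2M).\]
\begin{enumerate}
    \item Full faithfulness. Any $RG$-module $M$ decomposes as $M = e_1M \oplus e_2M$, with $e_iM$ an $R_iG$-module. Any $RG$-linear map $M\to N$ commutes with the $e_i$, so $\Hom_{RG}(M,N) = \Hom_{R_1G}(e_1M,e_1N)\times \Hom_{R_2G}(e_2M,e_2N)$.
    \item Well-definedness. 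Base change sends $R(G/H)$ to $R_i(G/H)$, hence preserves permutation modules and their summands, by \Cref{rec:extension-scalars-perm}.
    \item Essential surjectivity. Given $\natural$-permutation modules $M_i$ over $R_i$, regard $M_1\oplus M_2$ as an $RG$-module. It is $\natural$-permutation over $R$, because $R_1(G/H) = e_1\cdot R(G/H)$ is a direct summand of $R(G/H)$ as an $RG$-module, and summands of summands are summands.
    \item Monoidality. We have $e_i(M\otimes_R N) = e_iM\otimes_{R_i} e_iN$, since $R_i\otimes_R -$ is strong monoidal. The mixed terms $e_1M\otimes_R e_2N$ vanish because $e_1e_2=0$.
\end{enumerate}

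Second, I will pass to bounded homotopy categories. For additive categories $\mathcal{A}_1,\mathcal{A}_2$ there is a canonical isomorphism of categories $\on{K}_b(\mathcal{A}_1\times\mathcal{A}_2)\cong \on{K}_b(\mathcal{A}_1)\times \on{K}_b(\mathcal{A}_2)$. Complexes, chain maps and homotopies all split componentwise. Mapping cones and shifts are also computed componentwise, so this is a triangulated equivalence onto the product, which carries the pointwise triangulation. The tensor product of complexes is built from the monoidal structure of the additive category degreewise, so it is also componentwise, and the functor is a tt-equivalence.

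Finally, the composite of these identifications is exactly $\mathrm{pr}_1^\ast\times\mathrm{pr}_2^\ast$, because base change on complexes is applied degreewise. We use the convention $\catK(G;R)=\on{K}_b(\mathsf{perm}(G;R)^\natural)$, so no additional idempotent completion issue arises; alternatively, idempotent completion visibly commutes with finite products. I expect no serious obstacle here. The only point needing care is essential surjectivity in step (c), namely checking that $M_1\oplus M_2$ is genuinely a summand of a permutation $RG$-module. This holds because $R_1(G/H_1)\oplus R_2(G/H_2)$ is a summand of $R(G/H_1)\oplus R(G/H_2)$.
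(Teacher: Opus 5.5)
Your proof is correct and follows essentially the same route as the paper. Both reduce to the additive level, get essential surjectivity by viewing $M_1\oplus M_2$ as an $RG$-module, get full faithfulness from splitting modules and Hom-sets via the idempotents $e_1,e_2$, and then pass to bounded homotopy categories componentwise. Working with $\mathsf{perm}(G;R)^\natural$ rather than $\mathsf{perm}(G;R)$ is the right level of care: permutation $RG$-modules are $R$-free, so for example $(R_1,0)$ is not in the essential image of $\mathsf{perm}(G;R)$, and it is only hit once direct summands are allowed, as you do.
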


    \begin{proof}
        Since  $\mathrm{pr}_i^\ast$ are tt-functors, it is enough to verify that they induce an equivalence of categories. In fact, it enough to show that those functors induce an equivalence 
        \[
        \mathrm{pr}_1^\ast\times \mathrm{pr}_2^\ast\colon\mathsf{perm}(G;R)\xrightarrow[]{\simeq} \mathsf{perm}(G;R_1)\times \mathsf{perm}(G;R_2).
        \]
        We claim that the functor is essentially surjective. Let $(x_1,x_2)\in \mathsf{perm}(G;R_1)\times \mathsf{perm}(G;R_2)$. Since there is a ring map $\iota_i\colon R_i\to R$ such that $\mathrm{pr}_i\iota_\mathrm{i}=\mathrm{id}$, we deduce that $ \iota_1^\ast x_1\oplus \iota_2^\ast x_2\in \mathsf{perm}(G;R)$ satisfies that $(\mathrm{pr}_1^\ast\times \mathrm{pr}_i^\ast)(\iota_1^\ast x_1\oplus \iota_2^\ast x_2)\simeq (x_1,x_2)$. 

        Now, we claim that $\mathrm{pr}_1^\ast\times \mathrm{pr}_2^\ast$ is fully faithful. This is a consequence of the following observation. Note that for any subgroup $H$ of $G$, the permutation module $R(G/H)$ satisfies that $(\mathrm{pr}_1^\ast\times \mathrm{pr}_i^\ast)(R(G/H))\simeq (R_1(G/H), R_2(G/H))$. Indeed, note that  the induction functor is compatible with extension of scalars. From here it is straightforward to deduce the result. 
    \end{proof}

    \begin{remark}
        In view of the previous proposition, we may assume throughout that $R$ is indecomposable, or equivalently, $\Spec(R)$ is connected. We say \emph{$R$ is connected} for shorthand (as opposed to $R$ being connected as a topological ring).

        Note that if $R$ is connected, $\ch(R)$ is either 0 or $p^r$ for some prime $p$. Indeed, suppose otherwise, say $\ch(R) = n$ for $n$ having at least 2 prime divisors. Then we have an injective, unital ring homomorphism $\bbZ/n \hookrightarrow R$. Since $\bbZ/n$ decomposes, its unit can be written as the sum of two orthogonal idempotents. Therefore the unit of $R$ also decomposes in this way, affording a decomposition of $R$. 

        If $R$ is connected with $\ch(R) = p^r$, then the surjection $R \to R/p$ induces the identity map on Zariski spectra. Indeed, every prime ideal of $R$ contains $p$. Let $\frakp \in \Spec(R)$, then we have $p \cdot p^{r-1} \in \frakp$, and an inductive argument shows that $p \in \frakp$. 
    \end{remark}

    \begin{definition}
        Given a prime $\frakp \in \Spec(R)$, let $k(\frakp)$ denote its \emph{residue field} (also called its \emph{fiber}), i.e., the field of fractions of $R/\frakp$. The homomorphism $\lambda_\frakp\colon R \to k(\frakp)$ induces a geometric base change functor \[\lambda_\frakp^\ast\colon \catD(G;R) \to \catD(G;k(\frakp)), \, x \mapsto k(\frakp) \otimes_R x.\]  In particular, it restricts to the compact part $\lambda_\frakp^\ast\colon \catK(G;R) \to \catK(G;k(\frakp))$. See \Cref{rec:extension-scalars-perm}.
    \end{definition}

    \begin{theorem}{\cite[Theorem 4.8]{Gom25}}
        Let $R$ be a commutative Noetherian ring. The family of functors \[\{\lambda_\frakp^\ast\colon \catD(G; R) \to \catD(G;k(\frakp)) \}_{\frakp \in \Spec(R)}\] is jointly conservative.
    \end{theorem}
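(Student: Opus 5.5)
The plan is to reduce conservativity to a statement about compact objects and then reduce to the field case by a Nakayama-type argument. Since $\catD(G;R)$ is compactly generated by the permutation modules $R(G/H)$, and each $\lambda_\frakp^\ast$ is a coproduct-preserving triangulated functor, it suffices to show that if $x\in\catD(G;R)$ satisfies $\lambda_\frakp^\ast x\simeq 0$ for all $\frakp$, then $\Hom(R(G/H)[n],x)=0$ for all $H\le G$ and $n\in\bbZ$. By the induction/restriction adjunction, and because base change commutes with restriction, $\Res^G_H$ intertwines the families $\lambda_\frakp^\ast$. So we reduce to computing $\Hom_{\catD(H;R)}(R[n],\Res^G_H x)$, that is, to showing that the ``$H$-fixed points'' of $x$ vanish.

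The key tool is the right adjoint $\lambda_{\frakp\ast}$ together with the projection formula. For a geometric functor the projection formula $\lambda_{\frakp\ast}\lambda_\frakp^\ast x\simeq \lambda_{\frakp\ast}(\bbone)\otimes x = k(\frakp)\otimes_R x$ holds, where $k(\frakp)$ is viewed as a complex with trivial $G$-action. We then use the following step, analogous to the classical fact that $\{k(\frakp)\otimes^{\mathbf L}_R-\}$ is jointly conservative on $\on{D}(R)$ for $R$ Noetherian. Consider the $R$-linear ``fixed point'' functors $\Phi_H\coloneqq\Hom(R(G/H),-)\colon\catD(G;R)\to\on{D}(R)$, which are coproduct-preserving since $R(G/H)$ is compact. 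We want to show $\Phi_H(k(\frakp)\otimes x)\simeq k(\frakp)\otimes^{\mathbf L}_R\Phi_H(x)$. This holds for $x$ compact, since such $x$ is a bounded complex of $R$-lattices with $\Phi_H$ computed degreewise by genuine fixed points of lattices, and $\Phi_H$ is exact. It then extends to all $x$ because both sides are exact and coproduct-preserving in $x$, and $k(\frakp)$ is a filtered colimit of perfect $R$-complexes (for instance via localization and Koszul-type approximations), so both sides commute with this colimit.

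Given this, the hypothesis gives $k(\frakp)\otimes^{\mathbf L}_R\Phi_H(x)=0$ for every $\frakp$, and Neeman's theorem (the residue fields detect vanishing in $\on{D}(R)$ for Noetherian $R$) yields $\Phi_H(x)=0$ for all $H$. Hence $x=0$. Concretely, the argument would be the following: identify $\catD(G;R)$ with modules over the Mackey-type endomorphism ring object (equivalently, derived cohomological Mackey functors, as in \cite{BG23b}). Through this identification $\Phi_H$ becomes evaluation at $G/H$, which is $R$-linear and commutes with base change.

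The main obstacle is justifying the base-change identity $\Phi_H(k(\frakp)\otimes x)\simeq k(\frakp)\otimes^{\mathbf L}_R\Phi_H x$ for big $x$. Specifically, one must check that fixed points of permutation lattices commute with arbitrary base change, i.e.\ $(S\otimes_R R(X))^H = S\otimes_R R(X)^H$. This holds because $R(X)^H$ is free on $H$-orbit sums, unlike general modules where fixed points do not commute with base change. I would first verify this for permutation modules directly, then pass to idempotent completions and localizing closures by exactness and compatibility with coproducts.
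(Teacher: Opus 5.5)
Your argument is correct, but it does not follow the paper, because the paper gives no proof of this statement. It imports it from \cite[Theorem 4.8]{Gom25}, where it comes out of a general fiberwise framework for tt-categories linear over a Noetherian ring.

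\textbf{How your route differs.} Yours is hands-on and specific to permutation modules. You test vanishing on the compact generators $R(G/H)[n]$. In the model $\catD(G;R)\subseteq\on{K}(\mathsf{Mod}(RG))$ one has $\Hom_{\catD(G;R)}(R(G/H)[n],x)\cong\on{H}_n(x^H)$, so everything reduces to the fixed-point complexes $x^H$. The only input particular to $G$ is that $H$-fixed points of permutation lattices are $R$-free on orbit sums, hence commute with base change. After that, Neeman's theorem in $\on{D}(R)$ finishes the proof.

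\textbf{What each approach buys.} Yours is self-contained, resting only on Neeman's result for $\on{D}(R)$ plus a concrete fact about permutation modules. The general approach applies to any $R$-linear big tt-category. Its cost is that one must identify the abstract fiber $k(\frakp)\otimes x$ with $\lambda_{\frakp\ast}\lambda_\frakp^\ast x$, which is exactly what your projection-formula step does.

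\textbf{Streamlining.} By adjunction, $\Phi_H(\lambda_{\frakp\ast}y)\simeq y^H$, computed in $\catD(G;k(\frakp))$ and viewed over $R$. So you can bypass the projection formula entirely. You can also drop the description of $\lambda_{\frakp\ast}\bbone$ as ``$k(\frakp)$ with trivial action''. That description is only correct up to cellular replacement, since the trivial module $k(\frakp)$ is not $\natural$-permutation over $R$. Instead, argue directly as follows.
\begin{enumerate}
\item The natural chain map $k(\frakp)\otimes_R x^H\to(k(\frakp)\otimes_R x)^H$ is an isomorphism on $\natural$-permutation modules.
\item Both sides are triangulated and coproduct-preserving in $x$, so the map is a quasi-isomorphism for every $x$ in the localizing subcategory.
\item The class of $x$ with $x^H$ K-projective is localizing and contains $\catK(G;R)$, so $x^H$ is K-projective for all $x\in\catD(G;R)$. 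Hence the left side computes $k(\frakp)\otimes^{\mathbf L}_R x^H$.
\end{enumerate}
With this, the filtered-colimit remark about $k(\frakp)$ and the reduction via $\Res^G_H$ in your first paragraph become unnecessary.
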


    Therefore, we we have a description of the points of $\Spc(\catK(G;R))$ as follows; see \cite[Theorem 6.18]{Gom25} (cf \cite[Proposition 5.18]{DG25}).

    \begin{theorem}
        There is a set bijection induced by the base change along all residue fields of $R$ \[\bigsqcup_{\frakp \in \Spec(R)} \Spc(\catK(G;k(\frakp))) \to \Spc(\catK(G;R)).\] In particular, every prime of $\Spc(\catK(G;R))$ is of the form $(\lambda_\frakp^\ast)\inv(\frakq)$ for some $\frakp \in \Spec(R)$ and some $\frakq \in \Spc(\catK(G;k(\frakp)))$.
    \end{theorem}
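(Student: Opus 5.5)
The argument splits into two parts: show the map is well defined and surjective using joint conservativity, then show it is injective.

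\emph{Well-definedness.} Each $\lambda_\frakp^\ast\colon \catK(G;R)\to\catK(G;k(\frakp))$ is a tt-functor. Hence $\Spc(\lambda_\frakp^\ast)$ sends a prime $\frakq$ to the prime $(\lambda_\frakp^\ast)\inv(\frakq)$, and the map in question is the coproduct of these continuous maps.

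\emph{Surjectivity.} I would use the abstract tt-criterion: for a jointly conservative family of geometric functors between rigidly-compactly generated tt-categories, the induced maps on spectra of compacts are jointly surjective. This is Balmer's surjectivity theorem, in the form ``jointly conservative coproduct-preserving tt-functors on big categories detect $\otimes$-nilpotence, hence jointly surjective on $\Spc$.'' The steps are as follows.
\begin{enumerate}
\item By the theorem just stated (\cite[Theorem 4.8]{Gom25}), the family $\{\lambda_\frakp^\ast\}_{\frakp\in\Spec(R)}$ is jointly conservative on $\catD(G;R)$.
\item Each $\lambda_\frakp^\ast$ is geometric, so it has a right adjoint $(\lambda_\frakp)_\ast$, namely restriction of scalars.
\item By the Balmer--Sanders/Barthel--Castellana--Heuts--Naumann--Pol type result, a jointly conservative family of geometric functors detects $\otimes$-nilpotence of morphisms between compacts.
\item Balmer's theorem then gives joint surjectivity of the maps $\Spc(\lambda_\frakp^\ast)$.
\end{enumerate}
Step 3 is the delicate point. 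I would instead verify nilpotence detection directly. Take a map $f\colon x\to y$ in $\catK(G;R)$ with $\lambda_\frakp^\ast f=0$ for all $\frakp$. Passing to its cofiber reduces this to controlling supports, so it is enough to show $\supp(x)=\bigcup_\frakp \Spc(\lambda^\ast_\frakp)(\supp(\lambda_\frakp^\ast x))$.

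\emph{Injectivity.} This splits into two parts.
\begin{enumerate}
\item \emph{Different fibers.} First compose with Balmer's comparison map $\comp\colon\Spc(\catK(G;R))\to\Spec(R)$. The unit's endomorphism ring is $R$. Naturality of $\comp$ shows that primes coming from $k(\frakp)$ map to $\frakp$, since the comparison map of $\catK(G;k(\frakp))$ lands at the unique point of $\Spec(k(\frakp))$. So images from different fibers are distinct.
\item \emph{Same fiber.} It remains to show that $\Spc(\lambda_\frakp^\ast)$ is injective. Factor $\lambda_\frakp$ as $R\to R_\frakp\to k(\frakp)$. Localization $R\to R_\frakp$ gives a finite localization of $\catK(G;R)$ (inverting elements $s\notin\frakp$ via cones of multiplication by $s$), so on spectra it is an embedding. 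Then $R_\frakp\to k(\frakp)$ is a quotient by a nilpotent-type filtration. Here I would use that $\catK(G;R_\frakp)_{\{\frakp\}}$ is generated by Koszul objects, and that $\Spc$ of the quotient $R_\frakp\to k(\frakp)$ is injective because $k(\frakp)\otimes -$ is, up to Koszul objects, a ``base change to the closed point.'' Concretely, primes of the fiber are determined by modular fixed points $\Psi^H$ composed with cohomological support. These are compatible with base change, so two primes over $\frakp$ with the same preimage have the same invariants.
\end{enumerate}

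I expect this fiberwise injectivity to be the main obstacle. It is handled in \cite{Gom25} via fiberwise stratification, and I would cite that machinery for it.
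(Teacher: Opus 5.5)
\textbf{There are genuine gaps, one in the surjectivity argument and one in same-fibre injectivity.} For context: the paper gives no argument of its own for this statement. It records it right after the joint conservativity of $\{\lambda_\frakp^\ast\}$ and cites \cite[Theorem 6.18]{Gom25} (cf.\ \cite[Proposition 5.18]{DG25}).

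\textbf{What is sound.} Separating different fibres by naturality of the comparison map is correct: every prime of $\catK(G;k(\frakp))$ lands on $\ker\lambda_\frakp=\frakp$. The embedding $\Spc(\catK(G;R_\frakp))\hookrightarrow\Spc(\catK(G;R))$ coming from central localization is also correct.

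\textbf{Surjectivity.} Your chain fails at Step 4, not Step 3. Step 3 is the routine telescope argument. Balmer's nilpotence criterion, however, concerns a single functor, or finitely many via the product. For an infinite family such as $\{\lambda_\frakp^\ast\}_{\frakp\in\Spec(R)}$, joint detection of $\otimes$-nilpotence on compacts does \emph{not} give joint surjectivity. For instance, take $A=\prod_{n\in\bbN}k$ with $k$ a field. The projections $\on{D}_{\mathsf{perf}}(A)\to\on{D}_{\mathsf{perf}}(k)$ are jointly faithful, yet on spectra they only reach the principal ultrafilters in $\Spec(A)\cong\beta\bbN$. Their big versions are not jointly conservative, since all of them kill $A/\bigoplus_n k$. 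So you must cite a result that uses conservativity on $\catD(G;R)$ itself, namely that jointly conservative families of geometric functors are jointly surjective on spectra.

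Your fallback ``direct'' verification should be dropped, because it is circular. Since $\Spc(\lambda_\frakp^\ast)^{-1}(\supp x)=\supp(\lambda_\frakp^\ast x)$, the right-hand side of your identity is $\supp(x)\cap\im$. So the identity for $x=\bbone$ is just surjectivity again.

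\textbf{Injectivity within one fibre.} This is the substantive content, you do not prove it, and the sketch you give would not prove it.
\begin{itemize}
\item $R_\frakp\to k(\frakp)$ is not a nilpotent thickening unless $\frakp$ is minimal; think of $\bbZ_{(p)}\to\bbF_p$.
\item Knowing that the $\frakp$-torsion ideal of $\catK(G;R_\frakp)$ is generated by a Koszul object says nothing by itself about $\Spc(\lambda_\frakp^\ast)$.
\item ``Compatibility of the invariants with base change'' only produces commutative squares.
\end{itemize}
What is actually needed is that the points of $\Spc(\catK(G;k(\frakp)))$ be separated by the closed sets $\supp(\lambda_\frakp^\ast x)$ with $x\in\catK(G;R)$, that is, by objects defined over $R$. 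These points are the pairs $(H,\frakq)$ with $\frakq\in\Spec^h(\on{H}^\sbull(\Weyl{G}{H};k(\frakp)))$. Even granting that the subgroup coordinate is visible over $R$ (through $\Psi^H_p$ and the modules $R(G/K)$), the cohomological coordinate is cut out by objects living over $k(\frakp)$. These are cones on cohomology classes or Koszul-type complexes, and they need not lift to $R$. Showing that $R$-defined data suffice requires genuine input.

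Deferring this step to \cite{Gom25} puts you on the same footing as the paper, which is acceptable. But the heuristics you offer in its place should not be presented as an argument.
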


    \begin{remark}
        Balmer's \emph{comparison map} induces a continuous map 
        \[\Spc(\catK(G;R)) \to \Spec^h(\End^\bullet_{\catK(G;R)}(\bbone)) \cong \Spec(R).\]
        Note that in this case $R=\End_{\catK(G;R)}(\bbone)=\End_{\catK(G;R)}^\bullet(\bbone)$. 
        Therefore, for each open $U \subseteq \Spec(R)$, we obtain a corresponding open \[\comp\inv(U) = \bigsqcup_{\frakp \in U}\Spc(\catK(G;k(\frakp))) \subseteq \Spc(\catK(G;R)).\] 
        Here we are identifying $\Spc(\catK(G;k(\frakp)))$ with its image under $\Spc(\lambda_\frakp^\ast)$.
    \end{remark}
    
    Given $f \in R$ (morally, $f$ is an element of $\End_{\catK(G;R)}(\bbone)$), we write $R_f$ to denote the ring localization $R[1/f]$. We next verify that ring localizations correspond to genuine localizations on the level of tt-categories. This identification will play a crucial role in the sequel. 
    
    \begin{theorem}\label{prop:localization_identification}
        Let $G$ be a finite group and $R$ a commutative Noetherian ring. Let $f \in R$ and consider the associated localization $ R_f$. Let $D(f)$ denote the principal open $\{\frakp \mid f\not\in\frakp\}\subseteq \Spec(R)$, and set $\open(f) = \comp\inv(D(f))$. Then we have an equivalence of tt-categories \[\catK(G;R_f) \cong \catK(G;R)(\open(f)).\]
    \end{theorem}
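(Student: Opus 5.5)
The plan is to identify the base change functor $\ell^\ast\colon \catK(G;R)\to\catK(G;R_f)$ along $\ell\colon R\to R_f$ with the restriction functor $\catK(G;R)\to\catK(G;R)(\open(f))$. The closed complement of $\open(f)$ is $Z=\comp\inv(V(f))$. Since the comparison map sends a prime $\catP$ to the ideal of those $g$ with $\cone(g)\notin\catP$, we get $Z=\supp(\cone(f))$, where $f\colon\bbone\to\bbone$ is viewed as an endomorphism of the unit and $\cone(f)$ is the Koszul object $\bbone\xrightarrow{f}\bbone$. Since $\catK(G;R)$ is rigid, the classification of thick tensor ideals by their supports (Balmer) gives
\[
\catK(G;R)_Z=\langle \cone(f)\rangle,
\]
the thick $\otimes$-ideal generated by $\cone(f)$. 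The standard tt-localization theorem (Balmer's ``central localization'', or \cite[\S 3]{Bal10}) then identifies $(\catK/\langle\cone(f)\rangle)^\natural$ with the central localization $\catK[f^{-1}]$. This category has the same objects as $\catK$, and its morphisms are
\[
\Hom(x,y)[f^{-1}]=\on{colim}\bigl(\Hom(x,y)\xrightarrow{f}\Hom(x,y)\xrightarrow{f}\cdots\bigr),
\]
followed by idempotent completion. So it suffices to show that $\ell^\ast$ exhibits $\catK(G;R_f)$ as the idempotent completion of $\catK(G;R)[f^{-1}]$.

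I would carry this out in three steps.

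First, $\ell^\ast$ kills $\cone(f)$, since $f$ becomes invertible in $R_f$. Hence $\ell^\ast$ factors through $\catK(G;R)(\open(f))$, and we obtain an induced tt-functor $\Phi$ out of it.

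Second, $\Phi$ is fully faithful. By dévissage on bounded complexes (the five lemma over a triangle, inducting on length), it is enough to check morphisms between permutation modules concentrated in single degrees, including shifts. For finitely generated $\natural$-permutation modules $M,N$, which are $R$-projective lattices, we have
\[
\Hom_{R_fG}(R_f\otimes M,R_f\otimes N)\cong \Hom_{RG}(M,N)_f.
\]
This holds because $M$ is finitely presented over $RG$ (as $R$ is Noetherian) and localization is flat, so $\Hom$ commutes with it. The homotopy category Hom-groups are homology of the Hom complexes, and localization is exact, so chain-homotopy classes also localize. This matches the central localization description above.

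Third, $\Phi$ is essentially surjective up to idempotent completion. Every $R_f(G/H)$ is $\ell^\ast R(G/H)$. Hence the image contains $\on{K}_b(\mathsf{perm}(G;R_f))$, and since the source is idempotent complete and $\Phi$ is fully faithful, the image is closed under summands. It therefore equals $\on{K}_b(\mathsf{perm}(G;R_f))^\natural=\catK(G;R_f)$.

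The main obstacle is the first identification, namely that $\catK_Z$ is exactly the thick ideal generated by $\cone(f)$, together with the claim that the Verdier quotient by this ideal is the central localization. The former uses $\supp(\cone(f))=\comp\inv(V(f))$, which is immediate from the definition of $\comp$, together with Balmer's classification of radical thick ideals; in a rigid category every thick ideal is radical. I would cite Balmer's result on central localization, \cite[Theorem 3.6]{Bal10}, for the latter. Some care is also needed that $\Hom$ in $\catK(G;R)$ is computed by finitely generated $R$-modules compatibly with localization. This is where Noetherianity enters, as one can avoid the colimit subtleties by using flatness of $R\to R_f$ and finite presentation of the lattices.
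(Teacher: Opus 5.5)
Your proof is correct. Most of it coincides with the paper's: the appeal to Balmer's central localization in \cite[\S3]{Bal10}, the comparison $\Hom_{RG}(M,N)_f\cong\Hom_{R_fG}(R_f\otimes_R M,R_f\otimes_R N)$ via flatness followed by d\'evissage, and essential surjectivity from the generators $R_f(G/H)$.

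The genuine difference is how you identify $\catK_{\open(f)^c}$ with $\langle\cone(f)\rangle$.
\begin{itemize}
\item The paper argues fiberwise. It uses the decomposition of $\Spc(\catK(G;R))$ into the spectra over the residue fields $k(\frakp)$, together with joint conservativity of residue-field base change from \cite{Gom25}. It shows that $\supp(x)\subseteq\open(f)^c$ iff $k(\frakp)\otimes_R x=0$ for all $\frakp\in D(f)$, iff $f$ acts nilpotently on $x$. This is where Noetherianity enters the paper's proof.
\item You instead use two general tt-facts: $\comp^{-1}(V(f))=\supp(\cone(f))$ from \cite{Bal10}, and the fact that every thick $\otimes$-ideal of a rigid tt-category is radical and hence determined by supports.
\end{itemize}

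Your route is shorter and avoids the fiberwise description of the spectrum entirely. It also does not need Noetherianity anywhere. You attribute the finite presentation of the lattices to Noetherianity, but that is unnecessary: $\natural$-permutation modules are finitely generated projective over $R$, and $\Hom_{RG}(M,N)=\Hom_R(M,N)^G$ commutes with the flat base change $R\to R_f$. So your argument proves the statement for an arbitrary commutative ring.

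One small overstatement: $Z=\supp(\cone(f))$ is not quite ``immediate from the definition'' of $\comp$. Since $\comp(\catP)$ is defined as the ideal generated by the $g$ with $\cone(g)\notin\catP$, you need Balmer's lemma that this set of elements is already an ideal. That lemma is in \cite{Bal10}, so nothing is missing.
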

    \begin{proof}
        First, if $f$ is nilpotent then the claim is clear, so we may assume otherwise. Now, observe that $\catK(G;R_f) \cong \big(S\inv\catK(G;R)\big)^\natural$, where $S = \{1, f, f^2,\dots\}$ and $S\inv\catK(G;R)$ denotes the central localization by $S$; see \cite[Construction 3.5]{Bal10}. Indeed, by \cite[Proposition 3.7]{Bal10}, we have $S\inv\catK(G;R) \cong \catK(G;R)/\catI$, with $\catI$ the following thick $\otimes$-ideal, 
        \[\catI = \langle \cone(f^i)\mid i \in \bbN\rangle = \{x \in \catK(G;R) \mid \exists i \in \bbN \text{ such that } \id_x \otimes f^i = 0\}.\]  
        Now, consider the base change functor along the map $\lambda_f\colon R\to R_f$, and note that $\lambda_f^\ast(x)=0$ for any  $x\in \catI$. Then we obtain a commutative diagram 
        \[
        \begin{tikzcd}
            \catK(G;R) \ar[dr,"\lambda_f^\ast"] \ar[d] &  \\ 
            \big(S\inv\catK(G;R)\big)^\natural \ar[r,"F"] & \catK(G;R_f).
        \end{tikzcd}
        \]
        Let $R(G/H)$ and $R(G/K)$ be two permutation modules.  We have 
        \begin{align*}
            S\inv\Hom^\bullet_{RG}(R(G/H),R(G/K))&\cong R_f\otimes_R\Hom^\bullet_{RG}(R(G/H),R(G/K))  \\
            &\cong \Hom^\bullet_{R_fG}(R_f \otimes_R R(G/H),R_f \otimes_R R(G/K)), 
        \end{align*}
        where the left hand side is precisely the homomorphisms in $ \big(S\inv\catK(G;R)\big)^\natural$. By a d\'evissage argument we obtain that $F$ is fully faithful. On the other hand, by the commutativity of the previous triangle, we have that the essential image of $F$ contains all permutation modules of the form $R_f(G/H)$, from which we deduce that $F$ is essentially surjective since the latter are a set of thick generators of $\catK(G;R_f)$ which  gives us the claimed equivalence.

        On the other hand, we claim that $\catI$ is equivalently the collection of elements $x \in \catK(G;R)$ such that 
        \[\supp(x) \subseteq \bigsqcup_{f\in\frakp \in \Spec(R)} \Spc(\catK(G;k(\frakp))) = \open(f)^c.\] 
        First, note that the above containment holds for $x$ if and only if for all $\frakp \in \Spec(R)$ with $f\not\in \frakp$, one has $k(\frakp) \otimes_R x = 0$ in $\catK(G;k(\frakp))$. Indeed, this follows by some tt-yoga using the surjectivity of the map 
        \[
        \bigsqcup_{\frakp \in \Spec(R)} \Spc(\catK(G;k(\frakp))) \to \Spc(\catK(G;R)).
        \]
        and the naturality of Balmer's comparison map.

        Now, we claim that the above condition is equivalent to multiplication by $f$ on $x$ being nilpotent. Indeed, it is straightforward that if multiplication by $f$ on $x$ is nilpotent, then for every $\frakp$ with $f\not\in \frakp$, $k(\frakp) \otimes_R x = 0$. Conversely, if multiplication by $f$ on $x$ is non-nilpotent, then there exists a prime $\frakp$ with $f\not\in\frakp$. Then multiplication by $f$ on $k(\frakp) \otimes_R x$ is a nontrivial morphism, thus $k(\frakp) \otimes_R x \neq 0$. We conclude that \[\catI = \left\{ x \in \catK(G;R) \mid \supp(x) \subseteq \open(f)^c \right\} \eqqcolon \catK_{\open(f)^c}.\] Putting everything together, we have the following equivalences: 
        \begin{align*}
            \catK(G;R)(\open(f)) &\coloneqq \big(\catK(G;R)/\catK_{\open(f)^c}\big)^\natural\\
            &= \big(\catK(G;R)/\catI\big)^\natural\\
            &\cong \big(S\inv\catK(G;R)\big)^\natural\\
            &\cong \catK(G;R_f),
        \end{align*}
        and we are done. 
    \end{proof}

    We prove one final lemma regarding base change to reduced rings for permutation modules, which will be useful in the sequel. 
    
    \begin{lemma}\label{lem:reduction_surjective_on_modules}
        Let $R$ be a commutative Noetherian ring, let $N \subseteq \calN$ be an ideal of $R$ contained in the nilradical $\calN$ of $R$, and let $\overline{R} = R/N$. Let $G$ be a finite group. The base-change functor 
        \[
        F\colon \overline{R} \otimes_R -\colon \mathsf{perm}(G;R)^\natural \to \mathsf{perm}(G;\overline{R})^\natural
        \]
        is full and essentially surjective.
    \end{lemma}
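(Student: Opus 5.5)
We prove fullness first for genuine permutation modules and then pass to summands, and we obtain essential surjectivity by lifting idempotents along the nilpotent ideal $N$.

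\emph{Fullness.} For finitely generated permutation modules $P = R(X)$ and $Q = R(Y)$, the module $\Hom_{RG}(R(X),R(Y))$ is free over $R$ with basis the $G$-orbits on $X\times Y$; that is, it is spanned by orbit sums of elementary matrices. The same description holds over $\overline{R}$. Since these bases correspond under base change, the natural map
\[\overline{R}\otimes_R \Hom_{RG}(P,Q) \to \Hom_{\overline{R}G}(\overline{R}\otimes_R P, \overline{R}\otimes_R Q)\]
is an isomorphism, and hence $\Hom_{RG}(P,Q)\to\Hom_{\overline{R}G}(FP,FQ)$ is surjective. For summands $M = eP$ and $M' = e'Q$ with $e, e'$ idempotent, consider a map $\bar\phi\colon FM\to FM'$. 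Extend it by zero to obtain $F(e')\bar\phi F(e)\colon FP\to FQ$, lift this to some $\psi\colon P \to Q$, and then $e'\psi e$ restricts to a lift $M\to M'$ of $\bar\phi$. So $F$ is full on $\mathsf{perm}(G;R)^\natural$.

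\emph{Essential surjectivity.} Let $\overline{M}$ be a summand of a permutation $\overline{R}G$-module $\overline{R}(X) = F(R(X))$, cut out by an idempotent $\bar e \in E \coloneqq \End_{\overline{R}G}(\overline{R}(X))$. By the isomorphism above, $E \cong \Lambda/N\Lambda$ with $\Lambda \coloneqq \End_{RG}(R(X))$. The main step is to lift $\bar e$ to an idempotent of $\Lambda$.

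Since $R$ is Noetherian, $N$ is a finitely generated ideal contained in the nilradical, so $N^m = 0$ for some $m$. Hence $N\Lambda$ is a nilpotent two-sided ideal of $\Lambda$; it is two-sided because $N$ is central. Idempotents lift modulo nilpotent ideals by the standard argument: first lift modulo $N^2\Lambda$, then modulo $N^4\Lambda$, and so on; alternatively, use the polynomial formula $e_{k+1} = 3e_k^2 - 2e_k^3$. This produces an idempotent $e\in\Lambda$ lifting $\bar e$.

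Then $M \coloneqq eR(X)$ lies in $\mathsf{perm}(G;R)^\natural$, and $F(M) = F(e)F(R(X)) = \bar e\,\overline{R}(X) \cong \overline{M}$, since $F$ is additive and commutes with images of idempotents. This gives essential surjectivity. The only real obstacle is the nilpotence of $N$, which is where the Noetherian hypothesis enters.
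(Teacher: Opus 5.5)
Your proof is correct and follows the same route as the paper: you get surjectivity on morphisms from the $R$-free orbit basis of $\Hom_{RG}(R(X),R(Y))$, and essential surjectivity by lifting idempotents modulo the nilpotent ideal $N\Lambda$ of $\Lambda=\End_{RG}(R(X))$. You also spell out two steps the paper leaves implicit, namely the extension of fullness to direct summands and the identification $\End_{\overline{R}G}(\overline{R}(X))\cong \Lambda/N\Lambda$.
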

    \begin{proof}
        The reduction $F$ sends a permutation module $RX$ to $\overline{R}X$, hence $F$ is essentially surjective on permutation modules, and it is easy to see that $F$ is surjective on morphisms. To show $F$ is essentially surjective on direct summands, it suffices to show $F$ satisfies idempotent lifting. Since $R$ is Noetherian, $N$ is itself a nilpotent ideal. We have an isomorphism \[\End_{\mathsf{perm}(G;\overline{R})^\natural}(\overline{R}X) \cong \End_{\mathsf{perm}(G;R)^\natural}(RX)/\big(N\cdot \End_{\mathsf{perm}(G;R)^\natural}(RX)\big).\] It is well-known (see e.g., \cite{Khu21}) that idempotents lift modulo nilpotent ideals, thus $F$ satisfies idempotent lifting, as desired.
    \end{proof}

    \begin{remark}
        However, the base change $\overline{R} \otimes_R -\colon \catK(G;R) \to \catK(G;\overline{R})$ is not always essentially surjective. For instance, let $G = C_2$, let $R = \bbZ/4\bbZ$, in which case $\overline{R} = \bbZ/2\bbZ$. We have an exact 3-term chain complex of permutation $\overline{R}C_2$-modules \[\overline{R} \to \overline{R}C_2 \to \overline{R},\] and one can verify that such a complex cannot be lifted to a complex of $\natural$-permutation $RC_2$-modules, using the fact (see \Cref{thm:rk1ppermsforpgrp}) that the sign representation $R_{\text{sgn}}$ is not $\natural$-permutation. 
    \end{remark}

    \subsection{Permutation modules over fields} The tt-geometry of $p$-permutation modules for fields of positive characteristic $p$ was developed in detail by Balmer--Gallauer \cite{BG22,BG23a,BG25, BG25b}. We recall a few necessary facts. For this subsection, fix $G$ a finite group and $k$ a field of prime characteristic $p$ dividing the order of $G$. We set $\catK(G) \coloneqq \catK(G;k)$ and $\catD(G) \coloneqq \catD(G;k)$. We set $\catK_{ac}(G)$ to be the prime thick $\otimes$-ideal of $\catK(G)$ consisting of all acyclic complexes, and set $\on{D}_b(kG) \coloneqq \on{D}_b(\mathsf{mod}(kG))$. The story begins with the following remarkable observation: $\on{D}_b(kG)$ is a quotient of $\catK(G)$. 
    
    \begin{theorem}{\cite[Theorem 5.13]{BG23a}}
        Every finitely generated $kG$-module admits a finite $p$-permutation resolution. Moreover, the canonical functor \[\bar\Upsilon\colon\catK(G)/\catK_{ac}(G) \to \on{D}_b(kG)\] is an equivalence. In particular, the left-hand side is already idempotent complete. 
    \end{theorem}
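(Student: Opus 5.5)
The statement has two parts: every finitely generated $kG$-module admits a finite $p$-permutation resolution, and the canonical functor $\bar\Upsilon$ is an equivalence. The strategy is first to build the functor, then prove full faithfulness and essential surjectivity separately, with the resolution statement feeding into essential surjectivity.

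\emph{Constructing the functor.} The inclusion $\mathsf{perm}(G;k)^\natural \subseteq \mathsf{mod}(kG)$ induces a tt-functor $\Upsilon\colon \catK(G) \to \on{K}_b(\mathsf{mod}(kG)) \to \on{D}_b(kG)$. Its kernel consists of the bounded complexes of $p$-permutation modules that are quasi-isomorphic to zero, which is exactly $\catK_{ac}(G)$. Hence $\Upsilon$ factors through the Verdier quotient, giving $\bar\Upsilon$.

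\emph{Full faithfulness.} Let $x,y\in\catK(G)$ be bounded complexes of $p$-permutation modules. I want to show that $\Hom_{\catK(G)/\catK_{ac}(G)}(x,y)\to \Hom_{\on{D}_b(kG)}(x,y)$ is bijective. The standard tool is a Verdier-type criterion: it suffices that every quasi-isomorphism $s\colon x'\to y$ with $x'\in \on{K}_b(\mathsf{mod}(kG))$ can be dominated by a quasi-isomorphism from an object of $\catK(G)$. Equivalently, every bounded complex of $kG$-modules should admit a quasi-isomorphism from a bounded complex of $p$-permutation modules, functorially enough to build a calculus of fractions.

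So everything reduces to the resolution statement, and I would prove existence of finite $p$-permutation resolutions first. Once that is available, a bounded complex is handled by induction on its length, using cones: given a resolution of the top term and a complex resolving the rest, the connecting map is lifted to the resolutions. Making such lifts possible requires the resolutions to have a partial projectivity property, which is the next step.

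\emph{Key step: finite resolutions.} For $M$ finitely generated, take the relative projective cover with respect to all $p$-subgroups,
\[
\bigoplus_{P} \Ind^G_P \Res^G_P M \twoheadrightarrow M.
\]
This cover is too large. Instead I would use the map
\[
\bigoplus_{P\le G} k(G/P)\otimes M^P\to M,
\]
or some variant, built so that it is surjective on $Q$-fixed points for every $p$-subgroup $Q$. The kernel $K$ then satisfies $H^0$-exactness on all fixed points. Iterating produces a resolution that is acyclic after applying every Brauer quotient $\Psi^Q$. Maps from $p$-permutation modules lift along such fixed-point-surjective maps, since a permutation module $k(G/Q)$ maps in via $Q$-fixed points. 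This is exactly the projectivity needed for full faithfulness.

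\textbf{Main obstacle:} finiteness. The iteration must terminate. I would argue by induction on $|G|$ via the Brauer quotients $\Psi^Q$. For $Q\neq1$, the modular fixed points of the resolution live over $\Weyl{G}{Q}$, which is smaller, and one can arrange them to be eventually zero. At the trivial subgroup, the module is eventually a module whose $\natural$-permutation syzygy is projective, which is possible since $kG$ has finite global dimension relative to the class of $p$-permutation modules. Concretely, I would try to show that after $|G|_p$ steps the syzygy is a $p$-permutation module, by detecting this on the fixed points $\Psi^Q$ of the syzygy for all $Q$.

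\emph{Essential surjectivity and idempotent completeness.} Given the resolutions, every object of $\on{D}_b(kG)$ is $\bar\Upsilon$ of some object, by splicing finite resolutions via cones as above. Idempotent completeness of the left-hand side then follows because $\on{D}_b(kG)$ is idempotent complete and $\bar\Upsilon$ is an equivalence.
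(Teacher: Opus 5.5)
\textbf{Gap: fixed-point-exact resolutions are not finite.} The paper does not prove this statement; it quotes it from Balmer--Gallauer. The problem is internal to your argument, and it is in the key step. Iterating fixed-point-surjective covers $\bigoplus_Q k(G/Q)\otimes M^Q\to M$ gives a resolution that is exact on $Q$-fixed points for every $p$-subgroup $Q$. In effect this is a projective resolution of the fixed-point functor $Q\mapsto M^Q$ in cohomological Mackey functors, where permutation modules give the projectives. Such resolutions are in general infinite, so the claimed finite ``relative global dimension'' is false.

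Take $G=C_3=\langle g\rangle$, $\ch k=3$, $t=g-1$ and $M=J_2=kC_3/(t^2)$. The norm $t^2$ kills $J_2$, so $(kC_3)^{C_3}$ maps to zero in $J_2^{C_3}$, and every fixed-point-surjective cover needs a trivial summand. The minimal cover is $kC_3\oplus k\twoheadrightarrow J_2$, $(a,b)\mapsto \bar a+b\bar t$. Its kernel is generated by $(-t,1)$ and is again isomorphic to $J_2$. By Schanuel's lemma for fixed-point-exact sequences, any other choice of covers (including yours) changes each syzygy only by $p$-permutation summands. So no syzygy ever becomes $p$-permutation and the process never stops. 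In the minimal resolution every term is $kC_3\oplus k$, and $\Psi^{C_3}(kC_3\oplus k)=k$, so nothing becomes ``eventually zero'' at $Q\neq 1$ either.

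Yet $J_2$ has the finite resolution $0\to k\to kC_3\to J_2\to 0$, which is \emph{not} exact on $C_3$-fixed points. Finiteness therefore forces you to give up exactly the relative projectivity your construction relies on. Your ``Main obstacle'' paragraph has no valid mechanism, and the real content of the theorem, the existence of finite resolutions, is missing. Balmer--Gallauer's proof necessarily produces non-fixed-point-exact resolutions.

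\textbf{What does work: the second half.} The relative projectivity is not needed there. Assume every module has a finite $p$-permutation resolution, and let $X$ be a bounded complex concentrated in degrees $\le b$.
\begin{enumerate}
\item Take an ordinary projective resolution $P\to X$; projectives are $p$-permutation.
\item Choose $m>b$ and let $Z=\ker(P_m\to P_{m-1})$.
\item Splice a finite $p$-permutation resolution $Q_\bullet\to Z$ onto the brutal truncation $\sigma_{\le m}P$ via $Q_0\twoheadrightarrow Z\hookrightarrow P_m$, and send the $Q_i$ to zero.
\end{enumerate}
This gives a chain quasi-isomorphism $W\to X$ with $W\in\catK(G)$. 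There is no lifting problem, because $X_m=0$ and $H_m(P)=0$. Verdier's criterion then gives full faithfulness and essential surjectivity of $\bar\Upsilon$ at once, with no calculus-of-fractions functoriality required. Your identification of the kernel and the idempotent-completeness consequence are correct. So your reduction is sound once you drop the demand for fixed-point exactness, but producing the finite resolutions still needs a genuinely different argument.
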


    \begin{remark}
        The localization from above is the compact part of a finite localization  $\Upsilon\colon \catD(G) \twoheadrightarrow \on{KInj}(kG)$, see \cite[Remark 4.21]{BG22}. 
    \end{remark}
    
    We recall \emph{modular fixed points} as defined by Balmer--Gallauer. For finitely generated modules, these are known classically (in representation-theoretic contexts) as \emph{Brauer quotients}, and first due to Brou\'e \cite{Bro79, Bro85}. 

    \begin{proposition}{\cite[Proposition 2.7, Corollary 5.16]{BG25}}
        For every $p$-subgroup $H \leq G$, there exists a geometric functor \[\Psi^H\colon \catD(G) \to\catD(\Weyl{G}{H})\] such that $\Psi^H(kX) \cong k(X^H)$ for every $G$-set $X$. This functor preserves compacts and restricts to a tt-functor $\Psi^H\colon \catK(G)\to\catK(\Weyl{G}{H})$. Moreover, for any normal $p$-subgroup $N \trianglelefteq G$, $\id \cong \Psi^N \circ \Inf^{G}_{G/N}$. 
    \end{proposition}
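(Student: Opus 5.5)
The statement is cited from \cite{BG25}, so the plan is to reconstruct Balmer--Gallauer's construction in outline rather than to prove something new.

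\textbf{Step 1: the functor on compacts.} Work first on $\mathsf{perm}(G;k)$, with Brou\'e's Brauer quotient as the model. For a permutation module $kX$, set $\Psi^H(kX) \coloneqq k(X^H)$, regarded as a $\Weyl{G}{H}$-module. For a $G$-map $\alpha\colon kX\to kY$, represented by a matrix $(a_{y,x})$, the induced map $k(X^H)\to k(Y^H)$ is the submatrix indexed by $x\in X^H$, $y\in Y^H$. To see that this is a functor, compose two such maps. The composite's entry at $(z,x)$ is $\sum_{y\in Y} b_{z,y}a_{y,x}$. The non-fixed $y$ split into $H$-orbits of size a positive power of $p$, and on each orbit the summand is constant because $x$ and $z$ are $H$-fixed. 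Hence these orbits contribute $0$ in characteristic $p$, and only $y\in Y^H$ survives.

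Equivalently, $\Psi^H(M) = M^H/\sum_{K<H}\tr_K^H(M^K)$ is the Brauer quotient, which is manifestly a functor on all $kG$-modules.

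\textbf{Step 2: monoidality.} Use $(X\times Y)^H = X^H\times Y^H$ to get natural isomorphisms $\Psi^H(kX\otimes kY)\cong \Psi^H(kX)\otimes\Psi^H(kY)$, together with $\Psi^H(k)=k$. Then check that these isomorphisms are compatible with the matrix description, so that $\Psi^H$ is a symmetric monoidal additive functor $\mathsf{perm}(G;k)\to\mathsf{perm}(\Weyl{G}{H};k)$. It extends to idempotent completions, and then degreewise to a tt-functor $\Psi^H\colon\catK(G)\to\catK(\Weyl{G}{H})$.

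\textbf{Step 3: the big functor.} Extend to $\catD(G)$ by applying the same construction degreewise to complexes of arbitrary, not necessarily finitely generated, permutation modules, which is how $\catD(G)$ is modelled in \cite{BG23b}. Since $(\bigsqcup X_i)^H=\bigsqcup X_i^H$, this extension preserves coproducts. It agrees with the compact version on $\catK(G)$, so it is geometric and preserves compacts.

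This step is the main obstacle. One must make sure the degreewise construction is well defined on the localizing subcategory $\catD(G)$, i.e.\ that it preserves the relevant equivalences and is exact. An alternative route is the ``left Kan extension from compacts'': Neeman/Krause-style extension of an exact functor on $\catK(G)$ to a coproduct-preserving functor on its ind-completion.

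\textbf{Step 4: the inflation identity.} For $N\trianglelefteq G$ a $p$-subgroup, inflation sends $k(Y)$, for $Y$ a $G/N$-set, to $k(Y)$ with $N$ acting trivially. Then $Y^N = Y$ and $\Weyl{G}{N}=G/N$, and on matrices $\Psi^N$ does nothing. This gives a natural monoidal isomorphism $\Psi^N\circ\Inf^G_{G/N}\cong\id$ on permutation modules, hence on $\catK$. Both sides are coproduct-preserving, so the isomorphism extends to $\catD$ by the extension in Step 3.
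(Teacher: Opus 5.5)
\textbf{There is a genuine gap, in Step 3.}

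Your Steps 1, 2 and 4 are fine on compact objects, and they take a different route from Balmer--Gallauer. You start from the Brauer quotient, so $\Psi^H(kX)=k(X^H)$ holds by definition and functoriality is the $p$-orbit cancellation. They instead (as mirrored in \Cref{sec:fixedpts_hmks}) define $\Psi^N$, for normal $N$, through the equivalence $\mathsf{perm}(G/N;k)^\natural\simeq\mathsf{perm}(G;k)^\natural/\mathsf{proj}(\catF_N;k)$ induced by inflation. That makes $\Psi^N\circ\Inf^G_{G/N}\cong\id$ tautological and leaves the $G$-set formula to be verified.

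Step 3 is what you flag but leave open, and it carries the main claim of the statement: a \emph{geometric} functor on $\catD(G)$. Three pieces are missing:
\begin{enumerate}
\item If $\catD(G)$ is read, as your wording suggests, as a localization of complexes of big permutation modules at $G$-quasi-isomorphisms, you need the degreewise Brauer quotient to send unbounded $G$-acyclic complexes to acyclic ones. The Brauer quotient is not exact, and you give no argument for this.
\item The Ind-extension alternative does not, by itself, produce an exact coproduct-preserving functor in the bare triangulated setting. It needs a dg or $\infty$-categorical enhancement, which you do not invoke.
\item ``It agrees with the compact version, so it is geometric'' omits strong monoidality on all of $\catD(G)$. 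Likewise, in Step 4 a natural isomorphism defined on compacts does not automatically extend to $\catD$.
\end{enumerate}

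\textbf{How to close it.} All of this is avoided by using this paper's model, $\catD(G)=\mathrm{Loc}\langle\catK(G)\rangle\subseteq\on{K}(\mathsf{Mod}(kG))$, together with your own remark that $M\mapsto M[H]=M^H/\sum_{K<H}\tr^H_K(M^K)$ is an additive functor on \emph{all} $kG$-modules commuting with arbitrary direct sums.

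Applied degreewise, it gives a coproduct-preserving triangulated functor $\on{K}(\mathsf{Mod}(kG))\to\on{K}(\mathsf{Mod}(k[\Weyl{G}{H}]))$. By Step 1 it maps $\catK(G)$ into $\catK(\Weyl{G}{H})$. Hence it maps $\catD(G)$ into $\catD(\Weyl{G}{H})$ and compacts to compacts. Because you restrict to a subcategory rather than descend to a quotient, there are no quasi-isomorphisms to preserve.

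For monoidality, use the map $M[H]\otimes M'[H]\to(M\otimes M')[H]$, $\bar m\otimes\bar m'\mapsto\overline{m\otimes m'}$. It is well defined because $\tr^H_K(m)\otimes m'=\tr^H_K(m\otimes m')$ for $m'\in (M')^H$. Applied degreewise, it is a natural chain map. It is invertible on permutation modules by $(X\times Y)^H=X^H\times Y^H$. For each fixed variable, the objects at which it is invertible form a localizing subcategory, so it is invertible on $\catD(G)\times\catD(G)$.

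Finally, if $N$ acts trivially on $M$, then $\tr^N_K(M^K)=[N:K]M=0$ for $K<N$. So $M[N]=M$, and $\Psi^N\circ\Inf^G_{G/N}=\id$ holds on the nose already on $\on{K}(\mathsf{Mod}(k[G/N]))$.
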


    The collection of modular fixed points is \emph{conservative}, i.e., detects vanishing of objects. 
    
    \begin{theorem}{\cite[Theorem 2.11]{BG25}}\label{thm-conservativity-psiH}
        The family of functors \[\{\catD(G) \xrightarrow{\Psi^H} \catD(\Weyl{G}{H}) \xrightarrow{\Upsilon} \on{KInj}(k(\Weyl{G}{H}))\}_{H \in \on{Sub}_p(G)/G}\] indexed by conjugacy classes of $p$-subgroups $H\leq G$ is conservative. This restricts to a conservative family of functors \[\{\catK(G) \xrightarrow{\Psi^H} \catK(\Weyl{G}{H})\xrightarrow{\Upsilon} \on{D}_b(kG) \}_{H \in \on{Sub}_p(G)/G}.\]
    \end{theorem}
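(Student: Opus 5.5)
The plan is to reduce conservativity on $\catD(G)$ to the compact level and then prove the compact statement by induction on $|G|$, using the filtration of $\catK(G)$ by supports on $p$-subgroups. For the compact version, let $x \in \catK(G)$ and suppose $\Upsilon(\Psi^H(x)) = 0$ in $\on{D}_b(k(\Weyl{G}{H}))$ for every $p$-subgroup $H$. Taking $H=1$, we get that $x$ is acyclic, i.e.\ $x \in \catK_{ac}(G)$. The goal is to show $x = 0$. First I will reduce to $G$ a $p$-group. Restriction to a Sylow $S$ is faithful on $\catK(G)$, since $x$ is a summand of $\Ind^G_S\Res^G_S x$ because $[G:S]$ is invertible in $k$. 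Moreover $\Psi^H\circ\Res^G_S$ is compatible with $\Res^{\Weyl{G}{H}}_{\Weyl{S}{H}}\circ\Psi^H$ for $H\le S$, and restriction is faithful on $\on{D}_b$.

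For $G$ a $p$-group I induct on $|G|$. Choose a central subgroup $Z\le G$ of order $p$. For each $H\ge Z$, the Weyl group $\Weyl{G}{H}$ is a quotient of $\Weyl{G/Z}{H/Z}$-type data: concretely, $\Psi^H \cong \Psi^{H/Z}\circ\Psi^Z$ as functors into $\catK(\Weyl{G}{H})$. This transitivity should follow from the characterization $\Psi^H(kX)=k(X^H)$ on generators together with uniqueness of the tt-functor, and I take it as a standard property from \cite{BG25}. Then $y=\Psi^Z(x)\in\catK(G/Z)$ satisfies the hypothesis for all subgroups of $G/Z$, so by induction $\Psi^Z(x)=0$.

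The key step, and the main obstacle, is to show that an acyclic complex $x$ of $p$-permutation $kG$-modules with $\Psi^Z(x)=0$ for all nontrivial $p$-subgroups $Z$ is zero. It is enough to use all $H\neq 1$, which we have after applying the inductive argument to each central series step. My plan is to use that the objects killed by every $\Psi^H$ with $H\neq1$ form the thick subcategory generated by free modules, i.e.\ $\on{K}_b(\mathsf{proj}\,kG)$. The route is this. Strip contractible summands from $x$ termwise. Since $\Psi^H(kX)=k(X^H)$ and $\Psi^H$ on a minimal complex detects the non-free summands, the terms must all be projective (free). An acyclic bounded complex of projectives is contractible, so $x=0$. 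To make \emph{minimal complex} rigorous, I would use Krull--Schmidt in $\mathsf{perm}(G;k)^\natural$ and the fact that the Brauer quotient of a morphism between $k(G/K)$'s detects whether it is split on summands with vertex containing $H$. I expect verifying this detection, namely that a non-split-removable summand $k(G/K)$ with $K\ge H$ survives in $\Psi^H(x)$ as a non-contractible part, to be the delicate point.

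Finally, for the big category $\catD(G)$, I use that $\catD(G)$ is compactly generated by $\catK(G)$ and that the functors $\Upsilon\circ\Psi^H$ are geometric, so they preserve coproducts and their kernels are localizing $\otimes$-ideals. I then argue via a tensor idempotent / support argument. The Balmer spectrum of $\catK(G)$ is covered by the images of $\Spc(\on{D}_b(k\Weyl{G}{H}))$, which follows from the compact conservativity via tt-yoga. Combined with the local-to-global principle for the Noetherian spectrum, an object killed by all of these functors has empty support and hence vanishes.
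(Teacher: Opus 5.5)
Your treatment of $\catK(G)$ is essentially sound after the Sylow reduction. The induction over central subgroups is unnecessary, though, and its extension to non-central $H$ is left vague. Restriction to $N_G(H)$ does not preserve minimality, so the detection step should be done at a maximal stabilizer. Suppose $x$ is minimal and $K$ is maximal up to conjugacy among the $K$ for which $k(G/K)$ is a summand of some $x_n$. Then $\Psi^K(x)$ is a complex of free $k(\Weyl{G}{K})$-modules, and its differential comes from the $k(G/K)$-blocks of $d$. The Brauer quotient is surjective on morphisms within $\on{add}\,k(G/K)$, so radical maps stay radical. Hence $\Psi^K(x)$ is a nonzero minimal complex; it is acyclic by hypothesis, hence contractible, which is a contradiction. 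This argument only ever uses acyclicity of the $\Psi^K(x)$.

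The genuine gap is the passage to $\catD(G)$. Two steps fail. Compact conservativity does not imply that the target spectra cover $\Spc(\catK(G))$. Covering plus local-to-global does not imply conservativity on $\catD(G)$. Your argument uses nothing specific to $\Upsilon$, so it would also prove the following false statement.

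Take $G=C_p$ and the geometric functors $\Res^G_1,\Psi^G\colon\catD(C_p)\to\catD(1)=\on{D}(k)$. By your own minimal-complex argument they are jointly conservative on $\catK(C_p)$.
\begin{itemize}
\item Let $J$ be the acyclic complex $k\to kC_p\to kC_p\to k$ (norm, $1-\sigma$, augmentation). Then $\Psi^G(J)$ is $k$ in two degrees.
\item Let $e\to\bbone\to f$ be the idempotent triangle of the localizing $\otimes$-ideal generated by $kC_p\oplus J$. Then $f\otimes kC_p=0=f\otimes J$.
\item Since $\Res^G_1(kC_p)$ and $\Psi^G(J)$ are nonzero in $\on{D}(k)$, it follows that $\Res^G_1(f)=0=\Psi^G(f)$.
\item Yet $f\neq 0$, because $\bbone$ is not in the thick $\otimes$-ideal generated by $kC_p\oplus J$: its image under $\Upsilon$ lies in $\on{K}_b(\mathsf{proj}\,kC_p)$.
\item Likewise the prime $\Upsilon^{-1}(\on{K}_b(\mathsf{proj}\,kC_p))$ contains both $kC_p$ and $J$. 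So it lies in the image of neither $\Spc(\Res^G_1)$ nor $\Spc(\Psi^G)$.
\end{itemize}
The valid implications run the other way. Conservativity on $\catD(G)$ gives joint surjectivity on spectra, since a prime outside all images has $g(\catP)\neq 0$ killed by every functor. Even granting the covering, deducing $g(\catP)\otimes t=0$ from $\Upsilon\Psi^H(t)=0$ is a detection property, essentially part of stratification. It is not a consequence of the local-to-global principle.

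The paper does not reprove this result; it quotes \cite{BG25}, where it is stated for $\catD(G)$ and the compact version is the immediate restriction. What your proposal lacks is a genuinely non-compact input, and the example shows where it must enter. The object $f$ is nonzero, has acyclic underlying complex, and is killed by $\Psi^G$ and by restriction to every proper subgroup. So within the family only $\Upsilon$ can detect it. For big objects, $\Upsilon(t)=0$ is strictly stronger than acyclicity, which is all your compact argument uses. Moreover, $\Psi^G$ is not a finite localization: on compacts its kernel is generated by $kC_p$, but on $\catD(C_p)$ it contains $f\notin\on{Loc}\langle kC_p\rangle$. So an induction inside $\catD(G)$ can reduce you to objects with $k(G/H)\otimes t=0$ for all $H<G$. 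But the joint detection by $\Upsilon$ and $\Psi^G$ on those objects still cannot be read off from compact objects.
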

    
    \begin{notation}
        We write $\check{\Psi}^H \coloneqq \Upsilon \circ \Psi^H\colon \catK(G) \to \on{D}_b(k(\Weyl{G}{H}))$. 
    \end{notation}
    
    The collection of modular fixed points, somewhat remarkably, turn out to completely determine the Balmer spectrum $\Spc(\catK(G))$ as a set. Geometrically, $\catK(G)$ is determined by its $p$-local cohomological data. 
    
    \begin{theorem}{\cite[Theorem 2.10]{BG25}}
        The collection of functors $\check{\Psi}^H$ indexed over conjugacy classes of $p$-subgroups of $G$ induces a bijection of sets \[\Spc(\catK(G)) = \bigsqcup_{H \in \on{Sub}_p(G)/G} \Spc(\on{D}_b(k\Weyl{G}{H})) \cong \bigsqcup_{H \in \on{Sub}_p(G)/G} \Spec^h(\on{H}^\sbull(\Weyl{G}{H};k)).\]
    \end{theorem}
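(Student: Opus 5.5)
The statement is Balmer--Gallauer's \cite[Theorem 2.10]{BG25}, so the plan is to assemble it from the two preceding results: the conservativity of the modular fixed points (\Cref{thm-conservativity-psiH}) and the Benson--Carlson--Rickard homeomorphism $\Spc(\on{D}_b(kG)) \cong \Spec^h(\on{H}^\sbull(G;k))$ recalled in \Cref{sec:ttgprelims}.

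\textbf{Step 1 (the map).} For each $p$-subgroup $H$, the tt-functor $\check{\Psi}^H\colon \catK(G)\to \on{D}_b(k(\Weyl{G}{H}))$ induces a continuous map $\Spc(\check{\Psi}^H)$, $\frakq \mapsto (\check{\Psi}^H)\inv(\frakq)$. Conjugate subgroups give conjugate functors, which are twisted by an inner-type isomorphism, so the image depends only on the conjugacy class of $H$. Since $\on{D}_b(k(\Weyl{G}{H})) = \catK(\Weyl{G}{H})/\catK_{ac}$ is a Verdier quotient, $\Spc(\Upsilon)$ is injective with image the primes containing $\catK_{ac}(\Weyl{G}{H})$. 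The second identification in the display is then exactly BCR applied to $\Weyl{G}{H}$.

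\textbf{Step 2 (surjectivity).} This is the standard consequence of a jointly conservative family of tt-functors: the images of $\Spc(\check{\Psi}^H)$ cover $\Spc(\catK(G))$. Concretely, let $\catP$ be a prime, and suppose it is not hit. Then for each $H$ one finds an object outside $\catP$ whose image under $\check{\Psi}^H$ is $\otimes$-nilpotent in the relevant local sense. Tensoring finitely many such objects and using primality gives an object $x\notin\catP$ with every $\check{\Psi}^H(x)=0$, contradicting conservativity. To make the local step rigorous I would use the abstract criterion that a conservative family detects surjectivity on spectra, via Balmer's result that for a tt-functor $F$, $\supp(x)\cap \im\Spc(F)=\emptyset$ iff $F(x)$ is $\otimes$-nilpotent. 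I would then reduce to showing that the $\otimes$-nilpotence of $\check{\Psi}^H(x)$ for all $H$ forces $x$ to be $\otimes$-nilpotent. Since $\check{\Psi}^H(x^{\otimes n})=\check{\Psi}^H(x)^{\otimes n}$, this follows from conservativity applied to $x^{\otimes n}$.

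\textbf{Step 3 (disjointness and injectivity).} Injectivity within a single $H$ is clear from Step 1, but showing that distinct conjugacy classes give disjoint images is the main obstacle. My first attempt would be to construct, for each $H$, an object detecting it: the Koszul-type complexes built from $k(G/K)$. Observe that $\check{\Psi}^H(k(G/K))=k((G/K)^H)$ is nonzero iff $H\le_G K$. Define, for each $H$, the closed set $\supp(k(G/K))$ and argue that the image of $\Spc(\check{\Psi}^H)$ lies in $\bigcap_{K\ge_G H}\supp(k(G/K))\setminus\bigcup_{K\not\ge_G H}\supp(k(G/K))$. These loci are pairwise disjoint across conjugacy classes, which separates the pieces. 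The delicate point is that $\check{\Psi}^H(k(G/K))$ is nonzero but possibly acyclic-free only after applying $\Upsilon$. To handle this, I would use that $k((G/K)^H)$ is a nonzero permutation module, hence nonzero in $\on{D}_b$, and that restriction to $\Weyl{G}{H}$ of the complexes $\Psi^H$ of suitable acyclic objects recovers the identification of $\catK_{ac}$ as the kernel of $\Upsilon$.
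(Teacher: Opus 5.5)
The paper gives no proof of this statement; it is quoted from \cite{BG25}. Measured against what a proof needs, your outline has genuine gaps.

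\textbf{Step 2 (surjectivity).} Conservativity on compact objects is too weak. From ``$\check\Psi^H(x)$ nilpotent for all $H$ $\Rightarrow$ $x$ nilpotent'' you only learn that every nonempty $\supp(x)$ meets the union $I$ of the images. That means every prime \emph{contains} some prime of $I$, not that every prime lies in $I$. Your claim that a prime $\catP\notin I$ admits, for each $H$, some $x_H\notin\catP$ with $\check\Psi^H(x_H)$ nilpotent is false in general, and you never supply a valid ``local'' version. For example, $\bbF_p\otimes-\colon\on{D}_{\mathsf{perf}}(\bbZ_{(p)})\to\on{D}_b(\bbF_p)$ is conservative on compacts (Nakayama), yet its map on spectra misses the generic point.

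What is needed is detection of $\otimes$-nilpotence of \emph{morphisms}, which gives surjectivity by Balmer's criterion. Here it comes from the \emph{big} conservativity in \Cref{thm-conservativity-psiH}. Suppose the coproduct-preserving functors $\Upsilon\circ\Psi^H$ on $\catD(G)$ all kill $f\colon x\to\bbone$, with $x$ compact (general maps reduce to this by rigidity). Then they all kill the telescope of $\bbone\xrightarrow{f^\ast}x^\ast\to(x^\ast)^{\otimes 2}\to\cdots$, so this telescope is zero by conservativity. Compactness of $\bbone$ then forces $f^{\otimes n}=0$ for some $n$. Your plan only uses the compact restriction, which cannot give this.

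\textbf{Step 3 (separation), and injectivity.} The containment $\im\Spc(\check\Psi^H)\subseteq\bigcap_{K\geq_G H}\supp(k(G/K))$ is false. Indeed $\check\Psi^H(k(G/H))=k(\Weyl{G}{H})$ is projective, so its support in $\Spc(\on{D}_b(k\Weyl{G}{H}))$ is only the closed point, the zero prime. Being nonzero in $\on{D}_b$ only places an object outside the minimal prime, not outside every prime.

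Concretely, take $G=C_p$ and $\eta\coloneqq(\check\Psi^1)\inv(\on{K}_b(\mathsf{proj}(kC_p)))$. This point contains $kC_p$, so it lies in your locus for $H=C_p$, not in the one for $H=1$. Permutation modules therefore separate only the closed points $(\check\Psi^H)\inv(0)$.

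To separate whole pieces you need, for non-conjugate $H,K$, objects killed by $\check\Psi^K$ whose image under $\check\Psi^H$ has full support, i.e.\ acyclic Koszul-type complexes. For instance, take $p=2$, $L\trianglelefteq G$ of index $2$ and $H\not\leq L$. The exact complex $k\to k(G/L)\to k$ is killed by $\check\Psi^K$ for every $K\leq L$, while $\Psi^H$ of it is $k\oplus k[2]$. Producing enough such objects is the real content of this step.

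Lastly, injectivity for a fixed $H$ is not ``clear from Step 1'', which only concerns $\Spc(\Upsilon)$. The functor $\Psi^H$ involves $\Res^G_{N_G(H)}$, whose map on spectra is not injective in general. For $H\trianglelefteq G$ the section $\Psi^H\circ\Inf^G_{G/H}\cong\id$ settles it, but non-normal $H$ need a separate argument.
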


    There is much to say about the topology of $\Spc(\catK(G))$, which is deduced via \emph{permutation twisted cohomology} and an extended comparison map. This was first constructed by Balmer--Gallauer \cite[Part II]{BG25} for elementary abelian $p$-groups, and then `remixed' to $p$-groups by the 2nd author \cite{Mil25c}, although the question of deducing the topology of $\Spc(\catK(G))$ via remixed twisted cohomology for all $p$-groups is still conjectural. However these topics are rather technical and not required for this paper, so we will leave the intrigued reader to pursue these topics independently. 
    
    \section{Endotrivial complexes}\label{sec:etriv_review}

    Next, we cover some preliminaries about endotrivial complexes. 
    
    \begin{recollection}
        An \emph{invertible object} of a symmetric monoidal category $\catT$ is an object $x\in \catT$ for which there exists an object $y \in \catT$ such that $x \otimes y \cong \bbone$. In this case $y$ is the inverse $x\inv$ of $x$. 
        
        If $\catT$ is a big tt-category, then in fact any invertible object $x \in \catT$ is necessarily compact and satisfies $x\inv \cong x^\ast $; we leave the proof of this fact as a nice exercise. The \emph{Picard group} of $\catT$ (equivalently of $\catT^c$), denoted $\Pic(\catT^c)$, is the collection of isomorphism classes of invertible objects, and forms an abelian group under $\otimes$. 
    \end{recollection}

    \begin{definition}
         An \emph{endotrivial complex} is an invertible object of $\catK(G;R)$, that is, an object $x \in \catK(G;R)$ such that $x \otimes x^\ast  \simeq R[0] = \bbone$ as chain complexes of $RG$-modules. 
    \end{definition}

    \begin{lemma}\label{lem:commute_with_homology}
        Let $x \in \catK(G;R)$ be $R$-split; that is, $\Res^G_1(x)$ is isomorphic to its homology. Then for all $i \in \bbZ$ and any ring homomorphisms $f\colon  R\to S$, \[S \otimes_R \on{H}_i(x) \cong \on{H}_i(S \otimes_R x).\]
    \end{lemma}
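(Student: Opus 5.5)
The plan is to reduce to a statement about complexes of $R$-modules, since both homology and base change commute with restriction to the trivial subgroup. The underlying $R$-module of $\on{H}_i(S\otimes_R x)$ is $\on{H}_i(\Res^G_1(S\otimes_R x)) = \on{H}_i(S\otimes_R \Res^G_1 x)$, and $S\otimes_R \on{H}_i(x)$ likewise has underlying module $S\otimes_R \on{H}_i(\Res^G_1 x)$. So we first prove the isomorphism for $\Res^G_1(x)$ and then check that it is $G$-equivariant.

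Write $y = \Res^G_1(x)$, a bounded complex of finitely generated projective $R$-modules. By hypothesis $y \simeq \bigoplus_i \on{H}_i(y)[i]$ as complexes of $R$-modules (homotopy equivalence in $\on{K}_b$). Then each $\on{H}_i(y)$ is a direct summand, up to homotopy, of a bounded complex of projectives. The cleanest route is as follows.
\begin{enumerate}
\item The functor $S\otimes_R -$ is additive, so it preserves homotopy equivalences and direct sums. Hence $S\otimes_R y \simeq \bigoplus_i (S\otimes_R \on{H}_i(y))[i]$.
\item Taking homology of a complex with zero differentials gives $\on{H}_i(S\otimes_R y)\cong S\otimes_R\on{H}_i(y)$.
\end{enumerate}
Step (a) has one subtlety. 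The splitting $y\simeq \bigoplus \on{H}_i(y)[i]$ may not be realized inside $\catK(G;R)$ if the $\on{H}_i(y)$ are not projective. It is, however, a homotopy equivalence in $\on{K}(\mathsf{Mod}(R))$, which is all we need. I read the hypothesis ``isomorphic to its homology'' as meaning exactly this, with the isomorphism taken in $\on{K}(\mathsf{Mod}(R))$, or equivalently in $\on{D}(R)$ since $y$ is K-projective.

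For equivariance, I would avoid choosing the splitting at all and use the canonical natural map $\theta\colon S\otimes_R \on{H}_i(x) \to \on{H}_i(S\otimes_R x)$, $s\otimes[z]\mapsto[s\otimes z]$. This map is visibly $RG$-linear and natural in $x$. Its underlying $R$-linear map is the same natural map for $y$. Since $\theta$ is natural with respect to chain maps and additive, we can transport along the homotopy equivalence $y\simeq\bigoplus \on{H}_i(y)[i]$. For a complex with zero differentials, $\theta$ is the identity. Therefore $\theta$ is bijective, and hence an isomorphism of $SG$-modules.

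The main obstacle is only this bookkeeping: making sure the comparison is the canonical map, so that $G$-equivariance is automatic, rather than an abstract isomorphism coming from a non-equivariant splitting. Naturality of $\theta$ under homotopy equivalences resolves it.
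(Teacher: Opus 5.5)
Your argument is correct and essentially the paper's, provided the hypothesis is read as a homotopy equivalence in $\on{K}(\mathsf{Mod}(R))$. The paper says that on $R$-split complexes $S\otimes_R-$ is exact because it preserves $\ker(\Res^G_1)$, the $R$-contractible part; that is your step (a). Your natural map $\theta$ makes the $G$-equivariance explicit, which the paper leaves implicit.

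There is, however, a false claim you should remove: the parenthetical ``or equivalently in $\on{D}(R)$ since $y$ is K-projective''. K-projectivity of $y$ only tells you that an isomorphism $y\to \bigoplus_i \on{H}_i(y)[i]$ in $\on{D}(R)$ is represented by a chain map. That chain map is merely a quasi-isomorphism, and $S\otimes_R-$ need not preserve it, because the target need not be a complex of projectives.

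Under the $\on{D}(R)$ reading the lemma is actually false. Take $R=\bbZ$, $S=\bbF_2$, and $x=(\bbZ\xrightarrow{2}\bbZ)$ in degrees $1,0$ with trivial $G$-action. Then $\Res^G_1(x)\cong \bbZ/2[0]$ in $\on{D}(\bbZ)$, yet $\on{H}_1(\bbF_2\otimes_{\bbZ} x)=\bbF_2\neq 0=\bbF_2\otimes_{\bbZ} \on{H}_1(x)$.

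Relatedly, the ``subtlety'' you raise in step (a) never occurs. Suppose $f\colon y\to H$ and $g\colon H\to y$ are inverse homotopy equivalences, where $H=\bigoplus_i\on{H}_i(y)[i]$ has zero differentials. Then $fg-1_H=d_Hs+sd_H=0$, so each $\on{H}_i(y)$ is a direct summand of $y_i$ and hence projective, in line with \Cref{rem:prohomology-Rsplit}. With the $\on{D}(R)$ clause and that remark deleted, your proof is complete.
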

    \begin{proof}
        Indeed, if one restricts to the subcategory of $\catK(G;R)$ consisting of complexes which are $R$-split, $f^\ast = S \otimes_R -$ is an exact functor, as it preserves $\ker(\Res^G_1)$. The result follows. 
    \end{proof}

    \begin{remark}\label{rem:prohomology-Rsplit}
     Note that a complex $x\in \catK(G;R)$ is $R$-split if and only if its homology is $R$-projective. Indeed, this follows since $x$ is a perfect $R$-complex. 
    \end{remark}

    \begin{lemma}\label{lem:endotrival-Rsplit}
        Let $x\in \Pic(\catK(G;R))$. Then $x$ is $R$-split. 
    \end{lemma}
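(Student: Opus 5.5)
Restricting to the trivial subgroup, $\Res^G_1$ is a tt-functor, so $y \coloneqq \Res^G_1(x)$ is invertible in $\catK(1;R) = \on{K}_b(\mathsf{proj}(R)) \simeq \on{D}_{\mathsf{perf}}(R)$. By Remark~\ref{rem:prohomology-Rsplit}, it suffices to show that $y$ has $R$-projective homology in each degree. So the statement reduces to a purely commutative-algebra claim: every invertible perfect complex over $R$ has projective homology, and then it is isomorphic to that homology.

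Next I reduce to the local case. Projectivity of a finitely generated module, or finite presentation of the homology of a perfect complex, can be checked after localizing at each prime $\frakp$. Localization $R \to R_\frakp$ is flat, so it commutes with homology, and it preserves invertibility. We may therefore assume $R$ is local with maximal ideal $\frakm$ and residue field $k$. Take a minimal model of $y$: a bounded complex of finite free $R$-modules whose differentials have image in $\frakm$. Such a model exists for perfect complexes over local rings, by splitting off contractible pieces $R \xrightarrow{\sim} R$ wherever a differential has a unit entry.

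For $y$ and $y^\ast$ invertible, $k \otimes_R y$ is invertible in $\on{D}_b(k)$. Hence $k\otimes_R y \simeq k[n]$ for some $n$, since the graded dimensions multiply under $\otimes$ and must total $1$. Because $y$ is minimal, $k\otimes_R y$ has zero differentials, so its terms are its homology. The minimal model is therefore the single module $R$ placed in degree $n$. Thus $y \simeq R[n]$, whose homology is free.

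The main obstacle is the local-to-global step, i.e.\ checking that the homology $\on{H}_i(y)$ is $R$-projective. We know $\on{H}_i(y)$ is finitely generated when $R$ is Noetherian, and localization is exact, so $\on{H}_i(y)_\frakp \cong \on{H}_i(y_\frakp)$, which is free (of rank $0$ or $1$) for every $\frakp$. Since $\on{H}_i(y)$ is finitely presented in the Noetherian case, it is then projective.

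For a general commutative ring, the argument needs more care. I would first use that $\on{H}_i(y)$ is the top or bottom nonzero homology of a bounded complex of finitely generated projectives, and then induct on the length of $y$. The locally constant value of $n$ lets one split $\Spec(R)$ into clopen pieces on which $y_\frakp$ is concentrated in a single degree. On each piece, the bottom homology is a cokernel of a map of finite projectives, hence finitely presented, and this propagates through the induction.
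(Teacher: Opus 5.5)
Your argument is correct, and at the top level it matches the paper's. Both pass to $y=\Res^G_1(x)$, an invertible object of $\catK(1;R)\simeq \on{D}_{\mathsf{perf}}(R)$, and use \Cref{rem:prohomology-Rsplit} to reduce to projectivity of each $\on{H}_i(y)$. The difference is the commutative-algebra input. The paper cites \cite[Lemma 3.3]{Fau03}, which says $\bigoplus_i\on{H}_i(y)$ is an invertible $R$-module for any commutative ring. You prove the needed fact by hand: minimal complexes over $R_\frakp$ and the dimension count over the residue field give $y_\frakp\simeq R_\frakp[n]$ (essentially \cite[Proposition 3.2]{Fau03}, which the paper itself uses in \Cref{prop:etrivhomologyinonedegree}). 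You then globalize with ``finitely presented and locally free implies projective.'' Your route is self-contained and elementary; the citation buys brevity and full generality at once.

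The lemma does not assume $R$ Noetherian, so two comments on your last paragraph. First, your earlier claim that projectivity of a finitely generated module, or finite presentation, can be checked at stalks is false in general. Only ``finitely presented and stalkwise free $\Rightarrow$ projective'' holds, and that is what you actually use in the Noetherian case.

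Second, the locally constant degree and the clopen decomposition are unnecessary, and you assert local constancy without the spreading-out argument it would need. Instead, induct on length. Suppose $y$ lives in degrees $\geq a$. Then $\on{H}_a(y)=\on{coker}(y_{a+1}\to y_a)$ is always finitely presented, and it is stalkwise free, hence projective. So $y_a\to\on{H}_a(y)$ splits, making $\im(d_{a+1})$ projective, and then $y_{a+1}\to\im(d_{a+1})$ splits too. This gives $y\simeq \on{H}_a(y)[a]\oplus y'$, where $y'$ is a shorter bounded complex of finitely generated projectives whose stalks still have free homology. That settles the general case.
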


    \begin{proof}
        In view of \Cref{rem:prohomology-Rsplit}, it is enough to verify that $x$ has $R$-projective homology. Now, consider the tt-functor $\Res^G_1\colon\catK(G;R)\to \catK(1;R)\cong \on{D}_\mathsf{perf}(R)$. Then $\Res^G_1(x)$ is a $\otimes$-invertible perfect complex over $R$. It follows by \cite[Lemma 3.3]{Fau03} that $\oplus_i\on{H}_i(\Res^G_1(x))$ is a tensor invertible $R$-module, in particular, $\on{H}_i(\Res^G_1(x))$ is $R$-projective for all $i$, as we wanted. 
    \end{proof}

    Recall that we say $R$ is \emph{connected} if $\Spec(R)$ is connected, or equivalently, if $R$ is indecomposable. The next proposition demonstrates why this property is relevant. 

    \begin{proposition}\label{prop:etrivhomologyinonedegree}
        Assume $R$ is connected and suppose $x \in \Pic(\catK(G;R))$. Then $x$ has nonzero homology in exactly one degree, which we denote by $b(x)$. Moreover, for any $\frakp \in \Spec(R)$, the equality $b(k(\frakp)\otimes_R x)=b(x)$ holds.
    \end{proposition}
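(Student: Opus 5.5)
We reduce to the underlying perfect $R$-complex and use connectedness of $\Spec(R)$. By \Cref{lem:endotrival-Rsplit}, $x$ is $R$-split. So $\Res^G_1(x)\cong \bigoplus_i \on{H}_i(x)[i]$ in $\on{D}_{\mathsf{perf}}(R)$, and each $\on{H}_i(x)$ is finitely generated projective over $R$. Since $\Res^G_1$ is a tt-functor, $\Res^G_1(x)$ is $\otimes$-invertible in $\on{D}_{\mathsf{perf}}(R)$. As in the proof of \Cref{lem:endotrival-Rsplit}, the module $\bigoplus_i\on{H}_i(x)$ is then an invertible $R$-module, so it has constant rank $1$ on $\Spec(R)$.

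We now localize at primes. Fix $\frakp\in\Spec(R)$. Because $x$ is $R$-split, \Cref{lem:commute_with_homology} gives
\[
\on{H}_i(k(\frakp)\otimes_R x)\cong k(\frakp)\otimes_R\on{H}_i(x).
\]
The object $k(\frakp)\otimes_R x$ is invertible in $\catK(G;k(\frakp))$, hence invertible over the field $k(\frakp)$ after restriction to the trivial group. Its total homology is $k(\frakp)\otimes_R\bigoplus_i\on{H}_i(x)$, which is $1$-dimensional by the rank count above. So there is exactly one index $i=b_\frakp$ with $\on{H}_i(x)_\frakp\neq 0$, and there it has rank $1$.

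For each $i$, the function $\frakp\mapsto \on{rank}_\frakp \on{H}_i(x)$ is locally constant, since $\on{H}_i(x)$ is finitely generated projective. Hence the set
\[
U_i=\{\frakp \mid \on{H}_i(x)_\frakp\neq 0\}
\]
is clopen. By the previous paragraph, the $U_i$ are pairwise disjoint and cover $\Spec(R)$, and only finitely many are nonempty because $x$ is bounded. Since $\Spec(R)$ is connected, exactly one $U_i$ equals $\Spec(R)$ and the others are empty. A finitely generated projective module with all localizations zero is zero, so $\on{H}_j(x)=0$ for $j\neq i$, while $\on{H}_i(x)\neq 0$ (here $R\neq 0$). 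Set $b(x)=i$. The equality $b(k(\frakp)\otimes_R x)=b(x)$ then follows from the displayed isomorphism, together with $k(\frakp)\otimes_R P\neq0$ for a projective $P$ of rank $1$ at $\frakp$.

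The main obstacle is the first step: deducing that $\bigoplus_i \on{H}_i(x)$ has rank exactly $1$, i.e.\ that a $\otimes$-invertible perfect complex over a field has $1$-dimensional total homology. Over a field $k$, invertibility means $\dim \on{H}_*(y)\cdot\dim\on{H}_*(y^{-1})=1$ by Künneth, which forces both dimensions to be $1$. Alternatively, one can simply cite \cite[Lemma 3.3]{Fau03} as in \Cref{lem:endotrival-Rsplit}. The rest is routine local-constancy bookkeeping.
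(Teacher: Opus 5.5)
Your proof is correct and follows essentially the same route as the paper. Both arguments get invertibility of $\bigoplus_i \on{H}_i(x)$ from \cite[Lemma 3.3]{Fau03}, check pointwise on $\Spec(R)$, use connectedness to force a single degree, and obtain the second claim from the base-change isomorphism $k(\frakp)\otimes_R \on{H}_i(x)\cong \on{H}_i(k(\frakp)\otimes_R x)$. The only difference is that you test at residue fields and use local constancy of the rank of finitely generated projectives, which spells out the clopen/connectedness step; the paper instead localizes at $R_\frakp$ and invokes \cite[Proposition 3.2]{Fau03} to identify the localized complex with a shift of $R_\frakp$, leaving the use of connectedness implicit.
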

    
    \begin{proof}
        Recall that $\oplus_i\on{H}_i(\Res^G_1(x))$ is a tensor invertible $R$-module by \cite[Lemma 3.3]{Fau03}. 
        We claim $\on{H}_i(\Res^G_1(x))$ must be nontrivial on exactly one degree. Indeed, note that for any $\frakp\in \Spec(R)$, we have 
        \[ R_\frakp \otimes_R \left(\bigoplus_i\on{H}_i(\Res^G_1(x))\right)\cong \bigoplus_i \on{H}_i\left(R_\frakp \otimes_R \Res^G_1(x)  \right) \cong R_\frakp[b]\] 
        for some unique $b \in \bbZ$, where the last equivalence holds by \cite[Proposition 3.2]{Fau03}. Since $R$ is assumed to be connected, it follows that $\on{H}_i(\Res^G_1(x))=0$ for all $i\not=b$. Thus $x$ must have homology in exactly degree $b(x)\coloneqq b$. Moreover, such homology must be a tensor invertible $R$-module. 
    
        The second claim is a consequence of \Cref{rem:prohomology-Rsplit} and \Cref{lem:endotrival-Rsplit} which allow us to conclude that 
        \[
            k(\frakp) \otimes_R \on{H}_i(x) \simeq \on{H}_i(k(\frakp) \otimes_R x )
        \]
        for all $i$. 
    \end{proof}

    \begin{remark}
        However, unlike in the case of $\catK(G;k)$ with $k$ a field, the aforementioned nonzero homology need not be isomorphic as $R$-module to the free $R$-module $R$ without additional assumptions. Indeed, any element $M\in \Pic(R) \coloneqq \Pic(\mathsf{mod}(R))$ produces an endotrivial $M[i] \in \catK(G;R)$, since all invertible $R$-modules are projective. However if $R$ satisfies that all projective modules are free (e.g. if $R$ is local or a principal ideal domain), then $\Pic(R) = \{R\}$, so in this case, the nonzero homology is indeed isomorphic to the $R$-free module $R$. 
    \end{remark}
 
    \begin{remark}
        If $R$ is non-connected, \Cref{prop:etrivhomologyinonedegree} does not hold, due to the decomposition of tt-categories $\catK(G;R_1\times R_2) \simeq \catK(G;R_1) \times \catK(G;R_2)$. In this case, the function 
        \[
        b\colon \Spec(R)\to \mathbb Z, \, \frakp\mapsto b( k(\frakp) \otimes_R x)
        \]
        is locally constant. 
    \end{remark}

     \begin{corollary}\label{coro-homology-tensorprod}
         Assume $R$ is connected. Let $x$ and $y$ be two endotrivial complexes of $RG$-modules. Then  $b(x\otimes y)=b(x)+b(y)$. Moreover, it holds that 
         \[
         \on{H}_{b(x)}(x)\otimes \on{H}_{b(y)}( y)\simeq \on{H}_{b(x)+b(y)}(x\otimes y).
         \]
     \end{corollary}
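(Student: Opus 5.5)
The plan is to pass to the underlying $R$-complexes and apply a Künneth argument, using that both complexes are $R$-split by \Cref{lem:endotrival-Rsplit}. Homology of $RG$-complexes is computed on underlying $R$-modules, with the $G$-action carried along. So, writing $b=b(x)$ and $b'=b(y)$, it suffices to describe $x$, $y$ and $x\otimes y$ after applying $\Res^G_1$ and then keep track of the $G$-module structure.

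By \Cref{prop:etrivhomologyinonedegree} together with \Cref{lem:endotrival-Rsplit}, $\Res^G_1(x)\simeq \on{H}_{b}(x)[b]$ in $\on{D}_{\mathsf{perf}}(R)$, where $\on{H}_b(x)$ is an invertible, hence projective, $R$-module; likewise for $y$. Since $\Res^G_1$ is monoidal,
\[\Res^G_1(x\otimes y)\simeq \on{H}_b(x)[b]\otimes_R \on{H}_{b'}(y)[b'] \simeq \big(\on{H}_b(x)\otimes_R\on{H}_{b'}(y)\big)[b+b'].\]
Projectivity means no Tor terms arise, so the derived tensor product agrees with the ordinary one. This shows that $x\otimes y$ (itself endotrivial) has homology concentrated in degree $b+b'$, so $b(x\otimes y)=b(x)+b(y)$.

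For the isomorphism of $RG$-modules, I would not rely on the abstract splitting. Instead I would use the natural Künneth map
\[\on{H}_{b}(x)\otimes_R\on{H}_{b'}(y)\to \on{H}_{b+b'}(x\otimes_R y),\qquad [u]\otimes[v]\mapsto[u\otimes v].\]
This map is evidently $G$-equivariant for the diagonal actions, since $g(u\otimes v)=gu\otimes gv$ and cycles and boundaries are preserved. It then remains to check that it is bijective, which can be done on underlying $R$-modules. There I would use the Künneth theorem for bounded complexes of projective $R$-modules whose cycles, boundaries and homology are all projective. The $R$-split hypothesis guarantees exactly this: each of $x$ and $y$ is $R$-homotopy equivalent to its homology with zero differential, and the Künneth map is natural with respect to such homotopy equivalences. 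For zero-differential complexes the map is the identity. Alternatively, one can check bijectivity after localizing at each $\frakp$, where everything is free of rank one.

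The main subtlety is confirming that the Künneth map is compatible with the chosen $R$-homotopy equivalences. This is the naturality of the Künneth map in each variable, so the naturality check is routine.
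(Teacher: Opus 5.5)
Your proposal is correct and follows essentially the paper's route: forget the $G$-action, use that endotrivials are $R$-split with $R$-projective homology (\Cref{lem:endotrival-Rsplit}), and apply the K\"unneth formula, which the paper does by citing \cite[Theorem 3.6.3]{We94}, where the Tor terms vanish. Your explicit use of the natural $G$-equivariant K\"unneth map, and of its naturality along the $R$-homotopy equivalences $x\simeq \on{H}_\ast(x)$, is a reasonable way of justifying the paper's brief remark that it suffices to check the isomorphism after forgetting the $G$-action.
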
 

     \begin{proof}
       Note that it is enough to verify the claim after forgetting the $G$-action.  By \Cref{lem:endotrival-Rsplit}, we know that any endotrivial complex has  $R$-projective homology.  In particular, $\Res^G_1(x)$ and $\Res^G_1(y)$ satisfy the conditions in  \cite[Theorem 3.6.3]{We94} for the K\"unneth formula. That is, 
         \[
         \on{H}_\ast(\Res^G_1(x) \otimes_R\Res^G_1(y))\cong \on{H}_\ast(\Res^G_1(x))\otimes_R \on{H}_\ast(\Res^G_1 (y)) 
         \]
         from which we deduce the result. 
     \end{proof}

    \begin{definition}
    \label{def:homologyextraction}
        Assume $R$ is connected. 
        By \Cref{coro-homology-tensorprod}, we obtain that 
        extracting homology therefore induces a group homomorphism 
        \[\catH\colon \Pic(\catK(G;R)) \to \Pic(\mathsf{mod}(RG)), \, x\mapsto \on{H}_{b(x)}(x).\]
    \end{definition}

    \begin{remark}
        We note that $\catH$ need not be surjective in general; part of the reason for this is that not every $RG$-module which is $R$-free of rank 1 is $\natural$-permutation; see \Cref{prop:directsummandlemma} and \Cref{thm:rk1ppermsforpgrp}.     
    \end{remark}

    Finally, we prove a lifting theorem for ring reduction akin to Rickard's lifting of splendid Rickard equivalences \cite[Theorem 5.2]{Ric96}. In fact, the proof follows via the same techniques as \cite[Lemma 5.1]{Ric96}. Again, this lifting result will play a role in the sequel. 

    \begin{theorem}\label{thm:endotrivials_lift_from_reduction}
        Let $R$ be a commutative Noetherian ring, let $\overline{R} = R/N$ where $N \subseteq \calN$ is contained in the nilradical of $R$, and let $G$ be a finite group. Then the base change $\overline{R} \otimes_R -\colon \catK(G;R) \to \catK(G;\overline{R})$ induces a group isomorphism \[\Pic(\catK(G;R)) \cong \Pic(\catK(G;\overline{R})).\] 
    \end{theorem}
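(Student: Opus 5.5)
Write $F \coloneqq \overline{R}\otimes_R -\colon \catK(G;R)\to\catK(G;\overline{R})$. Since $F$ is a tt-functor, it sends invertible objects to invertible objects and so induces a group homomorphism $\Pic(\catK(G;R))\to\Pic(\catK(G;\overline{R}))$. The plan is to prove injectivity and surjectivity separately, following Rickard's strategy in \cite[Lemma 5.1, Theorem 5.2]{Ric96}. Throughout we use that $N$ is nilpotent because $R$ is Noetherian, and we use \Cref{lem:reduction_surjective_on_modules}.

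\emph{Injectivity.} Let $x$ be endotrivial with $F(x)\cong\bbone$. Then $F(x)\simeq\overline{R}[0]$, so there are maps $\bar u\colon\overline{R}\to F(x)$ and $\bar v\colon F(x)\to \overline{R}$ that are mutually inverse up to homotopy. Termwise $F$ is full on $\natural$-permutation modules (\Cref{lem:reduction_surjective_on_modules}). More to the point, $\Hom_{\catK(G;R)}(a,b)\to\Hom_{\catK(G;\overline{R})}(Fa,Fb)$ is surjective for bounded complexes $a,b$ of $\natural$-permutation modules. Indeed, $\Hom^\bullet_{RG}(a,b)$ is a bounded complex of $R$-modules, and
\[
\Hom^\bullet_{\overline{R}G}(Fa,Fb)\cong \overline{R}\otimes_R\Hom^\bullet_{RG}(a,b),
\]
which holds for permutation modules by the double coset formula and extends to summands. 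It is cleanest to use the filtration by powers $N^j$: the obstruction to lifting a chain map lives in $\Hom^\bullet$ with coefficients in $N^j/N^{j+1}$, and I will check that this can be killed degree by degree. Lift $\bar u$ and $\bar v$ to maps $u\colon R\to x$ and $v\colon x\to R$. Then $vu\in\End(R)=R$ reduces to $1$ modulo $N$, so it is a unit. Likewise $uv\in\End_{\catK(G;R)}(x)$ reduces to $\id$ modulo the ideal $N\cdot\End^\bullet(x)$, which consists of nilpotent elements on the chain level, so $uv$ is invertible. Hence $x\cong R[0]$.

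\emph{Surjectivity.} This is the main obstacle, since $F$ is not essentially surjective on $\catK$ (see the remark above). Given an endotrivial $\bar y\in\catK(G;\overline{R})$, I would first replace it with a homotopy-equivalent complex having no contractible summands in a controlled sense, and then lift it inductively along the square-zero extensions $R/N^{j+1}\to R/N^{j}$, reducing to the case $N^2=0$. Each term of $\bar y$ lifts to a $\natural$-permutation module by \Cref{lem:reduction_surjective_on_modules}, and each differential lifts as well. The obstruction to the lifted differentials squaring to zero is a class in
\[
\on{H}^2\big(\End^\bullet(\bar y)\otimes N/N^2\big),
\]
more precisely a class in $\Hom_{\catK}(\bar y, \bar y\otimes_{\overline{R}} M[2])$ with $M=N/N^2$, by Rickard's argument. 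The key point is that because $\bar y$ is invertible, this group is
\[
\Hom_{\catK(G;\overline{R})}(\bbone,\overline{R}\otimes M[2]) = \on{H}^{2}\big(\Hom^\bullet_{\overline{R}G}(\overline{R},\,\overline{R}\otimes M)\big)=0,
\]
since the complex sits in degree $0$. The main technical check is the Tor issue: $M$ need not be $\overline{R}$-flat, so this identification requires that $\bar y$ consist of $R$-projective terms, which holds. With the obstruction vanishing, the differentials can be lifted to an honest complex $y$ of $\natural$-permutation $RG$-modules with $F(y)\cong\bar y$.

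It remains to show that $y$ is invertible. Apply injectivity-type reasoning to the evaluation map $y\otimes y^\ast\to R$: it reduces to the isomorphism $\bar y\otimes\bar y^\ast\to\overline{R}$. A map between bounded complexes of $\natural$-permutation modules whose reduction is a homotopy equivalence has a cone $c$ with $F(c)\simeq 0$. Then $\id_c$ lies in the ideal generated by $N$ up to homotopy, so $\id_c$ is nilpotent, $c\simeq 0$, and the evaluation map is an isomorphism.
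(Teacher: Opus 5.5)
Your strategy is the paper's. The paper observes that $\Hom^\bullet_{RG}(x,x)\simeq R[0]$ for endotrivial $x$, filters $\calN$, and runs Rickard's lifting argument \cite[Lemma 5.1]{Ric96}; that is exactly the obstruction theory you carry out by hand.

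Your $N$-adic filtration is a reasonable variant. It needs no principal generation, because the splitting $\End^\bullet(\bar y)\simeq\overline{R}[0]$ kills obstructions with arbitrary coefficients $N^j/N^{j+1}$. Your surjectivity step is sound. The nilpotent-cone argument correctly shows that each successive lift is again endotrivial, which you need before the next obstruction computation. The preliminary removal of contractible summands is unnecessary.

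The one real error is in the injectivity step. The map $\Hom_{\catK(G;R)}(a,b)\to\Hom_{\catK(G;\overline{R})}(Fa,Fb)$ is \emph{not} surjective for general bounded complexes $a,b$. Take $G=1$, $R=\bbZ/4$, $N=2R$, let $b$ be the complex $R\xrightarrow{2}R$, and let $a$ be a copy of $R$ in the degree of the source of that differential. The homotopy classes $a\to b$ are multiplication by $0$ or $2$, and both reduce to $0$. Over $\bbZ/2$ the differential vanishes, so the identity onto the source term is a nonzero class that is not hit. So the obstruction you promise to kill ``degree by degree'' does not vanish in general.

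Here it vanishes only because of the hypothesis $F(x)\cong\overline{R}$, and you should say so. Set $C\coloneqq\Hom^\bullet_{RG}(R,x)$, a bounded complex of finitely generated projective $R$-modules. Then $\overline{R}\otimes_R C\cong\Hom^\bullet_{\overline{R}G}(\overline{R},F(x))\simeq\Hom^\bullet_{\overline{R}G}(\overline{R},\overline{R})=\overline{R}[0]$. Hence each $N^jC/N^{j+1}C\cong(N^j/N^{j+1})\otimes_{\overline{R}}(\overline{R}\otimes_R C)$ has homology only in degree $0$. So a $0$-cycle of $C/N^jC$ lifts to a $0$-cycle of $C/N^{j+1}C$, and by nilpotence of $N$ the chain map $\bar u$ lifts to a chain map $u$. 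The same argument with $\Hom^\bullet_{RG}(x,R)$ lifts $\bar v$.

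This repair uses the same mechanism as your surjectivity step. With it, the rest of your injectivity argument goes through: $vu$ is a unit, and $uv$ is a nilpotent perturbation of the identity up to homotopy.
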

    \begin{proof}
        First, we claim that any endotrivial $x \in \catK(G;R)$ satisfies that the complex $\Hom_{RG}(x,x)$ has homology concentrated in degree zero. Indeed, by the tensor-hom adjunction, we have \[\Hom_{RG}(x,x) \cong \Hom_{RG}(R[0], x^\ast  \otimes x) \cong (x^\ast  \otimes x)^G \cong R[0]^G \cong R[0].\] Next, because $R$ is Noetherian, there exists a finite filtration of 
        \[
        \calN = \calN_n \supset \calN_{n-1} \supset \cdots \supset \calN_0 = \{0\}
        \]
        such that $\calN_{i+1}/\calN_i$ is generated by one element. By taking successive lifts, it suffices to assume $\calN$ is principally generated. Finally, we have that every $\natural$-permutation $\overline{R}G$-module lifts to a $\natural$-permutation $RG$-module by \Cref{lem:reduction_surjective_on_modules}. The result now follows identically to the proof of \cite[Lemma 5.1]{Ric96}, with $\calN$ replacing $\calM$, and where we rely on the fact that $\calN^r = 0$ for some finite $r$, rather than completeness of the ring $R$ in \cite[Lemma 5.1]{Ric96}. 
    \end{proof}
    
    \subsection{Endotrivial complexes over fields}
    
    We recall the classification of endotrivial complexes over fields of prime characteristic $p$. The classification was completed by the second-named author in a series of papers \cite{Mil24a,Mil25a,Mil25b} using representation-theoretic techniques, however Bachmann, in his dissertation \cite{Bac16}, deduced some of the same results years prior in the context of Artin motives. We assume $k$ is a field of prime characteristic $p$ and $G$ is a finite group with $p$ dividing $|G|$. Set $\catK(G) \coloneqq \catK(G;k)$.

    \begin{example}
        We list some examples of endotrivial complexes. 
        \begin{enumerate}
            \item Given any $k$-dimension 1 $kG$-module $M$, the chain complex $M[i]$ is endotrivial. Note that $M$ is a $p$-permutation module, since $\Res^G_S (M)$ has $k$-dimension 1 for a Sylow $p$-subgroup $S$, but the only $k$-dimension $kS$-module is $k$ itself.
            \item If $p = 2$ and $G = C_2$, then any truncated periodic resolution or injective hull of the trivial module $k$ is endotrivial. For instance, the length 2 complex $kC_2 \to k$ is endotrivial. If $p > 2$ and $G = C_p$, then a truncated periodic resolution or injective hull is endotrivial if and only if the (bounded) complex has odd length. These complexes were extended to arbitrary coefficients in \cite{DG25}. 
            \item Let $G = D_{2^n}$ and $p = 2$. Then there exists a length 3 endotrivial $kG$-complex \[kG \to k(G/H_1) \oplus k(G/H_2) \to k,\] where $H_1, H_2$ are nonconjugate, noncentral subgroups of order 2, and the differentials are induced by augmentation homomorphisms between each indecomposable summands in each degree. In fact, this complex arises from the reflection representation of $D_{2^n}$ as a Coexter group; we explain how one obtains endotrivials from representations in the sequel. 
        \end{enumerate}
    \end{example}

    \begin{proposition}\label{prop-conservative-detect}
        Let $\catK$ be a rigid tt-category and $\{f_i\colon\catK \to \catL_i\}_{i \in I}$ be a jointly conservative family of functors. Then $x \in \catK$ is invertible if and only if $f_i(x) \in \catL_i$ is invertible for all $i \in I$.
    \end{proposition}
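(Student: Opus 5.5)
The forward direction is immediate. Each $f_i$ is strong monoidal, so if $x\otimes y\cong\bbone$ in $\catK$, then $f_i(x)\otimes f_i(y)\cong f_i(\bbone)\cong\bbone$ in $\catL_i$. I read the family as consisting of tt-functors, as in every application in the paper (restrictions, base changes, modular fixed points), and I use this implicitly throughout.

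For the converse I will use the standard rigidity criterion. In a rigid tt-category, $x$ is invertible if and only if the evaluation map $\on{ev}\colon x^\ast\otimes x\to\bbone$ is an isomorphism. One implication is clear. For the other, an isomorphism $x^\ast\otimes x\cong\bbone$ exhibits $x^\ast$ as an inverse of $x$. So it suffices to show that $\on{ev}_x$ is an isomorphism in $\catK$.

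A tt-functor between rigid categories preserves duals: there is a canonical isomorphism $f_i(x^\ast)\cong f_i(x)^\ast$, and under it $f_i(\on{ev}_x)$ corresponds to $\on{ev}_{f_i(x)}$. Since $f_i(x)$ is invertible, $\on{ev}_{f_i(x)}$ is an isomorphism. To check this, note that the inverse of an invertible object in a rigid category is its dual, and that $\on{ev}$ is then an isomorphism because $(-)\otimes f_i(x)$ is an equivalence sending the unit/coevaluation triangle identities to isomorphisms. Hence $f_i(\on{ev}_x)$ is an isomorphism for every $i$.

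Now pass to cones. Let $c=\cone(\on{ev}_x)$. Exactness gives $f_i(c)\cong\cone(f_i(\on{ev}_x))=0$ for all $i$. Joint conservativity, meaning the family detects vanishing of objects as in the usage around \Cref{thm-conservativity-psiH}, gives $c=0$, so $\on{ev}_x$ is an isomorphism and $x$ is invertible. If conservativity is meant on morphisms, this step is immediate.

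The one point needing care is the compatibility $f_i(\on{ev}_x)\leftrightarrow\on{ev}_{f_i(x)}$ together with the fact that evaluation is an isomorphism for invertible objects. Both are standard rigid-monoidal yoga, and I would cite them rather than verify the coherence diagrams.
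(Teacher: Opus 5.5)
Your proof is correct and follows the same route as the paper. Both reduce invertibility to the evaluation map $x^\ast\otimes x\to\bbone$ being an isomorphism, and then use that the jointly conservative family reflects isomorphisms. You make explicit two points the paper leaves implicit: each $f_i$ carries $\on{ev}_x$ to $\on{ev}_{f_i(x)}$, and for exact functors, detecting vanishing of objects (applied to the cone of $\on{ev}_x$) is the same as reflecting isomorphisms.
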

    \begin{proof}
        In any rigid monoidal category, $x$ is invertible if and only if the evaluation morphism $x^\ast  \otimes x \to \bbone$ is an isomorphism. The result follows since jointly conservative functors reflect isomorphisms. 
    \end{proof}

    \begin{proposition}{\cite[Theorem 3.4]{Mil24a}}
    \label{prop-detection-endo-fields}
        Let $x \in\catK(G)$. Then $x$ is endotrivial if and only if for all $p$-subgroups $H \leq G$, $\Psi^H(x)$ has nonzero homology in exactly one degree, with that homology having $k$-dimension 1. 
    \end{proposition}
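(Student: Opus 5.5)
We prove the two directions separately; the forward direction is formal and the reverse is where the work lies.

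\emph{Forward direction.} Suppose $x$ is endotrivial and $H \leq G$ is a $p$-subgroup. Since $\Psi^H$ is a tt-functor, $\Psi^H(x)$ is invertible in $\catK(\Weyl{G}{H})$. The field $k$ is connected, so \Cref{prop:etrivhomologyinonedegree} shows that $\Psi^H(x)$ has homology in exactly one degree $b$. By \Cref{lem:endotrival-Rsplit} this homology is an invertible $k$-module, hence $k$-dimension $1$. (Concretely, after restricting to the trivial group, \cite[Lemma 3.3]{Fau03} says the total homology is a tensor-invertible $k$-vector space, which forces dimension $1$.)

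\emph{Reverse direction: reduction.} Assume the homology condition holds for every $p$-subgroup $H$. By \Cref{prop-conservative-detect} together with the conservativity in \Cref{thm-conservativity-psiH}, it suffices to show that each $\check\Psi^H(x) = \Upsilon\Psi^H(x) \in \on{D}_b(k(\Weyl{G}{H}))$ is invertible. Here we use that $\check\Psi^H$ is a tt-functor, so it sends the evaluation map $x^\ast\otimes x\to\bbone$ to the evaluation map of $\check\Psi^H(x)$. By hypothesis, $\check\Psi^H(x)$ is a complex whose homology is a single $1$-dimensional $k(\Weyl{G}{H})$-module $M$ placed in one degree $b$. Since $\on{D}_b$ identifies a complex with homology in a single degree with that homology, we get $\check\Psi^H(x)\cong M[b]$.

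\emph{Reverse direction: invertibility.} A $1$-dimensional module $M$ is invertible in $\on{D}_b(k\Weyl{G}{H})$, because $M\otimes_k M^\ast\cong k$ with the diagonal action. Therefore $M[b]$ is invertible, and each evaluation map $\check\Psi^H(x^\ast\otimes x)\to \check\Psi^H(\bbone)$ is an isomorphism. Joint conservativity of the family $\{\check\Psi^H\}$ on $\catK(G)$ then shows that the evaluation $x^\ast\otimes x\to\bbone$ is already an isomorphism, so $x$ is endotrivial.

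\emph{Main obstacle.} The delicate point is making sure \Cref{prop-conservative-detect} applies. The family must be conservative on $\catK(G)$ itself, not only on $\catD(G)$, and this is exactly what the restricted statement in \Cref{thm-conservativity-psiH} provides. Conservativity means reflecting isomorphisms, which for triangulated functors is equivalent to detecting vanishing of cones. Everything else in the argument is formal.
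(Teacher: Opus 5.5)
Your proof is correct and follows essentially the same route as the paper, which combines the joint conservativity of $\{\check\Psi^H\}$ from \Cref{thm-conservativity-psiH} with \Cref{prop-conservative-detect} and the fact that the invertible objects of $\on{D}_b(k(\Weyl{G}{H}))$ are exactly the one-dimensional modules placed in a single degree. The only cosmetic difference is in the forward direction: you argue via \Cref{prop:etrivhomologyinonedegree} in $\catK(\Weyl{G}{H})$ (with invertibility of the homology really coming from Fausk's lemma rather than \Cref{lem:endotrival-Rsplit}), whereas the paper reads it off from that same description of invertibles in $\on{D}_b$, and the two are equivalent.
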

    
     \Cref{prop-conservative-detect} provides a considerably shorter version than the original proof, which relies predominantly on properties of $p$-permutation modules:
     
    \begin{proof}
        The family of functors $\{\check\Psi^H\}_{H \in \on{Sub}_p(G)/G}$ is jointly conservative by \Cref{thm-conservativity-psiH}, and the invertible objects in $\on{D}_b(k(\Weyl{G}{H}))$ are up to isomorphism given by the $k$-dimension 1 $k(\Weyl{G}{H})$-modules concentrated in a single degree. 
    \end{proof}

    \begin{definition}\label{def-hmark-field}
        Let $x \in \Pic(\catK(G))$. Set $h_x(H)\coloneqq b(\Psi^H(x))$, that is,  the unique integer for which $\on{H}_{h_x(H)}(\Psi^H(x))\neq 0$. Then $h_x\colon \on{Sub}_p(G) \to \bbZ$ is a well-defined superclass function valued on $p$-subgroups, that is, it is constant on conjugacy classes. 
        
        We denote the additive group of superclass functions of $G$ valued on $p$-subgroups by $\on{CF}(G,p)$. More generally, given a \emph{family} $\catF \subseteq \on{Sub}(G)$, i.e., a collection of subgroups closed under conjugacy, we denote the additive group of superclass functions of $G$ valued on subgroups in $\catF$ by $\on{CF}(G,\catF)$. If $\catF = \on{Sub}(G)$, we simply write $\on{CF}(G)$. 
    \end{definition}

    \begin{proposition}{\cite[Theorem 3.6]{Mil24a}}\label{prop:h_marks_for_fields}
        The assignment 
        \[
        h\colon \Pic(\catK(G)) \to \on{CF}(G,p), \, x \mapsto h_x
        \]
         is a well-defined group homomorphism, with kernel given by $k$-dimension 1 $kG$-modules concentrated in degree 0. In particular, $\ker(h) \cong \Hom(G,k^\times)$. Moreover, $\ker(h)$ is the torsion subgroup of $\Pic(\catK(G))$.  
    \end{proposition}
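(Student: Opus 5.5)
The plan has three parts: show that $h$ is a homomorphism, identify its kernel, and identify the torsion subgroup.

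\textbf{Homomorphism.} Each $\Psi^H$ is a tt-functor, so $\Psi^H(x\otimes y)\cong \Psi^H(x)\otimes\Psi^H(y)$. By \Cref{prop-detection-endo-fields}, each factor has homology of $k$-dimension 1 in exactly one degree. The Künneth formula over the field $k$ (as in \Cref{coro-homology-tensorprod}) then gives $h_{x\otimes y}(H)=h_x(H)+h_y(H)$. Conjugation invariance holds because $\Psi^{gHg^{-1}}$ agrees with $\Psi^H$ up to the conjugation isomorphism of Weyl groups, which preserves degrees. Hence $h$ is a well-defined homomorphism into $\on{CF}(G,p)$.

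\textbf{Kernel.} Clearly a one-dimensional module $M[0]$ lies in the kernel, since $\Psi^H(M)$ is one-dimensional for every $p$-subgroup $H$. The substantive direction is the converse: if $h_x=0$, I must show $x\simeq M[0]$ with $M=\on{H}_0(x)$.

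I would first reduce to a Sylow $p$-subgroup $S$. Restriction commutes with $\Psi^H$ for $H\le S$, so $h_{\Res^G_S x}=0$. For a $p$-group, $\Hom(S,k^\times)$ is trivial, so the goal becomes $\Res^G_S x\simeq k[0]$. I would prove this by induction on $|S|$ via a "tail-stripping" argument on a minimal representative (no contractible summands).

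Suppose the top nonzero term $x_n$ sits in degree $n>0$. Every indecomposable summand $k(S/K)$ of $x_n$ with $K$ maximal among the stabilizers occurring has $\Psi^K(x)$ nonzero in degree $n$. Because $h_x(K)=0$, this chain must be killed in $\Psi^K(x)$, which forces a split injection that contradicts minimality. A symmetric argument handles the bottom term, or one can apply the same argument to $x^\ast$. This leaves $x\simeq k[0]$ over $S$.

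To descend back to $G$, I would use that $k(G/S)$ contains $k$ as a summand, since $[G:S]$ is invertible in $k$. Thus $x$ is a summand of $\Ind^G_S\Res^G_S x$, which is homotopy equivalent to a complex concentrated in degree 0. So $x$ is homotopy equivalent to a one-dimensional module in degree 0. The kernel is therefore $\Hom(G,k^\times)$, using the identification of one-dimensional modules with homomorphisms $G\to k^\times$.

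\textbf{Torsion.} The kernel is finite, so it is torsion. Conversely, the image of $h$ lies in the torsion-free group $\on{CF}(G,p)\cong\bbZ^m$. Any torsion element therefore maps to $0$ and lies in $\ker(h)$.

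\textbf{Main obstacle.} The hard step is the kernel argument. Relating a top-degree summand $k(S/K)$ to nonvanishing homology of $\Psi^K$ requires controlling the differential after taking Brauer quotients. If the direct argument stalls, I would instead use the conservativity in \Cref{thm-conservativity-psiH}. Take the map $k\to x$ or $x\to k$ induced from $\on{H}_0$ and check that its cone vanishes under every $\check\Psi^H$. This requires choosing the map compatibly with all fixed points, which I expect to be the delicate point.
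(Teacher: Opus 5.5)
Your homomorphism argument, the Sylow reduction, the descent through the summand $k$ of $k(G/S)$, the use of $x^\ast$ for the bottom degree, and the torsion argument are all fine. (The paper gives no argument here; it cites \cite{Mil24a}.) The gap is the step you flag yourself: ``forces a split injection that contradicts minimality'' is the whole content of the kernel computation, and you only assert it.

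Your setup gives a splitting only \emph{after} taking Brauer quotients. Let $n>0$ be the top degree and $K$ maximal among stabilizers in $x_n$. Write $x_n=A\oplus B$, where $A\cong k(S/K)^{m}$ collects the summands isomorphic to $k(S/K)$. Then $\Psi^K(B)=0$. Since $n\neq h_x(K)$, we have $\ker\Psi^K(d_n)=\on{H}_n(\Psi^K(x))=0$. So $\Psi^K(d_n|_A)$ is injective, and its source $\Psi^K(A)\cong k[\Weyl{S}{K}]^{m}$ is free, hence injective, so the map has a retraction $\bar\rho$.

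To contradict minimality you must show $d_n|_A$ is split injective over $kS$. That needs two facts you never state.
\begin{enumerate}
\item $\bar\rho$ lifts to some $\rho\colon x_{n-1}\to A$. This means $\Hom_{kS}(k(S/L),k(S/K))\to\Hom(\Psi^K k(S/L),\Psi^K k(S/K))$ is onto. Dualizing, using that permutation modules are self-dual, this becomes the surjection $k(S/L)^K\to k((S/L)^K)$.
\item An endomorphism $\varphi$ of $A$ with $\Psi^K(\varphi)=\id$ is invertible. This holds because $\End_{kS}(k(S/K))$ is local with residue field $k$: the module is indecomposable by Green's theorem and its head is $k$. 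Compose the surjective ring map $\End_{kS}(A)\to\End(\Psi^K A)$ with the augmentation to $m\times m$ matrices over $k$, and compare with $\End_{kS}(A)$ modulo its radical. You get that the kernel of $\Psi^K$ on $\End_{kS}(A)$ lies in the Jacobson radical.
\end{enumerate}
Then $\rho\circ d_n|_A$ is an automorphism of $A$, so $d_n|_A$ is split injective. Gaussian elimination now splits off the contractible summand $A\xrightarrow{\sim}d_n(A)$, which is the contradiction you want.

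Two smaller remarks. Your fallback does not avoid the difficulty. For a chain map $f\colon k[0]\to x$ realizing $\on{H}_0$, you would have to show directly that the chosen cycle survives in $\on{H}_0(\Psi^H(x))$ for every $H$. Nothing in the construction shows this; a priori the cycle could lie in the span of relative traces that $\Psi^H$ kills. Arranging that compatibility is the kind of problem the forerunners of \Cref{cons:forerunners} are built to solve. Finally, the induction on $|S|$ is never used. In the descent step you should also say why a summand of $k(G/S)[0]$ in $\catK(G)$ is a module in degree $0$: $\End_{\catK(G)}(M[0])=\End_{kG}(M)$, so its idempotents split at the level of modules.
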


    We call the map $h$ the \emph{h-mark homomorphism}. To deduce the isomorphism class of $\Pic(\catK(G))$, which one could consider a classification of endotrivials, it suffices to deduce the image of $h$. To discuss this, we recall the notion of $p$-Borel-Smith functions. Note this differs slightly from previously used terminology, in which we refer to such functions simply as `Borel-Smith functions,' but we wish to emphasize the prime $p$ in this definition.

    \begin{definition}\label{def:borel_smith}
        Fix a prime $p$. A $p$-\textit{Borel-Smith function} is a superclass function $f \in \on{CF}(G,p)$ satisfying the following three conditions, which we call the Borel-Smith conditions.
        \begin{enumerate}
            \item If $p$ is odd, then for any subquotient $T/S$ of $G$ of order $p$, $f(T) \equiv f(S) \mod 2$.
            \item If $p = 2$, then for any sequence of subgroups $H \trianglelefteq K \trianglelefteq L \leq N_G(H)$, with $[K:H] = 2$, $f(K) \equiv f(H) \mod 2$ if $L/K$ is cyclic of order 4 and $f(K) \equiv f(H) \mod 4$ if $L/K$ is quaternion of order 8.
            \item For any elementary abelian subquotient $T/S$ of $G$ of rank 2, the equality \[f(S) - f(T) = \sum_{S < X < T} \big(f(X) - f(T)\big)\] holds.
        \end{enumerate}
        The collection of $p$-Borel-Smith functions forms an additive subgroup $\on{CF}_b(G,p)$ of $\on{CF}(G,p)$. 
        
        More generally, let $\catF \subseteq \on{Sub}(G)$ be a family containing all prime power subgroups of $G$. We say a superclass function $\on{CF}(G, \catF)$ is a \emph{Borel-Smith} function if it is a $p$-Borel-Smith function when restricted to $p$-subgroups for all primes $p$ dividing $|G|$. If $\catF = \on{Sub}(G)$, then we simply write $\on{CF}_b(G)$. 
    \end{definition}

    \begin{remark}
         This definition differs slightly from the classical reference \cite{tD87}, in which one considers subquotients over \emph{all subgroups} rather than those of prime power order. However, for studying endotrivials, the definition we list above we expect is the correct one to consider. 
    \end{remark}
    
    \begin{theorem}{\cite[Theorem 4.6, Corollary 6.4]{Mil25b}}\label{thm:endotriv_classification_over_field}
        We have a complete characterization of endotrivials for $\catK(G)$.
        \begin{enumerate}
            \item If $G$ is a finite $p$-group, then the $h$-mark homomorphism induces an isomorphism \[h\colon \Pic(\catK(G)) \cong \on{CF}_b(G).\]
            \item If $G$ is a finite group, we have a split exact sequence \[0 \to \Hom(G,k^\times) \to \Pic(\catK(G)) \xrightarrow{h} \on{CF}_b(G,p) \to 0.\] 
        \end{enumerate}
    \end{theorem}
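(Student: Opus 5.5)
The plan is to treat the two parts separately, starting with the $p$-group case. There the homomorphism $h$ is already well defined by \Cref{prop:h_marks_for_fields}, and its kernel is $\Hom(G,k^\times)$, which is trivial for a $p$-group because $k^\times$ has no elements of order $p$. So $h$ is injective, and it remains to show that its image is exactly $\on{CF}_b(G)$.

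\emph{Image contained in $\on{CF}_b(G)$.} Each Borel-Smith condition concerns a subquotient $T/S$ of small type: order $p$, elementary abelian of rank $2$, or, for $p=2$, a section of the form $L/H$ with $L/K$ cyclic of order $4$ or quaternion of order $8$. For a $p$-subgroup $H$ and $H\le K\le N_G(H)$, the composite $\Psi^{1}\circ\Res\circ\Psi^H$ identifies the h-marks of $\Res^{N_G(H)/H}_{K/H}\Psi^H(x)$ with the function $K'/H\mapsto h_x(K')$. This reduces each condition to a statement about endotrivials over one of these small groups. For $C_p$, $C_p\times C_p$, $C_4$ and $Q_8$ (and their extensions occurring in condition (b)) I would compute directly. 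The rank-2 relation in condition (c) should follow from an Euler-characteristic or Lefschetz-type count in the Green ring: over $E=C_p\times C_p$, an endotrivial complex has alternating-sum class of the form $\pm$ a one-dimensional module, modulo permutation modules of the $p+1$ subgroups of order $p$ and of $k$ itself. Applying the Brauer character, i.e.\ the dimension of $\Psi^X$, yields the linear relation among $h_x(1)$, $h_x(X)$ and $h_x(E)$. The parity conditions (a) and (b) come from the classification over cyclic groups, where odd $p$ forces odd length, and from the periodicity of $Q_8$ and $C_4$ cohomology.

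\emph{Surjectivity for $p$-groups.} I would invoke the theorem of tom Dieck and Bouc--Yalçın that, for a $p$-group, every Borel-Smith function is a difference of dimension functions $H\mapsto\dim_\bbR V^H$ of real representations. For a real representation $V$, the reduced cellular chains of a $G$-CW structure on the sphere $S(V)$ form a complex of permutation modules. Since $\Psi^H k(X)\cong k(X^H)$, applying $\Psi^H$ yields the chains of $S(V^H)$. Their homology is one-dimensional and concentrated in degree $\dim V^H$ (up to a normalization shift), so the complex is endotrivial by \Cref{prop-detection-endo-fields}, and its h-mark function is the dimension function of $V$. Taking tensor products and inverses then gives all of $\on{CF}_b(G)$.

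\emph{General finite $G$.} Exactness on the left and in the middle is again \Cref{prop:h_marks_for_fields}. The sequence splits because $\on{CF}_b(G,p)$ is free abelian, being a subgroup of $\bbZ^n$. The main obstacle is surjectivity. Given $f\in\on{CF}_b(G,p)$, its restriction to a Sylow subgroup $S$ is a $G$-fusion-stable Borel-Smith function, so by part (a) it equals $h_y$ for a unique $y\in\Pic(\catK(S))$, and $y$ is $G$-stable under conjugation by fusion. What remains is to descend $y$ to $G$. I would first try a stable-elements argument: show that $\Pic(\catK(G))\to\Pic(\catK(S))$ has image exactly the fusion-stable classes, modulo one-dimensional modules, in the spirit of the limit description in Theorem C. Concretely, take a direct summand of $\Ind^G_S y$ whose restriction to $S$ contains $y$, and use the Mackey formula together with the fusion stability of $h_y$. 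The hard step is showing this summand is endotrivial. For that I would check, using \Cref{prop-detection-endo-fields}, that each of its Brauer quotients has one-dimensional homology in a single degree; this is where the idempotent-lifting and Krull--Schmidt arguments available over a field are essential.
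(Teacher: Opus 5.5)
The paper does not prove this statement; it imports it from \cite{Mil25b}. Your injectivity argument is sound. Your surjectivity argument for $p$-groups (cellular chains of representation spheres plus tom Dieck's theorem) is also sound, and matches how the paper re-derives surjectivity in \Cref{thm:p_grp_h_mark_surj}.

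\textbf{First gap: containment of the image in $\on{CF}_b(G)$.} Your reduction to small sections via $\Psi^{K'}\cong\Psi^{K'/H}\circ\Psi^{H}$ is fine. However, the verification on those sections is the entire content, and the Euler-characteristic argument you propose for condition (c) cannot work.

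For a $p$-group, $\Lambda(x)=\sum_i(-1)^i[x_i]$ lies in the Burnside ring. The Burnside ring embeds in $\prod_X\bbZ$ via the marks $X\mapsto\chi(\Psi^X(x))$, and $\chi(\Psi^X(x))=(-1)^{h_x(X)}$. So $\Lambda(x)$ records only $h_x \bmod 2$. Concretely, take the superclass function on $C_p\times C_p$ equal to $2$ at the trivial subgroup and $0$ elsewhere. It has exactly the Euler-characteristic data of $k[0]$, yet it violates (c), so no argument of this kind can rule it out.

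Euler characteristics and Burnside-ring congruences do give the odd-$p$ parity condition and the $C_4$ condition. They give neither (c) nor the mod-$4$ condition for $Q_8$. For $Q_8$ you invoke ``periodicity'' without any mechanism tying $x$ to it.

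What is needed is an input that compares integer degrees across strata. One route:
\begin{enumerate}
\item Normalize with inflated representation-sphere complexes so that $h_x(H)=0$ for all $H\neq 1$.
\item Produce a morphism $k\to x$ that becomes an isomorphism under every $\Psi^H$ with $H\neq 1$.
\item Its cone is then homotopy equivalent to a bounded complex of projectives, so $k\cong\Omega^{-h_x(1)}k$ in $\mathsf{stmod}(kG)$.
\item Non-periodicity of $C_p\times C_p$ then yields (c), and period $4$ for $Q_8$ yields the quaternion condition.
\end{enumerate}
Constructing such a map, or supplying an equivalent argument, is the missing step.

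\textbf{Second gap: surjectivity for arbitrary $G$.} You name the descent of $y$ from $S$ to $G$ as the hard step but do not carry it out. Nothing in your sketch controls the Brauer quotients of a Krull--Schmidt summand of $\Ind^G_S y$; these are summands of sums, over double cosets, of induced pieces.

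You do not need that argument. \Cref{cor:limit_of_pic_groups}, whose proof does not use the field classification, identifies $\Pic(\catK(G;k))$ with $\lim_{G/H\in\calO_\Phi(G)\op}\Pic(\catK(H;k))$. Given $f\in\on{CF}_b(G,p)$ (a superclass function, hence $G$-stable), build a family as follows:
\begin{enumerate}
\item For a $p$-subgroup $Q$, let $x_Q$ be the endotrivial complex over $Q$ with h-marks $f|_Q$; it exists and is unique by part (a).
\item For a $q$-subgroup $Q'$ with $q\neq p$, let $x_{Q'}=k[f(1)]$.
\end{enumerate}
Uniqueness in (a) and stability of $f$ make this a compatible family. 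It therefore lifts to $x\in\Pic(\catK(G))$ with $h_x=f$, exactly as in the proof of \Cref{thm:img_of_hmk_all_grps}.
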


    \begin{remark}
        The question of explicitly constructing endotrivials remains nebulous; we can completely classify the endotrivials by their local homological properties, but if one were to ask a passerby on the street to draw an endotrivial given its h-marks, they would likely struggle! One source of endotrivials is representation spheres (i.e., real representations), and we expand on this in \Cref{sec:rep_spheres}. But at present, given an explicit representation sphere, writing the associated endotrivial down is difficult outside of special cases, such as the reflection representation of a finite Weyl group (the second-named author thanks Christopher Hone for this observation). 
    \end{remark}

    \section{Generalized modular fixed points}\label{sec:fixedpts_hmks}

    We introduce a generalized form of modular fixed points over arbitrary rings. Though these modular fixed points are not as well-behaved over arbitrary rings, they still allow us to make local-to-global observations for permutation modules. Note that for this section we need not assume Noetherianity of $R$.

    Let $H\leq G$ be a subgroup.  We write \[\catF_H \coloneqq \{K \leq G \mid (G/K)^H = \varnothing\} = \{K \leq G \mid H \not\leq_G K\}.\] Then $\catF_H$ is a \emph{family} of subgroups of $G$, i.e. a set of subgroups of $G$ which is closed under conjugation and subgroups. 
    
    \begin{proposition}
        Let $R$ be a commutative ring of characteristic $p$ and $N \trianglelefteq G$ be a normal $p$-subgroup. Set 
        \[\mathsf{proj}(\mathscr{F}_N;R) \coloneqq \mathsf{add}^\natural\{R(G/K)\mid K \in \catF_N\},\] 
        the thick additive closure of $\{R(G/K)\mid K \in \catF_N\}$ in $\mathsf{perm}(G;R)^\natural$. Then $\mathsf{proj}(\mathscr{F}_N;R)$ is a $\otimes$-ideal of $\mathsf{perm}(G;R)^\natural$, and the composite \[\mathsf{perm}(G/N; R)^\natural \xrightarrow{\Inf^G_{G/N}} \mathsf{perm}(G;R)^\natural \twoheadrightarrow \frac{\mathsf{perm}(G; R)^\natural}{\mathsf{proj}(\mathscr{F}_N;R)}\] is an equivalence of tensor categories. 
    \end{proposition}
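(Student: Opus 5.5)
The plan follows Balmer--Gallauer's proof over fields, which only uses that $p=0$ in $R$. There are three steps: the ideal property, fullness together with essential surjectivity, and faithfulness. Recall that $\catF_N=\{K\le G\mid N\not\le K\}$, since $N$ is normal.

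\emph{Ideal property.} For $K\in\catF_N$ and any $L\le G$, the double coset formula gives $R(G/K)\otimes R(G/L)\cong\bigoplus_{g} R(G/(K\cap {}^gL))$. Each $K\cap{}^gL\le K$ fails to contain $N$, so every summand lies in $\catF_N$. Passing to direct summands on both sides shows that $\mathsf{proj}(\catF_N;R)$ is a $\otimes$-ideal. The quotient $\mathsf{perm}(G;R)^\natural/\mathsf{proj}(\catF_N;R)$, meaning morphisms modulo those factoring through the ideal, is then a tensor category. Inflation is monoidal, so the composite is a tensor functor.

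\emph{Essential surjectivity and fullness.} Every transitive $R(G/K)$ with $K\notin\catF_N$ has $N\le K$, so it equals $\Inf^G_{G/N}R((G/N)/(K/N))$. The remaining transitive modules are zero in the quotient. Idempotents in the quotient lift through the inflated summand of a permutation module: an idempotent endomorphism of $\Inf M$ in the quotient becomes, under the equivalence proved below, an idempotent of $M$. Granting full faithfulness, the functor is therefore essentially surjective onto the idempotent completion. For fullness, take a morphism $f\colon R(X)\to R(Y)$ between $G/N$-sets $X,Y$. Write $f$ as an $R$-combination of orbit sums of $G$-orbits $\mathcal{O}\subseteq X\times Y$. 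If the stabilizer of a point of $\mathcal{O}$ contains $N$, the corresponding basis morphism is inflated from $G/N$. If it does not, the basis morphism factors through $R(G/\mathrm{Stab})$ with $\mathrm{Stab}\in\catF_N$. To see this, the map $R(X)\to R(G/\mathrm{Stab})\to R(Y)$ is given by relative transfer and projection, which is the standard factorization of a Mackey double coset basis element. Hence every morphism is inflated modulo the ideal.

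\emph{Faithfulness.} This is the main obstacle. We must show that if $\Inf f$ factors through some $R(G/K)$ with $K\in\catF_N$, then $f=0$. The tool is the Brauer quotient: define $\mathrm{Br}_N(M)=M^N/\sum_{Q<N}\tr_Q^N M^Q$ on $RG$-modules. For a permutation module, $\mathrm{Br}_N(R(X))\cong R(X^N)$. The proof of this uses that $\tr_Q^N$ of a basis element of $R(X)$ with orbit of size $p^a>1$ equals that orbit sum up to multiplication by $p^a$, and $p^a=0$ in $R$. This is exactly where the characteristic-$p$ hypothesis enters. By this isomorphism, $\mathrm{Br}_N$ is additive, kills $R(G/K)$ for $K\in\catF_N$ (because $(G/K)^N=\varnothing$), and satisfies $\mathrm{Br}_N\circ\Inf\cong\id$ on $\mathsf{perm}(G/N;R)^\natural$. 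Applying $\mathrm{Br}_N$ to the factorization gives $f\cong\mathrm{Br}_N(\Inf f)=0$. Compatibility with $\otimes$ follows from $X^N\times Y^N=(X\times Y)^N$. The one careful check is that $\sum_{Q<N}\tr_Q^N$ of the orbit-sum basis really spans the non-$N$-fixed orbit sums and nothing more. This is routine because $R$ has characteristic $p$.
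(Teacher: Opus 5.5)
Your argument is correct, but for the one substantive step, faithfulness, it takes a different route from the paper.

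\textbf{The paper's route.} The paper gives no independent proof. It runs Balmer--Gallauer's argument for fields (their Proposition 5.4) and only observes that their Lemma 5.3 survives over a ring of characteristic $p$, because $\Hom(R(G/H),R(G/K))$ still has the $R$-basis indexed by double cosets. Faithfulness then amounts to checking that a composite $R(G/H)\to R(G/L)\to R(G/K)$ with $N\le H,K$ and $N\not\le L$ vanishes. This is a computation with basis morphisms: the coefficients of the composite are sums over $N$-orbits of $G/L$, all of length divisible by $p$.

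\textbf{Your route.} You instead exhibit a retraction. $\mathrm{Br}_N$ is an additive functor on all $RG$-modules that kills $\mathsf{proj}(\catF_N;R)$ and, in characteristic $p$, is the identity on modules with trivial $N$-action. Faithfulness is then formal. This costs you the standard computation $\mathrm{Br}_N(R(X))\cong R(X^N)$. In exchange, it gives for free the identification of the resulting $\Psi^N$ with the Brauer quotient on $\natural$-permutation modules, which the paper only alludes to. The basis route stays inside the permutation category and needs no auxiliary functor.

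\textbf{Two small corrections.}
\begin{enumerate}
\item In your fullness step the second case never occurs. $N$ acts trivially on $X\times Y$, so every stabilizer contains $N$. Hence $\Inf^G_{G/N}$ is already fully faithful into $\mathsf{perm}(G;R)^\natural$, and fullness in the quotient is immediate. Your factorization claim is true but unused.
\item The sentence locating the characteristic-$p$ input is garbled. What is needed is the following. For an $N$-fixed basis vector $x$ and $Q<N$, one has $\tr_Q^N(x)=[N:Q]\,x=0$. A non-fixed $N$-orbit sum equals $\tr_{N_x}^N(x)$ with coefficient $1$. So non-fixed orbit sums die and fixed basis vectors survive. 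In particular, $\mathrm{Br}_N(R(G/K))=0$ for $K\in\catF_N$ holds over any ring. Characteristic $p$ is used only for $\mathrm{Br}_N\circ\Inf=\id$; in general one gets $M/pM$.
\end{enumerate}
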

    \begin{proof}
        The proof follows effectively the same as the proof of \cite[Proposition 5.4]{BG25}, noting that \cite[Lemma 5.3]{BG25} holds over any ring $R$ of characteristic $p$, as $\Hom_{\mathsf{perm}(G;R)^\natural}(R(G/H), R(G/K))$ has a $R$-basis indexed by conjugacy classes of the double coset $H \backslash G/K$. 
    \end{proof}

    \begin{definition}
        Let $R$ be a ring of prime characteristic $p$. Then for any normal $p$-subgroup $N \trianglelefteq G$, one has a tensor functor on the the categories of $\natural$-permutation modules.
        \[\Psi^N\colon \mathsf{perm}(G;R) \twoheadrightarrow \frac{\mathsf{perm}(G; R)^\natural}{\mathsf{proj}(\catF_N;R)} \cong \mathsf{perm}(G/N; R)^\natural.\] Extending this construction degreewise induces tt-functor on homotopy categories \[\Psi^N\colon \catK(G;R) \to \catK(G/N;R).\] If $H \leq G$ is an arbitrary $p$-subgroup, we define the \emph{modular $H$-fixed points functor} by the composite \[\Psi^H\colon \catK(G;R) \xrightarrow{\Res^G_{N_G(H)}} \catK(N_G(H);R) \xrightarrow{\Psi^H} \catK(\Weyl{G}{H};R).\]
    \end{definition}

    If $R$ does not have characteristic $p$, we cannot define a (reasonable) modular fixed-points functor on the nose. However, if a prime $p$ generates a proper ideal of $R$, we can partially recover such a functor. Let $\lambda_p^\ast\colon \catK(G;R) \to \catK(G;R/p)$ denote the functor induced via extension of scalars $R/p \otimes_R -$. 

    \begin{definition}
        Let $\Phi(G;R)$ denote the set of primes $p$ for which $p \mid |G|$ and $pR \neq R$, i.e., $p \not\in R^\times$. If $\on{char}(R) = 0$, we also include $0 \in \Phi(G;R)$. By a 0-subgroup of $G$, we mean the trivial subgroup $1$ of $G$. 
        
        We call the elements of $\Phi(G;R)$ the \emph{singular primes} of $R$ with respect to $G$. For example, if $R$ is a field of characteristic $p$ and $p \mid |G|$, then $\Phi(G;R) = \{p\}$. 
    \end{definition}
    
    \begin{definition}
        Let $p \in \Phi(G;R)$. Given a $p$-subgroup $H \leq G$, we define the \emph{generalized $H$-modular fixed points functor} by \[\Psi^H_p \coloneqq \Psi^H\circ\lambda_p^\ast \colon \catK(G;R) \to \catK(\Weyl{G}{H};R/p).\] 
    \end{definition}

    \begin{remark}
        Note that if $p = 0$, then $\Psi^1_0$ is simply the identity functor.
    \end{remark}

    One may verify that modular fixed points behave as they should on $G$-sets. The proof follows identically to that of \cite[Proposition 5.12]{BG25}. 
    
    \begin{proposition}[c.f. {\cite[Proposition 5.12]{BG25}}]
    \label{prop-gfixedpoints-Gsets}
        Let $p \in \Phi(G;R)$ and let $H \leq G$ be a $p$-subgroup. The following diagram commutes:
        \begin{figure}[H]
            \centering
            \begin{tikzcd}
                G\on{-}\mathsf{set} \ar[r, "R(-)"] \ar[d,"(-)^H"']& \mathsf{perm}(G;R) \ar[r, hookrightarrow] \ar[d, "\Psi^H_p"] & \catK(G;R) \ar[d, "\Psi^H_p"]\\
                (\Weyl{G}{H})\on{-}\mathsf{set} \ar[r, "R/p(-)"]& \mathsf{perm}(\Weyl{G}{H};R/p)\ar[r, hookrightarrow] & \catK(\Weyl{G}{H};R/p).
            \end{tikzcd}
        \end{figure}
    \end{proposition}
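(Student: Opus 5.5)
The plan is to split each square and check it separately. Since $\Psi^H_p = \Psi^H\circ\lambda_p^\ast$, it suffices to know that the square commutes for $\lambda_p^\ast$ and for $\Psi^H$ (over the characteristic $p$ ring $R/p$).

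The base change square commutes directly. Indeed, $R/p\otimes_R R(X)\cong (R/p)(X)$ naturally in $G$-maps, since the $G$-set basis is preserved.

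Next reduce $\Psi^H$ to the normal case. By definition $\Psi^H=\Psi^H\circ\Res^G_{N_G(H)}$. Restriction to $N_G(H)$ commutes with linearization, and $X^H$ depends only on the $N_G(H)$-set $\Res X$. So we may assume $H=N\trianglelefteq G$, with coefficients $S=R/p$ of characteristic $p$.

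For the normal case, write $X=\bigsqcup G/K$; linearization is additive, so we check on orbits and then on morphisms.
\begin{itemize}
\item If $N\not\le K$, i.e.\ $K\in\catF_N$, then $S(G/K)$ lies in $\mathsf{proj}(\catF_N;S)$ and maps to $0$. This matches $(G/K)^N=\varnothing$.
\item If $N\le K$, then $G/K$ is inflated from the $G/N$-set $(G/N)/(K/N)$, and $(G/K)^N=G/K$. So $S(G/K)=\Inf^G_{G/N}S((G/N)/(K/N))$, and under the equivalence of the preceding proposition it corresponds to $S((G/K)^N)$.
\end{itemize}

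For naturality, a $G$-map $\alpha\colon X\to Y$ induces $S(X)\to S(Y)$. Decompose $X=X^N\sqcup X'$ and $Y=Y^N\sqcup Y'$, where $X'$ and $Y'$ are the orbits with isotropy in $\catF_N$. Then $\alpha$ maps $X^N$ into $Y^N$. The component $S(X^N)\to S(Y')$ factors through an object of $\mathsf{proj}(\catF_N;S)$, so it vanishes in the quotient. The surviving component $S(X^N)\to S(Y^N)$ is the inflation of the linearization of $\alpha^N$, which gives commutativity of the left square.

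The main subtlety is identifying the quotient equivalence concretely, namely checking that its inverse sends a morphism on $S(X^N)$ to the inflated map. This needs only the description of Hom-spaces by double coset bases, valid over any ring of characteristic $p$ as noted in that proposition's proof, so we follow \cite[Proposition 5.12]{BG25}.

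The right square commutes because $\Psi^H_p$ on $\catK$ is defined by extending degreewise.
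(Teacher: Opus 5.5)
Your proposal is correct and matches the paper's approach: the paper only says the proof follows identically to \cite[Proposition 5.12]{BG25}, and you have written out that argument (factor $\Psi^H_p$ as $\Psi^H\circ\Res^G_{N_G(H)}\circ\lambda_p^\ast$, reduce to a normal $p$-subgroup over the characteristic-$p$ ring $R/p$, and split $X=X^N\sqcup X'$). The step you flag as the main subtlety is in fact automatic: $S(\alpha|_{X^N})$ is literally $\Inf^G_{G/N}S(\alpha^N)$, and any quasi-inverse of the equivalence in the preceding proposition sends the image of an inflated map back to that map, up to the unit isomorphism; the double-coset description is only needed to prove that equivalence, which you may take as given.
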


    \begin{remark}
        If $p$ is a prime such that $pR \neq R$, then reduction modulo $p$ induces a closed immersion $\Spec(R/p) \hookrightarrow \Spec(R)$, and we regard $\frakp \in \Spec(R/p)$ via its image. In other words, we identify $\Spec(R/p)$ with the prime ideals of $R$ containing the ideal $pR$. We will identify $\Spec(R/p) \subseteq \Spec(R)$ as a subspace, and equivalently regard points of $\Spec(R/p)$ as points of $\Spec(R)$ containing $pR$.
    \end{remark}

    \begin{proposition}\label{prop:modularfixedptscommutes}
        Let $R, S$ be commutative rings, $p \in \Phi(G;R) \cap \Phi(G;S)$ be prime, let $H \leq G$ be a $p$-subgroup, and let $f\colon R\to S$ be a ring homomorphism. Let $f_p \colon: R/p \to S/p$ be the induced ring homomorphism. Then for all $p$-subgroups $H\leq G$, the following diagram commutes. 
        \begin{figure}[H]
            \centering
            \begin{tikzcd}
                \catK(G;R) \ar[r, "\Psi^H_p"] \ar[d, "f^\ast "]& \catK(\Weyl{G}{H}; R/p) \ar[d, "f_p^\ast "] \\
                \catK(G; S) \ar[r, "\Psi^H_p"] & \catK(\Weyl{G}{H}; S/p)
            \end{tikzcd}
        \end{figure}
    \end{proposition}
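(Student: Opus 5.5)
The plan is to factor both composites through reduction modulo $p$ and reduce the claim to a statement about characteristic $p$ rings. First, the square of base changes
\[
\catK(G;R)\xrightarrow{\lambda_p^\ast}\catK(G;R/p)\xrightarrow{f_p^\ast}\catK(G;S/p)
\quad\text{and}\quad
\catK(G;R)\xrightarrow{f^\ast}\catK(G;S)\xrightarrow{\lambda_p^\ast}\catK(G;S/p)
\]
commutes up to natural isomorphism, since $f_p\circ(R\to R/p)=(S\to S/p)\circ f$ as ring maps and base change is pseudofunctorial: $S/p\otimes_{R/p}(R/p\otimes_R x)\cong S/p\otimes_R x\cong S/p\otimes_S(S\otimes_R x)$, naturally and monoidally. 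Restriction $\Res^G_{N_G(H)}$ evidently commutes with any extension of scalars. So the problem reduces to showing that, for a ring map $g\colon A\to B$ of characteristic $p$ rings and a normal $p$-subgroup $N\trianglelefteq G$, we have $g^\ast\circ\Psi^N\cong\Psi^N\circ g^\ast$ as functors $\catK(G;A)\to\catK(G/N;B)$. Apply this with $A=R/p$, $B=S/p$, $g=f_p$, and group $N_G(H)$ with $N=H$.

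Since $\Psi^N$ on homotopy categories is defined degreewise from the additive functor on $\mathsf{perm}(G;-)^\natural$, it suffices to prove the commutation at the level of additive categories of $\natural$-permutation modules, naturally. The base change $g^\ast\colon\mathsf{perm}(G;A)^\natural\to\mathsf{perm}(G;B)^\natural$ sends $A(G/K)$ to $B(G/K)$, so it maps $\mathsf{proj}(\catF_N;A)$ into $\mathsf{proj}(\catF_N;B)$. It therefore descends to a functor on the additive quotients. Likewise $g^\ast$ commutes with $\Inf^G_{G/N}$. Consequently the equivalence $\mathsf{perm}(G/N;-)^\natural\simeq\mathsf{perm}(G;-)^\natural/\mathsf{proj}(\catF_N;-)$ of the preceding proposition is compatible with $g^\ast$. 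Chasing the quotient functor then gives the desired natural isomorphism $g^\ast\Psi^N\cong\Psi^N g^\ast$, because inverting the inflation equivalence on each side is compatible with the descended $g^\ast$.

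The main point needing care is this descent and compatibility of quasi-inverses, which I expect to be the real (if mild) obstacle. A concrete check is available: on the generators $A(G/K)$ both composites give $B((G/K)^N)$, consistent with \Cref{prop-gfixedpoints-Gsets}. On morphisms, the quotient kills exactly the maps factoring through $\catF_N$-orbits, and the double coset bases of the Hom-spaces are preserved by $g^\ast$. The equality therefore holds on transitive permutation modules and morphisms between them. It then extends to direct sums and summands by additivity and idempotent completion. Finally, extending degreewise to bounded homotopy categories gives the commuting square of tt-functors.
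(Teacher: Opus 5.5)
Your proposal is correct and follows the paper's proof: the paper also splits the square into three squares (reduction $\lambda_p^\ast$, restriction $\Res^G_{N_G(H)}$, and $\Psi^H$), and the only substantive point is that $f_p^\ast$ carries $\mathsf{proj}(\catF_H;R/p)$ into $\mathsf{proj}(\catF_H;S/p)$. Your extra discussion, in which $g^\ast$ descends to the additive quotient and is compatible with the inverse of the inflation equivalence, spells out what the paper summarizes as ``essentially follows from construction.''
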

    \begin{proof}
     Let $N$ denote $N_G(H)$.   The above diagram expands out as follows.
        \begin{figure}[H]
            \centering
            \begin{tikzcd}
                \catK(G;R) \ar[d, "f^\ast "] \ar[r, "\lambda_p^\ast "] & \catK(G;R/p) \ar[d, "f_p^\ast "] \ar[r,"\Res^G_{N}"]& \catK(N;R/p) \ar[d, "f_p^\ast "] \ar[r, "\Psi^H"] & \catK(\Weyl{G}{H}; R/p) \ar[d, "f_p^\ast "]\\
                \catK(G;S) \ar[r, "\lambda_p^\ast "] & \catK(G;S/p) \ar[r,"\Res^G_{N}"] & \catK(N;S/p)\ar[r, "\Psi^H"] & \catK(\Weyl{G}{H}; S/p)
            \end{tikzcd}
        \end{figure}
        Commutativity of the first square follows since $f$ and $f_p$ commute with modding out $p$. Commutativity of the second square is clear. Commutativity of the third square essentially follows from construction, noting that the base change $f_p^\ast $ sends $\mathsf{proj}(\mathscr{F}_H;R/p)$ to $\mathsf{proj}(\mathscr{F}_H;S/p)$, and that these constructions are well-defined since $p \in \Phi(G;R) \cap \Phi(G;S)$.
    \end{proof}

    In particular, \Cref{prop:modularfixedptscommutes} holds for the base change $R \to k(\frakp)$ for any prime $\frakp \in \Spec(R)$.
    
    \begin{definition}
        Fix a finite group $G$ and a commutative ring $R$. Given a prime $p$, we say $R$ is \emph{$p$-connected} if $\Spec(R/p)$ is connected. We say $R$ is \emph{strongly connected} (with respect to $G$) if $R$ is connected and for all $p \in \Phi(G;R)$, $\Spec(R/p)$ is connected. For instance, $\mathbb Z$ is strongly connected with respect to any group. 
    \end{definition}

    \begin{remark}
        If $R$ is connected and has positive characteristic, then $R$ is strongly connected. Indeed, this follows since all primes already contain $pR$. However, connectedness need not imply strong connectedness in general. For instance, if $G$ is a $2$-group, then $R = \bbZ[x]/(x^2 - x + 2)$ is connected, as $R$ is an integral domain. However, $R/2 \cong \bbF_2[x]/(x^2 - x) \cong \bbF_2 \times \bbF_2$. Similarly, $R$ can be $p$-connected but not connected.
    \end{remark}

    We establish a local-to-global detection theorem for Noetherian rings.

    \begin{theorem}\label{thm:dir_sum_of_homologies}
        Suppose $R$ is a commutative Noetherian ring, and let $x \in \catK(G;R)$. Then $x$ is an endotrivial complex if and only if for all $p \in \Phi(G;R)$ and $H \leq G$ a $p$-subgroup, the direct sum of the homology of $\Psi^H_p(x)$ belongs to $\Pic(R/p)$. 
    \end{theorem}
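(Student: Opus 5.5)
The plan is to reduce to residue fields and then apply the field-case detection result, \Cref{prop-detection-endo-fields}.

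\emph{Forward direction.} Suppose $x$ is endotrivial. Each $\Psi^H_p$ is a tt-functor, so $\Psi^H_p(x)$ is invertible in $\catK(\Weyl{G}{H};R/p)$. Apply $\Res^{\Weyl{G}{H}}_1$ and view the result in $\on{D}_{\mathsf{perf}}(R/p)$. By \cite[Lemma 3.3]{Fau03}, as in the proof of \Cref{lem:endotrival-Rsplit}, the direct sum of its homology is a tensor-invertible $R/p$-module, i.e. it lies in $\Pic(R/p)$. For $p=0$ the same argument applies with $\Psi^1_0=\id$. Note that $R/p$ need not be connected, which is why the statement only asks that the total homology be invertible.

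\emph{Converse.} By \cite[Theorem 4.8]{Gom25}, the functors $\lambda_\frakp^\ast$ for $\frakp\in\Spec(R)$ are jointly conservative. Together with \Cref{prop-conservative-detect}, it suffices to show that $k(\frakp)\otimes_R x$ is endotrivial for every $\frakp$.

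Fix $\frakp$ and let $q=\ch k(\frakp)$. Then $q\in\Phi(G;R)$, unless $q$ is a prime not dividing $|G|$; I treat that case separately below. Let $H$ be a $q$-subgroup. By \Cref{prop:modularfixedptscommutes} applied to $R\to k(\frakp)$,
\[
\Psi^H(k(\frakp)\otimes_R x)\cong k(\frakp)\otimes_{R/q}\Psi^H_q(x).
\]
The key input is the following claim: if $y\in\catK(\Weyl{G}{H};R/q)$ has total homology $\bigoplus_i \on{H}_i(y)\in\Pic(R/q)$, then $k(\frakp)\otimes y$ has homology of total $k(\frakp)$-dimension $1$. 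This is the main obstacle, because base change does not commute with homology in general. I would handle it as in \Cref{rem:prohomology-Rsplit}. Since $y$ is a perfect complex over $R/q$ with projective homology, it is $R/q$-split, i.e. quasi-isomorphic to its homology as a complex of $R/q$-modules. Hence
\[
\on{H}_\ast(k(\frakp)\otimes y)\cong k(\frakp)\otimes \on{H}_\ast(y),
\]
as in \Cref{lem:commute_with_homology}. An invertible module has rank one at every prime, so the total dimension is $1$.

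Having dimension $1$ in total forces the homology to be concentrated in a single degree with $k(\frakp)$-dimension $1$. Now \Cref{prop-detection-endo-fields} shows that $k(\frakp)\otimes_R x$ is endotrivial.

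\emph{Characteristic $0$, or characteristic coprime to $|G|$.} Here one uses only the $p=0$ condition, or more precisely $\Res^G_1$ with $\Psi^1_0=\id$. When $|G|$ is invertible in $k(\frakp)$, $\catK(G;k(\frakp))$ is semisimple-like: every module is a summand of a permutation module, and invertibility can be checked after forgetting to $k(\frakp)$. In particular $\Res^G_1$ is conservative in this case. If $0\notin\Phi(G;R)$, every residue field has positive characteristic, and either the previous paragraph applies or the characteristic is coprime to $|G|$. In the coprime case one can use the condition at any $p\in\Phi(G;R)$ with $H=1$, provided this gives information at $\frakp$. If $p\notin\frakp$ this does not directly apply, so one should argue the total homology of $x$ itself is invertible. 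I expect to derive this from the trivial-subgroup conditions via a Nakayama/support argument, and I flag it as the delicate bookkeeping point.
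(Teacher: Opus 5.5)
Your main line is the paper's proof. The forward direction uses the tt-functor $\Psi^H_p$ and \cite[Lemma 3.3]{Fau03}. The converse uses joint conservativity of the $\lambda_\frakp^\ast$, \Cref{prop:modularfixedptscommutes}, and \Cref{prop-detection-endo-fields} on each fiber. Your splitting argument is what justifies the paper's unelaborated claim that $\Res_1\Psi^H_p(x)$ lies in $\Pic(\on{D}_{\mathsf{perf}}(R/p))$: a perfect $R/q$-complex with projective homology is homotopy equivalent to its homology, so $k(\frakp)\otimes-$ commutes with homology. You also treat separately the fibers of characteristic $0$ or prime to $|G|$, using the $p=0$ condition and semisimplicity. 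That is more careful than the paper, which applies \Cref{prop-detection-endo-fields} uniformly even though it assumes $p\mid|G|$.

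The point you flag at the end cannot be settled by Nakayama or any support argument, because the statement is false there. The hypotheses only involve base change along $R\to R/p$ for $p\in\Phi(G;R)$, so they only see the locus $\bigcup_{p\in\Phi(G;R)}\Spec(R/p)\subseteq\Spec(R)$. If $\ch(R)=n>0$, then $0\notin\Phi(G;R)$. By the Chinese remainder theorem, this locus is then the union of the clopen pieces $\Spec(R/\ell)$ over primes $\ell\mid\gcd(n,|G|)$. The pieces for primes $\ell\mid n$ with $\ell\nmid|G|$ are invisible to the hypotheses.

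Two counterexamples:
\begin{enumerate}
\item For $R=\bbF_3$ and $G=C_2$ one has $\Phi(G;R)=\varnothing$, since $2\in R^\times$ and $\ch(R)\neq 0$. The condition is vacuous, yet $x=0$ is not endotrivial.
\item For a non-vacuous example, take $R=\bbZ/6\cong\bbF_2\times\bbF_3$, $G=C_2$, and $x=(R/2)[0]$ with trivial action, a summand of $R[0]$. Then $\Phi(G;R)=\{2\}$ and $\Psi^H_2(x)\cong\bbF_2[0]$ for both $H\leq C_2$, but $\lambda_{(3)}^\ast(x)=0$.
\end{enumerate}

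The paper's proof has the same hole. Its $\frakp$ only ranges over $\Spec(R/p)$ for $p\in\Phi(G;R)$, which exhausts $\Spec(R)$ exactly when $\ch(R)=0$ or every prime divisor of $\ch(R)$ divides $|G|$. The right move is to amend the statement, in one of two ways:
\begin{enumerate}
\item add that hypothesis; or
\item always impose the $p=0$ condition, namely $\bigoplus_i\on{H}_i(x)\in\Pic(R)$.
\end{enumerate}
With either amendment your argument is complete as written. The paper's applications are unaffected; for instance, \Cref{prop:hmkisdimfunc} reduces to $R=\bbZ$.
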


    \begin{proof}
    Assume that $x$ is endotrivial. Since $\Psi^H_p$ is a tt-functor, we must get that $\Psi^H_p(x)$ is an endotrivial complex in $\catK(\Weyl{G}{H};R/p)$, and hence $\mathrm{Res}^{\Weyl{G}{H}}_1(\Psi^H_p(x))$ is an invertible perfect $R/p$-complex. The conclusion follows then by \cite[Lemma 3.3]{Fau03}. 

    For the converse, fix $p \in \Phi(G;R)$ and $H \leq G$ a $p$-subgroup, and assume that $\on{H}_\ast(\Psi^H_p(x))=\bigoplus_i \on{H}_i(\Psi^H_p(x))$  belongs to $\Pic(R/p)$.  Now, let $\frakp\in \Spec(R/p)$. We obtain a commutative diagram 
    \[
    \begin{tikzcd}
        \catK(\Weyl{G}{H};R/p) \ar[r,"\lambda_\frakp^\ast"] \ar[d,"\mathrm{Res}_1"] & \catK(\Weyl{G}{H};k(\frakp)) \ar[d,"\mathrm{Res}_1"] \\ 
       \on{D}_\mathsf{perf}(R/p) \ar[r,"\lambda_\frakp^\ast"] & \on{D}_\mathsf{perf}(k(\frakp)).
    \end{tikzcd}
    \]
    Here we are identifying $ \catK(1;S)$ with $\on{D}_\mathsf{perf}(S)$. Now, our assumption on $\Psi^H_p(x)$ gives us that $\mathrm{Res}_1(\Psi^H_p(x))$ lies in $\mathrm{Pic}(\on{D}_\mathsf{perf}(R/p))$. Using again that $\lambda_\frakp^\ast$ is monoidal, we obtain that $\lambda_\frakp^\ast(\mathrm{Res}_1(\Psi^H_p(x)))$ must lie in $\mathrm{Pic}(\on{D}_\mathsf{perf}(k(\frakp)))\simeq \mathbb Z$. By the commutativity of the above square, we deduce that $\lambda_\frakp^\ast\Psi^H_p(x))\simeq \Psi^H\lambda_\frakp^\ast(x)$ must have homology in exactly one degree, and such cohomology must be 1-dimensional. Note that the previous equivalence corresponds to the commutativity of the square in \Cref{prop:modularfixedptscommutes}. 
    
    Keep $p$ and $\frakp$ fixed. We conclude that for any $H\leq G$ a $p$-subgroup,  $\Psi^H(\lambda_\frakp^\ast(x))$ has nonzero homology in exactly one degree, with that homology having $k$-dimension 1.  We now invoke \Cref{prop-detection-endo-fields} to deduce that $\lambda_\frakp^\ast(x)$ must be an endotrivial complex.
    
    Finally, letting $\frakp$ vary, by an application of \cite[Theorem 4.8]{Gom25} combined with \Cref{prop-conservative-detect}, we deduce that $x$ must be endotrivial as we wanted. 
    \end{proof}

    As an immediate consequence we obtain the following result. 
    
    \begin{corollary}\label{thm:localglobaletrivdetection}
        Suppose $R$ is Noetherian, and let $x \in \catK(G;R)$. If for all $p \in \Phi(G;R)$ and $H \leq G$ a $p$-subgroup, $\Psi^H_p(x)$ has nonzero homology in exactly one degree, with that homology belonging to $\Pic(R/p)$, then $x$ is an endotrivial complex. If $R$ is strongly connected, the converse holds. 
    \end{corollary}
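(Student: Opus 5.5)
The forward direction is a direct reduction to \Cref{thm:dir_sum_of_homologies}. Suppose that for every $p \in \Phi(G;R)$ and every $p$-subgroup $H \leq G$, the complex $\Psi^H_p(x)$ has nonzero homology in exactly one degree $n$, and that $\on{H}_n(\Psi^H_p(x)) \in \Pic(R/p)$. Then $\bigoplus_i \on{H}_i(\Psi^H_p(x)) = \on{H}_n(\Psi^H_p(x))$, since all other summands vanish, and this lies in $\Pic(R/p)$. So the hypothesis of \Cref{thm:dir_sum_of_homologies} holds, and that theorem shows $x$ is endotrivial. No connectedness is needed here.

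For the converse, assume $R$ is strongly connected and $x \in \Pic(\catK(G;R))$.

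1. Fix $p \in \Phi(G;R)$ and a $p$-subgroup $H \leq G$. Since $\Psi^H_p$ is a tt-functor, $y \coloneqq \Psi^H_p(x)$ is invertible in $\catK(\Weyl{G}{H};R/p)$.

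2. We want to apply \Cref{prop:etrivhomologyinonedegree} to $y$ over the ring $R/p$. That result requires the coefficient ring to be connected, which is exactly what strong connectedness gives: $\Spec(R/p)$ is connected for each $p \in \Phi(G;R)$.

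3. \Cref{prop:etrivhomologyinonedegree} is stated for an arbitrary finite group, so it applies to $\Weyl{G}{H}$. It gives that $y$ has nonzero homology in exactly one degree $b(y)$, and that $\on{H}_{b(y)}(y)$ is a tensor-invertible $R/p$-module, i.e.\ an element of $\Pic(R/p)$.

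4. The case $p = 0$ (when $\ch(R) = 0$) is covered the same way. There $H = 1$ and $\Psi^1_0$ is the identity, so the claim follows from \Cref{prop:etrivhomologyinonedegree} using that $R$ itself is connected.

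There is no real obstacle in this argument. The only point to check is that connectedness of $R/p$, rather than of $R$, is the hypothesis that matters in step 2. This is why strong connectedness, and not mere connectedness, is required for the converse: if $\Spec(R/p)$ splits, $y$ can have homology in different degrees on the different components. This is the situation illustrated by the example $\bbZ[x]/(x^2-x+2)$ in the preceding remark.
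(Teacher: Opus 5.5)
Your proof is correct and is the paper's argument: the first claim is the special case of \Cref{thm:dir_sum_of_homologies} in which the homology is concentrated in one degree, and the converse is \Cref{prop:etrivhomologyinonedegree} applied over the connected ring $R/p$ to the invertible object $\Psi^H_p(x)$. One caveat, which you share with the paper because it is inherited from \Cref{thm:dir_sum_of_homologies}: the first direction implicitly needs the subsets $\Spec(R/p)$, $p\in\Phi(G;R)$, to cover $\Spec(R)$ (automatic when $\ch(R)=0$); for example, for $R=\bbF_3$ and $G=C_2$ one has $\Phi(G;R)=\varnothing$, so $x=0$ satisfies the hypothesis vacuously.
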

    \begin{proof}
        The first claim is a particular case of the previous theorem, hence it remains to show the converse. But this follows since $R$ is strongly connected. Indeed, for $p \in \Phi(G;R)$ we have that $R/p$ is connected, and hence the conclusion follows by \Cref{prop:etrivhomologyinonedegree} applied to $\Psi^H_p(x)$.
    \end{proof}

    \begin{remark}
         As a result of \Cref{thm:dir_sum_of_homologies}, we can refine the homomorphism $\catH$ for non-connected rings as follows: for $x \in \Pic(\catK(G;R))$, set $\catH(x) \coloneqq \bigoplus_{i \in \bbZ} \on{H}_i(x)$. It is routine to verify that $\catH$ is a group homomorphism by repeated use of \Cref{prop:etrivhomologyinonedegree} and the K\"unneth spectral sequence, since $x$ decomposes in accordance with $R$ into a sum of complexes with homology in one degree. Additionally, when $R$ is connected, this coincides with the previous definition of $\catH$. 

        The upshot is that $\catH(\Psi^H_p(x))$ is always a well-defined invertible $R_p(\Weyl{G}{H})$--module (that is, an element of $\Pic(\mathsf{mod}(R_p(\Weyl{G}{H})))$), even if $R/p$ is not connected. 
    \end{remark}
    
    \subsection{Generalized and fiberwise h-marks}
    
    We introduce two types of h-marks for endotrivials akin to those of \cite{Mil24a}, one coming from the fibers $k(\frakp)$ for $\frakp\in \Spec(R)$, and one which we define over a prime $p$ when our base ring is $p$-connected using generalized modular fixed points. Both of these notions will be useful for classification. Recall the h-mark homomorphism for fields, $h\colon \Pic(\catK(G;k))\to \mathrm{CF}_b(G,\mathrm{char}(k))$ from \Cref{def-hmark-field}. 

    \begin{definition}
        Let $x \in \Pic(\catK(G;R))$. For every $\frakp \in \Spec(R)$, $\lambda_\frakp^\ast(x)$ is endotrivial, and if $\ch(k(\frakp)) =p > 0$, there is a corresponding `local' h-mark function $h_{\lambda_\frakp^\ast (x)}\colon \on{Sub}_{p}(G) \to \bbZ$. Otherwise if $\ch(k(\frakp)) = 0$, then the only homological data available for $\lambda_\frakp^\ast(x)$ is the location of its homology; we consider this the characteristic 0 h-mark. Putting everything together, for every $\frakp\in\Spec(R)$ and $\ch(k(\frakp))$-subgroup $H \leq G$, the \emph{fiberwise h-mark of $x$ at $\frakp$ and $H$} is $h_{\lambda_\frakp^\ast(x)}(H)$.

        In fact, we obtain a homomorphism \[h_{\frakp}\colon \Pic(\catK(G;R)) \to \on{CF}_b\big(G,\ch(k(\frakp))\big),\, x \mapsto h_{\lambda_\frakp^\ast (x)},\] where the image necessarily is a Borel-Smith function by \Cref{thm:endotriv_classification_over_field}. This is the \emph{fiberwise h-mark homomorphism at $\frakp$}. Note if $\ch(k(\frakp)) = 0$ then $\on{CF}_b(G,0) \cong \bbZ$. 
        
        Moreover, the global data induces a group homomorphism 
        \begin{align*}
            h\colon \Pic(\catK(G;R)) &\to \prod_{\frakp \in \Spec(R)} \on{CF}_b(G, \ch(k(\frakp)))\\
            x &\mapsto (h_{\lambda_\frakp^\ast  (x)})_{\frakp \in \Spec(R)}
        \end{align*}
        which we refer to as the \emph{fiberwise h-mark homomorphism}. 
    \end{definition}

    This fiberwise h-mark homomorphism is rather coarse but exists regardless of connectivity conditions on $R$. Our first goal is to show that there is redundant information present, and that we can reduce down the indexing set. 
    
    On the other end of the spectrum, we introduce a finer notion of h-marks, but only for sufficiently well-behaved rings. We first record the following observation. 

    \begin{proposition}\label{prop:connected_generic_locus}
        Let $G$ be a commutative Noetherian ring, $G$ a finite group of order $n$, and $x$ an endotrivial $RG$--complex. Then for any two points $\frakp$ and $\frakq$ in the same connected component of $\Spec(R_n)$, the fiberwise h-marks $h_{\frakp}$ and $h_{\frakq}$ agree.  
    \end{proposition}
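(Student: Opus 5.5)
The plan is to reduce everything to the location of the single nonzero homology group, and then use the connectedness argument of \Cref{prop:etrivhomologyinonedegree} over the localization $R_n$.

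First I make explicit what the fiberwise h-marks are at a point $\frakp \in D(n) = \Spec(R_n) \subseteq \Spec(R)$. Since $n \notin \frakp$, the characteristic $\ell \coloneqq \ch(k(\frakp))$ is either $0$ or a prime not dividing $|G|$. In either case the only $\ell$-subgroup of $G$ is the trivial subgroup $1$, so $\on{CF}_b(G,\ell) \cong \bbZ$. Under this identification, $h_\frakp(x) = h_{\lambda_\frakp^\ast(x)}(1) = b(\lambda_\frakp^\ast(x))$, because $\Psi^1$ is the identity functor. Hence it suffices to show that the function
\[
\frakp \mapsto b(k(\frakp)\otimes_R x)
\]
is constant on each connected component of $\Spec(R_n)$. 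I will also note that for $\frakp \in D(n)$ the map $R \to k(\frakp)$ factors as $R \to R_n \to k(\frakp)$, and $k(\frakp)$ is also the residue field of $R_n$ at the corresponding prime. So
\[
k(\frakp)\otimes_R x \cong k(\frakp)\otimes_{R_n}(R_n\otimes_R x).
\]

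Next I base change along $\lambda_n\colon R \to R_n$. Since $\lambda_n^\ast$ is a tt-functor, $x_n \coloneqq R_n\otimes_R x$ is endotrivial in $\catK(G;R_n)$. The ring $R_n$ is Noetherian, so $\Spec(R_n)$ has finitely many connected components. Each component is therefore clopen and cut out by an idempotent, which gives a decomposition $R_n \cong R_1\times\cdots\times R_m$ with each $R_i$ connected and $\Spec(R_i)$ the $i$-th component.

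By the product decomposition proposition for $\catK(G;-)$, the object $x_n$ corresponds to a tuple $(x_1,\dots,x_m)$ with each $x_i \in \Pic(\catK(G;R_i))$. For $\frakp$ in the $i$-th component, the residue map $R_n \to k(\frakp)$ factors through the projection to $R_i$. Therefore $k(\frakp)\otimes x \cong k(\frakp)\otimes_{R_i} x_i$, and \Cref{prop:etrivhomologyinonedegree}, applied to the connected ring $R_i$, gives $b(k(\frakp)\otimes x) = b(x_i)$. This value is independent of $\frakp$ within the component, which proves the claim.

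The only delicate point is the passage from ``same connected component'' to a ring decomposition. This is where Noetherianity is used: in general, connected components need not be open. Everything else is a direct bookkeeping application of \Cref{prop:etrivhomologyinonedegree}.
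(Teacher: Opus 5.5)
Your proof is correct and is essentially the paper's argument. Both identify the fiberwise h-mark at a point of $\Spec(R_n)$ with the degree of the unique nonzero homology, base change to $R_n$, and show this degree is constant on connected components of $\Spec(R_n)$. The paper does this by appealing directly to the invertibility of $\bigoplus_i \on{H}_i(R_n\otimes_R x)$; you do it by splitting $R_n$ into connected factors and applying \Cref{prop:etrivhomologyinonedegree}, whose proof rests on the same Fausk argument.

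One small correction to your closing remark: the degree function is locally constant, so it is constant on connected components even when these are not open. Noetherianity is therefore only needed for your particular splitting step, not for the statement itself.
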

    \begin{proof}
        First of all, note that any residue field of a point in $\Spec(R_n)$ must have characteristic coprime to $n$. Then the corresponding fiberwise h-marks at primes lying in $\Spec(R_n)$ take values only in the trivial group. The result now follows by noticing that $R_n\otimes x$ must satisfy that $\bigoplus_i\on{H}_i(R_n\otimes x)$ is a tensor invertible $R_n$--module. Since base change to any residue field of a prime lying in $\Spec(R_n)$ must factor through $R[1/p]$, one deduces that the degree of $\bigoplus_i \on{H}_i(R_n\otimes x)$ which survives after tensoring with the residue field $k(\frakp)$  only depends on the connected component of $\Spec(R_n)$ containing $\frakp$. 
    \end{proof}

    \begin{definition}
       Let $p$ be a singular prime of $G$ with respect to $R$. Assume $R$ is $p$-connected and let $x \in \Pic(\catK(G;R))$ be endotrivial. \Cref{prop:etrivhomologyinonedegree} asserts $R/p \otimes_R x$ has nonzero homology in one degree. For any
        $p$-subgroup $H \leq G$, $\Psi^H_p(x) \in \Pic(\catK(\Weyl{G}{H}; R/p))$ also has nonzero homology in one degree. Let $h'_x(H)$ denote the homological degree of the nonzero homology of $\Psi^H_p(x)$. Then the function $h'_x\colon \on{Sub}_p(G) \to \bbZ$ is a \emph{superclass function}, i.e., constant on conjugacy classes. We call $h'_x$ the \emph{generalized h-marks} of $x$ at $p$. Moreover, we have a group homomorphism, the \emph{generalized h-mark homomorphism} at $p$, \[h' \colon \Pic(\catK(G;R)) \to \on{CF}_b(G,p),\;  x\mapsto h'_x.\] 
        If $R$ is strongly connected, then more generally, let $\on{Sub}_\Phi(G;R)$ denote the collection of $p$-subgroups of $G$, where $p$-runs through the primes in $\Phi(G;R)$. Then $h'_x$ may be regarded as a function $\on{Sub}_\Phi(G) \to \bbZ$. We call the function $h'_x$ the \emph{generalized h-marks} of $x$. Let $\on{CF}^\Phi(G;R)$ denote the set of superclass functions of $G$ valued on subgroups in $\on{Sub}_\Phi(G;R)$. 
    \end{definition}

    We next show that generalized h-marks can be locally computed at any compatible prime; therefore, for $p$-connected rings, the two notions of h-marks coincide at $p$-local subgroups of $G$. 
    For non-$p$-connected rings, we need not have generalized h-marks at $p$ since homology over $R/p$ need not be contained in one degree. However, when such a situation arises, we can still use global h-marks to link fiberwise h-marks. 

    \begin{proposition}\label{prop:hmarkslocallydetermined}
        Let $p \in \Phi(G;R)$ and assume $R$ is $p$-connected. Let $x \in \Pic(\catK(G;R))$, and suppose $\frakp \in \Spec(R)$ satisfies $\on{char}(k(\frakp)) = p$. Then \[h_{\lambda_\frakp^\ast(x)} \equiv h'_x|_{\on{Sub}_p(G)}.\] In particular, $h'_x|_{\on{Sub}_p(G)}$ satisfies the Borel-Smith conditions.
    \end{proposition}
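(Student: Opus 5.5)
The plan is to compare the two h-marks through the commutative square of \Cref{prop:modularfixedptscommutes}, applied to the residue map $R \to k(\frakp)$. Since $\ch(k(\frakp)) = p$, we have $p \in \frakp$. Hence $\lambda_\frakp\colon R \to k(\frakp)$ factors as $R \to R/p \to k(\frakp)$, and the induced map on mod $p$ reductions is $f_p\colon R/p \to k(\frakp)/p = k(\frakp)$. Moreover $p \in \Phi(G;k(\frakp))$, because $p$ divides $|G|$ by assumption that $p\in\Phi(G;R)$.

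Fix a $p$-subgroup $H \leq G$. \Cref{prop:modularfixedptscommutes} then gives an isomorphism
\[f_p^\ast\,\Psi^H_p(x) \cong \Psi^H_p(\lambda_\frakp^\ast x) = \Psi^H(\lambda_\frakp^\ast x)\]
in $\catK(\Weyl{G}{H};k(\frakp))$. The last equality holds because $\lambda_p^\ast$ is the identity when the base ring $k(\frakp)$ already has characteristic $p$.

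Now put $y \coloneqq \Psi^H_p(x)$. It lies in $\Pic(\catK(\Weyl{G}{H};R/p))$, since it is the image of an invertible object under a tt-functor. Because $R/p$ is connected, \Cref{prop:etrivhomologyinonedegree} applies to $y$ over the ring $R/p$: $y$ has homology in exactly one degree $b(y) = h'_x(H)$, and for every prime of $R/p$, in particular $\frakp/pR$, we get $b(k(\frakp)\otimes_{R/p} y) = b(y)$. The residue field of $\frakp/pR$ in $R/p$ is exactly $k(\frakp)$, and $f_p$ is the corresponding residue map. So the degree of the unique nonzero homology of $\Psi^H(\lambda_\frakp^\ast x)$, which is $h_{\lambda_\frakp^\ast(x)}(H)$ by \Cref{def-hmark-field}, equals $h'_x(H)$. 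Letting $H$ range over $\on{Sub}_p(G)$ gives $h_{\lambda_\frakp^\ast(x)} \equiv h'_x|_{\on{Sub}_p(G)}$.

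The Borel-Smith statement follows at once. The field $k(\frakp)$ has characteristic $p$ and $\lambda_\frakp^\ast(x)$ is endotrivial, so $h_{\lambda_\frakp^\ast(x)}\in \on{CF}_b(G,p)$ by \Cref{thm:endotriv_classification_over_field}(b). (If $p \nmid |G|$ there is nothing to check, but that case is excluded by the definition of $\Phi(G;R)$.) Such a prime $\frakp$ exists, since $pR \neq R$ implies $\Spec(R/p) \neq \varnothing$.

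The only point needing care is the identification of the residue field of $\frakp/pR$ in $R/p$ with $k(\frakp)$, together with compatibility of the maps, so that \Cref{prop:etrivhomologyinonedegree} really computes the same complex as $\Psi^H(\lambda_\frakp^\ast x)$. I expect this to be routine: $(R/p)/(\frakp/pR) \cong R/\frakp$, and the square of \Cref{prop:modularfixedptscommutes} is natural in the ring map. Everything else is formal.
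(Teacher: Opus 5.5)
Your proof is correct and is essentially the paper's argument. The paper likewise combines the compatibility square of \Cref{prop:modularfixedptscommutes} (for $R\to k(\frakp)$) with \Cref{prop:etrivhomologyinonedegree} applied over the connected ring $R/p$, and then invokes \Cref{thm:endotriv_classification_over_field} for the Borel--Smith conditions. You additionally spell out details the paper leaves implicit: the identification of the residue field of $\frakp/pR$ in $R/p$ with $k(\frakp)$, and the existence of some $\frakp$ with $\ch(k(\frakp))=p$ because $pR\neq R$.
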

    \begin{proof}
        This is an immediate consequence of \Cref{prop:modularfixedptscommutes}, which gives that modular fixed points and generalized modular fixed points are compatible, and \Cref{prop:etrivhomologyinonedegree}, which gives that the fiberwise homology agrees with the `global' homology of $R/p$. The last statement follows from \Cref{thm:endotriv_classification_over_field}.
    \end{proof}

    \begin{corollary}\label{cor:connected_component_hmks_coincide}
        Let $R$ be a connected Noetherian ring and let $p \in \Phi(G;R)$. Suppose $\frakp_1,\frakp_2$ live in the same connected component of $\Spec(R/p)$. Then $h_{\frakp_1}\equiv h_{\frakp_2}$.
    \end{corollary}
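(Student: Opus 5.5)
The plan is to reduce to \Cref{prop:hmarkslocallydetermined} by passing to the connected component. Let $C \subseteq \Spec(R/p)$ be the connected component containing $\frakp_1$ and $\frakp_2$. Since $R$ is Noetherian, $R/p$ is Noetherian, so $\Spec(R/p)$ has finitely many connected components. Each component is therefore clopen, and there is an idempotent $e \in R/p$ with $R/p \cong (R/p)e \times (R/p)(1-e)$ and $\Spec((R/p)e) = C$.

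The first step is to lift $e$ to $R$ without assuming $R$ is $p$-adically complete. Instead of lifting, I would use the open set directly. Choose $f \in R$ whose image in $R/p$ is $e$; then $D(\bar f) \cap \Spec(R/p) = C$. By \Cref{prop:localization_identification} (or simply base change along $R \to R_f$), $x$ maps to an endotrivial $R_f \otimes_R x$. Moreover $R_f/p \cong (R/p)_{e} \cong (R/p)e$, whose spectrum is $C$, which is connected. Hence $R_f$ is $p$-connected. Connectedness of $R_f$ itself is not needed, because \Cref{prop:hmarkslocallydetermined} only assumes $p$-connectedness.

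Next, apply \Cref{prop:hmarkslocallydetermined} to the ring $R_f$ and the endotrivial $R_f\otimes_R x$. Both $\frakp_i$ lie in $D(f)$ and contain $p$, so they correspond to primes of $R_f$ with residue field $k(\frakp_i)$ of characteristic $p$. Since $\lambda_{\frakp_i}^\ast(x) \cong \lambda_{\frakp_i R_f}^\ast(R_f\otimes_R x)$, the proposition gives $h_{\frakp_1} = h'_{R_f\otimes x}|_{\on{Sub}_p(G)} = h_{\frakp_2}$ as functions on $p$-subgroups.

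Finally, both fiberwise h-marks live in $\on{CF}_b(G,p)$, so they agree as homomorphisms. The main point to check carefully is that $p \in \Phi(G;R_f)$, i.e. $p \notin R_f^\times$; this holds because $C$ is nonempty.

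A possible alternative route avoids localizing altogether: work directly over $R/p$. Then $\Psi^H_p(x)$ decomposes along $R/p \cong (R/p)e\times(R/p)(1-e)$, and the $(R/p)e$-factor has homology in a single degree by \Cref{prop:etrivhomologyinonedegree}. Compatibility with residue fields then follows from \Cref{prop:modularfixedptscommutes}. I expect the only subtle step to be this decomposition and its compatibility with $\Psi^H$, which holds because $\Psi^H$ is defined degreewise and commutes with base change along $R/p \to (R/p)e$.
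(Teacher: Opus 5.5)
Your proposal is correct, but your main argument takes a different route from the paper's. Your closing ``alternative route'' is essentially the paper's proof.

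The paper does not localize $R$. It decomposes $R/p = R_1\times\cdots\times R_n$ into indecomposable factors and takes the factor $R_i$ with $\frakp_1,\frakp_2\in\Spec(R_i)$. It then base changes $x$ along $R\to R/p\to R_i$. Because $R_i$ is connected of characteristic $p$, it is automatically $p$-connected, so \Cref{prop:hmarkslocallydetermined} applies to $R_i\otimes_R x$. Both fiberwise h-marks then equal the global h-marks of that complex, since base change to $k(\frakp_1)$ and $k(\frakp_2)$ factors through $R_i$.

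Your main route replaces $R_i$ with the Zariski localization $R_f$, where $f$ is any preimage of the idempotent $e$. This costs you a few extra verifications, and you handle each of them correctly:
\begin{enumerate}
\item $R_f/p\cong (R/p)_e\cong (R/p)e$, so $\Spec(R_f/p)=C$ is connected and $R_f$ is $p$-connected.
\item $pR_f\neq R_f$ because $C\neq\varnothing$, so $p\in\Phi(G;R_f)$.
\item $\frakp_i\in D(f)$, so $k(\frakp_i)$ is also the residue field of $\frakp_iR_f$ and $\lambda_{\frakp_i}^\ast$ factors through $R_f$.
\end{enumerate}

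What your route buys is that the intermediate category is $\catK(G;R_f)\cong\catK(G;R)(\open(f))$ by \Cref{prop:localization_identification}, i.e.\ a genuine open piece of $\Spc(\catK(G;R))$. That is exactly the device the paper uses later: lifts $a_i$ of the primitive idempotents of $R/p$ in \Cref{cons:open_cover_of_spc} and \Cref{rmk:local_endotriv_observations}, in preparation for gluing. For the corollary on its own, the paper's route is shorter, since it needs neither the lift nor the check on $R_f/p$.
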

    \begin{proof}
        We have a decomposition $R/p = R_1 \times\cdots \times R_n$ into indecomposable summands and a corresponding decomposition $\Spec(R/p) = \bigsqcup_{i \in I}\Spec(R_i)$ into connected components over some finite indexing set $I$. Suppose $\frakp_1,\frakp_2 \in \Spec(R_i)$ for a fixed $i \in I$. Given any endotrivial $x \in \Pic(\catK(G;R))$, we have an endotrivial $R_i \otimes_R x \in \catK(G;R_i)$ given by base change along the homomorphism $R \to R/p \to R_i$. Note that  base change to $k(\frakp_1)$ and to $ k(\frakp_2)$ factors through the base change to $R_i$, since $\frakp_1,\frakp_2 \in \Spec(R_i) \subseteq\Spec(R)$, therefore the fiberwise h-marks of $x \in \Pic(\catK(G;R))$ at $\frakp_1$, respectively at $\frakp_2$, coincide with those of $R_i \otimes_R x \in \Pic(\catK(G;R_i))$. Since $R_i$ is $p$-connected, \Cref{prop:hmarkslocallydetermined} asserts the fiberwise h-marks coincide with the global h-marks of $R_i \otimes_R x$; in particular, they must coincide. 
    \end{proof}

    \begin{definition}\label{rmk:refined_h_mk_func}
        Let $G$ be a finite group of order $n$ and $R$ be a connected, Noetherian ring.  As a consequence of \Cref{cor:connected_component_hmks_coincide} and \Cref{prop:connected_generic_locus}, the fiberwise h-mark homomorphism is determined locally on each connected component of $\Spec(R/p)$, for all $p \in \Phi(G;R)$. We deduce that the fiberwise h-mark homomorphism takes values in the following group. First, let us set some notation. Let $\Phi(G)$ denote the set of all primes dividing $n$. If $\Phi(G;R)$ contains at least one prime, we define 
        \[
            \pi_0(\Phi(G;R))\coloneqq \pi_0\left(\left( \bigsqcup_{p\in \Phi(G)} \Spec(R/p)\right) \sqcup \Spec(R_n) \right),
        \]
        and
        \[
            \tilde \pi_0(\Phi(G;R))\coloneqq \pi_0\left( \bigsqcup_{p\in \Phi(G;R)} \Spec(R/p)\right).
        \]
        Abusing notation, we write $ \frakp \in \tilde \pi_0(\Phi(G;R))$ to denote a choice of representative for a connected component. 
        
        On the other hand, if there is no such prime, we instead choose any $\frakp\in \Spec(R)$ and set $\tilde \pi_0(\Phi(G;R))\coloneqq \{\frakp\}$. Note that if $G$ is a $p$-group, then $\tilde \pi_0(\Phi(G;R))$ is simply the connected components of $\Spec(R/p)$ or a singleton containing an arbitrary prime of $\frakp$.
        Then, the fiberwise h-mark factors through 
        \begin{align*}
            h\colon \Pic(\catK(G;R)) &\to  \prod_{\frakp\in\tilde \pi_0(\Phi(G;R))}\on{CF}_b(G,\ch(k(\frakp)))\\
            x &\mapsto (h_{k(\frakp) \otimes_R x})_{\frakp \in \tilde \pi_0(\Phi(G;R))}
        \end{align*}
        We regard this homomorphism the \emph{(reduced) fiberwise h-mark homomorphism}, and unless we specify otherwise, let $h$ denote the reduced version. 
        Note the cardinality of $\pi_0(\Phi(G;R))$ is always finite! For instance, if $R=\mathbb{Z}$, then $\pi_0(\Phi(G;R))$ has as many elements as $n$ has prime divisors plus the number of connected components of $\Spec(R_n)$, and  $\tilde \pi_0(\Phi(G;R))$ has an element for each prime dividing $n$.  
    \end{definition}

    \begin{remark}
        One may also define a `generalized h-mark homomorphism' from $\Pic(\catK(G;R))$ for $p$-connected or strongly connected rings. These will not play as much of a role from here, and as we have seen, can already be recovered from the fiberwise h-mark homomorphisms.
    \end{remark}

    \section{Representation spheres and the orbit category}\label{sec:rep_spheres}

    Shifting gears, we show that, like in the case of endotrivials over a field, or the case of genuine $G$-spectra (see e.g. \cite[Section 4]{FLM01}), representation spheres induce endotrivials. We follow the exposition of \cite{HY14, Y17}.

    \begin{recollection}
        Let $G$ be a finite group. A \emph{geometric homotopy representation} is a finite $G$-CW-complex $X$ for which each fixed point set $X^H$ is homotopy equivalent to a sphere, for any subgroup $H\leq G$. A \emph{representation sphere}, a one-point compactification of a real (orthogonal) representation of $G$, can be given a finite $G$-CW-complex structure, and as such, is an example of a geometric homotopy representation. 

        Given a geometric homotopy representation $X$, we define its \emph{dimension function} $\dim_X\colon  \on{Sub}(G) \to \bbN$ by $\dim_X(H) = \dim(X^H)$. Similarly, given a (generalized) character $\rho$, we define the dimension function in the analogous way. The dimension function can be computed directly via \[\dim_\rho(H) = \frac{1}{|H|}\sum_{g \in H} \rho(g),\] where $\rho$ denotes the real-valued character associated to $X$.
    \end{recollection}

    \begin{recollection}\label{def:orbit_cat}
        Let $G$ be a finite group. The \emph{orbit category} $\calO(G)$ is defined as the full subcategory of $G\on{-}\mathsf{set}$ whose objects are the finite $G$-sets $G/H$ with $H \leq G$. In particular, morphisms are $G$-set homomorphisms. If $\catF$ is a subset of the subgroups of $G$ closed under $G$-conjugation, we define $\calO_\catF(G)\subseteq \calO(G)$ to be the full subcategory of $\calO(G)$ with objects the $G$-sets with isotropy subgroups in $\catF$, i.e., $G/H$ with $H \in \catF$. 

        We can consider the module category over $\calO(G)$ in the following sense. For any commutative ring, a (right) $R\calO(G)$-module $M(-)$ is a contravariant functor $\calO(G)  \to \mathsf{mod}(R)$. We denote the $R$-module $M(G/K)$ by $M(K)$ for shorthand, and write $M(f)\colon M(L) \to M(K)$ for a $G$-map $G/K \xrightarrow[]{f} G/L$. Note that the $R$-module $M(K)$ is in fact a (right) $R(\Weyl{G}{K})$-module via the action of $\on{Out}(G/K) = \Weyl{G}{K}$. In particular, $M(1)$ is a (right) $RG$-module. All constructions may be translated into statements about left modules as well, since $G \cong G\op$, and we do so without further mention. 

        The category of $R\calO(G)$-modules is abelian, so usual homological algebraic concepts apply. Note kernels and cokernels are computed pointwise, that is, a sequence \[0 \to A(-) \to B(-) \to C(-) \to 0\] is exact if and only if for all $H \in \on{Sub}(G)$, \[0\to A(H) \to B(H) \to C(H) \to 0\] is an exact sequence of $R$-modules. 

        For a fixed subgroup $H\leq G$, we write $R(G/H)^?$ to denote the $R\calO(G)$-module given by the contravariant  functor  $K\mapsto R((G/H)^K)$ which maps an equivariant map $G/K\xrightarrow[]{g}G/L$ to the corresponding $R$-linear extension of multiplying by $g$, $R((G/H)^K)\to R((G/H)^L)$.

        One has by Yoneda that for every subgroup $H\leq G$, the $R\calO(G)$-module $R(G/H)^?$ is projective. A $R\calO(G)$-module is \emph{free} if it is isomorphic to a direct sum of $R\calO(G)$-modules of this form, and in fact, a $R\calO(G)$-module is projective if and only if it is a direct summand of a free module. We write $\mathsf{proj}(R\calO(G))$ to denote the full subcategory of $\mathsf{mod}(R\calO(G))$ on projective $R\calO(G)$-modules. 
    \end{recollection}

    \begin{construction}
        Given a $G$-CW-complex $X$, there is an associated chain complex of $R\calO(G)$-modules \[\tilde{C}(X^?;R): \cdots \to R(X_n^?) \xrightarrow{\partial_n} R(X_{n-1}^?) \xrightarrow{\partial_{n-1}} \cdots \xrightarrow{\partial_1} R(X_0^?) \to \underline{R} \xrightarrow{\varepsilon} 0,\] where $X_i$ denotes the set of (oriented) $i$-dimensional cells in $X$, $R(X_i^?)$ denotes the associated free $R\calO(G)$-module given by $R(X_i^?)(H)\coloneqq R(X_i^H)$, $\underline{R}$ denotes the trivial representation satisfying $\underline{R}(H) = R$ for all $H \leq G$, and $\varepsilon$ denotes the augmentation homomorphism. 
    \end{construction}

     \begin{remark}
         Observe that evaluation at $H \leq G$ induces a functor 
        \[
            -(H)\colon\on{K}_b(\mathsf{proj}(R\calO(G))) \to \catK(\Weyl{G}{H};R).
        \]
     \end{remark}
    
     The next proposition, whose proof is a routine exercise using \Cref{prop-gfixedpoints-Gsets}, demonstrates that evaluation at subgroups identifies with modular fixed points. 

    \begin{proposition}\label{prop:evandmodfixedptscommute}
        Let $G$ be a finite group, $p \in \Phi(G;R)$, and $H \leq G$ a $p$-subgroup. Then the following diagram commutes.
        \begin{figure}[H]
            \centering
            \begin{tikzcd}
                \on{K}_b(\mathsf{proj}(R\calO(G))) \ar[r, "-(H)"] \ar[d, "-(1)"'] & \catK(\Weyl{G}{H};R) \ar[d, "\lambda_p^\ast "] \\
                \catK(G;R) \ar[r, "\Psi_p^H"]& \catK(\Weyl{G}{H};R/p)
            \end{tikzcd}
        \end{figure}
    \end{proposition}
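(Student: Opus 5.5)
Since all four functors in the square are additive and extended degreewise to bounded homotopy categories, it suffices to check that the square commutes at the level of additive categories: from $\mathsf{proj}(R\calO(G))$ to $\mathsf{perm}(\Weyl{G}{H};R/p)^\natural$. Both composites are additive and compatible with idempotents. We may therefore reduce further to the full subcategory of free $R\calO(G)$-modules $R(G/K)^?$ and the morphisms between them, and then pass to direct sums and summands.

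On objects, evaluation at $1$ sends $R(G/K)^?$ to $R((G/K)^1)=R(G/K)$. Applying $\Psi^H_p$ and using \Cref{prop-gfixedpoints-Gsets}, this becomes $R/p\,((G/K)^H)$. Going the other way, evaluation at $H$ gives $R((G/K)^H)$ as a $\Weyl{G}{H}$-module, and $\lambda_p^\ast$ turns it into $R/p\,((G/K)^H)$. The two agree.

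On morphisms, Yoneda gives
\[
\Hom_{R\calO(G)}\bigl(R(G/K)^?,R(G/L)^?\bigr)\cong R((G/L)^K),
\]
which has an $R$-basis of $G$-maps $G/K\to G/L$. Each such $G$-map $\alpha$ acts on the value at a subgroup by the $R$-linearization of $\alpha$ restricted to fixed points. So the top-right composite sends $\alpha$ to $R/p\,(\alpha^H)$. The left-bottom composite sends $\alpha$ to $\Psi^H_p(R\alpha)$, which by the naturality in \Cref{prop-gfixedpoints-Gsets} (the left square there is a commutative square of functors) is again $R/p\,(\alpha^H)$. By $R$-linearity the two composites agree on all morphisms.

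The main point to verify is that the isomorphisms $\Psi^H_p(R(G/K))\cong R/p\,((G/K)^H)$ in \Cref{prop-gfixedpoints-Gsets} are natural in $G$-maps, rather than merely objectwise. This holds because, as in \cite[Proposition 5.12]{BG25}, $\Psi^H$ is the quotient by $\mathsf{proj}(\catF_H)$, and on the basis of a permutation module it kills the non-$H$-fixed basis elements. That description is visibly natural. With naturality in hand, the bounded homotopy statement follows by applying the additive commutation termwise to complexes.
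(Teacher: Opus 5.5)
Your argument is correct and is the routine exercise the paper has in mind. The paper gives no proof beyond saying the result follows from \Cref{prop-gfixedpoints-Gsets}; you carry this out by reducing to the additive level on free modules $R(G/K)^?$, matching objects and the Yoneda basis of $G$-maps, and using the naturality built into that proposition. The one check you leave implicit is that the $\Weyl{G}{H}$-action on $R((G/K)^H)$ coming from $\on{Out}(G/H)$ is the natural action on fixed points; this follows at once from the Yoneda identification.
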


    \begin{theorem}\label{prop:hmkisdimfunc}
        Let $R$ be a commutative ring, let $G$ be a finite group, and let $S(V)$ be a representation sphere for $G$. Then \[x \coloneqq \tilde{C}(S(V)^1; R) \in \catK(G;R)\] is endotrivial. 
        Moreover, the generalized h-mark function (if it exists) of $x$ satisfies \[h'_x(H) = \dim_V(H)\] for all $p$-subgroups $H \leq G$, with $p \in \Phi(G;R)$. 
    \end{theorem}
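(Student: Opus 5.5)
We view $x$ as the evaluation at the trivial subgroup of the chain complex of projective $R\calO(G)$-modules $\tilde{C}(S(V)^?;R)$. This is a bounded complex of finitely generated free $R\calO(G)$-modules, since $S(V)$ is a finite $G$-CW-complex. Each term $R(X_i^?)$ evaluated at $1$ is the permutation module $R(X_i)$, so $x \in \catK(G;R)$. Moreover, evaluating at any $H \leq G$ gives the augmented reduced cellular chain complex $\tilde{C}(S(V)^H;R)$ of the fixed point set. This is a complex of permutation $R(\Weyl{G}{H})$-modules.

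The key input is that $S(V)^H = S(V^H)$ is a sphere of dimension $\dim_V(H)$. Hence, as a complex of $R$-modules, $\tilde{C}(S(V)^H;R)$ has homology $\tilde{\on{H}}_\ast(S^{\dim_V(H)};R)$, which is $R$ concentrated in degree $\dim_V(H)$. Here one should check that the augmentation convention shifts degrees correctly. When $\dim_V(H) = 0$ we get $S^0$, whose reduced homology is $R$ in degree $0$; when $V^H = 0$ the fixed set is $S^{0}$ as well, and the conventions must be matched consistently with $\dim_V$. Because this is a bounded complex of free $R$-modules with homology free of rank one, it is $R$-split. By \Cref{lem:commute_with_homology}, base change to $R/p$ then gives homology $R/p$ in the single degree $\dim_V(H)$.

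Now fix $p \in \Phi(G;R)$ and a $p$-subgroup $H$. By \Cref{prop:evandmodfixedptscommute},
\[\Psi^H_p(x) \cong \lambda_p^\ast\big(\tilde{C}(S(V)^?;R)(H)\big) = R/p \otimes_R \tilde{C}(S(V)^H;R).\]
By the previous paragraph, this complex has homology $R/p$ in exactly the degree $\dim_V(H)$. In particular, the direct sum of its homology is $R/p \in \Pic(R/p)$. Since this holds for every such $p$ and $H$, \Cref{thm:dir_sum_of_homologies} shows that $x$ is endotrivial. This step uses Noetherianity, but the statement is for an arbitrary commutative ring, so that case needs separate treatment. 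There are two options. One is to reduce to a Noetherian ring by noting that $x$ is base-changed from $\bbZ$, namely $x = R \otimes_\bbZ \tilde{C}(S(V)^1;\bbZ)$, and tt-functors preserve invertibility; so it suffices to treat $R = \bbZ$, which is Noetherian. The other is to argue directly via the evaluation/duality map. I would take the reduction to $\bbZ$.

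For the h-mark claim, assume the generalized h-marks exist, i.e.\ $R$ is $p$-connected. By definition, $h'_x(H)$ is the degree of the nonzero homology of $\Psi^H_p(x)$, and the computation above shows this degree is $\dim_V(H)$.

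The main obstacle I expect is the careful bookkeeping needed to identify the evaluation of the Bredon chain complex with $\Psi^H_p$, together with the degree and augmentation conventions for reduced chains, including the empty fixed set case. The remaining steps are formal.
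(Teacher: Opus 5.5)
Your proposal is correct and matches the paper's proof essentially step for step. The paper also reduces to $R=\bbZ$ by base change. It uses $S(V)^H = S(V^H)$ to see that the reduced chains are $R$-split with homology $R$ in degree $\dim_V(H)$, identifies $R/p\otimes_R \tilde{C}(S(V)^H;R)$ with $\Psi^H_p(x)$ via \Cref{prop:evandmodfixedptscommute}, and concludes with the local-to-global detection result (it cites the corollary form \Cref{thm:localglobaletrivdetection}). Your concern about an empty fixed set does not arise here: each $S(V)^H$ contains $0$ and $\infty$, so the degree bookkeeping is uniform, with $V^H=0$ giving $S^0$ and homology in degree $0$.
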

    \begin{proof}
        First note that it suffices to assume $R = \bbZ$, as $\tilde{C}(-; R) \cong R \otimes_\bbZ\tilde{C}(-; \bbZ)$ and extending scalars preserves endotriviality. Since $S(V)^H$ is again a $\Weyl{G}{H}$-representation sphere for every subgroup $H \leq G$, $x_H \coloneqq \tilde{C}(S(V)^H;R)$ has nonzero homology in degree $i = \dim(S(V)^H)$ and is exact in all other degrees. Moreover, $\on{H}_i(x_H)$ is free as $R$-module of rank 1. By \Cref{rem:prohomology-Rsplit}, we obtain that $x_H$ is $R$-split, and from \Cref{lem:commute_with_homology} we obtain 
        \[
        R/p \otimes_R \on{H}_\ast(x_H)\simeq \on{H}_\ast(R/p\otimes_R x_H).
        \]       
        Now, \Cref{prop:evandmodfixedptscommute} implies that if $H$ is a $p$-subgroup of $G$ for $p \in \Phi(G;R)$, then $R/p \otimes_R x_H \cong \Psi^H_p(x)$. Therefore \Cref{thm:localglobaletrivdetection} implies that $x$ is endotrivial, as desired.  For the final statement, we have that $x_H$ has nonzero homology only in the degree corresponding to the dimension of $S(V)^H$, which is precisely $\dim_V(H)$. The result now follows again from the isomorphism $R/p \otimes_R x_H \cong \Psi^H_p(x)$ for $H $ and $ p$ as specified. 
    \end{proof}

    \begin{proposition}\label{prop:def_of_B}
        Let $R$ be a commutative ring, and let $G$ be a finite group. The assignment $V \mapsto \tilde{C}(S(V)^1; R)$ induces a group homomorphism \[\catB\colon\on{RO}(G) \to \Pic(\catK(G;R)),\] where $\on{RO}(G)$ denotes the real representation ring of $G$.
    \end{proposition}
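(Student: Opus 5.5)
We need to show that $V \mapsto \tilde{C}(S(V)^1;R)$ gives a well-defined homomorphism $\on{RO}(G)\to\Pic(\catK(G;R))$. Since $\on{RO}(G)$ is the Grothendieck group of the monoid of isomorphism classes of real $G$-representations under $\oplus$, the universal property reduces this to two facts. First, the assignment sends actual representations to well-defined classes in $\Pic(\catK(G;R))$. Second, it turns direct sums into tensor products up to isomorphism. Endotriviality is already \Cref{prop:hmkisdimfunc}, so only these two points remain.

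\emph{Well-definedness.} The $G$-CW structure on $S(V)$ is not unique, and isomorphic representations give $G$-homeomorphic spheres. So I would show that the homotopy type of $\tilde{C}(S(V)^1;R)$ in $\catK(G;R)$ depends only on the $G$-homotopy type of $S(V)$. Two $G$-CW structures on $G$-homotopy equivalent complexes admit a cellular $G$-homotopy equivalence, by the equivariant cellular approximation theorem. This induces a chain homotopy equivalence of the Bredon cellular chains $\tilde{C}(-^?;R)$ in $\on{K}_b(\mathsf{proj}(R\calO(G)))$. Evaluating at the trivial subgroup, via the functor $-(1)$, then gives an isomorphism in $\catK(G;R)$. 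Each $R(X_n^1)$ is a permutation module, since $X_n$ is a finite $G$-set of cells.

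\emph{Multiplicativity.} We have $S(V\oplus W)\cong S(V)\wedge S(W)$ as $G$-spaces. Give the product $S(V)\times S(W)$ the product $G$-CW structure; for finite $G$ this is again a $G$-CW complex, perhaps after subdividing with the diagonal action. Its reduced cellular chains are the tensor product of the reduced cellular chains, with the diagonal $G$-action on cells. On the underlying $RG$-complexes this yields
\[\tilde{C}(S(V\oplus W)^1;R)\cong \tilde{C}(S(V)^1;R)\otimes_R\tilde{C}(S(W)^1;R),\]
since $R(X_n\times Y_m)\cong R(X_n)\otimes_R R(Y_m)$ as permutation modules and the cellular boundary is the Leibniz differential. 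The unit is respected because $S(0)=S^0$ gives $R[0]=\bbone$. Hence $\catB$ is a monoid map into a group, and so it extends uniquely to $\on{RO}(G)$ by $[V]-[W]\mapsto \catB(V)\otimes\catB(W)^{-1}$.

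\emph{Main obstacle.} The delicate step is making the smash-product identification honest at the cellular level. One must check that the smash product of the two CW structures (with the basepoint cells collapsed) is a legitimate $G$-CW structure on $S(V\oplus W)$, and that orientation signs of the cells are $G$-compatible. If signs are twisted, $R(X_n)$ becomes $R(X_n)$ tensored with a sign character rather than a permutation module. I would handle this by using the standard fact that the diagonal $G$-action on a product of $G$-CW complexes admits a $G$-CW refinement, for instance via triangulations. Cellular chains of any two structures agree up to chain homotopy by the well-definedness step, so the refinement does not change the class.
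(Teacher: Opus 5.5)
Your proposal is correct and follows the paper's proof. Both reduce to multiplicativity on actual representations, use the $G$-homeomorphism $S(V\oplus W)\cong S(V)\wedge S(W)$, and identify the cellular chains of the smash product with the tensor product of cellular chains; the paper just cites \cite[Lemma 8.2]{K11} for this last step and leaves implicit the independence of the $G$-CW structure that you check.

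Your ``main obstacle'' does not actually arise for finite $G$. The product cells $(G/H\times G/K)\times D^{n}\times D^{m}$ already split into $G$-cells, because $G/H\times G/K$ is a disjoint union of orbits. Moreover, $G$ permutes these product cells preserving orientation, so no subdivision or sign twist is needed.
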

    \begin{proof}
        If $V = [V_1] - [V_2] \in \on{RO}(G)$ for two real representations $V_1, V_2$, we set \[\catB(V) = \tilde{C}(S(V_1)^1;R) \otimes_R \tilde{C}(S(V_2)^1;R)^\ast .\] It suffices to check $\tilde{C}(S(V\oplus W)^1;R) \cong \tilde{C}(S(V)^1;R) \otimes \tilde{C}(S(W)^1;R)$ in $\catK(G;R)$ for any two real representations $V,W$ of $G$. We have an isomorphism $S(V \oplus W) \cong S(V)\wedge S(W)$, see e.g. \cite[A.2.5]{HHR16}, and there is an standard isomorphism $\tilde{C}((S(V)\wedge S(W))^?;R)\cong \tilde{C}(S(V)^?;R) \otimes \tilde{C}(S(W)^?;R)$, see e.g. \cite[Lemma 8.2]{K11}.
    \end{proof}

    \begin{remark}    
        The homomorphism $\catB$ is generally non-injective. For instance, if $k$ is a field of prime characteristic $p$ and $G$ is a $p$-group, then if $V$ and $W$ are Adams-conjugate, $\catB(V) = \catB(W)$, see \cite[Proposition 5.9, Page 213]{tD87}. This coincides with the stable homotopy-theoretic situation for the analogous map $\on{RO}(G) \to \Pic(\on{SH}(G))$, see e.g. \cite{Kra25}.
    \end{remark}

    \begin{remark}
        Note that in \cite{tDP82, HY14, Y17}, there is a (harmless) change in convention, as dimension functions are defined with a shift, and the authors take joins of spheres rather than smash products. This is due to the fact that the authors consider \emph{unit spheres} rather than \emph{representation spheres} of real representations; a representation sphere of $V$ is a unit sphere of $\bbR \oplus V$. See e.g. \cite[Page 2]{MP04}. In this case, if $S'(V)$ denotes the unit sphere of $V$, then $S'(V\oplus W) \cong S'(V) \ast S'(W)$. 
    \end{remark}

    \subsection{The h-mark homomorphism for $p$-connected rings}
    
    For $p$-connected rings, characterizing the image of the fiberwise and generalized h-mark homomorphism for $p$-groups is now easy: it does not differ from the field case. 
    
    \begin{theorem}\label{thm:p_grp_h_mark_surj}
        Let $p$ be prime, let $R$ be a $p$-connected commutative Noetherian ring satisfying $pR \neq R$, and let $G$ be a $p$-group. Then the composition \[\on{RO}(G) \xrightarrow{\catB} \Pic(\catK(G;R)) \xrightarrow{h'} \on{CF}_b(G)\] is surjective, where $h'$ denotes the generalized h-mark homomorphism at $p$. In particular, $h'$ is surjective. 
    \end{theorem}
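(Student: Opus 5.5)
The plan is to reduce to the field case via a fiber and then use the classical fact that dimension functions of real representations of a $p$-group exhaust the Borel--Smith functions.

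Since $pR \neq R$, the ring $R/p$ is nonzero. Choose any $\frakp \in \Spec(R/p)$, so $k(\frakp)$ is a field of characteristic $p$. By \Cref{prop:hmarkslocallydetermined}, since $R$ is $p$-connected, for every $x \in \Pic(\catK(G;R))$ the generalized h-marks satisfy
\[
h'_x \equiv h_{\lambda_\frakp^\ast(x)}
\]
on $\on{Sub}_p(G) = \on{Sub}(G)$. In particular $h'$ does land in $\on{CF}_b(G)$. So it suffices to understand the composite $h' \circ \catB$ directly.

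For a genuine real representation $V$, \Cref{prop:hmkisdimfunc} gives $h'_{\catB(V)} = \dim_V$, the dimension function of $V$. For a virtual representation $[V_1]-[V_2]$, additivity of $h'$ (a group homomorphism) together with \Cref{prop:def_of_B} gives
\[
h'\circ\catB([V_1]-[V_2]) = \dim_{V_1} - \dim_{V_2}.
\]
Thus $h'\circ \catB$ is exactly the dimension-function homomorphism $\dim\colon \on{RO}(G) \to \on{CF}(G)$. Note that $\on{CF}_b(G,p) = \on{CF}_b(G)$ here, since $G$ is a $p$-group and all its subgroups are $p$-subgroups.

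The remaining and main step is the surjectivity of $\dim\colon \on{RO}(G) \to \on{CF}_b(G)$ for a $p$-group $G$. This is the classical theorem of tom Dieck (\cite{tD87}, the theorem that Borel--Smith functions on a $p$-group are precisely the dimension functions of virtual real representations). I would cite it directly.

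Alternatively, to stay within results already available, I would argue through the field case. By \Cref{thm:endotriv_classification_over_field}(a), $h\colon \Pic(\catK(G;k(\frakp))) \to \on{CF}_b(G)$ is an isomorphism. Applying \Cref{prop:hmkisdimfunc} with $R = k(\frakp)$ shows that $h\circ\catB_{k(\frakp)} = \dim$. So surjectivity of $\dim$ onto $\on{CF}_b(G)$ is equivalent to surjectivity of $\catB_{k(\frakp)}$, which is the representation-sphere realization result of \cite{Mil25b,Mil25c} (itself resting on tom Dieck's theorem).

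Either way, $h'\circ\catB = \dim$ is surjective onto $\on{CF}_b(G)$. Hence $h'$ is surjective as well, since its image contains the image of the composite.
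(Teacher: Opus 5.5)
Your proposal is correct and is essentially the paper's proof: you identify $h'\circ\catB$ with the dimension-function homomorphism via \Cref{prop:hmkisdimfunc} (extended additively to virtual representations), and you cite tom Dieck's theorem that $\dim\colon \on{RO}(G)\to\on{CF}_b(G)$ is surjective for nilpotent, in particular $p$-, groups. Your alternative route through $k(\frakp)$ is not an independent argument, since surjectivity of $\catB$ over a field rests on that same theorem, as you yourself note.
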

    \begin{proof}
        \cite[Theorem 5.4, Page 211]{tD87}, asserts that the dimension function homomorphism \[\dim\colon \on{RO}(G) \to \on{CF}_b(G)\] is surjective for nilpotent groups. This homomorphism factors through $\Pic(\catK(G;R))$ by \Cref{prop:hmkisdimfunc}, and the result follows.
    \end{proof}

    \begin{remark}
        The above homomorphism is also surjective if $R$ is Noetherian and satisfies $pR = R$, for trivial reasons. 

        If $R$ is not necessarily $p$-connected, then one may formulate an analogous surjectivity statement for the fiberwise h-marks at every prime $\frakp \in \Spec(R)$. We leave the details to the reader. The remaining, difficult, question for $G$ a $p$-group is to determine relationships between h-marks on different connected components of $R/p$. 
    \end{remark}

    \section{Gluing endotrivials}\label{sec:gluing}

    Even if $R$ is connected of characteristic 0, $R/p$ need not be, so even though $\catK(G;R)$ is indecomposable as a tt-category, $\Psi^H_p$ may pass to a decomposable tt-category. In this case, endotrivials over $R/p$ need not have homology concentrated in one degree $p$-locally. We consider this case when $G$ is a $p$-group and show that indeed, more endotrivials can arise, but we can describe them by gluing from each connected component of $\Spec(R/p)$.  

    Recall for each open $U \subseteq \Spec(R)$, we have a corresponding open $\comp\inv(U) \subseteq \Spc( \catK(G;R))$. If $U = D(f)$ for some element $f \in R$ (morally considered as an endomorphism of $\bbone \in \catK(G;R)$), we denote the corresponding open by $\open(f) \subseteq \Spc(\catK(G;R))$, which as a set is the subset \[\open(f) = \bigsqcup_{f\not\in \frakp \in \Spec(R)}\Spc(\catK(G;k(\frakp))) \subseteq \Spc(\catK(G;R)).\]

    \begin{lemma}\label{lem:intersections_in_generic_locus}
        Let $G$ be a $p$-group and let $R$ be a connected Noetherian ring. The open $D(p) \subseteq \Spec(R)$ consists of all primes $\frakp$ for which $R/\frakp$ has characteristic not $p$. Consequently, if $R_p=R[1/p] \neq 0$, then all $\natural$-permutation $R_pG$-modules are projective, the open $\open(p) \subseteq \Spc(\catK(G;R))$ is in bijection with $D(p)$, and \[\catK(G;R)(\open(p)) \cong \catK(G;R_p) \cong \on{D}_\mathsf{perf}(R_pG).\]Moreover, if $f \in R$ satisfies $D(f) \subseteq D(p)$, then \[\catK(G;R)(\open(f))\cong \on{D}_\mathsf{perf}(R_fG).\]
    \end{lemma}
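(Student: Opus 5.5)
The plan is to dispose of the three claims in order, each reducing to a standard fact about the group ring $R_pG$ once $p$ is invertible.

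\textbf{First claim.} For $\frakp \in \Spec(R)$ we have $\frakp \in D(p)$ if and only if $p \notin \frakp$. This holds if and only if the image of $p$ in the domain $R/\frakp$ is nonzero, which is the same as saying $\ch(R/\frakp) \neq p$. This is tautological.

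\textbf{Projectivity over $R_p$.} Every subgroup $H \leq G$ has order a power of $p$, hence $|H|$ is invertible in $R_p$. For a transitive permutation module $R_p(G/H) = \Ind^G_H R_p$, the trivial module $R_p$ is a direct summand of $R_pH$. The splitting comes from the idempotent $e_H = \frac{1}{|H|}\sum_{h\in H} h$, equivalently from the augmentation split by $1 \mapsto e_H$. Inducing, $R_p(G/H)$ is a summand of $R_pG$, so every permutation $R_pG$-module is projective, and hence so is every summand. Conversely, $R_pG = R_p(G/1)$ is itself permutation. Therefore
\[\mathsf{perm}(G;R_p)^\natural = \mathsf{proj}(R_pG),\]
where we use finite generation of the modules involved. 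It follows that
\[\catK(G;R_p) = \on{K}_b(\mathsf{proj}(R_pG)) \simeq \on{D}_\mathsf{perf}(R_pG)\]
as tt-categories, since the tensor product with diagonal action matches on both sides.

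\textbf{The localizations.} \Cref{prop:localization_identification} with $f = p$ gives $\catK(G;R)(\open(p)) \cong \catK(G;R_p)$. Here $p$ is non-nilpotent because $R_p \neq 0$. For general $f$ with $D(f) \subseteq D(p)$, the same theorem gives $\catK(G;R)(\open(f)) \cong \catK(G;R_f)$. The inclusion $D(f) \subseteq D(p)$ means $p$ lies in no prime of $R_f$, so $p \in R_f^\times$. Hence $R_f$ is an $R_p$-algebra, and the projectivity argument above applies verbatim with $R_f$ in place of $R_p$, yielding $\on{D}_\mathsf{perf}(R_fG)$. If $f$ is nilpotent both sides vanish.

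\textbf{Bijection $\open(p) \leftrightarrow D(p)$.} As a set,
\[\open(p) = \bigsqcup_{\frakp \in D(p)} \Spc(\catK(G;k(\frakp))).\]
For each such $\frakp$, $\ch k(\frakp) \neq p$, so $|G|$ is invertible in $k(\frakp)$. Maschke's theorem together with the projectivity argument gives $\catK(G;k(\frakp)) \simeq \on{D}_b(k(\frakp)G)$ with $k(\frakp)G$ semisimple.

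The main point is then to show that $\Spc$ of this category is a single point. I would use that every object is a sum of shifts of simple modules, each of which is a summand of $k(\frakp)G \cong \Ind^G_1 k(\frakp)$. Moreover $x \otimes k(\frakp)G \cong \Ind^G_1\Res^G_1 x$, which is nonzero for $x \neq 0$. So any nonzero object generates the regular module as a tensor ideal, and the regular module is a summand of $k(\frakp)G \otimes k(\frakp)G^\ast$. The trivial module $k(\frakp)$ is in turn a summand of $k(\frakp)G$ (via $e_G$), so every nonzero tensor ideal contains the unit. Hence the only proper thick tensor ideal is $0$, which is prime, and $\Spc$ is one point.

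Alternatively, and more cleanly, I would obtain the bijection directly from the identification $\catK(G;R)(\open(p)) \cong \on{D}_\mathsf{perf}(R_pG)$. Since the restriction functor $\catK \to \catK(U)$ induces the open immersion $U \hookrightarrow \Spc(\catK)$, it suffices to know that $\Spc(\on{D}_\mathsf{perf}(R_pG)) \cong \Spec(R_p) = D(p)$. This follows from the fiberwise description just given, with the comparison map restricting to the identity on the base. I expect this bookkeeping of the point-set bijection to be the only mildly delicate step.
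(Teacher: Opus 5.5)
Your proposal is correct and follows essentially the same route as the paper. The ingredients match: the tautological description of $D(p)$; projectivity of $\natural$-permutation modules once $p$ is invertible; \Cref{prop:localization_identification} for $f=p$ and for general $f$ with $p\in R_f^\times$; and the fact that each fiber $\Spc(\catK(G;k(\frakp)))$ over $\frakp\in D(p)$ is a single point because $k(\frakp)G$ is semisimple. Your splitting of $R_p(G/H)$ off $R_pG$ via the idempotent $e_H$ is a more careful version of the paper's Maschke step, which as literally phrased asserts that every submodule of $R_pG$ is a summand, true only for $R_p$-split submodules. Your argument that each fiber is a point also fills in details the paper leaves implicit.
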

    \begin{proof}
        We may assume $R_p \neq 0$, as the statement is trivial otherwise. The first statement is routine. For the bijection of sets, $\open(p)$ is the disjoint union of subsets $\Spc(\catK(G;k(\frakp))) \subseteq \Spc(\catK(G;R))$ over all $\frakp$ not containing $p$. In these cases, $k(\frakp)G$ is semisimple, so $\Spc(\catK(G;k(\frakp)))$ is a point, corresponding to the point $\frakp \in \Spec(R)$, and the bijection follows. Now, we have by a Maschke argument that every submodule of the free module $R_pG$ is a direct summand. Since $R_p(G/H)$ injects into $R_pG$ via the coaugmentation homomorphism, the map \[R_p(G/H) \to R_pG, \, gH \mapsto \sum_{g' \in [G/H]} gg',\] every $\natural$-permutation module is projective. Consequently,  
        we obtain the equivalence $\mathrm{perm}(R_pG)\cong\mathrm{proj}(R_pG)$, and hence $\catK(G;R_p) \cong \on{D}_{\mathsf{perf}}(R_pG)$. The equivalence $\catK(G;R)(\open(p)) \cong \catK(G;R_p) $ follows from \Cref{prop:localization_identification}. The last statement is an easy consequence, since if $D(f) \subseteq D(p)$, then $p$ is again invertible in $R_f G$. 
    \end{proof}

    We call the open $\open(p)$ the \emph{generic locus} of $\Spc(\catK(G;R))$ and similarly for $D(p) \subseteq \Spec(R)$. Now, we construct an open cover of the spectrum that stratifies each connected component of $R/p$, as well as the generic locus. 

    \begin{construction}\label{cons:open_cover_of_spc}
        Fix $G$ a $p$-group, and let $R$ be a commutative Noetherian ring with $\ch(R) = 0$. We assume for non-triviality purposes that $pR \neq R$ and that $R/p$ is non-connected. Consider the unit $1 +pR \in R/p$. Connected components of $\Spec(R/p)$ are in bijection with the components of an orthogonal decomposition of $1 +pR \in $. Let $a_1 + pR, \cdots, a_n + pR$ be the primitive orthogonal idempotents, that is, $a_1 + \cdots + a_n + pR = 1 + pR$ and $a_ia_j \in pR$ for all $i,j$. Note this list is necessarily finite since $R/p$ is Noetherian. We have that $D(a_i + pR) \subseteq \Spec(R/p)$ is a maximally connected open, i.e., a connected component. In particular, $\{D(a_i + pR)\}_{i=1}^n$ is an open cover of $\Spec(R/p)$ and for all $i\neq j$, $D(a_i + pR) \cap D(a_j + pR) = \varnothing$.
        
        Let $a_1, \dots, a_n$ be lifts of each idempotent to $R$. Set $a_0 = p$. Then we claim the following:
        \begin{enumerate} 
           \item each $\open(a_i)$ is quasi-compact open.
            \item $\{\open(a_i)\}_{i=0}^n $ is a cover of $\Spc(\catK(G;R))$.
            \item For all $i \neq j$, $\open(a_i) \cap \open(a_j) \subseteq \open(p)$. 
        \end{enumerate}
        Indeed, (a) follows since $\Spc(\catK(G;R))$ is Noetherian; see \cite[Section 6]{Gom25}. For (b) and (c),   it suffices to check this in $\Spec(R)$, and both of these facts follow easily from the fact that $\Spec(R/p) \subseteq \Spec(R)$ is the closed subset consisting of primes with residue field having characteristic $p$. In particular, $\Spec(R) \setminus \Spec(R/p)$ is the generic locus. 

        Note that it need not be the case that the opens $\open(a_i)$ (or even $D(a_i)$) are connected, however this is fine. The key point we require is that $D(a_i) \cap \Spec(R/p)$ is connected for each $i \in \{1, \dots, n\}$. 
        
    \end{construction}

    We recall the following gluing result of Balmer--Favi. 

    \begin{definition}
        Let $\catK$ be an essentially small tt-category $\Spc(\catK) = U_1 \cup \cdots\cup U_n$ be a cover by quasi-compact opens. Given objects $a_i \in \catK(U_i)$ and isomorphisms $\sigma_{i,j}$ in $\catK(U_i \cup U_j)$ such that $\sigma_{ki} = \sigma_{kj}\sigma_{ji}$ in $\catK(U_i\cap U_j \cap U_k)$ for $1\leq i,j,k\leq n$, a \emph{gluing} of the objects $a_i$ along isomorphisms $\sigma_{ij}$ is an object $a \in \catK$ and isomorphism $f_i\colon a \to a_i$ in $\catK(U_i)$ such that $\sigma_{ji} f_i = f_j$ in $\catK(U_i \cup U_j)$
    \end{definition}

    \begin{theorem}{\cite[Theorem 5.13]{BF07}}
        Let $\catK$ be an essentially small tt-category and let $\Spc(\catK) = U_1 \cup \cdots \cup U_n$ be a cover by quasi-compact open subsets for $n \geq 2$. Consider objects $a_i \in \catK(U_i)$ and isomorphisms $\sigma_{ji}\colon a_i \cong a_j$ in $\catK(U_i \cap U_j)$ satisfying the cocycle condition $\sigma_{kj}\sigma_{ji} = \sigma_{ki}$ in $\catK(U_i \cap U_j \cap U_k)$ for $1 \leq i, j, k\leq n$. Assume moreover the following \emph{connectivity condition}: for any $i = 2, \dots, n$ and any quasi-compact open $V \subseteq U_i$, we suppose that \[\Hom_{\catK(V)}(\Sigma a_i, a_i) = 0.\] Note it suffices to check at unions of intersections of $U_1, \dots, U_n$. Then there exists a gluing, unique up to unique isomorphism.
    \end{theorem}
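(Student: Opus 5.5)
This is Balmer--Favi's gluing theorem \cite[Theorem 5.13]{BF07}, which the paper only cites. The plan is to reproduce their argument: induct on the number $n$ of opens, and glue two pieces at a time using a Mayer--Vietoris triangle.

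\emph{Two-open case ($n=2$).} Let $U_1, U_2$ be quasi-compact opens with closed complements $Z_1, Z_2$. Write $\catK(U_1\cup U_2)$ for the global category when $U_1 \cup U_2 = \Spc(\catK)$. I would use the Mayer--Vietoris type distinguished triangle for the restriction functors, which in Balmer--Favi is obtained from Verdier quotients by $\catK_{Z_1}$ and $\catK_{Z_2}$ together with $\catK_{Z_1}\cap \catK_{Z_2} = \catK_{Z_1\cup Z_2}=0$.

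\begin{enumerate}
\item Lift $\sigma\colon a_1\to a_2$ in $\catK(U_1\cap U_2)$ to a morphism in $\catK$ between suitable lifts of $a_1$ and $a_2$. Because of idempotent completion, a lift may only exist after adding a complement, e.g.\ $a_i\oplus\Sigma a_i$; this is handled by Thomason's $K_0$-type argument.
\item Define the glued object $a$ as the homotopy fiber (cocone) of
\[a_1 \oplus a_2 \xrightarrow{(\sigma,\,-1)} a_2,\]
taken in an ambient category where this makes sense.
\item Check that $a$ restricts to $a_i$ on $U_i$. On $U_1$ the map $\sigma$ becomes an isomorphism, and the triangle splits to give $a|_{U_1}\cong a_1$; the case of $U_2$ is symmetric.
\end{enumerate}

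Uniqueness comes from the long exact Hom sequence of the Mayer--Vietoris triangle:
\[\Hom(\Sigma a_1|_{U_{12}}, a_2|_{U_{12}}) \to \Hom(a,a') \to \Hom(a_1,a_1')\oplus\Hom(a_2,a_2') \to \cdots.\]
The connectivity hypothesis $\Hom_{\catK(V)}(\Sigma a_i,a_i)=0$ kills the left term. So morphisms compatible with the $f_i$ are unique, and the two gluings are uniquely isomorphic.

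\emph{Induction step.} Glue $a_1,\dots,a_{n-1}$ over $U' = U_1\cup\dots\cup U_{n-1}$, applying the induction hypothesis to $\catK(U')$, whose spectrum is $U'$. Then glue the result with $a_n$ along $U'\cap U_n=\bigcup_{i<n}(U_i\cap U_n)$.
\begin{itemize}
\item The required isomorphism on $U'\cap U_n$ is itself obtained by gluing the $\sigma_{ni}$ over the cover $\{U_i\cap U_n\}_{i<n}$ by the two-open uniqueness statement. The cocycle condition guarantees that these pieces agree on overlaps.
\item The connectivity hypothesis on the $V\subseteq U_i$ that are unions of intersections is exactly what makes these morphism-level gluings unique, so the isomorphism is well defined.
\end{itemize}
Compatibility $\sigma_{ji}f_i=f_j$ then follows from the same uniqueness.

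\emph{Main obstacle.} The hardest step is lifting morphisms and objects from the idempotent-completed quotients $\catK(U)$ back to $\catK$. Morphisms in a Verdier quotient are fractions, and objects of $\catK(U)$ need not come from $\catK$. I would handle this as Balmer--Favi do, via Thomason's localization theorem: the obstruction lies in a $K_0$ class, and it vanishes for $a_1\oplus\Sigma a_1$. I would then use the connectivity vanishing to split off the extra summand.
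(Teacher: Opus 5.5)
The gap is the existence step for two opens, which is the real content of the theorem. Your uniqueness argument (the Mayer--Vietoris exact sequence for $\Hom$, with the connecting term killed by $\Hom_{\catK(U_1\cap U_2)}(\Sigma a_2,a_2)=0$) and your induction are sound in outline. That includes gluing the $\sigma_{ni}$ as morphisms over $\{U_i\cap U_n\}_{i<n}$, which is exactly where connectivity on subsets of $U_n$ enters.

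The fiber you define fails as written. The map $(\sigma,-1)\colon a_1\oplus a_2\to a_2$ differs from the projection onto $a_2$ by an automorphism of $a_1\oplus a_2$, so its fiber is just $a_1$ and cannot restrict to $a_2$ on $U_2$. The target must be the object $a_{12}$ over $U_1\cap U_2$, with components $\sigma$ composed with restriction and minus the restriction $a_2\to a_{12}$.

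More importantly, you never say in which triangulated category this fiber is formed. The data $a_1$, $a_2$, $a_{12}$, $\sigma$ live in three different categories $\catK(U_1)$, $\catK(U_2)$, $\catK(U_1\cap U_2)$. Lifting to $\catK$ does not help: restriction maps $x\to x|_U$ are not morphisms of $\catK$. Also, $\sigma$ is only a fraction $\tilde a_1\leftarrow c\to\tilde a_2$ whose denominator is invertible over $U_1\cap U_2$ but not over $U_1$. Turning it into an honest morphism between an object restricting to $a_1$ on $U_1$ and one restricting to $a_2$ on $U_2$ is essentially the gluing problem itself. The Mayer--Vietoris triangle you cite for uniqueness likewise lives only in an ambient big category. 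Note also that $\catK_{Z_1}\cap\catK_{Z_2}=\catK_{Z_1\cap Z_2}=0$, not $\catK_{Z_1\cup Z_2}$.

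One way to fill this is available here, since $\catK(G;R)=\catD(G;R)^c$.
\begin{enumerate}
\item Take $\catT$ rigidly-compactly generated with $\catT^c=\catK$, and idempotent triangles $e_Z\to\bbone\to f_Z\to\Sigma e_Z$ for $Z_i=\Spc(\catK)\setminus U_i$.
\item By Neeman's theorem, $\catK(U_i)$ is the compact part of $f_{Z_i}\otimes\catT$. So the $a_i$ are $f_{Z_i}$-local objects of $\catT$, and $\sigma$ is an isomorphism $f_{Z_1\cup Z_2}\otimes a_1\cong f_{Z_1\cup Z_2}\otimes a_2$ in $\catT$.
\item Let $a$ be the fiber of $a_1\oplus a_2\to f_{Z_1\cup Z_2}\otimes a_2$. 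Tensoring with $f_{Z_i}$, using $f_{Z_1}\otimes f_{Z_2}\cong f_{Z_1\cup Z_2}$, gives $f_{Z_i}\otimes a\cong a_i$ compatibly with $\sigma$.
\item The object $a$ is compact. For $Z\in\{Z_1,Z_2,Z_1\cup Z_2\}$, $\Hom_\catT(a,f_Z\otimes t)\cong\Hom_\catT(f_Z\otimes a,f_Z\otimes t)$ commutes with coproducts in $t$. Apply the triangle $t\to (f_{Z_1}\otimes t)\oplus(f_{Z_2}\otimes t)\to f_{Z_1\cup Z_2}\otimes t\to\Sigma t$ (valid since $Z_1\cap Z_2=\varnothing$) and the five lemma.
\end{enumerate}
That the glued object lands in $\catK$ is what your plan never establishes.

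Your ``main obstacle'' is a red herring. Thomason's $K_0$ obstruction concerns extending an object from one open to all of $\Spc(\catK)$. Along a cover, each $a_i$ lifts a posteriori (to the gluing), and two objects glue with no connectivity hypothesis at all.

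The proposed repair also fails as stated. If $\tilde a$ glues the objects $a_i\oplus\Sigma a_i$, then $\Hom_{\catK(U_1\cap U_2)}(\Sigma\tilde a,\tilde a)$ contains $\Hom(\Sigma a_{12},\Sigma a_{12})\ni\id$. So connectivity of the $a_i$ does not make the locally glued projection $e$ idempotent. What does hold is that $e^2-e$ vanishes on $U_1$ and on $U_2$, hence factors through objects of $\catK_{Z_1}$ and of $\catK_{Z_2}$. Since $\Hom(\catK_{Z_1},\catK_{Z_2})=0$ by rigidity, $(e^2-e)^2=0$ and $3e^2-2e^3$ is an idempotent. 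Splitting it needs $\catK$ idempotent complete, a hypothesis left implicit in the quoted statement. This route still presupposes a construction of $\tilde a$.
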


    \begin{remark}\label{rmk:local_endotriv_observations}
        Before we prove our gluing theorem, we make a few preliminary observations. First, by \Cref{prop:localization_identification}, in our open cover $\{\open(a_i)\}_{i=0}^n$, we have identifications $\catK(G;R)(\open(a_i)) \cong \catK(G;R_{a_i})$ and the localization 
        \[
        \catK(G;R)\to \catK(G;R)(\open(a_i)) \cong \catK(G;R_{a_i})
        \]
        identifies with extension of scalars $R_{a_i} \otimes_R -$. Therefore, given an object $x \in \catK(G;R)$, $x$ is endotrivial if and only if $x \in \catK(G;R)(\open(a_i))$ is endotrivial for every $i \in \{0,\dots, n\}$; indeed, this follows since endotriviality is detected by the collection of base change functors $\{\lambda_\frakp^\ast\}_{\frakp \in \Spec(R)}$, and each base change to $k(\frakp)$ factors through either base change to $R_{a_i}$ for some $a_i$ or base change to $R_p$. 

        Moreover, observe that if $\frakp \in \Spec(R/p)$ lives in $D(a_i)\cap \Spec(R/p) \subseteq \Spec(R)$, then the base-change functor $k(\frakp) \otimes_R - \colon \catK(G;R) \to \catK(G;k(\frakp))$ factors as follows: \[\catK(G;R) \to \catK(G;R/p) \to \catK(G;(R/p)_{a_i}) \to \catK(G;k(\frakp)).\] 
        Since $(R/p)_{a_i}$ has connected spectrum and has characteristic $p$, any endotrivial $x \in \catK(G;(R/p)_{a_i})$ has well-defined global h-marks at $p$, which then coincide with the usual fiberwise h-marks of $\lambda_\frakp^\ast(x)$ for any $\frakp \in D(a_i)$. Therefore, on each connected component on $R/p$, fiberwise h-marks coincide, and all that remains is for us to check whether there exist relations between non-connected components. 
    \end{remark}

    \begin{theorem}\label{thm:gluing_etrivs_p_grps}
        Let $G$ be a $p$-group and $R$ a commutative connected Noetherian ring. Recall the reduced fiberwise h-mark homomorphism from \Cref{rmk:refined_h_mk_func}, \[\Pic(\catK(G;R)) \to \prod_{\frakp \in \tilde \pi_0(\Phi(G;R))} \on{CF}_b(G,\ch(k(\frakp))).\] The image is precisely the tuples of Borel-Smith functions which agree on the trivial subgroup of $G$. Note $\tilde \pi_0(\Phi(G;R))$ is indexed by the connected components of $\Spec(R/p)$. 
    \end{theorem}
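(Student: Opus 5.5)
The proof has two halves: showing the image lies in the glued group, and showing every glued tuple is realized. I would first dispose of the degenerate cases. If $pR = R$, then $\tilde\pi_0(\Phi(G;R))$ is a singleton with characteristic-zero fiber, and the image is $\bbZ$, realized by the shifts $R[n]$. If $R$ has characteristic $p^r$, or more generally $R/p$ is connected, then $\tilde\pi_0$ is a single component and surjectivity is \Cref{thm:p_grp_h_mark_surj}. So the real case is $\ch(R)=0$, $pR\neq R$ and $R/p$ non-connected, where I would use the cover $\{\open(a_i)\}_{i=0}^n$ of \Cref{cons:open_cover_of_spc}.

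\emph{Necessity.} Let $x\in\Pic(\catK(G;R))$ and let $\frakp$ be any prime. The value $h_{\lambda_\frakp^\ast x}(1)$ is the degree of the homology of $\Res^G_1\lambda_\frakp^\ast x = k(\frakp)\otimes_R \Res^G_1 x$. By \Cref{prop:etrivhomologyinonedegree} this degree equals $b(x)$, which is independent of $\frakp$ because $R$ is connected. Hence all components of $h(x)$ agree at the trivial subgroup.

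\emph{Sufficiency.} Fix a tuple $(f_1,\dots,f_n)$ of $p$-Borel--Smith functions with $f_i(1)=c$ for all $i$. For each $i\ge 1$ the ring $R_{a_i}$ is Noetherian, and $R_{a_i}/p=(R/p)_{a_i}$ is connected, since $D(a_i)\cap\Spec(R/p)$ is a single component. Applying \Cref{thm:p_grp_h_mark_surj} to a connected component of $R_{a_i}$ containing that piece (the other components are generic and handled as below), I obtain a representation $V_i\in\on{RO}(G)$ with $\dim_{V_i}=f_i$. Set $x_i=\catB(V_i)\in\catK(G;R_{a_i})\cong\catK(G;R)(\open(a_i))$, and on the generic locus set $x_0=R_p[c]\in\catK(G;R_p)\cong\on{D}_\mathsf{perf}(R_pG)$.

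On overlaps, which lie in $\open(p)$ by \Cref{cons:open_cover_of_spc}(c), \Cref{lem:intersections_in_generic_locus} identifies the category with $\on{D}_\mathsf{perf}(R_fG)$ with $p$ invertible. There $x_i$ restricts to $R_f\otimes\tilde C(S(V_i)^1;\bbZ)$. This is a complex of projective $R_fG$-modules with homology $R_f$ (the sign of the underlying $H_{c}$ of the sphere) in the single degree $c$, so it is quasi-isomorphic to a rank-one module shifted to degree $c$. That module should be trivial or the determinant character; I would fix this by replacing $V_i$ with $V_i\oplus\det V_i\oplus\det V_i$-type adjustments, or by choosing the $V_i$ orientable, so that the homology is the trivial module. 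Equivalently, I could take every $V_i$ orientable from the start, which is possible for $p$-groups up to adding trivial summands; for $p$ odd the determinant is automatically trivial.

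This produces isomorphisms $\sigma_{ji}\colon x_i\cong x_j$ on overlaps, namely the canonical identification of each restriction with $R[c]$.

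\emph{Gluing.} To apply \cite[Theorem 5.13]{BF07} I need two things. First, the cocycle condition, which I can arrange by defining every $\sigma_{ji}$ through a fixed chosen isomorphism to $R_f[c]$ on the overlap, rescaling by units if necessary. Second, the connectivity condition $\Hom(x_i[1],x_i)=0$ on quasi-compact opens $V\subseteq U_i$. Since $x_i$ is invertible, this group equals $\Hom(\bbone[1],\bbone)$ in $\catK(G;R)(V)$. For $V=\open(g)$ this is $\Hom_{\catK(G;R_g)}(R_g[1],R_g)=0$, because these are complexes of $\natural$-permutation modules and negative self-extensions of the unit vanish; the case of general quasi-compact $V$ (finite unions of principal opens) follows from a Mayer--Vietoris argument. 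The glued object $x$ is endotrivial by \Cref{rmk:local_endotriv_observations}, and its fiberwise h-marks on the $i$-th component are $f_i$.

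The main obstacle is the overlap and cocycle step: checking that the local representation-sphere complexes become isomorphic, including the $G$-action on the rank-one homology, in the generic-locus overlaps, and that the isomorphisms can be chosen compatibly on triple intersections. I would handle this via orientations and the fact that automorphisms of $R_f[c]$ in $\on{D}_\mathsf{perf}(R_fG)$ are just units of $R_f$.
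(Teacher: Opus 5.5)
Your outline is the paper's proof. You dispose of the cases $pR=R$ and $R/p$ connected, get necessity from \Cref{prop:etrivhomologyinonedegree}, and for sufficiency glue local endotrivials over the cover $\{\open(a_i)\}_{i=0}^n$ of \Cref{cons:open_cover_of_spc} via \cite[Theorem 5.13]{BF07}, using that all overlaps lie in the generic locus. You glue all components at once, with trivializations chosen on $\open(a_i)\cap\open(p)$ and then restricted, which gives the cocycle condition for free; the paper instead glues one component at a time and tensors. That difference is harmless, and your connectivity argument is fine.

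For $p$ odd your argument is complete. There $\det V$ is a homomorphism from a group of odd order to $\{\pm 1\}$, so $\catB(V_i)$ has trivial homology and restricts to $R_f[c]$ on every overlap.

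For $p=2$, however, the step you flag is a genuine gap, and neither proposed repair works. For an involution $g$ and $\dim_V=f$ one has $\det V(g)=(-1)^{\dim V-\dim V^{\langle g\rangle}}=(-1)^{f(1)-f(\langle g\rangle)}$. Trivial summands do not change $\det V$, and $V\oplus L\oplus L$ has the same determinant as $V$ (and a different dimension function). So whether $V_i$ can be oriented is dictated by $f_i$, not something you can arrange. You also cannot untwist by $R_{\det V_i}^\ast[0]$: by \Cref{thm:rk1ppermsforpgrp}, the sign module is not $\natural$-permutation over, e.g., a domain of characteristic $0$ in which $2$ is not a unit.

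This is not a defect of your particular construction: for $p=2$ the statement itself fails. Take $G=C_2=\langle\sigma\rangle$ and $R=\bbZ[x]/(x^2-x+2)$. This is the paper's own example: a Dedekind domain with $R/2\cong\bbF_2\times\bbF_2$, whose two components are $\frakp_1,\frakp_2$. Consider the glued tuple $f_1=(f_1(1),f_1(C_2))=(1,0)$, $f_2=(1,1)$.

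Suppose some endotrivial $x$ realized this tuple. Then $\sigma$ acts on the invertible $R$-module $\on{H}_1(x)$ by some $u\in R^\times$ with $u^2=1$, so $u=\pm1$. Base change to $\widehat R_{\frakp_j}\cong\bbZ_2$ (the prime $2$ splits). Over $\bbZ_2$, $\natural$-permutation $C_2$-modules are permutation by Krull--Schmidt, so the terms of the complex are $\bbZ_2X_i$ for finite $C_2$-sets $X_i$.

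Now compute the Lefschetz number of $\sigma$ in two ways:
\begin{enumerate}
\item From the terms, it is $\sum_i(-1)^i|X_i^{\sigma}|=\chi\big(\Psi^{C_2}(\lambda_{\frakp_j}^\ast x)\big)=(-1)^{f_j(C_2)}$.
\item By the Hopf trace formula on homology over $\mathbb{Q}_2$, it is $(-1)^{f_j(1)}u$.
\end{enumerate}
Hence $u=(-1)^{f_j(1)-f_j(C_2)}$. This gives $u=-1$ for $j=1$ and $u=1$ for $j=2$, a contradiction.

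The paper's proof has exactly the same hole. Its step ``we may normalize $x_l$ to satisfy $\catH(x_l)=R$'' is impossible here: for $f=(1,0)$, the local endotrivial over the domain $R_{a_l}$ is forced to have sign homology.

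So what your argument establishes is the statement for $p$ odd. For $p=2$, the image lies inside the subgroup of glued tuples whose forced homology characters are compatible across components. For instance, when $R$ is a domain, the parity of $f_i(1)-f_i(\langle g\rangle)$ must be independent of $i$ for every involution $g$. The theorem needs such an extra condition.
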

    \begin{proof}
        Note that if $\ch(R) = p^r$ for some $r$, then $R/p$ is connected if and only if $R$ is, in which case \Cref{thm:p_grp_h_mark_surj} proves the result. Similarly if $pR = R$, then the result is trivial, so assume neither case holds, in which case $\ch(R) = 0$. 
    
        First, since $R$ is connected, \Cref{prop:etrivhomologyinonedegree} holds and directly implies that the tuples of functions must agree at the trivial subgroup. It only remains to show that this is the only condition necessary. It suffices to show that there do not exist any relationships between the fiberwise h-marks corresponding to primes on different connected components of $\Spec(R/p)$ outside of those at 1. To do this, we construct endotrivials in $\catK(G;R)$ whose h-marks are constant away from one connected component. 
        
      Using the notation of \Cref{cons:open_cover_of_spc}, let $f \in \on{CF}_b(G)$ be a Borel--Smith function, and fix $l\in\{1,\dots,n\}$. We claim that there exists an endotrivial object $x \in \Pic(\catK(G;R))$ satisfying the following properties:
        \begin{enumerate}
            \item $h_{\lambda_\frakp^\ast (x)} = f$, for all $\frakp$ in the connected component $D(a_l+pR) \subseteq \Spec(R)$;
            \item $h_{\lambda_\frakq^\ast (x)} \equiv f(1)$, for all $\frakq\not\in D(a_l+pR)$.
        \end{enumerate}
         We construct this endotrivial by gluing on each component $\open(a_i)$ of $\Spc(\catK(G;R))$.   By \Cref{prop:localization_identification}, we have $\catK(G;R)(\open(a_i)) \cong \catK(G;R_{a_i})$. Moreover, $D(a_i) \cap D(a_j) \subseteq D(p) \subseteq\Spec(R)$ as noted in \Cref{cons:open_cover_of_spc}, so by \Cref{lem:intersections_in_generic_locus}, \[\catK(G;R)(\open(a_i) \cap \open(a_j)) \cong \catK(G;R_{a_ia_j}) \cong \on{D}_\mathsf{perf}(R_{a_ia_j}G).\] 
        Now, for $i \neq l$, we specify the local endotrivial $x_i \coloneqq R[f(1)] \in \catK(G;R)(\open(a_i))$. On the other hand since $\catK(G;R)(\open(a_l)) \cong \catK(G;R_{a_l})$ and $R_{a_l}$ is $p$-connected, \Cref{thm:p_grp_h_mark_surj} asserts that there exists an endotrivial $x_l \in \catK(G;R_{a_l})$ with global h-mark function $f$, and we may normalize $x_l$ to satisfy $\catH(x_l) = R$. 

        It remains to check the cocycle and connectivity conditions of \cite[Theorem 5.13]{BF07}, however these are easy since any intersection not involving $l$ has cocycles given by the identity. Similarly, $\Hom_{\catK(G;R)(V)}(\Sigma x_i, x_i) = 0$ for any union of intersections of opens of $\open(a_i)$s, again since the unions of intersections of $D(a_i)$s always remain in the generic locus of $\Spec(R)$, so \Cref{lem:intersections_in_generic_locus} applies. It follows that $\bbone \in \catK(G;R)(V)$ has no nontrivial extensions, so connectivity applies. Thus we conclude there exists an $x \in \catK(G;R)$ which descends to each $x_i$ in each open $\open(a_i)$, and by our observations in \Cref{rmk:local_endotriv_observations}, $x$ is endotrivial with fiberwise h-marks corresponding to $f$ on the component $D(a_l)$ and equivalently $f(1)$ otherwise. The result follows. 
    \end{proof}

    \begin{notation}\label{not:glued_borel_smith}
        For a finite $p$-group $G$ we let  $\on{CF}^{gl}_b(G;R)$ denote the subgroup of $\prod_{\tilde \pi_0(\Phi(G;R))} \on{CF}_b(G)$ consisting of tuples of Borel-Smith functions agreeing on the trivial subgroup of $G$. We refer to these as `glued' tuples. Then \Cref{thm:gluing_etrivs_p_grps} states that the reduced fiberwise h-mark homomorphism can be refined to a surjective homomorphism 
        \[
        \Pic(\catK(G;R)) \twoheadrightarrow \on{CF}^{gl}_b(G;R).
        \]
        This surjection splits since $\on{CF}^{gl}_b(G;R)$ is a free abelian group, and the proof of \Cref{thm:gluing_etrivs_p_grps} essentially constructs an explicit splitting. Therefore, to deduce $\Pic(\catK(G;R))$ for $G$ a $p$-group, it only remains to deduce the kernel of the (refined) fiberwise h-mark homomorphism. 
    \end{notation}

    \begin{remark}
        Another consequence of \Cref{thm:gluing_etrivs_p_grps} is that as soon as $R/p$ is disconnected, $\catB\colon \on{RO}(G) \to \Pic(\catK(G;R))$ is non-surjective for $G$ a $p$-group, contrasting the case of $R$ a field of prime characteristic $p$. 
    \end{remark}
    
    \section{The kernel of the fiberwise h-mark homomorphism}\label{sec:kernel_of_hmks}

    In this section, we deduce the kernel of the fiberwise h-mark homomorphism. Recall that in the case of fields of prime characteristic, $\ker(h)\cong \Hom(G,k^\times)$. In particular, if an endotrivial's h-marks are entirely 0, then the endotrivial is nothing more than an invertible $kG$-module in degree 0. We witness the same phenomenon here.

    \begin{theorem}\label{thm:ker_h_marks}
        Let $G$ be a finite group, $R$ a connected commutative Noetherian ring, and $x\in \Pic(\catK(G;R))$ endotrivial. If $h_\frakp \equiv 0$ for all $\frakp \in \Spec(R)$, then $x \cong M[0]$ for some invertible $\natural$-permutation $RG$-module $M \in \Pic(\mathsf{perm}(G;R)^\natural)$. In other words, $\ker(h) \cong \Pic(\mathsf{perm}(G;R)^\natural)$, where $h$ denotes the reduced fiberwise h-mark homomorphism.
    \end{theorem}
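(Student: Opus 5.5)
We plan to reduce to the case of a reduced ring, argue fiberwise over residue fields, and then descend back to $R$.

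\emph{Step 1: reduction.} By \Cref{thm:endotrivials_lift_from_reduction}, base change to $\overline{R} = R/\calN$ induces an isomorphism on Picard groups. It is also compatible with fiberwise h-marks, since every residue field map factors through $\overline{R}$. The base change $\mathsf{perm}(G;R)^\natural \to \mathsf{perm}(G;\overline{R})^\natural$ is full and essentially surjective by \Cref{lem:reduction_surjective_on_modules}, and the lift of a complex concentrated in degree $0$ can again be taken in degree $0$. So it suffices to treat $R$ reduced. Both $\Spec$ and the connectivity hypothesis are unchanged by this passage.

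\emph{Step 2: fiberwise concentration.} Let $x$ be endotrivial with all fiberwise h-marks zero. By \Cref{prop:etrivhomologyinonedegree}, $b(x) = b(\lambda_\frakp^\ast x) = h_\frakp(1) = 0$, so $x$ has homology only in degree $0$. That homology $M := \on{H}_0(x)$ is an invertible $R$-module, hence $R$-projective, and is an $RG$-module. The goal is to show that $x \simeq M[0]$ in $\catK(G;R)$, which in particular requires $M$ to be $\natural$-permutation.

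For each $\frakp$, the field case (\Cref{prop:h_marks_for_fields}) says $\lambda_\frakp^\ast x \simeq k(\frakp)\otimes M$ in degree $0$ as an object of $\catK(G;k(\frakp))$. Equivalently, for every $p$-subgroup $H$ with $p=\ch k(\frakp)$, $\Psi^H(\lambda_\frakp^\ast x)$ has homology only in degree $0$.

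\emph{Step 3: local-to-global (the main obstacle).} I would argue via a truncation on the bounded complex $x$ of $\natural$-permutation modules. Let $x_{\ge}$ and $x_{\le}$ denote the parts above and below degree $0$, and show that the differentials split degreewise by descending induction from the top degree. Because the homology is $R$-projective, $x$ is $R$-split (\Cref{lem:endotrival-Rsplit}).

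It remains to upgrade $R$-splitness to $RG$-splitness modulo $\natural$-permutation pieces. Concretely, take the top nonzero degree $n > 0$. The map $d_n\colon x_n \to x_{n-1}$ is injective and $R$-split. Its base change to each $k(\frakp)$ is split injective in $\mathsf{perm}(G;k(\frakp))^\natural$, since the field-case complex is homotopy equivalent to a module in degree $0$. Equivalently, the map $\Hom_{RG}(x_{n-1}, x_n) \to \End_{RG}(x_n)$ becomes surjective after applying every $k(\frakp)\otimes -$.

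Hom-modules between $\natural$-permutation modules are finitely generated projective over $R$ and commute with base change. Surjectivity of a map of finitely generated $R$-modules can be checked on all residue fields by Nakayama, applied to its cokernel at each localization. So $d_n$ is split injective over $RG$, and we can cancel the contractible piece $x_n \to x_n$ from $x$. This is the delicate point, since it needs Hom commuting with $k(\frakp)\otimes-$ for $\natural$-permutation lattices, which holds because they are summands of permutation modules with free Hom-lattices.

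By induction, and dually from the bottom degree using surjectivity of $d$, $x$ becomes homotopy equivalent to a single $\natural$-permutation module in degree $0$, which must be $M$.

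\emph{Step 4: identifying the kernel.} Conversely, any $M \in \Pic(\mathsf{perm}(G;R)^\natural)$ gives an endotrivial $M[0]$ with zero h-marks, since over each field the module $k(\frakp)\otimes M$ is one-dimensional in degree $0$. Hence $\ker(h) \cong \Pic(\mathsf{perm}(G;R)^\natural)$.

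Step 3 also shows the reduction of Step 1 is inessential. If the Nakayama argument runs as stated, it works over $R$ directly, and Step 1 can be kept only as a fallback.
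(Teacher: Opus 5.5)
Your proof is correct, and it takes a genuinely different route from the paper.

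\textbf{The paper's route.} The paper first normalizes to $\catH(x)=R$ by tensoring with $\catH(x)^\ast[0]$, then passes to $R$ reduced. It uses $\Res^G_1(x)\cong R[0]$ to pick an $R$-free summand $M\subseteq\ker(d_0)$ complementary to $\im(d_1)$, and argues via reducedness that $M$ is $G$-stable. It then concludes that the resulting chain map $R[0]\to x$ is an isomorphism, because it is one after every $\lambda_\frakp^\ast$ and these functors are jointly conservative (\cite[Theorem 4.8]{Gom25}).

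\textbf{Your route.} You run a Gaussian-elimination argument instead. A homotopy equivalence of $\lambda_\frakp^\ast x$ with a degree-$0$ complex gives $s_{n-1}d_n=\id_{x_n}$ in the top degree and $d_{m+1}s_m=\id_{x_m}$ in the bottom degree, after each $\lambda_\frakp^\ast$. Since $\Hom_{RG}$ between $\natural$-permutation modules is a finitely generated projective $R$-module compatible with base change, Nakayama upgrades these fiberwise splittings to $RG$-splittings. You can then cancel contractible pieces until only degree $0$ remains.

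\textbf{What your route buys.}
\begin{itemize}
\item It needs neither reducedness (your Step 1 is indeed dispensable) nor the residue-field conservativity theorem, which is where Noetherianity enters the paper's argument.
\item It does not need the preliminary normalization $\catH(x)=R$. That normalization implicitly requires $\catH(x)$ to already be $\natural$-permutation; you obtain this as an output.
\item It avoids producing a $G$-stable complement of the boundaries inside $x_0$. An arbitrary $R$-complement need not be $G$-stable: for $x=(\bbZ C_2\xrightarrow{\id}\bbZ C_2)\oplus\bbZ[0]$, the complement $\bbZ\cdot(1,1)\subseteq \bbZ C_2\oplus\bbZ$ is not.
\end{itemize}

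\textbf{What the paper's route buys.} A more tt-flavoured argument: an explicit map $R[0]\to x$ detected on fibers.

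\textbf{One small point.} For fibers with $\ch k(\frakp)$ zero or prime to $|G|$, the fact that $\lambda_\frakp^\ast x$ is homotopy equivalent to a module in degree $0$ comes from semisimplicity of $k(\frakp)G$. It does not come from \Cref{prop:h_marks_for_fields}, which is stated for $p$ dividing $|G|$.
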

    \begin{proof}
        It suffices to assume $\catH(x) = R$. Indeed, since $\catH(x) \in \Pic(\mathsf{mod}(RG))$, we may replace $x$ with $x' = x \otimes (\catH(x)^\ast )[0]$ which satisfies $\catH(x') = R$, as $\calH$ is a well-defined group homomorphism by \Cref{def:homologyextraction}. Moreover, it suffices to assume that $R$ is reduced by \Cref{thm:endotrivials_lift_from_reduction} and \Cref{lem:reduction_surjective_on_modules}, which together assert that endotrivial $\natural$-permutation modules lift uniquely. 

        First, for every $\frakp \in \Spec(R)$, the following diagram commutes. 

        \begin{figure}[H]
            \centering
            \begin{tikzcd}
                \catK(G;R) \ar[r, "\lambda_\frakp^\ast "] \ar[d, "\Res^G_1"']& \catK(G; k(\frakp)) \ar[d, "\Res^G_1"] \\
                \catK(1;R) \ar[r, "\lambda_\frakp^\ast "] & \catK(1; k(\frakp))
            \end{tikzcd}
        \end{figure}

        Note $\catK(1;R) = \on{K}_b(\mathsf{proj}(R))$ and $\catK(1;k(\frakp)) = \on{K}_b(\mathsf{vec}(k(\frakp)))$ Let $x \in \Pic(\catK(G))$ satisfy $h_x \equiv 0$ and $\catH(x) = R$. Then $\lambda_\frakp^\ast(x) \cong k[0]$ in $\catK(G;k(\frakp))$ by \Cref{prop:h_marks_for_fields}, and a routine homological algebra argument verifies $\Res^G_1 (x) \cong R[0]$ in $\catK(1;R)$. Therefore, there exists a free $R$-module summand $M \subseteq x_0$ satisfying $M \subseteq \ker(d_0)$ and $M \cap \im(d_1) = 0$. Therefore, $\lambda_\frakp^\ast(M)$ identifies with $k[0]$ in $\catK(G;k(\frakp))$. 
        
        We claim that $M$ is in fact a $RG$-submodule of $x_0$, in other words, $G \cdot M \subseteq M$. Let $x_0 = M \oplus N$ as $R$-modules, then $N$ is $R$-projective. Now, this follows because we assume $R$ is reduced: in this case, for every nonzero $n \in N$, there exists some $\frakp \in \Spec(R)$ for which $n$ has nonzero image $k(\frakp) \otimes_R N$. Therefore, $p_N(g \cdot m) \neq n$, where $p_N$ denotes the projection $M \oplus N \to N$, since otherwise, the $g$-action would be detected under $\lambda_\frakp^\ast $. W conclude $G \cdot M \subseteq M$.

        Now, since $M$ is a $RG$-submodule of $x_0$, we can construct an injective chain complex homomorphism $f\colon R[0] \to x$ with image in degree 0 exactly $M$. We have 
        \[
        \lambda_\frakp^\ast (f)\colon k(\frakp)[0] \to x\simeq k(\frakp)[0]
        \]
        is an isomorphism, for all $\frakp \in \Spec(R)$, so conservativity implies $x \cong R[0]$, as desired. 
    \end{proof}

    \begin{observation}\label{obs:ker_h_mark_decomp}
        We obtain a split injective group homomorphism \[\Pic(\mathsf{perm}(G;R)^\natural) \to \catK(G;R), M \mapsto M[0]\] with retraction given by $\catH$. For $p$-groups over commutative connected Noetherian rings, this affords a short exact sequence
        \[ 0 \to \Pic(\mathsf{perm}(G;R)^\natural) \to \Pic(\catK(G;R)) \to \on{CF}_b^{gl}(G)\to 0.\] 
        All that remains is to deduce $\Pic(\mathsf{perm}(G;R)^\natural)$, and this can be reduced to considering free $R$-rank 1 $\natural$-permutation modules as follows. Recall $\Pic(R)$ denotes the group of line bundles over $R$, i.e., invertible $R$-modules. If $R$ is a commutative Noetherian ring, then we moreover have an split injective group homomorphism \[\Pic(R) \hookrightarrow  \Pic(\mathsf{perm}(G;R)^\natural)\] with retract given by restriction $\Res^G_1$. This affords a decomposition 
        \[
        \Pic(\mathsf{perm}(G;R)^\natural) = \Pic(R) \times \catM(G;R)
        \]
        where $\catM(G;R)$ denotes the subgroup of $ \Hom(G,R^\times)$ consisting of homomorphisms $\varphi$ for which the free $R$-rank 1 $RG$-module $R_\varphi$ is $\natural$-permutation. All that remains is to determine $\catM(G;R)$, or in other words, determine a criteria for when $R_\varphi$ is $\natural$-permutation.
    \end{observation}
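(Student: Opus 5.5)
The observation bundles four assertions, and I would verify them in the order they are stated. Each step only combines results already proved.

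\emph{Step 1: the split injection $M\mapsto M[0]$.} Let $M\in\Pic(\mathsf{perm}(G;R)^\natural)$. Then $M^\ast$ is again $\natural$-permutation, and $M\otimes_R M^\ast\cong R$ as $RG$-modules. Hence $M[0]\otimes M^\ast[0]\cong R[0]$ in $\catK(G;R)$, so $M[0]$ is endotrivial. The assignment $M\mapsto M[0]$ is strong monoidal, so it is a group homomorphism. By \Cref{def:homologyextraction}, $\catH(M[0])=\on{H}_0(M[0])=M$, so $\catH$ is a retraction and the map is split injective.

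\emph{Step 2: the short exact sequence for $p$-groups.} Let $G$ be a $p$-group and $R$ connected Noetherian. The reduced fiberwise h-mark homomorphism is surjective onto $\on{CF}_b^{gl}(G)$ by \Cref{thm:gluing_etrivs_p_grps}. Its kernel is $\Pic(\mathsf{perm}(G;R)^\natural)$, embedded via Step 1, by \Cref{thm:ker_h_marks}. For the converse inclusion, note that a module $M[0]$ has all h-marks zero: this holds because every modular fixed point of $k(\frakp)\otimes_R M$ is a one-dimensional module in degree $0$.

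\emph{Step 3: $\Pic(R)$ as a retract of $\Pic(\mathsf{perm}(G;R)^\natural)$.} Let $L\in\Pic(R)$ and give it the trivial $G$-action (inflation along $G\to 1$). Since $L$ is finitely generated projective, it is a summand of some $R^m$. With trivial action, $R^m=R(G/G)^m$ is permutation, so $L$ is $\natural$-permutation. The map $L\mapsto L$ with trivial action is monoidal and sends $L^{-1}$ to an inverse, so it lands in $\Pic(\mathsf{perm}(G;R)^\natural)$. The functor $\Res^G_1$ is monoidal, so it sends invertible $RG$-modules to invertible $R$-modules. The composite $\Pic(R)\to\Pic(\mathsf{perm}(G;R)^\natural)\xrightarrow{\Res^G_1}\Pic(R)$ is the identity, which gives the splitting.

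\emph{Step 4: identifying the complement with $\catM(G;R)$.} This is the only step needing a small argument. Since all groups here are abelian, the splitting gives
\[\Pic(\mathsf{perm}(G;R)^\natural)\cong\Pic(R)\times\ker(\Res^G_1).\]
Explicitly, $M$ decomposes as $\Res^G_1(M)\otimes M'$ with $M'=M\otimes\Res^G_1(M)^{-1}$. The module $M'$ is still $\natural$-permutation because tensor products of $\natural$-permutation modules are $\natural$-permutation. An element of $\ker(\Res^G_1)$ is an invertible $\natural$-permutation $RG$-module $M$ whose underlying $R$-module is isomorphic to $R$. Transporting the $G$-action along such an isomorphism, $G$ acts on $R$ by $R$-linear automorphisms, and $\Aut_R(R)=R^\times$. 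So $M\cong R_\varphi$ for a unique $\varphi\in\Hom(G,R^\times)$.

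Conversely, each $R_\varphi$ is invertible, with inverse $R_{\varphi^{-1}}$ since $R_\varphi\otimes R_{\psi}\cong R_{\varphi\psi}$. Thus it lies in $\ker(\Res^G_1)$ exactly when it is $\natural$-permutation. Since $\catM(G;R)$ is closed under products and inverses, this identifies $\ker(\Res^G_1)$ with the subgroup $\catM(G;R)\le\Hom(G,R^\times)$.

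The only points requiring care are that inverses and tensor products of $\natural$-permutation modules stay $\natural$-permutation, and that the isomorphism class of $R_\varphi$ determines $\varphi$. The latter holds because $R^\times$ is commutative, so conjugating by a unit does not change $\varphi$.
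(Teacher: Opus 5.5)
Your Steps 2--4 are correct and follow the paper's reasoning. In Step 3, $\Res^G_1$ really is a retraction, since it lands in $\Pic(R)$. The closure fact you flag also holds: $R(G/H)^\ast\cong R(G/H)$, so $\mathsf{perm}(G;R)^\natural$ is closed under duals, and $R_\varphi^\ast\cong R_{\varphi^{-1}}$ is $\natural$-permutation whenever $R_\varphi$ is.

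\textbf{The gap is in Step 1.} The identity $\catH(M[0])=M$ only shows that the composite of $M\mapsto M[0]$ with $\catH$ is the inclusion $\Pic(\mathsf{perm}(G;R)^\natural)\subseteq\Pic(\mathsf{mod}(RG))$. For $\catH$ to be a retraction, it must take values in $\Pic(\mathsf{perm}(G;R)^\natural)$. You never check this, and it is false.

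Take $G=C_2$, $R=\bbZ$, and the endotrivial $x=(\bbZ C_2\xrightarrow{\varepsilon}\bbZ)$ coming from the sign representation. Then $\catH(x)=\on{H}_1(x)=\bbZ(1-\sigma)\cong\bbZ_{\mathrm{sgn}}$. This module is not $\natural$-permutation: every composite $\bbZ_{\mathrm{sgn}}\to P\to\bbZ_{\mathrm{sgn}}$ through a permutation module $P$ is multiplication by an even integer (cf.\ \Cref{cor:rk1_for_int_domain}). So $\Pic(\mathsf{perm}(C_2;\bbZ)^\natural)$ is trivial while $\catH$ is a nontrivial homomorphism. Hence $\catH$ cannot be a retraction; it only certifies injectivity.

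You have reproduced the paper's own wording here. The same slip recurs in the ``canonical section $\catH$'' of \Cref{thm:mainthm}.

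\textbf{The fix.} Step 1 legitimately gives injectivity. More directly, a homotopy equivalence between complexes concentrated in degree $0$ is an isomorphism of modules. The splitting has to be deduced after Step 2. There the cokernel of $M\mapsto M[0]$ is identified with $\on{CF}_b^{gl}(G)$, a subgroup of a finite product of the free abelian groups $\on{CF}_b(G,\ch(k(\frakp)))$, hence free, so the short exact sequence splits. The same argument, with \Cref{thm:img_of_hmk_all_grps} in place of \Cref{thm:gluing_etrivs_p_grps}, gives split injectivity for arbitrary finite $G$ over connected Noetherian $R$. The resulting splitting is non-canonical and is not given by $\catH$. So reorganize: prove only injectivity in Step 1, and obtain the splitting from the free cokernel once Step 2 is established.
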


    \subsection{Characterizing free $R$-rank 1 $\natural$-permutation modules}

    Throughout, we only assume $R$ is a commutative ring. 

    \begin{notation}
        For $\varphi \in \Hom(G, R^\times)$ and $H \leq G$, let $I^\varphi_H\subseteq R$ denote the ideal 
        \[I^\varphi_H \coloneqq  \{r \in R \mid r = \varphi(h)r \text { for all } h \in H\}.\] 
        In other words, $I^\varphi_H$ is the annihilator ideal of the collection of elements \[\{1 - \varphi(h)\}_{h \in H}.\] 
    \end{notation}

    \begin{remark}
        By the induction/restriction adjunction, any $RG$-module homomorphism $f\colon R_\varphi \to R(G/H)$ is induced from the assignment \[1 \mapsto a\sum_{g\in [G/H]} \varphi(g\inv)gH,\] for some $a \in R$, however this assignment may not always be well-defined. In fact, it is an easy verification that $f$ is well-defined if and only if $a \in I^\varphi_H$. Similarly, any $RG$-module homomorphism $g\colon R(G/H) \to R_\varphi$ is induced from the assignment \[gH \mapsto \varphi(g)b\] for some $b \in R$, and $g$ is well-defined if and only if $b \in I^\varphi_H$. The composition of these morphisms is $g\circ f = [G:H]\cdot ab\cdot  \id_{R_\varphi}.$
    \end{remark}
    
    \begin{proposition}\label{prop:directsummandlemma}
        Let $\varphi \in \Hom(G,R^\times)$. The $RG$-module $R_\varphi$ is a $\natural$-permutation module if and only if \[R = \sum_{H\leq G}[G:H](I^\varphi_H)^2.\]
    \end{proposition}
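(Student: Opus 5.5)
The plan is to characterize when $R_\varphi$ is a direct summand of a finitely generated permutation module via split morphisms, using the description of $\Hom$-sets in the preceding remark. Since $R_\varphi$ is $R$-free of rank one, $\End_{RG}(R_\varphi) = R$. So $R_\varphi$ is $\natural$-permutation if and only if there are morphisms $f\colon R_\varphi \to P$ and $g\colon P \to R_\varphi$ with $g\circ f = \id$, for some permutation module $P = \bigoplus_j R(G/H_j)$.

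Decomposing $P$ into transitive summands, we write $f = (f_j)_j$ and $g = (g_j)_j$. Then $g\circ f = \sum_j g_j\circ f_j$. By the preceding remark, each $f_j$ is determined by some $a_j \in I^\varphi_{H_j}$, each $g_j$ by some $b_j \in I^\varphi_{H_j}$, and the composite is $g_j \circ f_j = [G:H_j]\, a_j b_j\cdot \id_{R_\varphi}$. Hence $g\circ f = \big(\sum_j [G:H_j] a_j b_j\big)\id$.

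For the forward direction, suppose $R_\varphi$ is a summand. Then $1 = \sum_j [G:H_j]a_jb_j$ with $a_j b_j \in (I^\varphi_{H_j})^2$, so $1 \in \sum_{H}[G:H](I^\varphi_H)^2$. Since this sum is an ideal, it equals $R$.

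For the converse, suppose $R$ equals that ideal sum. Then we can write $1 = \sum_{k} [G:H_k] a_k b_k$, a finite sum over pairs with $a_k, b_k \in I^\varphi_{H_k}$ (subgroups may repeat). Here we use that $(I^\varphi_H)^2$ consists of finite sums of products. We take $P = \bigoplus_k R(G/H_k)$, define $f_k$ via $a_k$ and $g_k$ via $b_k$, and the computation above gives $g\circ f = \id_{R_\varphi}$. This exhibits $R_\varphi$ as a summand of $P$.

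The only point needing care is the preceding remark itself: that every map $R_\varphi \to R(G/H)$ has the stated form and is well-defined exactly when $a\in I^\varphi_H$ (via Frobenius reciprocity, a map corresponds to an $H$-fixed vector of the appropriate twist), and that the composite is multiplication by $[G:H]ab$, which is the count of the $[G:H]$ cosets each contributing $ab$. I take this as given from the remark, so the main step is simply the bookkeeping over finite sums.
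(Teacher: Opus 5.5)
Your proposal is correct and matches the paper's proof. Both arguments decompose the splitting through transitive summands, use the preceding remark that each composite $R_\varphi\to R(G/H)\to R_\varphi$ is multiplication by $[G:H]ab$ with $a,b\in I^\varphi_H$, and read off $1\in\sum_{H}[G:H](I^\varphi_H)^2$ in each direction. You also note explicitly that elements of $(I^\varphi_H)^2$ are finite sums of products, so subgroups may repeat in the converse construction; the paper leaves this small step implicit.
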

    \begin{proof}
        If $R_\varphi$ is a $\natural$-permutation module, then there exists a permutation module $M = \bigoplus_{i=1}^n R(G/H_i)$ admitting $R_\varphi$ as a direct summand. In particular, there exist maps $f_i\colon R_\varphi \to R(G/H_i)$ and $g_i\colon R(G/H_i) \to R_\varphi$ with $g_i\circ f_i = [G:H_i]\cdot a_ib_i\cdot \id$ such that $\sum_{i=1}^n g_i\circ f_i = \id$. Therefore, $1 = \sum_{i=1}^n [G:H_i]a_ib_i$, and since each $a_i, b_i \in I^\varphi_{H_i}$, it follows that $1 \in \sum_{H \leq G}[G:H](I^\varphi_H)^2$, so $R = \sum_{H\leq G}[G:H](I^\varphi_H)^2$.
        
        Conversely, if $R = \sum_{H\leq G}[G:H](I^\varphi_H)^2$, then $1 \in \sum_{H \leq G}[G:H](I^\varphi_H)^2$, so we have $1 = \sum_{i=1}^n [G:H_i]a_ib_i$. From this, we can construct associated maps
        \[
        f_i\colon R_\varphi \to R(G/H_i) \; \mbox{ and } \; g_i\colon R(G/H_i) \to R_\varphi 
        \]
        satisfying $g_i\circ f_i = [G:H]a_ib_i \cdot \id$. It follows that $\sum_{i=1}^n g_i\circ f_i = \id$, hence $R_\varphi$ is a direct summand of the permutation module $\bigoplus_{i=1}^n R(G/H_i)$, as desired.
    \end{proof}

    \begin{corollary}\label{cor:rk1_for_int_domain}
        Let $R$ be an integral domain. Then given $\varphi \in \Hom(G,R^\times)$, the following are equivalent:
        \begin{enumerate}
            \item $R_\varphi$ is $\natural$-permutation;
            \item $N \coloneqq \ker(\varphi)$ satisfies $[G:N] \in R^\times$;
            \item For all prime $p \in \Phi(G;R)$, and $p$-subgroups $H \leq G$, $\Res^G_H(\varphi)$ is trivial. 
        \end{enumerate}
    \end{corollary}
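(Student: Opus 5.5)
The plan is to prove (a)$\Leftrightarrow$(b) from \Cref{prop:directsummandlemma}, and then (b)$\Leftrightarrow$(c) by elementary group theory plus one small computation in $R$. The key simplification is that in an integral domain the ideal $I^\varphi_H$ is easy to compute. If $r\in I^\varphi_H$ is nonzero, then $r(1-\varphi(h))=0$ forces $\varphi(h)=1$ for all $h\in H$. Hence $I^\varphi_H=R$ when $H\le N$, and $I^\varphi_H=0$ otherwise. Substituting this into the criterion of \Cref{prop:directsummandlemma}, $R_\varphi$ is $\natural$-permutation if and only if $R=\sum_{H\le N}[G:H]R$. The right-hand side is the ideal generated by the integers $[G:H]$ with $H\le N$. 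Each such index is a multiple of $[G:N]$, and the index $[G:N]$ itself occurs (take $H=N$). So the sum equals $[G:N]R$, and (a)$\Leftrightarrow$(b) follows.

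For (b)$\Rightarrow$(c), suppose $[G:N]\in R^\times$, and let $p\in\Phi(G;R)$ be a prime, so $p\notin R^\times$. Then $p\nmid[G:N]$, since otherwise $p$ would divide a unit. Let $H$ be a $p$-subgroup. Its image $HN/N$ has $p$-power order dividing $[G:N]$, so it is trivial. Thus $H\le N$ and $\Res^G_H\varphi$ is trivial.

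For (c)$\Rightarrow$(b), I would take a Sylow $p$-subgroup $S$ of $G$ for each prime $p\mid[G:N]$. If $p\in\Phi(G;R)$, condition (c) gives $S\le N$, so $p\nmid[G:N]$, a contradiction. Hence every prime divisor of $[G:N]$ lies outside $\Phi(G;R)$. Such a prime divides $|G|$ and does not satisfy $pR\ne R$, so it is a unit in $R$. Therefore $[G:N]$, a product of units, is a unit.

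One subtlety to check is the role of $0\in\Phi(G;R)$ in characteristic zero. A $0$-subgroup means the trivial subgroup, on which $\varphi$ restricts trivially, so it imposes no condition. I expect no real obstacle here. The only step needing care is the identification of $\sum_{H\le N}[G:H]R$ with $[G:N]R$, which uses only that $[G:N]$ divides $[G:H]$ for all $H\le N$.
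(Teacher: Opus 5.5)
Your proof is correct and follows the paper's argument. You compute $I^\varphi_H$ as $R$ or $0$ according to whether $H\le N$, which reduces the criterion of \Cref{prop:directsummandlemma} to $R=[G:N]R$. You then verify (b)$\Leftrightarrow$(c) with Lagrange and Sylow arguments that spell out what the paper states in a single line.
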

    \begin{proof}
        We first show (a) if and only if (b). By \Cref{prop:directsummandlemma},  we have that $R_\varphi$ is $\natural$-permutation if and only if $R = \sum_{H\leq G}[G:H](I^\varphi_H)^2$. However, if $H \not\leq N$, then $I^\varphi_H = \{0\}$ since $R$ is an integral domain. On the other hand,  if $H \leq N$, then $I^\varphi_H = R$ trivially. Therefore, $R_\varphi$ is $\natural$-permutation if and only if \[R = \sum_{H \leq N}[G:H]R^2 = [G:N]R^2,\] and the equality holds if and only if $[G:N] \in R^\times$.   

        (b) if and only if (c) is quick. We have $[G:N] \in R^\times$ if and only if $N$ contains all $p$-subgroups of $G$ for every prime $p \in \Phi(G;R)$ if and only if $\Res^G_H(\varphi)$ is trivial for all $p$-subgroups $H \leq G$ with $p\in \Phi(G;R)$. 
    \end{proof}
    
    \begin{example}
        If $|G|$ is invertible in $R$, then all $RG$-modules of rank 1 are $\natural$-permutation modules, since $I^\varphi_1 = R$. Of course, in this case all $R$-projective $RG$-modules are projective, hence $\natural$-permutation, so the point is moot. 
    \end{example}

   \begin{example}\label{ex-R-retract-sylows}
        If $\varphi = 1$, then $I^\varphi_H = R$ for all $H \leq G$. Then $\gcd([G:S] \mid S \in \Syl(G)) = 1$, hence \[R = \sum_{S \in \Syl(G)} [G:S] R^2,\] and we recover the fact that $R$ is a direct summand of \[\bigoplus_{S\in \Syl(G)/G} R(G/S).\] This argument is essentially the proof of \cite[Lemma 3.2]{Car00}.
   \end{example}
        
    For $p$-groups, we can reduce the criteria much further.
    
    \begin{theorem}\label{thm:rk1ppermsforpgrp}
        Let $G$ be a $p$-group and $\varphi \in \Hom(G, R^\times)$. The $R$-free $RG$-module $R_\varphi$ is a $\natural$-permutation module if and only if $R = I^\varphi_G + pR$.
    \end{theorem}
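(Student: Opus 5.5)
The plan is to reduce to \Cref{prop:directsummandlemma}. That result says $R_\varphi$ is $\natural$-permutation if and only if $R = \sum_{H\leq G}[G:H](I^\varphi_H)^2$. Since $G$ is a $p$-group, every index $[G:H]$ with $H<G$ is a positive power of $p$, and $[G:G]=1$. So the ideal in question is
\[
J \coloneqq (I^\varphi_G)^2 + \sum_{H<G}[G:H](I^\varphi_H)^2 .
\]
The second summand is contained in $pR$. Hence $J\subseteq (I^\varphi_G)^2 + pR \subseteq I^\varphi_G + pR$.

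\emph{Forward direction.} If $R=J$, then $R = I^\varphi_G + pR$ follows immediately from the containment above.

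\emph{Converse.} Assume $R = I^\varphi_G + pR$. I would show that $J$ is not contained in any maximal ideal $\mathfrak m$ of $R$. There are two cases.
\begin{enumerate}
\item If $p\notin\mathfrak m$, use the term $H=1$. Here $I^\varphi_1 = R$, so $[G:1]\cdot R = |G|R \subseteq J$. Since $|G|$ is a power of $p$, it is not in $\mathfrak m$, and so $J\not\subseteq \mathfrak m$.
\item If $p\in\mathfrak m$, the hypothesis $R=I^\varphi_G+pR$ forces $I^\varphi_G\not\subseteq\mathfrak m$. Pick $r\in I^\varphi_G\setminus\mathfrak m$. Then $r^2\in (I^\varphi_G)^2\subseteq J$ and $r^2\notin\mathfrak m$, since $\mathfrak m$ is prime.
\end{enumerate}
In both cases $J\not\subseteq\mathfrak m$. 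Therefore $J=R$, and \Cref{prop:directsummandlemma} shows that $R_\varphi$ is $\natural$-permutation.

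If one prefers to avoid maximal ideals, the converse can be done by hand. Write $1 = a + pb$ with $a\in I^\varphi_G$ and raise to a high power: $1=(a+pb)^N$ expands into a multiple of $a^2$ (which lies in $J$) plus terms divisible by $p^k$. For $N$ large, choosing $p^k$ a multiple of $|G|$, those remaining terms lie in $|G|R\subseteq J$.

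There is no real obstacle here. The only point needing care is that the squares $(I^\varphi_H)^2$ in \Cref{prop:directsummandlemma} do not obstruct the reduction to $I^\varphi_G + pR$. Primality of $\mathfrak m$, or the power-expansion trick, together with the contribution $|G|R$ from $H=1$, handles this.
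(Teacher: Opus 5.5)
Your proof is correct, and your forward direction is the paper's: the containment $\sum_{H\leq G}[G:H](I^\varphi_H)^2\subseteq I^\varphi_G+pR$.

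For the converse, the paper uses exactly your ``by hand'' alternative. With $|G|=p^n$ and $1=a+pb$, it expands $(a+pb)^{2n}$ and splits the binomial sum at $i=n$. The terms with $i\leq n$ lie in $p^nR=[G:1](I^\varphi_1)^2$, and those with $i\geq n+1$ lie in $(I^\varphi_G)^2$.

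Your main argument instead checks comaximality one maximal ideal at a time. In effect it observes that $(I^\varphi_G)^2+|G|R$ and $I^\varphi_G+pR$ have the same radical. This is shorter and needs no exponent bookkeeping. However, it is non-constructive, since it relies on the existence of maximal ideals.

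The paper's expansion gives an explicit identity $1=p^nc+a^2d$ with $a\in I^\varphi_G$. Through the maps in the proof of \Cref{prop:directsummandlemma}, this exhibits $R_\varphi$ concretely as a direct summand of $R\oplus RG$. Both arguments show that only the subgroups $H=1$ and $H=G$ are ever needed.
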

    \begin{proof}
        The forward direction is implied by the previous lemma, as we have an inclusion \[\sum_{H\leq G}[G:H](I^\varphi_H)^2 \subseteq I^\varphi_G + pR.\] Now suppose $R = I^\varphi_G + pR$, then we can write $1 = a + pb$ for $a \in I^\varphi_G$ and $b \in R$. Suppose $|G| = p^n$; if $n = 0$ there is nothing to show so we may assume $n \geq 1$. We have \[1 = (a + pb)^{2n} = \sum_{i=0}^{2n} {2n \choose i} a^i (pb)^{2n-i}.\] 
        The sum of terms indexed from $i = 0$ to $i = n$ is an element of $[G:1]R $, since the minimum power of $p$ attached to each term is $p^n$. Additionally, the sum of terms indexed from $i = n+1$ to $i = 2n$ lives in $(I^\varphi_G)^2$ since the minimum power of $a$ attached to each term is $n \geq 1$. Thus, \[1 = (a + pb)^{2n} \in [G:1]R + (I^\varphi_G)^2 = [G:1](I^\varphi_1)^2 + [G:G](I^\varphi_G)^2,\] so \Cref{prop:directsummandlemma} implies $R_\varphi$ is a $\natural$-permutation module, as desired.
    \end{proof}
    
    \begin{example}\label{ex:p_grp_rk_1_pperm_exs}
        Assume $G$ is a $p$-group. If $R$ has characteristic $p$, then the only $R$-rank 1 $RG$-module which is $\natural$-permutation is the trivial representation $R$. 
        
        If $R$ is an integral domain, then more generally, the only $\natural$-permutation $RG$-module of $R$-rank 1 is $R$ if and only if $pR \neq R$. Indeed, here $I^\varphi_G = 0$. 
    \end{example}

    \begin{remark}
        The above fact need not hold for non-integral domains however. For instance, let $R = \bbZ/6\bbZ$, $G = C_2$, and let $\varphi \in \Hom(C_2, R^\times)$ be the unique nontrivial group homomorphism. Then $I^\varphi_G = 3\bbZ/6\bbZ$. We have $\bbZ/6\bbZ = 3\bbZ/6\bbZ + 2\bbZ/6\bbZ$, so the sign representation $R_\varphi$ is $\natural$-permutation, even though 2 is not invertible in $R$.
    \end{remark}

    We conclude with a classification for endotrivials for $p$-groups.  

    \begin{theorem}\label{thm:picard_grp_p_grps}
        Let $G$ be a $p$-group and $R$ a commutative, connected, Noetherian ring.
        \begin{enumerate}
            \item If $pR \neq R$, then \[\Pic(\catK(G;R))) = \on{CF}^{gl}_b(G) \times \Pic(R) \times \catM(G;R),\] where $\catM(G;R)$ denotes the group of $\varphi\in \Hom(G, R^\times)$ for which \[R = I^\varphi_G + pR.\]
            \item If $pR = R$, then \[\Pic(\catK(G;R)) = \bbZ \times \Pic(R) \times \Hom(G,R^\times).\]
        \end{enumerate}
    \end{theorem}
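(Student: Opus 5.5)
The plan is to assemble the classification from the short exact sequence of \Cref{obs:ker_h_mark_decomp} together with the image computation in \Cref{thm:gluing_etrivs_p_grps}. The first step is to write down the sequence
\[0 \to \Pic(\mathsf{perm}(G;R)^\natural) \to \Pic(\catK(G;R)) \xrightarrow{h} \on{CF}^{gl}_b(G;R) \to 0.\]
Exactness on the left and in the middle is \Cref{thm:ker_h_marks}, where the kernel of the reduced fiberwise h-mark homomorphism consists of the complexes $M[0]$ with $M$ an invertible $\natural$-permutation module. Surjectivity onto the glued tuples is \Cref{thm:gluing_etrivs_p_grps}.

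The sequence splits because $\on{CF}^{gl}_b(G;R)$ is a subgroup of a finite product of free abelian groups, hence free. Alternatively, one can use the splitting given by $\catH$ on the kernel side, as in \Cref{obs:ker_h_mark_decomp}. Either way this gives $\Pic(\catK(G;R)) \cong \on{CF}^{gl}_b(G) \times \Pic(\mathsf{perm}(G;R)^\natural)$.

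Next, decompose $\Pic(\mathsf{perm}(G;R)^\natural) = \Pic(R)\times\catM(G;R)$ as in \Cref{obs:ker_h_mark_decomp}. The inclusion $\Pic(R)\to \Pic(\mathsf{perm}(G;R)^\natural)$ via trivial action is split by $\Res^G_1$. The kernel of $\Res^G_1$ consists of $\natural$-permutation modules that are $R$-free of rank one, i.e.\ of the form $R_\varphi$ with $\varphi\in\Hom(G,R^\times)$. One point needs checking here: an invertible module $L$ with nontrivial underlying line bundle should decompose as $L'\otimes R_\varphi$. I would handle this by tensoring $L$ with the inflation of $(\Res^G_1 L)^{-1}$, which remains $\natural$-permutation since $\otimes$ preserves $\natural$-permutation modules. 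By \Cref{thm:rk1ppermsforpgrp}, the condition for $R_\varphi$ to be $\natural$-permutation is exactly $R=I^\varphi_G+pR$, which gives part (a) when $pR\neq R$.

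For part (b), suppose $pR=R$, so $|G|$ is invertible in $R$. The set $\Phi(G;R)$ is empty unless $\ch(R)=0$, and in any case $\tilde\pi_0(\Phi(G;R))$ is a singleton whose only h-mark datum is the degree $b(x)$. Thus $\on{CF}^{gl}_b(G)$ degenerates to $\bbZ$, realized by shifts of $R$. The criterion in \Cref{thm:rk1ppermsforpgrp} now holds for every $\varphi$, since $pR=R$; equivalently, by the Example following \Cref{ex-R-retract-sylows}, every $R$-projective module is $\natural$-permutation. Hence $\catM(G;R)=\Hom(G,R^\times)$.

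The step I expect to need the most care is this degenerate case. There I must confirm that \Cref{thm:ker_h_marks} and the fiberwise homomorphism still yield exactly $\bbZ$, i.e.\ that an endotrivial with $b(x)=0$ is a module in degree $0$. I would argue this directly via \Cref{lem:intersections_in_generic_locus}: here $\catK(G;R)\cong\on{D}_\mathsf{perf}(RG)$, and since $R$ is connected, an invertible perfect complex has homology in one degree, so it is a shifted invertible $RG$-module.
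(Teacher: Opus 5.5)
Your proposal is correct and follows essentially the paper's route. Like the paper, you take the split sequence from \Cref{thm:gluing_etrivs_p_grps} and \Cref{thm:ker_h_marks} and identify the kernel as $\Pic(R)\times\catM(G;R)$ via \Cref{obs:ker_h_mark_decomp} and \Cref{thm:rk1ppermsforpgrp}. Your direct treatment of the case $pR=R$ through $\catK(G;R)\cong\on{D}_{\mathsf{perf}}(RG)$ is a sound supplement.

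One caveat: base the splitting only on the freeness of $\on{CF}^{gl}_b(G)$, and drop the alternative splitting via $\catH$. $\catH$ need not land in $\natural$-permutation modules: for example, $\catH(\bbZ C_2\to\bbZ)\cong\bbZ_{\mathrm{sgn}}$, which is not $\natural$-permutation over $\bbZ$.
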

    \begin{proof}
        We have a split exact sequence \[0 \to \ker(h) \to \Pic(\catK(G;R)) \to \on{CF}_b^{gl}(G;R) \to 0\] by \Cref{thm:gluing_etrivs_p_grps}, and \Cref{thm:ker_h_marks}, \Cref{obs:ker_h_mark_decomp}, and \Cref{thm:rk1ppermsforpgrp} deduce that $\ker(h) = \Pic(R) \times \catM(G;R)$.
    \end{proof}

    As an example, when $R = \bbZ$, we obtain the following.

    \begin{corollary}\label{cor:pgrp_integer_classification}
        Let $G$ be a $p$-group. Then $\Pic(\catK(G;\bbZ)) \cong \on{CF}_b(G)$, and the base change functor $\lambda_p^*\colon \catK(G;\bbZ) \to \catK(G;\bbF_p)$ induces an isomorphism \[\Pic(\catK(G;\bbZ)) \cong \Pic(\catK(G;\bbF_p)).\] In particular, $\catB\colon \on{RO}(G) \to \Pic(\catK(G;\bbZ))$ is surjective. 
    \end{corollary}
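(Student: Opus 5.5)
We plan to specialize \Cref{thm:picard_grp_p_grps}(a) to $R=\bbZ$. Since $\bbZ$ is connected and Noetherian with $p\bbZ\neq\bbZ$, we get
\[\Pic(\catK(G;\bbZ))=\on{CF}^{gl}_b(G)\times\Pic(\bbZ)\times\catM(G;\bbZ).\]
We then identify the three factors in turn.

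\emph{The Picard factor.} Since $\bbZ$ is a principal ideal domain, $\Pic(\bbZ)$ is trivial.

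\emph{The $\catM$ factor.} Since $\bbZ$ is an integral domain in which $p$ is not a unit, \Cref{ex:p_grp_rk_1_pperm_exs} gives that $\catM(G;\bbZ)$ is trivial. Directly: $I^\varphi_G=0$ for $\varphi\neq1$, and $p\bbZ\neq\bbZ$.

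\emph{The glued factor.} Here $\bbZ/p=\bbF_p$ has connected spectrum, so $\tilde\pi_0(\Phi(G;\bbZ))$ is a single point. Hence $\on{CF}^{gl}_b(G)=\on{CF}_b(G)$, and the isomorphism is given by the reduced fiberwise h-mark at the prime $(p)$, i.e.\ by $h_{\lambda_p^\ast(x)}$.

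This yields $\Pic(\catK(G;\bbZ))\cong\on{CF}_b(G)$ via $x\mapsto h_{\lambda_p^\ast x}$.

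For the second claim, we will check that this isomorphism factors as the map $\lambda_p^\ast\colon\Pic(\catK(G;\bbZ))\to\Pic(\catK(G;\bbF_p))$ followed by the h-mark isomorphism $h\colon\Pic(\catK(G;\bbF_p))\cong\on{CF}_b(G)$ of \Cref{thm:endotriv_classification_over_field}(a). The residue field $k((p))$ is exactly $\bbF_p$, so the fiberwise h-mark at $(p)$ is by definition $h\circ\lambda_p^\ast$. Since both the composite and $h$ are isomorphisms, $\lambda_p^\ast$ is an isomorphism on Picard groups.

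For surjectivity of $\catB$, we apply \Cref{thm:p_grp_h_mark_surj} with $R=\bbZ$, which is $p$-connected. It says that $h'\circ\catB\colon\on{RO}(G)\to\on{CF}_b(G)$ is surjective. By \Cref{prop:hmarkslocallydetermined}, $h'$ agrees with the fiberwise h-mark at $(p)$, which we have just shown is an isomorphism. Therefore $\catB$ is surjective.

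The main point needing care is to confirm that no contribution is lost in passing to the \emph{reduced} fiberwise h-mark. In principle one might worry about the characteristic $0$ fiber. However, its h-mark is the value at the trivial subgroup, and this value is already recorded by $f(1)$ (\Cref{prop:etrivhomologyinonedegree}). So the kernel is exactly as described in \Cref{thm:ker_h_marks}, and everything else is bookkeeping.
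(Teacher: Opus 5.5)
Your proposal is correct and follows essentially the paper's argument. The paper presents this corollary simply as the specialization of \Cref{thm:picard_grp_p_grps} to $R=\bbZ$: $\Pic(\bbZ)$ is trivial, $\catM(G;\bbZ)$ is trivial because $\bbZ$ is a domain with $p\notin\bbZ^\times$, and the glued factor is just $\on{CF}_b(G)$ because $\bbF_p$ is connected. Your deductions of the $\bbF_p$ comparison (from the field classification) and of the surjectivity of $\catB$ (from \Cref{thm:p_grp_h_mark_surj} together with injectivity of the h-mark) are exactly the intended bookkeeping.
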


    \section{Endotrivial line bundles}\label{sec:line_bundles}

    Let $G$ be a $p$-group and $R$ a commutative Noetherian ring. All invertible $R$-modules are line bundles; that is, they are all locally trivial. Similarly, in \cite{Mil25c}, the second-named author showed that when $R$ is a field $k$ of characteristic $p$, there exists an open cover of $\Spc(\catK(G;k))$ under which every endotrivial is a line bundle. We show here that under certain conditions on our ring $R$, the analogous result holds: one can construct an open cover of $\Spc(\catK(G;R))$ under which invertible $R$-modules and most (but not necessarily all) endotrivials arising from representation spheres are line bundles. 

    \begin{observation}
        Let $G$ be a finite group, $R$ a commutative, connected, Noetherian ring, $V$ a real $n$-dimensional orthogonal representation of $G$, and $x \in \Pic(\catK(G;R))$ the associated endotrivial complex (see \Cref{prop:hmkisdimfunc}). Then $\on{H}_n(x)$ is free of $R$-rank 1, but need not be the trivial $RG$-module. Indeed, any $g \in G$ induces a continuous invertible map on $S(V)$, hence the degree of the map induced by $g$ must be $\pm 1$. The degree, with scalars in $R$, can be read off from the induced map on $R \otimes_\bbZ \on{H}_n(S(V)) \cong \on{H}_n(x)$, hence $g \cdot r = \det(\rho_V(g)) \in \on{H}_n(x)$, where $\rho_V \colon G \to \Aut(V)$ is a matrix representation of $V$.

        In particular, if $G$ has odd order or $R$ has characteristic 2, any element in the image of $\on{\calB}\colon \on{RO}(G) \to \Pic(\catK(G;R))$ has \textit{$G$-trivial homology}, i.e., $\catH(x)$ has trivial $G$-action. On the other hand, this need not happen outside of these cases. Indeed, the 2-term endotrivial $x = \bbZ C_2 \to \bbZ$ associated to the unique nontrivial irrep of $C_2$ satisfies $\catH(x) \cong \bbZ_{sgn}$. It is easy to see from the above discussion however that any (virtual) real representation of the form $V = W\oplus W$ for another (virtual) real representation $W$ has $G$-trivial homology. 
        
        Additionally, it need not be the case that if $\calH(x)$ is $G$-trivial, then $\calH(\Psi^H(x))$ will also be $\Weyl{G}{H}$-trivial for a $p$-subgroup $H$ of $G$ with $p \in \Phi(G;R)$ (if it exists). Indeed, let $G = C_2\times C_2 \coloneqq  \langle \sigma, \tau\rangle$, set $H_1 \coloneqq  \langle\sigma\rangle$ and $H_2 \coloneqq  \langle \tau\rangle$. Let $x\coloneqq  \bbZ(G/H_1) \to \bbZ$, let $x'$ be the sign representation for the subgroup $H_1$, and let $y \coloneqq  x \otimes (x'[0])$. Then $y$ is endotrivial and satisfies $\catH(y) \cong \bbZ$, but $\catH(\Psi^{H_2}(y)) \cong \Psi^{H_2}(x')$, which is non-$\Weyl{G}{H_2}$-trivial. 
    \end{observation}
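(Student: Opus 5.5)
The plan is to work at the level of the topological sphere and transport everything along cellular chains. By \Cref{prop:hmkisdimfunc} and its proof, $x=\tilde C(S(V)^1;R)\cong R\otimes_\bbZ \tilde C(S(V)^1;\bbZ)$, and the integral complex has homology only in degree $n$, where it is $\bbZ$. Hence $x$ is $R$-split, and \Cref{lem:commute_with_homology} gives $\on{H}_n(x)\cong R\otimes_\bbZ \on{H}_n(S(V);\bbZ)$, which is free of $R$-rank $1$. This identification is $G$-equivariant, because the cellular chain functor is natural for cellular $G$-maps. So the $G$-action on $\on{H}_n(x)$ is the $R$-linearization of the action on $\tilde{\on{H}}_n(S(V);\bbZ)\cong\bbZ$. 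On the latter, $g$ acts by the degree of the homeomorphism $S(\rho_V(g))$.

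The key step is the standard fact that the one-point compactification of an orthogonal linear map $A$ has degree $\det(A)=\pm1$. I would prove it as follows. $O(n)$ has two path components, and the degree is a homotopy invariant, so it suffices to check two maps: the identity, which has degree $1$, and a single reflection, which has degree $-1$. The reflection case follows from the suspension description $S^n=\Sigma S^{n-1}$, with the reflection acting on the suspension coordinate. This gives $g\cdot r=\det(\rho_V(g))\,r$.

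The consequences are then formal.
\begin{itemize}
\item If $|G|$ is odd, then $\det(\rho_V(g))\in\{\pm1\}$ has odd order, so it equals $1$.
\item If $\ch R=2$, then $-1=1$ in $R$.
\item By \Cref{prop:def_of_B}, $\catB(W\oplus W)\cong\catB(W)^{\otimes2}$. By \Cref{coro-homology-tensorprod} its homology is $\catH(\catB(W))^{\otimes 2}=R_{\det^2}=R$.
\end{itemize}
All three statements extend to virtual representations, since $\catH\circ\catB$ is a homomorphism.

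For the $C_2$ example, I would compute directly. The complex $\bbZ C_2\to\bbZ$ is the cellular complex of the sign sphere, and its degree-$1$ homology is the kernel of the augmentation. That kernel is spanned by $1-\sigma$, on which $\sigma$ acts by $-1$, so it is $\bbZ_{sgn}$.

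The main obstacle is the last example. As literally stated, the twist $x'[0]$ by a nontrivial sign character is problematic over $\bbZ$. By \Cref{cor:rk1_for_int_domain} and \Cref{ex:p_grp_rk_1_pperm_exs}, such a module is not $\natural$-permutation for a $2$-group, so $x'[0]\notin\catK(G;\bbZ)$. The first thing to check is therefore whether $y$ lies in $\catK(G;\bbZ)$ at all. If it does not, I would replace the example with one built purely from spheres. Write $\chi_1,\chi_2,\chi_3$ for the three nontrivial real characters of $C_2\times C_2$, with kernels $H_1$, $H_2$ and $\langle\sigma\tau\rangle$, and take $V=\chi_1\oplus\chi_2\oplus\chi_3$.
\begin{itemize}
\item On $G$: $\det\rho_V=\chi_1\chi_2\chi_3=1$, so $\catH(\catB(V))\cong\bbZ$ is $G$-trivial.
\item On $H_2$-fixed points: $V^{H_2}=\chi_2$. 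Using \Cref{prop:evandmodfixedptscommute}, the complex $\Psi^{H_2}$ of $\catB(V)$ is computed by the chains of the sphere $S(\chi_2)$ with its $G/H_2$-action.
\item That action is by the sign of $G/H_2\cong C_2$: integrally, before reducing mod $2$, its homology is the nontrivial character $\chi_2$ on $G/H_2$.
\end{itemize}
This exhibits the claimed failure of $\Weyl{G}{H}$-triviality, at the level of the integral fixed-point complex $x_{H_2}$.
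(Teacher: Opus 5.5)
Your treatment of the first two paragraphs is the paper's argument, with more detail supplied. Each $g$ acts on $R\otimes_\bbZ\tilde{\on{H}}_n(S(V);\bbZ)\cong\on{H}_n(x)$ by the degree of $S(\rho_V(g))$, which equals $\det\rho_V(g)$; you add the $O(n)$ path-component justification and the explicit kernel computation for $\bbZ C_2\to\bbZ$. That part is fine. You are also right that the paper's last example cannot be used as written. By \Cref{cor:rk1_for_int_domain}, a rank-one sign module for a $2$-group is not $\natural$-permutation over $\bbZ$, so $x'[0]\notin\catK(G;\bbZ)$.

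The gap is in your replacement $V=\chi_1\oplus\chi_2\oplus\chi_3$. For $R=\bbZ$ and a $2$-subgroup $H_2$, the functor $\Psi^{H_2}$ in the statement is $\Psi^{H_2}_2=\Psi^{H_2}\circ\lambda_2^\ast$, which lands in $\catK(\Weyl{G}{H_2};\bbF_2)$. By \Cref{prop:evandmodfixedptscommute} and \Cref{lem:commute_with_homology}, $\catH(\Psi^{H_2}_2(\catB(V)))\cong\bbF_2\otimes_\bbZ\on{H}_1(x_{H_2})$. Since $\bbF_2^\times=\{1\}$, every rank-one $\bbF_2$-module is trivial, so this is the trivial $\Weyl{G}{H_2}$-module. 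The sign you exhibit lives only on the integral fixed-point complex $x_{H_2}=\tilde C(S(V)^{H_2};\bbZ)$. That shows $V$ is not oriented over $\bbZ$ in the representation-level sense of \Cref{def:oriented}, but it does not show the claimed failure for $\catH(\Psi^H(x))$.

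In fact no example with $R=\bbZ$ and $p=2$ can exist for any $G$, and the paper's $C_2\times C_2$ example fails for the same reason. You need an odd prime and a Weyl group acting through a sign. For instance, take $G=S_3$, $p=3$, $H=C_3$, $V=\mathrm{std}\oplus\mathrm{sgn}$. Then $\det\rho_V=\mathrm{sgn}^2=1$, so $\catH(\catB(V))\cong\bbZ$ is $G$-trivial. On the other hand $V^{C_3}=\mathrm{sgn}$ as a representation of $\Weyl{S_3}{C_3}\cong C_2$, so $\catH(\Psi^{C_3}_3(\catB(V)))\cong\bbF_3\otimes_\bbZ\bbZ_{\mathrm{sgn}}$, on which the generator acts by $-1\neq 1$.
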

    
    \begin{definition}\label{def:oriented}
        Let $V$ be a real representation of $G$. We call $V$ \emph{oriented} over $R$ if $\tilde{C}(S(V)^H; R)$ has nonzero homology isomorphic to $R$ for all $H \leq G$ (following \cite{HY14}). More generally, if $V \in \on{RO}(G)$ is a virtual real representation with $V = [V_1] - [V_2]$, we say $V$ is oriented if $\tilde{C}(S(V_1)^H; R) \otimes_R \tilde{C}(S(V_2)^H; R)^\ast $ has nonzero homology isomorphic to $R$ for all $H \leq G$. Note that this definition depends on $R$: if $\ch(R) = 2$, then all real representations are automatically oriented. 
    
        Let $x \in \Pic(\catK(G;R))$ be endotrivial (e.g., $x = \catB(V)$ for $V \in \on{RO}(G)$). We say $x$ is \emph{oriented} if $\catH(\Psi^H_p(x))$ has trivial $\Weyl{G}{H}$-action for all $p \in \Phi(G;R)$ and $p$-subgroups $H \leq G$. If a (virtual) real representation $V$ is oriented, then $\catB(V)$ is oriented. 

    \end{definition}

    \begin{example}
        Any $M \in \Pic(R)$ defines an oriented endotrivial $M[0]$. Given any real representation $V$ of $G$, the endotrivial associated to $W = V\oplus V$ is oriented. Moreover, if $G$ is a $p$-group and $R$ is a principal ideal domain of characteristic $p$, then every endotrivial for $G$ over $R$ is oriented. Indeed, this follows from the classification \Cref{thm:picard_grp_p_grps}, as here, every endotrivial is induced from a virtual representation sphere (and if $p=2$, $1 = -1$).
    \end{example}   

    There are a few instances where we can deduce if a representation is oriented, but in general, this appears to be a nontrivial question. 

    \begin{proposition}
        Suppose $V$ is a real representation. Consider the complexification $\bbC \otimes_R V$: if $\bbC \otimes_R V$ is complex or quaternionic type, then $V$ is oriented. 
    \end{proposition}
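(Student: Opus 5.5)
The plan is to reduce orientedness of $V$ to the action of $\Weyl{G}{H}$ on the top homology of each fixed-point sphere. For each subgroup $H\leq G$, the complex $\tilde{C}(S(V)^H;R)$ has a single nonzero homology group. By the Observation above, this group is $R\otimes_\bbZ \on{H}_{d}(S(V^H))$ with $d=\dim_V(H)$, and $gH\in\Weyl{G}{H}$ acts on it by the degree of $g$ acting on the sphere $S(V^H)$. That degree is $\det(g|_{V^H})=\pm1$. So it suffices to show that every $n\in N_G(H)$ acts on $V^H$ with positive determinant. Then the homology is $R$ with trivial action for every $H$, and $V$ is oriented.

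The key step is to put a structure on $V^H$ that is preserved by $N_G(H)$ and forces orientation preservation. If $\bbC\otimes V$ is of complex type, then $V$ is the realification of a complex representation $W$: concretely, $\End_{\bbR G}(V)$ contains a $J$ with $J^2=-1$. If $\bbC\otimes V$ is of quaternionic type, then $V$ is the realification of a quaternionic representation, which is in particular complex. In either case $V=\on{res}_\bbR W$ with $W$ a complex $G$-representation. Then $V^H=\on{res}_\bbR(W^H)$, and $W^H$ is a complex subspace stable under $N_G(H)$ because $J$ commutes with $G$. Each $n\in N_G(H)$ therefore acts $\bbC$-linearly on $W^H$, and $\det_\bbR(n|_{V^H})=|\det_\bbC(n|_{W^H})|^2>0$. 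Since this determinant is also $\pm1$, it equals $1$.

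The main obstacle is interpreting the hypothesis. "Complex or quaternionic type" must be read so that $V$ carries a $G$-invariant complex structure. This is automatic if $V$ is irreducible and $\End_{\bbR G}(V)$ is $\bbC$ or $\mathbb{H}$. For a reducible $V$, I would apply the argument to each isotypic summand of these types, and note that a direct sum of oriented representations is oriented, since $\catB$ is multiplicative and $S(V\oplus W)^H=S(V^H)\wedge S(W^H)$. I would first check the complex-type case explicitly, choosing $J$ from the endomorphism ring via Schur's lemma. The quaternionic case then follows by restricting the $\mathbb{H}$-structure to $\bbC\subseteq\mathbb{H}$.

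Finally, I would verify the identification of the homology action with the determinant over an arbitrary coefficient ring $R$. The action is obtained by base change from $\bbZ$, because $\tilde{C}(-;R)\cong R\otimes_\bbZ\tilde{C}(-;\bbZ)$ and these complexes are $R$-split (\Cref{lem:commute_with_homology}). So a trivial action over $\bbZ$ gives a trivial action over $R$, and $\on{H}\cong R$ for all $H$, which is exactly the definition of oriented in \Cref{def:oriented}.
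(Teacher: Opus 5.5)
Your proposal is correct and follows essentially the paper's route: the paper writes $\bbC\otimes V = W\oplus\overline{W}$, computes $\det\rho_V(g)=\det\rho_W(g)\,\overline{\det\rho_W(g)}=1$, and then says to repeat this on $V^H$, which is exactly your identity $\det_{\bbR}=|\det_{\bbC}|^2$ for a $G$-invariant complex structure. Your version is, if anything, more explicit about why $V^H$ inherits the complex structure and is stable under $N_G(H)$ (because $J$ commutes with $G$), a step the paper leaves implicit.
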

    \begin{proof}
        If $\bbC \otimes_R V$ is complex or quaternionic, then $\bbC \otimes_R V = W \oplus \overline{W}$, where $\overline{W}$ indicates complex conjugation (if the type is quaternionic, then $\overline{W} = W$). In this case, we have for any $g \in G$ that \[\det(\rho_V(g)) = \det(\rho_W(g))\det(\rho_{\overline{W}}(g)) = \det(\rho_W(g))\overline{\det(\rho_W(g))}.\] If $W$ is complex, then \[\det(\rho_W(g))\overline{\rho_W(g)} = |\mathrm{det}(\rho_W(g))|^2 = 1,\] and if $W$ is quaternionic, then since $W = \overline{W}$, \[\det(\rho_W(g))\overline{\det(\rho_W(g))} = \det(\rho_W(g))^2 = 1,\] as desired. Repeating the argument for the fixed-point subspaces $V^H$ completes the result. 
    \end{proof}

    On the other hand, if $\bbC \otimes_R V$ has real type, we are unaware of any criterion deducing when $V$ is orientable.
        
    \subsection{Forerunners}

    We present a topological approach to constructing forerunner homomorphisms in this section, which works beyond the setting of fields of prime characteristic. This answers the question posed by the second-named author in \cite{Mil25c}, of a `topological' construction of the forerunner homomorphisms.

    \begin{construction}[Forerunner homomorphisms]\label{cons:forerunners}
        Let $V$ be a real representation for $G$ orientable over $R$, and let $S(V)$ be its associated representation sphere, endowed with the structure of a finite $G$-CW-complex, and let $H \leq G$ be a subgroup. Then as a CW-complex, we have an inclusion inducing a homotopy equivalence $f\colon S^n \to S(V^H)$ with $n$ the dimension of $V^H$, and $S^n$ the $n$-sphere, viewed as a CW-complex with a $0$-cell and a $n$-cell. Then the homomorphism $f\colon S^n \to S(V^H)$ sends the $n$-cell of $S^n$ to a (finite) union of $n$-cells of $S(V^H)$, $\{e_1, \dots, e_l\}$. In fact, the image must contain all $n$-cells of $S(V^H)$, which form a $\Weyl{G}{H}$-set. 

        Now, the $R$-span of the element $e_1 + \cdots + e_l \in \tilde{C}_n(S(V^H)^1; R)$ forms a $R(\Weyl{G}{H})$-submodule isomorphic to $R$, since we assumed $V$ is orientable over $R$. Because $f\colon S^n \to S(V^H)$ is a homotopy equivalence, the submodule belongs to $\ker(d_n^{H,1})$ and in fact generates $\on{H}_n(\tilde{C}(S(V^H)^1; R)) \cong R$. We denote the differentials of $\tilde{C}_n(S(V^?); R)$ by $d_i^?$; note there is an identification $\tilde{C}_n(S(V^H)^1; R) = \tilde{C}_n(S(V^H); R)$, and $d^{H,1}_i=d^H_i$.  
        
        We first note an easy case: if the $R(\Weyl{G}{H})$-submodule \[T \coloneqq  \langle e_1 + \cdots + e_l\rangle \subseteq \ker(d_n^H)\subseteq \tilde{C}_n(S(V^H)^1; R)\] is in fact a $G$-stable submodule (i.e., $\catS \coloneqq  \{e_1,\dots, e_l\}$ is a $G$-set), then we define the \emph{forerunner} of $x \coloneqq  \tilde{C}_n(S(V)^1;R) $ at $H$ to be the homomorphism $\iota^H_x \colon R \to x[-n]$ given by the inclusion in homological degree $n$ \[R \to T \subseteq \tilde{C}_n(S(V^H)^1;R) = \tilde{C}_n(S(V)^H;R) \hookrightarrow \tilde{C}_n(S(V)^1;R) = x.\] The set $\catS$ will be $G$-stable e.g. when $H$ is normal in $G$ or $S(V^H) \simeq S^0$, but not in general. 

        For the more general case in which $\catS = \{e_1,\dots, e_l\}\subseteq S(V)^H$ is not closed under $G$-action, let $\catS'$ denote the $G$-orbit of $\catS$. We claim that all elements in $\catS'\setminus\catS$ are necessarily non-$H$-fixed. Indeed, suppose for contradiction there exists some $e \in \catS'\setminus \catS$ that is $H$-fixed, and let $g \in G$ satisfy ${}^ge_i = e$ for some $i \in I$. Now, note that ${}^g(e_1 + \cdots + e_l)$ is $H$-fixed and satisfies ${}^g(e_1 + \cdots + e_l)\in \ker(d_n^H)$. Moreover, it is distinct from $e_1 + \cdots + e_l$ since ${}^ge_i=e \not\in \catS$. This contradicts the fact that $e_1 + \cdots + e_l$ generates $\ker(d_n^H)$, hence no element of $\catS'\setminus \catS$ is $H$-fixed. 
        
        A similar argument shows the following: if $g \in G$ satisfies ${}^g\catS\neq \catS$, then ${}^g\catS \cap \catS = \varnothing.$ In particular, \[\catS' = \bigsqcup_{g \in [G/H']} {}^g\catS,\] where $H'$ denotes the largest subgroup of $G$ under which $\catS$ is closed under $H'$-action. Indeed, such a largest group exists, since if $\catS$ is closed under the action of two subgroups $H_1, H_2$ of $G$, then $\catS$ is closed under $H_1H_2$-action as well. In particular, ${}^g\catS$ is precisely the set of $n$-cells for $S(V)^{{}^gH}$.
        
        Since we assume $V$ is oriented over $R$, ${}^g(e_1 + \cdots + e_n)$ is a trivial $R({}^g(\Weyl{G}{H}))$-submodule of $\ker(d_n^H)\subseteq \ker(d_n^1)$. Therefore, \[T' \coloneqq  \left\langle \sum_{g \in [G/H']} {}^g(e_1 + \cdots + e_l) \right\rangle=\left\langle\sum_{e \in \catS'} e\right\rangle \subseteq \ker(d_n^1)\] is isomorphic to $R$ as $RG$-module. We define the forerunner $\iota^H_x\colon R \to x[-n] \coloneqq \tilde{C}_n(S(V)^1;R)$ via the inclusion in homological degree $n$ \[R \to T' \subseteq \tilde{C}_n(S(V^H)^1;R) = \tilde{C}_n(S(V)^H;R) \hookrightarrow \tilde{C}_n(S(V)^1;R) =: x.\]

        We remark that the forerunner $\iota^H_x$ can be expressed as a trace sum. With $T \coloneqq  \langle e_1 + \cdots + e_l\rangle$ as before, we have the homomorphism of $R(\Weyl{G}{H})$-modules \[\iota'\colon R \to T \hookrightarrow \tilde{C}_n(S(V)^H;R) \hookrightarrow \tilde{C}_n(S(V)^1;R).\] The fact that $\catS' = \bigsqcup_{g \in [G/H']}{}^g\catS$ implies that \[\iota^H_x = \tr^G_{H'} \iota' \coloneqq  \sum_{g \in [G/H']} {}^gf.\] 
    \end{construction} 

    \begin{remark}
        Note that forerunners are constructed given a cellularization of a representation sphere $S(V)$, not just an endotrivial. The forerunners can be thought of as `singling out' certain $n$-cells, and their construction relies on the fact that some \emph{effective} endotrivials, those whose h-mark functions are monotonically decreasing, arise up to homotopy from chain complexes of $R\calO(G)$-modules. However, they cannot be extended to homomorphisms of chain complexes of $R\calO(G)$-modules in general. Indeed, in degree $n$, one has a $RG$-module homomorphism of the form $R \to RX$ for some $G$-set $X$, and this in general will not satisfy the `stabilizers grow' property that homomorphisms of free $R\calO(G)$-modules satisfy. 

        If $G$ is a $p$-group and $R$ a field of characteristic $p$, it was deduced in \cite{Mil25c} that after removing contractible summands of an effective endotrivial $x$, the highest nonzero homological degree $x_n$ of $x$ consists of a single transitive permutation module. We do not know if an analogous topological statement holds for a $n$-dimensional representation sphere $S(V)$: does there always exist a cellular structure for $S(V)$ such that the $n$-cells form one transitive $G$-set? 
    \end{remark}
        
    We observe the following properties the forerunners satisfy; showing these takes considerably less work than in \cite{Mil25c}.

    \begin{proposition}\label{prop:forerunner_props}
        Let $V$ be a real representation of $G$ orientable over $R$. Set $x \coloneqq  \tilde{C}(S(V)^1; R)$ $p \in \Phi(G;R)$ a prime, and $H$ a $p$-subgroup of $G$. The following hold:

        \begin{enumerate}
            \item $\Psi^H_p(\iota^H_x)$ is a quasi-isomorphism;
            \item If $K =_G H$, then $\iota^H_x = \iota^K_x$. 
            \item If $K\neq_G H$ and all minimal subgroups $L$ with respect to the property that $K,H \leq_G L$ satisfy $h_x(H), h_x(K) > h_x(L)$, then $\Psi^K(\iota_x^H) = 0$ and $\Psi^H(\iota_x^K) = 0$. In particular, $\iota^K_x$ and $\iota^H_x$ differ. 
        \end{enumerate}
    \end{proposition}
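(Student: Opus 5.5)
The plan is to compute everything through \Cref{prop:evandmodfixedptscommute}, which identifies $\Psi^H_p$ of a complex coming from $R\calO(G)$-modules with $R/p\otimes_R(-)$ of its evaluation at $H$. The forerunner $\iota^H_x$ is not a map of $R\calO(G)$-complexes. However, in degree $n=\dim V^H$ it is a map $R\to R(X_n)$, where $X_n$ is the $G$-set of $n$-cells. By \Cref{prop-gfixedpoints-Gsets}, $\Psi^H_p$ of a map of permutation modules can be computed by restricting to $N_G(H)$ and passing to the Brauer quotient, i.e.\ keeping only the coefficients on $H$-fixed basis elements and reducing mod $p$.

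For (a), we write $\iota^H_x$ as the inclusion of $\sum_{e\in\catS'}e$. Since $\catS'\setminus\catS$ consists of non-$H$-fixed cells (shown in \Cref{cons:forerunners}), the image under $\Psi^H_p$ is the map $R/p\to R/p(X_n^H)$ sending $1\mapsto \sum_{e\in\catS}e$. This element generates $\on{H}_n(\tilde C(S(V^H);R/p))\cong R/p$, because $e_1+\cdots+e_l$ generates $\on{H}_n$ over $R$, and base change commutes with homology here by \Cref{lem:commute_with_homology}. The complex $\Psi^H_p(x)\cong R/p\otimes_R\tilde C(S(V)^H;R)$ has homology only in degree $n$, so the map is a quasi-isomorphism. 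One point needs care: the composite $\Psi^H$ involves restriction to $N_G(H)$. The trace description $\iota^H_x=\tr^G_{H'}\iota'$ settles this, since the summands ${}^g\catS$ with ${}^g\catS\ne\catS$ contain no $H$-fixed cells, and so they die in the Brauer quotient.

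For (b), if $K={}^gH$ then ${}^g\catS$ is precisely the set of $n$-cells of $S(V)^K$ (noted in \Cref{cons:forerunners}). Hence the $G$-orbit $\catS'$ is the same for $H$ and $K$, and both forerunners are the inclusion of $\sum_{e\in\catS'}e$ in degree $n$. If $\dim V^H\ne\dim V^K$ this cannot happen, since conjugate subgroups have equal dimension, so the degrees also agree.

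For (c), the argument is again via Brauer quotients. $\Psi^K_p(\iota^H_x)$ is the map $R/p\to R/p(X_{h(H)}^K)$ sending $1$ to the sum of the $K$-fixed cells in $\catS'_H$. A $K$-fixed cell $e\in{}^g\catS_H$ is fixed by both $K$ and ${}^gH$, so it lies in $S(V)^{L}$ with $L=\langle K,{}^gH\rangle$, which contains $K$ and a conjugate of $H$. The subcomplex $S(V)^L$ has dimension $h_x(L)$. By the hypothesis, every such $L$ contains a minimal $L'$ with $h_x(L')<h_x(H)$, and monotonicity gives $h_x(L)\le h_x(L')<h_x(H)$. So $S(V)^L$ has no cells in degree $h_x(H)$, which is a contradiction. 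Therefore no cell survives and $\Psi^K_p(\iota^H_x)=0$. The symmetric argument gives $\Psi^H_p(\iota^K_x)=0$. Comparing with (a), we get $\Psi^H_p(\iota^H_x)\ne 0=\Psi^H_p(\iota^K_x)$, so the two forerunners differ.

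The main obstacle is making the Brauer-quotient computation of $\Psi^H_p$ on a single non-$\calO(G)$ map rigorous: one must check that in degree $n$ the functor really acts on $\Hom(R,R(X_n))$ by restricting to $H$-fixed basis coordinates. I would verify this directly from the quotient $\mathsf{perm}(N_G(H);R/p)/\mathsf{proj}(\catF_H)$, as in \cite[Lemma 5.3]{BG25}.
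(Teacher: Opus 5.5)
Your proposal is correct and follows essentially the same route as the paper. For (a) you keep only the $H$-fixed cells of $\Psi^H_p(\iota^H_x)$, using that $\catS'\setminus\catS$ contains no $H$-fixed cells. For (b) you use that ${}^g\catS$ is the cell set for ${}^gH$, so the $G$-orbits agree. For (c) you use the dimension bound on $S(V)^L$. Your explicit choice $L=\langle K,{}^gH\rangle$, the monotonicity of $\dim V^{(-)}$ to reduce to a minimal $L'$, and your Brauer-quotient justification of how $\Psi^H_p$ acts on maps not coming from $\calO(G)$ spell out steps the paper leaves implicit.
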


    \begin{proof}
        We use the notation of \Cref{cons:forerunners}. For (a), recall that we have an identification $\Psi_p^H(\tilde{C}(S(V^1);R)) \cong \tilde{C}(S(V^H);R/p)$. Since all elements of $\catS'\setminus \catS$ are non-$G$-fixed, the image of $\iota^H_x$ identifies with the submodule $T = \langle e_1 + \cdots + e_l\rangle$, which generates $\ker(d_n^H)$. For (b), one has from the above construction that $\catS = \{e_1, \dots, e_l\}$ consists of the $n$-simplices whose sum generates $\ker(d_n^H)\subseteq \tilde{C}_n(S(V)^H;R)$, and if ${}^gH = K$, then ${}^g\catS$ consists of the $n$-simplices whose sum generates $\ker(d_n^K) \subseteq \tilde{C}_n(S(V)^H; R)$. Since the $G$-orbits of $\catS$ and ${}^g\catS$ agree, it follows that the maps $\iota^H_x$ and $\iota^K_x$ coincide. 

        For (c), we let $\catS_K, \catS_{K}'$ and $\catS_H,\catS_{H}'$ denote the sets $\catS, \catS'$ in \Cref{cons:forerunners} for $K$ and $H$ respectively. The elements of $\catS_K$ all have stabilizer containing $K$, and the elements of $\catS_H$ all have stabilizer containing $H$. Similarly, the elements of $\catS_K'$ all have stabilizer containing $K$ up to conjugacy, and same for $\catS_H'$. The only way for $\Psi^H(\iota^K_x)$ to be nonzero is if an element of $\catS_K'$ has stabilizer also containing $H$ up to conjugacy; otherwise, the image of $\iota_x^K$ is killed by $\Psi^H$. However, $h_x(L) < h_x(K)$ equivalently states $\dim_\bbR V^K > \dim_\bbR V^L$; in particular, any cell with stabilizer $L$ can be at most a $h_x(L)$-cell. Note that we use the fact that applying 1-point compactification commutes with taking fixed points. Since the elements of $\catS_K'$ are all $h_x(K)$-cells, none of them can be stabilized by $L$. Since this holds for any subgroup containing both $K, H$ up to conjugacy, we conclude none of the elements in $\catS_K'$ are $H$-fixed, and thus $\Psi^H(\iota^K_x) = 0$. Swapping the roles of $H,K$ yields the same for $\Psi^K(\iota^H_x)$. 
    \end{proof}

    The forerunners allow us to deduce that oriented endotrivials are `tt-line bundles'.

    \begin{definition}
        Let $\catK$ be a tt-category and $x\in \catK$. We say $x$ is a \emph{line bundle} if there exists an open cover $\{U_i\}_{i=1}^n$ of $\Spc(\catK)$ such that $x \cong \bbone[j_i] \in \catK(U_i)$ for each $i \in \{1, \dots, n\}$. Say that the open cover $\{U_i\}_{i=1}^n$ \emph{verifies} that $x$ is a line bundle. 
    \end{definition}
    
    \begin{lemma}\label{lem:single_open}
        Let $\catK$ be a tt-category and suppose $x_1, \dots, x_n \in\catK$ are line bundles. Then $x_1^{\otimes a_1} \otimes \cdots \otimes x_n^{\otimes a_n}$ is a line bundle for all $a_1, \dots, a_n \in \bbN$, and there is a single open cover verifying each possible combination is a line bundle. 
    \end{lemma}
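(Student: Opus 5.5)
The plan is to take a common refinement of the individual covers. For each $i$, fix a finite cover $\{U^{(i)}_{j}\}_{j=1}^{m_i}$ of $\Spc(\catK)$ verifying that $x_i$ is a line bundle, so that $x_i \cong \bbone[c_{ij}]$ in $\catK(U^{(i)}_j)$. We may take the opens quasi-compact: $\Spc(\catK)$ is spectral, so quasi-compact opens form a basis, and the definition of $\catK(U)$ presupposes quasi-compactness anyway. For each multi-index $J=(j_1,\dots,j_n)$ set
\[
W_J \coloneqq U^{(1)}_{j_1}\cap \cdots \cap U^{(n)}_{j_n}.
\]
The family $\{W_J\}_J$ is finite and consists of quasi-compact opens, since in a spectral space a finite intersection of quasi-compact opens is again quasi-compact open. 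It covers $\Spc(\catK)$: any point lies in some $U^{(i)}_{j_i}$ for each $i$, hence in the corresponding $W_J$. This single cover is the one I would use for all combinations; if some $W_J$ is empty, $\catK(W_J)=0$ and there is nothing to check.

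Next I would transport the local trivializations to the refinement. For $W\subseteq U$ quasi-compact opens, the restriction $\catK\to\catK(W)$ factors through $\catK\to\catK(U)$, by the presheaf structure of Balmer--Favi recalled in \Cref{sec:ttgprelims}. The restriction $\catK(U)\to\catK(W)$ is a tt-functor (a Verdier quotient by a thick $\otimes$-ideal followed by idempotent completion), so it preserves the unit, shifts and tensor products. Hence the isomorphism $x_i\cong\bbone[c_{i j_i}]$ in $\catK(U^{(i)}_{j_i})$ restricts to an isomorphism $x_i\cong \bbone[c_{i j_i}]$ in $\catK(W_J)$. In particular, each $x_i$ is invertible in $\catK(W_J)$.

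Finally, since $\catK\to\catK(W_J)$ is strong monoidal, the image of $x_1^{\otimes a_1}\otimes\cdots\otimes x_n^{\otimes a_n}$ in $\catK(W_J)$ is
\[
\bigotimes_i \bbone[c_{ij_i}]^{\otimes a_i}\cong \bbone\Big[\textstyle\sum_i a_i c_{ij_i}\Big],
\]
using $\bbone[a]\otimes\bbone[b]\cong\bbone[a+b]$ in any tt-category. The cover $\{W_J\}$ does not depend on the exponents $a_i$, so it verifies simultaneously that every such combination is a line bundle. The same argument also covers negative exponents, using $x^{-1}\cong x^\ast$ and $(\bbone[c])^\ast\cong\bbone[-c]$, though the statement only asks for $a_i\in\bbN$.

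There is no serious obstacle. The only point requiring care is the compatibility of restriction functors between the presheaf categories $\catK(U)$, namely that restricting to $W_J$ is well defined and monoidal. This holds because $\catK_{Z_U}\subseteq\catK_{Z_W}$ whenever $W\subseteq U$, so the quotient by $\catK_{Z_W}$ factors through the quotient by $\catK_{Z_U}$.
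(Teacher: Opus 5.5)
Your proposal is correct and follows essentially the same route as the paper: you refine to the common cover by all intersections $U^{(1)}_{j_1}\cap\cdots\cap U^{(n)}_{j_n}$, restrict each local trivialization to it, and use that restriction is monoidal so tensor powers of shifted units are again shifted units. Your added checks (quasi-compactness of the intersections, the factorization of restriction through $\catK(U)$, and empty opens) only make explicit what the paper calls a routine exercise.
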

    \begin{proof}
        Suppose $x_i$ has open cover $\{U^i_j\}_{j=1}^{m_i}$ which verifies $x_i$ is a line bundle. Then taking combinations of intersections, that is, all opens of the form $U^1_{j_1} \cap U^2_{j_2} \cap \cdots \cap U^n_{j_n}$ for $j_i \in \{1,\dots, m_i\}$ gives an open cover of $\Spc(\catK)$, and it is a routine exercise to show that every $x_i$ is isomorphic to a shift of the tensor unit in each open. Then, it is easy to see that if two elements are line bundles under the same open, their tensor product is as well, whence the result. 
    \end{proof}

    \begin{lemma}\label{lem:remains_oriented}
        Let $x \in \Pic(\catK(G;R))$ be an oriented endotrivial, and let $f \in R$. Then $R_f \otimes_R x \in \Pic(\catK(G;R_f))$ is oriented as well. 
    \end{lemma}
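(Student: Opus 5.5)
We need to show: for every $p \in \Phi(G;R_f)$ and every $p$-subgroup $H \leq G$, the module $\catH(\Psi^H_p(R_f\otimes_R x))$ carries the trivial $\Weyl{G}{H}$-action. The plan is to reduce everything to the corresponding statement for $x$ itself, using that generalized modular fixed points commute with base change and that homology of $R$-split complexes commutes with base change.

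First, the indexing set only shrinks. If $p$ is not invertible in $R_f$, then it is not invertible in $R$, so $\Phi(G;R_f) \subseteq \Phi(G;R)$; the case $p=0$ is handled the same way. Hence for each relevant pair $(p,H)$ we already know, by the orientation hypothesis, that $M \coloneqq \catH(\Psi^H_p(x))$ is a $R/p(\Weyl{G}{H})$-module with trivial action.

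Next, we apply \Cref{prop:modularfixedptscommutes} to the localization $R \to R_f$, whose reduction mod $p$ is $R/p \to R_f/p = (R/p)_{\bar f}$. This gives
\[\Psi^H_p(R_f\otimes_R x) \cong (R_f/p)\otimes_{R/p} \Psi^H_p(x).\]
The complex $\Psi^H_p(x)$ is endotrivial in $\catK(\Weyl{G}{H};R/p)$, so it is $R/p$-split by \Cref{lem:endotrival-Rsplit}. Applying \Cref{lem:commute_with_homology} degreewise, and summing over degrees as in the refined definition of $\catH$ for possibly disconnected rings, yields
\[\catH(\Psi^H_p(R_f\otimes_R x)) \cong (R_f/p)\otimes_{R/p} M\]
as $\Weyl{G}{H}$-modules. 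Here the group acts only on the second factor, so the action is trivial because it is trivial on $M$.

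The only delicate point is bookkeeping when $R/p$ or $R_f/p$ is disconnected, so that $\catH$ is the direct sum of homologies rather than a single degree. Because \Cref{lem:commute_with_homology} applies in every degree separately, this causes no real difficulty. If the paper's definition of oriented instead requires $\catH(\Psi^H_p(-))$ to be isomorphic to the trivial module, the same isomorphism still transports that property.
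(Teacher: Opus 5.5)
Your proposal is correct and follows the paper's own argument. You commute the base change $R\to R_f$ past $\Psi^H_p$ via \Cref{prop:modularfixedptscommutes}, pass to homology with \Cref{lem:commute_with_homology} (justified by $R/p$-splitness), and note that a base change of a module with trivial $\Weyl{G}{H}$-action keeps trivial action. Your explicit check that $\Phi(G;R_f)\subseteq\Phi(G;R)$, which is needed both to invoke the orientation hypothesis and to apply \Cref{prop:modularfixedptscommutes}, is a detail the paper leaves implicit.
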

    \begin{proof}
        We check that  $\catH(\Psi^H_p(R_f \otimes_R x))$ has trivial $G$-action for every $p \in \Phi(G;R_f)$ and $H$ a $p$-subgroup. Indeed,  by \Cref{prop:modularfixedptscommutes}, along with \Cref{lem:commute_with_homology}, we obtain \[\catH( \Psi^H_p({R_f} \otimes_{R} x)) \cong \catH(R_f/p \otimes_{R/p} \Psi^H_p(x))\cong R_f/p\otimes_{R_p}\catH(\Psi^H_p(x)),\] 
        and since the $G$-action on $\catH(\Psi^H_p(x))$ is trivial, it remains trivial after base change, as desired. 
    \end{proof}
    
    \begin{theorem}\label{thm:linebundle}
        Let $G$ be a $p$-group and $R$ a commutative Noetherian ring. Then every oriented endotrivial $x \in \Pic(\catK(G;R))$ is a line bundle. If $\Pic(R)$ is finitely generated, then moreover, there exists an open cover of $\Spc(\catK(G;R))$ that verifies every such endotrivial is a line bundle. 
    \end{theorem}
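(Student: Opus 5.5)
The plan is to reduce to oriented endotrivials with $G$-trivial homology $R$, and to use the forerunners of \Cref{cons:forerunners} to define opens on which such an $x$ becomes a shift of $\bbone$. First I would split off the $\Pic(R)$ part. By \Cref{thm:ker_h_marks} and \Cref{obs:ker_h_mark_decomp}, any endotrivial $x$ can be written as $x = x' \otimes M[0]$ with $M = \catH(x)\in\Pic(\mathsf{perm}(G;R)^\natural)$ and $\catH(x') = R$. Being oriented forces $\catH(\Psi^1_p(x))$ to be $G$-trivial, so I expect $M$ to lie in the image of $\Pic(R)$. An invertible $R$-module $M$ is Zariski-locally free. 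Choosing $f_1,\dots,f_m$ generating the unit ideal with $M_{f_j}\cong R_{f_j}$, \Cref{prop:localization_identification} shows that $\{\open(f_j)\}$ is a cover of $\Spc(\catK(G;R))$ on which $M[0]\cong\bbone$. Next, by \Cref{lem:single_open} and \Cref{lem:remains_oriented}, I may refine by this cover, work over each $R_{f_j}$, and assume $\catH(x)=R$. By \Cref{thm:p_grp_h_mark_surj}, \Cref{thm:gluing_etrivs_p_grps} and gluing, I then hope to reduce further to the case where $x$ is (locally) of the form $\catB(V)$ with $V$ oriented, so that forerunners are available. For general connected components of $R/p$, I would first pass to the cover $\{\open(a_i)\}$ of \Cref{cons:open_cover_of_spc}. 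On $\open(p)$ the category is $\on{D}_\mathsf{perf}(R_pG)$, and there an oriented $x$ with $\catH(x)=R$ is already $\bbone[b(x)]$.

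The core step is the following. For each $p$-subgroup $H$, I take the forerunner $\iota^H_x\colon R\to x[-h_x(H)]$ and set $U_H \coloneqq \Spc(\catK(G;R))\setminus\supp(\cone(\iota^H_x))$. On $U_H$ the map $\iota^H_x$ becomes an isomorphism, so $x\cong\bbone[h_x(H)]$ in $\catK(G;R)(U_H)$. It remains to show that the $U_H$, together with $\open(p)$, cover the spectrum. Using the description of points $\bigsqcup_\frakp\Spc(\catK(G;k(\frakp)))$ together with \cite{BG25}, every point over $\frakp\in\Spec(R/p)$ has the form $(\check\Psi^H\lambda^\ast_\frakp)^{-1}(\frakq)$ for some $p$-subgroup $H$. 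By \Cref{prop:forerunner_props}(a) and \Cref{prop:modularfixedptscommutes}, $\check\Psi^H\lambda_\frakp^\ast(\cone(\iota^H_x))=0$, so this point lies in $U_H$. Points over $D(p)$ lie in $\open(p)$. Hence the cover works, and it is finite because there are finitely many conjugacy classes of $p$-subgroups.

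For the uniform statement, I would take $\Pic(R)$ finitely generated with generators $M_1,\dots,M_r$. Any oriented endotrivial is then a product of $\pm$-powers of the $M_i[0]$ and of oriented $\catB(V)$'s. The main obstacle is here: a single finite cover must handle infinitely many $V$, so the opens $U_H$ cannot depend on $V$. I would try to show that the opens $U_H$ can be chosen to depend only on $H$, for instance by taking them to be the Balmer–Gallauer/Miller opens arising from the field case, pulled back fiberwise. I would also try to use \Cref{lem:single_open} applied to a finite generating set of the free abelian group $\on{CF}_b^{gl}(G)$ of h-marks, realized by oriented generators such as $V\oplus V$. The remaining issue, which I flag as the weak point, is whether the oriented endotrivials are generated by finitely many oriented ones; this needs care at $p=2$.
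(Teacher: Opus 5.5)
\textbf{The gap.} The step you only ``hope'' for cannot be filled. You need to get from an oriented endotrivial to $M[0]\otimes\catB(V_1)\otimes\catB(V_2)^\ast$ with $M\in\Pic(R)$ and $V_1,V_2$ oriented \emph{over $R$}. That is what the forerunners of \Cref{cons:forerunners} require: trivial $\Weyl{G}{H}$-action on the top homology of $\tilde{C}(S(V)^H;R)$ over $R$ itself. Orientedness in the sense of \Cref{def:oriented} only constrains $\catH(\Psi^H_p(x))$ over $R/p$, and constrains $\catH(x)$ itself only when $\ch(R)=0$, via $p=0$. So neither ``$M\in\Pic(R)$'' nor ``$V$ oriented'' follows.

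The paper's proof makes exactly the same move. It invokes \Cref{thm:picard_grp_p_grps} and then asserts that $V=[V_1]-[V_2]$ can be chosen with $V_1,V_2$ oriented. From there your argument and the paper's coincide. Your pointwise check that $\check\Psi^H\lambda_\frakp^\ast(\cone(\iota^H_x))=0$, and your separate use of $\open(p)$ for the generic locus, are if anything more careful than the paper, which places the generic locus in every $U_y(H)$; that fails whenever $\dim V^H\neq\dim V$.

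\textbf{A counterexample.} The statement fails as written. Take $R=\bbZ/4$, $G=C_2=\langle\sigma\rangle$, and $x=\catB(\mathrm{sgn})=(RC_2\xrightarrow{\varepsilon}R)$.
\begin{enumerate}
\item $x$ is endotrivial by \Cref{prop:hmkisdimfunc}.
\item Since $\ch(R)=4$, we have $\Phi(C_2;R)=\{2\}$, so both orientation conditions are checked over $\bbF_2$ and hold. Thus $x$ is oriented.
\item Yet $\catH(x)=R_{\mathrm{sgn}}$, with $\sigma$ acting by $-1\neq 1$. This module is not even $\natural$-permutation by \Cref{thm:rk1ppermsforpgrp}, so your opening decomposition $x=x'\otimes M[0]$ is unavailable too.
\item The acyclic complexes form a prime $\catP_0$: it is the kernel of $\catK(C_2;R)\to\on{D}_b(\mathsf{mod}(\bbF_2C_2))$, since acyclicity is detected mod $2$ by Nakayama.
\item Let $U\ni\catP_0$ be open and $Z=\Spc(\catK(C_2;R))\setminus U$. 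Every object supported on $Z$ lies in $\catP_0$. Hence the canonical functor $\catK(C_2;R)\to\on{D}_b(\mathsf{mod}(RC_2))$ factors through $\catK(C_2;R)/\catK(C_2;R)_Z$.
\item An isomorphism $x\cong\bbone[j]$ in $\catK(C_2;R)(U)$ would then give $R_{\mathrm{sgn}}[1]\cong R[j]$ in $\on{D}_b(\mathsf{mod}(RC_2))$, which is absurd.
\end{enumerate}
So $x$ is an oriented endotrivial that is not a line bundle.

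The reduction also breaks in characteristic $0$, though there I only claim the forerunner argument does not apply. Take $R=\bbZ$, $G=C_2\times C_2$, and $V=\chi_1+\chi_2+\chi_3$ the sum of the nontrivial characters. Then $\catB(V)$ is oriented, since $\det V=1$. But every $V'$ with $\catB(V')=\catB(V)$ has $\dim (V')^{\ker\chi_1}-\dim (V')^{G}=1$. For any difference of $\bbZ$-oriented representations this quantity is even, so no forerunner at $\ker\chi_1$ is available.

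\textbf{What does go through.} Your argument, like the paper's, proves the statement for endotrivials $M[0]\otimes\catB(V_1)\otimes\catB(V_2)^\ast$ with $M\in\Pic(R)$ and $V_1,V_2$ oriented over $R$. For these, your Zariski cover trivializing $M$, the cover $\{\open(a_i)\}$ of \Cref{cons:open_cover_of_spc}, and the opens $\supp(\cone(\iota^H))^c$ work as you describe.

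For the uniform part, the obstacle you flag is not real. These objects are the image of $\Pic(R)\times\on{RO}^{\mathrm{or}}(G)$, where $\on{RO}^{\mathrm{or}}(G)\leq\on{RO}(G)$ is the subgroup generated by $R$-oriented representations; it is finitely generated, and sums of oriented representations stay oriented. So these endotrivials form a finitely generated group whenever $\Pic(R)$ is finitely generated. Applying \Cref{lem:single_open} to a finite generating set and its inverses then gives a single cover. The opens may depend on the generators, and no generators of the form $W\oplus W$ are needed.
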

    \begin{proof}
        First, we claim that any endotrivial of the form $M[i] \in \Pic(\catK(G;R))$ with $M \in \Pic(R)$ and $i \in \bbZ$ is a line bundle. Indeed, for any line bundle $M \in \Pic(R)$, there exists an open cover $D(f_1),\dots, D(f_n)$ of $\Spec(R)$ such that $R_{f_i} \otimes_R M \cong R_{f_i}$ as $R_{f_i}$-modules. We consider the preimage $\open(f_i) \coloneqq  \on{comp}\inv(D(f_i))$; then \Cref{prop:localization_identification} gives us that $\catK(G;R)(\open(f_i)) \cong \catK(G;R_f)$, and the image of $M[i]$ in $\catK(G;R_f)$ is $R_f \otimes_R M[i] \cong R_f[i]$. Thus $M[i]$ is a line bundle via the open $\{\open(f_i)\}_{i=1}^n$. 
       
        Next, we consider an arbitrary oriented endotrivial $x \in \Pic(\catK(G;R))$ under the assumption that $R$ is $p$-connected. If $R$ is not connected, there is a connected component containing $R/p$, and in all other components $p$ is invertible. Let $R = R_1 \times \cdots \times R_n$, then we have a decomposition of Picard groups $\Pic(\catK(G;R)) \cong \Pic(\catK(G;R_1)) \times \cdots \times \Pic(\catK(G;R_n))$, however for the components where $p$ is invertible, all orientable endotrivials are already line bundles, so it suffices to consider the connected component containing $\Spec(R/p)$. Thus, we may assume $R$ is connected as well.
        
        Now, in this case by \Cref{thm:picard_grp_p_grps}, we have that $x = M[0] \otimes y$ where $\catH(x) = M \in \Pic(R)$ and $y$ is in the image of $\catB$; in this case $\catH(y) = R$. Let $V \in \on{RO}(G)$ satisfy $\catB(V) = y$ and be written as $V = [V_1] - [V_2]$ for real representations $V_1, V_2$; since $V$ is oriented over $R$, we can choose $V_1, V_2$ such that they are oriented over $R$ as well. We have $\catB(V_1)\otimes_R \catB(V_2)^\ast  = y$, so it suffices to show $\catB(V_1), \catB(V_2)$ are line bundles. Consequently, we have reduced to the case of $y = \catB(V)$ for a real representation $V$ oriented over $R$. 

        Since $V$ is oriented, the forerunner homomorphisms $\iota_{y}^H\colon R \to y[-h_y(H)]$ are well-defined. We define the following open sets: 
        \[U_{y}(H) \coloneqq \open(\iota_{y}^H) = \supp(\cone(\iota_{y}^H))^c.\] 
        In other words, $U_y(H)$ consists of the open locus of $\Spc(\catK(G;R))$ where $\iota_y^H$ is invertible. Consequently, in $\Spc(\catK(G;R))(U_y(H))$, $y \cong R[h_y(H)]$. Therefore, it suffices to show that $\{U_y(H)\}_{H \in \on{Sub}_p(G)/G}$ is an open cover of $\Spc(\catK(G;R))$. However, this follows from \Cref{prop:forerunner_props}; since $\Psi^H_p(\iota^H_y)$ is a quasi-isomorphism, it follows that the generic locus is contained in $U_y(H)$ (see \Cref{lem:intersections_in_generic_locus}). On the other hand, 
        the minimal prime $\catP(H, 0,\frakp) \in \Spc(\catK(G;R))$, with $\frakp \in \Spec(R)$ satisfying $\ch(k(\frakp)) = p$, satisfies $\catP(H, 0,\frakp) \in U_y(H)$. Indeed, since $\catP(H, 0,\frakp) = \ker(\check\Psi^H \circ \lambda_\frakp^\ast )$, $\cone(\iota^H_y)\in \catP(H, 0,\frakp)$, thus $\catP(H, 0,\frakp) \in U_y(H)$. Thus, the set $\{U_y(H)\}_{H \in \on{Sub}_p(G)/G}$ verifies $y$, hence $x$, is a line bundle. 

        Finally, in the case where $R$ is not necessarily $p$-connected, we use the open cover of $\Spc(\catK(G;R))$ constructed in \Cref{cons:open_cover_of_spc}, $\{\open(a_i)\}_{i=0}^n$. Given an oriented endotrivial $x \in \Pic(\catK(G;R))$, it suffices to construct an open cover in each open, i.e., an open cover verifying $x \in \catK(G;R)(\open(a_i))\cong \catK(G;R_{a_i})$ is a line bundle, where the equivalence holds by \Cref{prop:localization_identification}. Note $x$ remains oriented by \Cref{lem:remains_oriented}. We may construct an open cover in each open, since $R_{a_i}$ is either $p$-connected or satisfies that $p$ is invertible. Taking the collection of open covers on each open $\open(a_i)$ determines an open verifying that $x$ is a line bundle, and we conclude that if $R$ is a commutative Noetherian ring, every oriented endotrivial $x$ is a line bundle.

        For the final statement, by \Cref{lem:single_open}, it suffices to show that the subgroup of $\Pic(\catK(G;R))$ consisting of oriented endotrivials is finitely generated. This follows from the classification \Cref{thm:picard_grp_p_grps}: one may easily check that the subgroup of oriented endotrivials is isomorphic to $\Pic(R) \times H$ for some subgroup $H \leq \on{CF}_b^{gl}(G)$, which is finitely generated free. The result follows. 
    \end{proof}

    \begin{remark}
        These forerunners and the line bundle property are the first important steps for defining a `twisted cohomology' ring for $\catK(G;R)$ as in \cite[Part II]{BG25}, \cite[Section 8]{DG25}, and \cite{Mil25c}. The groundwork has been laid, but we postpone developing that theory for another day. 
    \end{remark}

    \section{A reduction to $p$-groups} \label{sec:infty_cat_stuff}
    
    Let $G$ be a finite group, and let $R$ be a commutative ring. We write $\Phi(G)$ for the set of primes dividing the order of $G$. In this section, we explain how the study of the Picard group of $\catK(G;R)$ can be reduced to the case of $p$-groups together with certain $q$-local information associated to $G$, for each $q \in \Phi(G)$. To do so, we make use of the language of $\infty$-categories, and we refer the reader to \cite{Ant16}, \cite{Lur17}, or \cite{Jas26} for the necessary background. 
    
    In particular, throughout this section we adopt the convention that a `tensor-triangulated category' means a symmetric monoidal stable $\infty$-category. The advantage of this framework is that it provides access to tools that are unavailable when working solely with triangulated categories, although the final results ultimately depend only on invariants of the underlying $1$-categories.
    
    We also note that some of the results presented here may be recovered from \cite{GP26}, or with some additional work from \cite{MNN17}; however, the arguments given here are more direct in the specific setting relevant to our purposes.
    
    \begin{definition}
        Let $G$ be a group and $R$ a commutative ring. We regard the homotopy category of chain complexes of $RG$-modules, equipped with the monoidal structure given by the tensor product over $R$, as an $\infty$-category via the dg-nerve construction. Through this construction, we view the big derived category of permutation modules $\catD(G;R)$ as an $\infty$-category. By abuse of notation, we continue to denote it by $\catD(G;R)$, since the intended meaning will always be clear from the context.
    \end{definition}
    
    \begin{remark}
        The category $\catD(G;R)$ is a rigidly compactly generated tt-category, and its homotopy category agrees with the big derived category of permutation modules introduced in \Cref{sec-perm}.
    \end{remark}

    \begin{recollection}\label{rec-res-ind-monadic}
        Let $H \leq G$ be a subgroup. Recall that there is an adjunction given by induction and restriction:
        \[
        \mathrm{Res}_H^G \colon \catD(G;R)\rightleftarrows \catD(H;R) \colon  \mathrm{Coind}_H^G\simeq \mathrm{Ind}_H^G.
        \]
        This adjunction satisfies the projection formula, the functor $\mathrm{Coind}_H^G$ preserves both limits and colimits, and it is conservative. Hence, by \cite[Proposition 5.29]{MNN17}, there is an equivalence
        \[
        \mathsf{Mod}_{\catD(G;R)}\!\bigl(\mathrm{Coind}_H^G\mathrm{Res}_H^G(\bbone)\bigr)\simeq \catD(H;R).
        \]
        Under this equivalence, extension of scalars corresponds to restriction, while restriction of scalars corresponds to induction. Moreover, the underlying object of the commutative algebra
        \[
        A_H\coloneqq \mathrm{Coind}_H^G\mathrm{Res}_H^G(\bbone)
        \]
        may be identified with $R(G/H)[0]$.
    \end{recollection}


    \begin{remark}
        Since the functor $\mathrm{Coind}_H^G$ preserves compact objects, and hence dualizable objects, as our categories are rigidly-compactly generated, we conclude that the commutative algebra $A_H$ is dualizable.
    \end{remark}
    
    \begin{remark}
        The previous observation may also be deduced from the results of \cite{Bal17}. Indeed, one can show that the restriction/induction adjunction
        \[
        \mathrm{Res}_H^G \dashv \mathrm{Coind}_H^G
        \]
        on the big derived categories of permutation modules is separable, and hence monadic. Moreover, the associated monad can be identified with tensoring by the separable algebra $R(G/H)[0]$.
    \end{remark}

    \begin{notation}
        Let $\on{Sub}_\Phi(G)$ denote the family of subgroups of $G$ of $p$-power order, where the prime $p$ ranges over all elements of $\Phi(G)$.
    \end{notation}

    \begin{proposition}\label{prop-Aphi-descent}
        Let $G$ be a finite group and let $R$ be a commutative ring. Then the commutative algebra
        \[
        A_\Phi \coloneqq \prod_{E\in \on{Sub}_\Phi(G)} A_E
        \]
        satisfies descent; that is, the smallest thick tensor ideal in $\catD(G;R)$ containing $A_\Phi$ is all of $\catD(G;R)$. Equivalently,
        \[
        \mathrm{thick}_\otimes(A_\Phi)=\catD(G;R).
        \]
    \end{proposition}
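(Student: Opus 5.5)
It suffices to show that the unit $\bbone = R[0]$ lies in $\mathrm{thick}_\otimes(A_\Phi)$. Since $\catD(G;R)$ is closed under finite products, $A_\Phi$ is a finite direct sum of the $A_E$, and $\mathrm{thick}_\otimes(A_\Phi)$ equals the thick tensor ideal generated by all the $A_E$ with $E\in\on{Sub}_\Phi(G)$. Once $\bbone$ lies in this ideal, ideal closure gives every object $x \cong \bbone\otimes x$, hence the whole category. By \Cref{rec-res-ind-monadic}, each $A_E$ has underlying object $R(G/E)[0]$. So the task reduces to showing that the trivial module $R[0]$ lies in the thick subcategory generated by the permutation modules $R(G/E)[0]$ with $E$ of prime power order.

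The key step is to exhibit $R$ as a direct summand of a direct sum of such permutation modules. For this I will apply \Cref{ex-R-retract-sylows} (equivalently, \Cref{prop:directsummandlemma} with $\varphi=1$). For each $p\in\Phi(G)$, choose a Sylow $p$-subgroup $S_p$. The indices $[G:S_p]$ have greatest common divisor $1$, so there are integers $c_p$ with $\sum_p c_p[G:S_p]=1$.

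The maps are then explicit. Let $f_p\colon R\to R(G/S_p)$ be the norm map $1\mapsto \sum_{gS_p} gS_p$, and let $g_p\colon R(G/S_p)\to R$ be the augmentation scaled by $c_p$. Then $g_p\circ f_p=c_p[G:S_p]\cdot\id$, so $\sum_p g_p f_p=\id_R$. This exhibits $R$ as a retract of $\bigoplus_{p\in\Phi(G)} R(G/S_p)$. The decomposition is valid over any commutative ring $R$, since it uses only the integer identity; no Noetherian hypothesis is needed.

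Since thick subcategories are closed under retracts, $R[0]\in\mathrm{thick}(A_{S_p}\mid p\in\Phi(G))\subseteq \mathrm{thick}_\otimes(A_\Phi)$. The equivalence of the two formulations of descent in the statement is then just the definition via the thick tensor ideal.

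The only point needing care, and the place I expect the main obstacle, is the identification of the underlying object of $A_E$ with $R(G/E)[0]$ in the $\infty$-categorical setting. This is supplied by \Cref{rec-res-ind-monadic}, and the argument is insensitive to the algebra structure, so only the underlying object matters. The rest is the elementary splitting above.
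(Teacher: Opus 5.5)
Your proposal is correct and follows the same route as the paper. Both reduce to showing that $R[0]$ lies in the thick subcategory generated by $A_\Phi$, and both get this from the retraction of $R$ onto $\bigoplus_{S\in\mathrm{Syl}(G)}R(G/S)$ in \Cref{ex-R-retract-sylows}; you just write that retraction out explicitly, using norm maps and scaled augmentations with $\sum_p c_p[G:S_p]=1$.
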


    \begin{proof}
        This is a direct consequence of \Cref{ex-R-retract-sylows}. Indeed, the set of Sylow subgroups $\mathrm{Syl}(G)$ is contained in $\on{Sub}_\Phi(G)$. Hence $R$ is a retract of
        \[
        \bigoplus_{S\in \mathrm{Syl}(G)} R(G/S),
        \]
        and therefore also a retract of
        \[
        \bigoplus_{H\in \on{Sub}_\Phi(G)} R(G/H).
        \]
        Equivalently, $R[0]$ belongs to the thick subcategory of $\catD(G;R)$ generated by $A_\Phi$. Since $R[0]$ is the tensor unit, the claim follows.
    \end{proof}

    \begin{construction}\label{cons-functor-Dperm}
        Let $G$ be a finite group, and let $\mathcal{O}(G)$ denote its orbit category (see \Cref{def:orbit_cat}). Then there is a functor
        \[
        \catD(-;R)\colon \mathcal{O}(G)\to \mathsf{CAlg}(\mathrm{Pr}^L_{\mathrm{st}}),
        \qquad
        G/H\mapsto \mathsf{Mod}_{\catD(G;R)}(A_H)\simeq \catD(H;R),
        \]
        where $\mathsf{CAlg}(\mathrm{Pr}^L_{\mathrm{st}})$ denotes the $\infty$-category of presentable symmetric monoidal stable $\infty$-categories whose tensor product preserves colimits separately in each variable, together with symmetric monoidal left adjoint functors. See \cite[Section 2]{Mat16} for further details.
    \end{construction}

     \begin{theorem}\label{prop-reduction-pgroups}
        Let $G$ be a finite group and let $R$ be a commutative ring. Let
        \[
            \mathcal{O}_\Phi(G)\subseteq \mathcal{O}(G)
        \]
        denote the full subcategory consisting of those orbits whose isotropy groups lie in $\on{Sub}_\Phi(G)$. Then evaluation at $G/G$ of the functor from \Cref{cons-functor-Dperm}, restricted to $\mathcal{O}_\Phi(G)$, induces an equivalence of symmetric monoidal categories
        \[
            \catD(G;R)\xrightarrow{\simeq}
            \lim_{G/H\in \mathcal{O}_{\Phi}(G)^\mathrm{op}} \catD(H;R).
        \]
    \end{theorem}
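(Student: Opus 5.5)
This is a descent statement, and I will obtain it from the general descent theory of Mathew--Naumann--Noel \cite{MNN17}, applied to the commutative algebra $A_\Phi$ of \Cref{prop-Aphi-descent}. The argument has three parts: descent for $A_\Phi$, rewriting the resulting cobar limit as a limit over the orbit category, and checking that this limit is the one induced by the functor of \Cref{cons-functor-Dperm}.

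\textbf{Step 1: descent along $A_\Phi$.} By \Cref{prop-Aphi-descent}, $A_\Phi$ is a dualizable commutative algebra whose thick $\otimes$-ideal is all of $\catD(G;R)$. The unit is even a retract of $A_\Phi$, via the Sylow subgroups as in \Cref{ex-R-retract-sylows}. So $A_\Phi$ admits descent in the sense of \cite{MNN17}. By \cite[Proposition 3.22]{MNN17}, or the earlier Mathew result \cite{Mat16}, the canonical functor to the limit of the cobar cosimplicial diagram is then a symmetric monoidal equivalence:
\[
\catD(G;R)\xrightarrow{\simeq}\lim_{[n]\in\Delta}\mathsf{Mod}_{\catD(G;R)}\bigl(A_\Phi^{\otimes n+1}\bigr).
\]

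\textbf{Step 2: identify the cobar terms with $G$-sets.} Let $X=\bigsqcup_{E\in\on{Sub}_\Phi(G)}G/E$. Since $A_E\simeq R(G/E)[0]$, we have $A_\Phi^{\otimes n+1}\simeq R(X^{n+1})[0]$. Decomposing $X^{n+1}$ into orbits, each of which has isotropy in $\on{Sub}_\Phi(G)$ because this family is closed under subgroups, gives
\[
\mathsf{Mod}\bigl(R(X^{n+1})\bigr)\simeq\prod_{G/H\subseteq X^{n+1}}\catD(H;R).
\]
Here I use \Cref{rec-res-ind-monadic} and the fact that modules over a finite product of algebras form the product of the module categories. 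Thus the cosimplicial diagram is the right Kan extension of the functor from \Cref{cons-functor-Dperm} along $\mathcal O_\Phi(G)\hookrightarrow$ (finite $G$-sets with $\Phi$-isotropy), evaluated on the Čech nerve of $X\to G/G$. Equivalently, it is the functor $T\mapsto\mathsf{Mod}_{\catD(G;R)}(R(T))$, which sends disjoint unions to products.

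\textbf{Step 3: compare the two limits.} Because this functor sends coproducts to products, its limit over the Čech nerve of $X\to\ast$ agrees with its limit over $\mathcal O_\Phi(G)^{\mathrm{op}}$. The argument is the standard one from \cite[Section 6]{MNN17}: in the presheaf category on $\mathcal O_\Phi(G)$, the Čech nerve of $X\to\ast$ has colimit the terminal presheaf, since $X^H\neq\varnothing$ for every $H\in\on{Sub}_\Phi(G)$. Hence the right Kan extension gives the same value on both diagrams. Finally, I would check that the comparison functor is the one induced by evaluating at $G/G$, i.e.\ by the restrictions $\Res^G_H$. This is immediate, since extension of scalars along $\bbone\to A_H$ corresponds to $\Res^G_H$ under \Cref{rec-res-ind-monadic}.

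The main obstacle is Step 3. It requires the cofinality/Kan-extension identification of the cobar limit with the limit over the orbit category, together with the functoriality of the identification $\mathsf{Mod}(A_H)\simeq\catD(H;R)$ in \Cref{cons-functor-Dperm}. The identification must be coherent, not merely objectwise, so I would invoke \cite[Proposition 6.28]{MNN17}-type statements ($\mathcal F$-nilpotence implies $\catD(G;R)\simeq\lim_{\mathcal O_{\mathcal F}(G)^{\mathrm{op}}}$) rather than reprove them by hand.
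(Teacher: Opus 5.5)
Your proposal is correct and follows essentially the same route as the paper. Both arguments use descent for the descendable algebra $A_\Phi$ via Mathew's comonadicity result \cite[Proposition 3.22]{Mat16}, then identify the cobar totalization with the limit over $\mathcal{O}_\Phi(G)^{\mathrm{op}}$ through the coproduct-to-product extension to finite $G$-sets with $\Phi$-isotropy and the cofinality of $X^{\bullet+1}$. Your Step 3 phrasing (presheaves on $\mathcal{O}_\Phi(G)$, effective epimorphism because $X^H\neq\varnothing$) is the same mechanism as the paper's appeal to \cite[Lemma 6.30, Proposition 6.28]{MNN17} via ``every object admits a map to $X$''.
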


    Here, the arrows in the limit diagram are given by restriction. The main ingredient is the following result, due to Mathew, which applies to any presentable symmetric monoidal stable $\infty$-category whose tensor product preserves colimits separately in each variable. In particular, it applies to $\catD(G;R)$. 

    \begin{proposition}{\cite[Proposition 3.22]{Mat16}}\label{prop-descent-mathew}
        Let $\calC$ be an object of $\mathsf{CAlg}(\mathrm{Pr}^L_{\mathrm{st}})$, and let $A$ be a commutative algebra object in $\calC$ that admits descent. Then the adjunction given by tensoring with $A$ and the forgetful functor is comonadic. In particular, the natural functor to the totalization
        \[
        \calC \to \mathrm{Tot}\left(
        \mathsf{Mod}_\calC(A)
        \doublerightarrow{}{}
        \mathsf{Mod}_\calC(A\otimes A)
        \triplerightarrow{}{}
        \cdots
        \right)
        \]
        is an equivalence.
    \end{proposition}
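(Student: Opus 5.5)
The plan is to apply the Barr--Beck--Lurie comonadicity theorem to the adjunction $F \coloneqq A\otimes - \colon \calC \rightleftarrows \mathsf{Mod}_\calC(A)\colon U$. Two conditions must be checked: $F$ is conservative, and $F$ preserves totalizations of $F$-split cosimplicial objects. The resulting comonad is $A\otimes -$ on $\calC$. For each $M\in\calC$, the cobar resolution identifies with the augmented Amitsur complex $M \to A^{\otimes \bullet+1}\otimes M$. So once comonadicity holds, the stated equivalence with the totalization follows formally by identifying $\mathsf{Mod}_\calC(A^{\otimes n})$ with the terms of the cobar construction.

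Conservativity is the easy step. Let $\catI$ be the class of objects $X$ such that $X\otimes M \simeq 0$ whenever $A\otimes M\simeq 0$. This class is a thick $\otimes$-ideal, since it is closed under cofibers, retracts and tensoring with arbitrary objects. It contains $A$, so by descent it contains $\bbone$. Hence $A\otimes M\simeq 0$ forces $M\simeq 0$, and since $\calC$ is stable, $F$ detects equivalences.

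For the second condition I would pass to a quantitative form of descent. Let $I\to \bbone\to A$ be the fiber sequence of the unit. Descent should imply that $I^{\otimes n}\to \bbone$ is null for some $n$. To see this, note that the objects $X$ for which $I^{\otimes n}\otimes X \to X$ is null for some $n$ form a thick $\otimes$-ideal: a cofiber sequence costs at most adding the two exponents. This ideal contains $A$ with $n=1$, because $I\otimes A\to A$ is split null via the multiplication of $A$. So it contains $\bbone$. From nilpotence I would deduce that for every $M$, the tower of partial totalizations $\{\mathrm{Tot}_m(A^{\otimes\bullet+1}\otimes M)\}_m$ is pro-equivalent to the constant tower on $M$. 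The reason is that the fiber of $M\to \mathrm{Tot}_m$ is identified with $I^{\otimes m+1}\otimes M$, and the transition maps are then eventually null.

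Pro-constant towers have limits preserved by every exact functor. Now take an $F$-split cosimplicial object $X^\bullet$. I would write it as a retract of the cobar-type resolution and use the pro-constancy, tensored levelwise, to see that $\mathrm{Tot}(X^\bullet)$ is a retract of a finite limit. Then $A\otimes -$, being exact, commutes with it, which verifies the Barr--Beck--Lurie hypothesis. The main obstacle is this last step, namely transferring pro-constancy from Amitsur complexes to arbitrary $F$-split cosimplicial objects. If it proves awkward, I would instead use the thick-ideal trick a second time: let $\catI$ be the class of $Y$ for which $Y\otimes\mathrm{Tot}(X^\bullet)\to\mathrm{Tot}(Y\otimes X^\bullet)$ is an equivalence. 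This class is thick. It contains every $A\otimes Z$, because nilpotence makes the relevant Tot towers pro-constant. Hence it contains $\bbone$ and $A$.
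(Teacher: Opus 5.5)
\textbf{The gap.} Your conservativity argument is correct. So are the nilpotence of $I^{\otimes n}\to\bbone$ and the pro-constancy of the Amitsur towers. The gap is the step that carries the weight: showing that $A\otimes-$ preserves $\mathrm{Tot}(X^\bullet)$ for an $F$-split $X^\bullet$.

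In your main sketch you attribute the commutation to exactness of $A\otimes-$ alone. But any ``cobar-type resolution'' of $X^\bullet$ still has to be totalized in the $\bullet$-direction, which is an infinite limit. The sketch never says where $F$-splitness enters, and exactness by itself cannot supply this.

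Your fallback is circular. For $\catI=\{Y : Y\otimes\mathrm{Tot}(X^\bullet)\xrightarrow{\sim}\mathrm{Tot}(Y\otimes X^\bullet)\}$, the membership $\bbone\in\catI$ is a tautology. The membership $A=A\otimes\bbone\in\catI$ is literally the statement you need. So the thick-subcategory argument runs in the wrong direction, and all the content sits in the unproved assertion ``it contains every $A\otimes Z$''. Knowing that the split towers of $A\otimes Z\otimes X^\bullet$ and the Amitsur towers of the individual $X^q$ are pro-constant does not, by itself, control $\mathrm{Tot}(X^\bullet)$. Moreover $\catI$ is not evidently a $\otimes$-ideal: an infinite limit commuting with $Y\otimes-$ need not commute with $Z\otimes Y\otimes-$.

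\textbf{How to close it.} Run the thick-ideal trick on finite-stage data. Let $\mathcal{J}$ be the class of $M\in\calC$ for which the tower $\{\mathrm{Tot}_m(M\otimes X^\bullet)\}_m$ is pro-constant.
Since each $\mathrm{Tot}_m$ is a finite limit, $\mathcal{J}$ is thick. It is a $\otimes$-ideal because applying $N\otimes-$ levelwise preserves pro-isomorphisms. It contains $A$ because $A\otimes X^\bullet$ is split, and split cosimplicial objects have pro-constant Tot towers. Hence $\bbone\in\mathcal{J}$, so $\{\mathrm{Tot}_m X^\bullet\}_m$ is pro-constant. Its limit is then preserved by every exact functor, in particular by $A\otimes-$. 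This is the standard proof of the cited result of Mathew, which the paper invokes without reproving.

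Alternatively, your own nilpotence step finishes the job if you use it as follows. Since $I^{\otimes n}\to\bbone$ is null, $\bbone$ is a retract of $T\coloneqq\mathrm{cof}(I^{\otimes n}\to\bbone)\simeq\mathrm{Tot}_{n-1}(A^{\otimes\bullet+1})$. So $X^\bullet$ is naturally a retract of $T\otimes X^\bullet$. Now $\mathrm{Tot}(T\otimes X^\bullet)$ is a finite limit of the totalizations of the split cosimplicial objects $A^{\otimes p+1}\otimes X^\bullet$, $0\le p<n$, and these are absolute limits; this is exactly where $F$-splitness is used. Hence the comparison map $A\otimes\mathrm{Tot}(X^\bullet)\to\mathrm{Tot}(A\otimes X^\bullet)$ is a retract of an equivalence.

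Two smaller corrections. First, the comonad of the adjunction is $A\otimes-$ on $\mathsf{Mod}_\calC(A)$; on $\calC$, $A\otimes-$ is the monad. Second, passing from comonadicity to the totalization of the $\mathsf{Mod}_\calC(A^{\otimes\bullet+1})$ is not purely formal. It is Lurie's descent criterion, whose Beck--Chevalley hypothesis holds here by base change for module categories.
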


    \begin{proof}[Proof of \Cref{prop-reduction-pgroups}]
        The argument is a standard cofinality argument. Applying \Cref{prop-descent-mathew} to the descendable algebra $A_\Phi$ of \Cref{prop-Aphi-descent}, we obtain an equivalence
        \begin{equation}\label{eq-tot}
            \catD(G;R)\xrightarrow{\simeq}
            \mathrm{Tot}\left(
            \mathsf{Mod}_{\catD(G;R)}(A_\Phi)
            \doublerightarrow{}{}
            \mathsf{Mod}_{\catD(G;R)}(A_\Phi\otimes A_\Phi)
            \triplerightarrow{}{}
            \cdots
            \right).
        \end{equation}
        Let $\mathsf{Set}_{G,\Phi}$ denote the category obtained from $\mathcal{O}_\Phi(G)$ by freely adjoining finite coproducts. Then the functor of \Cref{cons-functor-Dperm} extends uniquely to a finite-coproduct-preserving functor
        \[
        \tilde{\catD}(-;R)\colon \mathsf{Set}_{G,\Phi}\to \mathsf{CAlg}(\mathrm{Pr}^L_{\mathrm{st}}).
        \]
        By \cite[Lemma 6.30]{MNN17}, there is an equivalence of $\infty$-categories
        \[
        \lim_{\mathsf{Set}_{G,\Phi}^\mathrm{op}} \tilde{\catD}(-;R)
        \simeq
        \lim_{\mathcal{O}_{\Phi}(G)^\mathrm{op}} \catD(-;R).
        \]
        Now let
        \[
        X=\coprod_{H\in \on{Sub}_\Phi(G)} G/H \in \mathsf{Set}_{G,\Phi}.
        \]
        Every object $Y$ of $\mathsf{Set}_{G,\Phi}$ admits a morphism $Y\to X$. Therefore, \cite[Proposition 6.28]{MNN17} implies that the functor
        \[
        X^{\bullet+1}\colon \Delta^\mathrm{op}\to \mathsf{Set}_{G,\Phi}, \; [n]\mapsto X^n
        \]
        is cofinal, where $\Delta$ denotes the simplex category. Consequently, we obtain an equivalence
        \[
        \lim_{\mathsf{Set}_{G,\Phi}^\mathrm{op}} \tilde{\catD}(-;R)
        \simeq
        \lim_{\Delta} \tilde{\catD}(-;R)\circ X^{\bullet+1}.
        \]
        The right-hand side identifies with the totalization appearing in \Cref{eq-tot}, and the result follows.
    \end{proof}

    It is known that the Picard spectrum functor commutes with limits; see \cite[Proposition 2.2.3]{MS16}. We therefore obtain the following corollary.
    
    \begin{corollary}\label{coro-lim-picard}
        Let $\mathrm{pic}(\catD(G;R))$ denote the connective Picard spectrum of $\catD(G;R)$. Then there is an equivalence of spectra
        \[
        \mathrm{pic}(\catD(G;R))
        \xrightarrow{\simeq}
        \lim_{G/H\in \mathcal{O}_{\Phi}(G)^\mathrm{op}}
        \mathrm{pic}(\catD(H;R)).
        \]
    \end{corollary}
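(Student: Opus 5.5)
The plan is to deduce \Cref{coro-lim-picard} by feeding the equivalence of \Cref{prop-reduction-pgroups} into the Picard spectrum functor and using that this functor preserves limits.

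First, I will view $\mathrm{pic}$ as a functor
\[
\mathrm{pic}\colon \mathsf{CAlg}(\mathrm{Pr}^L_{\mathrm{st}})\to \mathsf{Sp}_{\geq 0}.
\]
It sends $\calC$ to the connective spectrum associated to the grouplike $E_\infty$-space $\calC^{\simeq}_{\mathrm{inv}}$, the core of $\calC$ restricted to invertible objects. This assignment is functorial for symmetric monoidal functors, since such functors preserve invertible objects and their tensor inverses.

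Next, I will check that limits in $\mathsf{CAlg}(\mathrm{Pr}^L_{\mathrm{st}})$ are computed on underlying $\infty$-categories. For the forgetful functor $\mathsf{CAlg}(\mathrm{Pr}^L_{\mathrm{st}})\to \mathrm{Pr}^L\to \mathsf{Cat}_\infty$, the first step creates limits as usual for commutative algebras, and the inclusion $\mathrm{Pr}^L\to\widehat{\mathsf{Cat}}_\infty$ preserves limits. Hence the limit in \Cref{prop-reduction-pgroups} is computed in symmetric monoidal $\infty$-categories.

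Then I will show that $\mathrm{pic}$ commutes with such limits; this is \cite[Proposition 2.2.3]{MS16}, and I will sketch why. Taking the core is a right adjoint, so it preserves limits. The subtle point is the invertible part: an object of the limit is a compatible family $(x_H)$. If each $x_H$ is invertible, the inverses $x_H^{-1}\simeq x_H^\ast$ form a compatible family, because symmetric monoidal functors preserve duals. The evaluation maps $x_H^\ast\otimes x_H\to\bbone$ are also compatible, so the family is invertible in the limit. Thus $\mathrm{Pic}^{\simeq}(\lim\calC_i)\simeq\lim \mathrm{Pic}^{\simeq}(\calC_i)$ as grouplike $E_\infty$-spaces. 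Finally, $\Omega^\infty$ from connective spectra to grouplike $E_\infty$-spaces is an equivalence, so limits of connective spectra correspond to limits of grouplike $E_\infty$-spaces. One caveat: the limit of connective spectra must be taken in connective spectra, i.e.\ with $\tau_{\geq0}$ applied. I will flag this and read the statement's limit in that sense, noting that the limit over the finite-type diagram $\mathcal{O}_\Phi(G)^{\mathrm{op}}$ may acquire negative homotopy that $\mathrm{pic}$ ignores.

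Applying this to the equivalence of \Cref{prop-reduction-pgroups} gives the claimed equivalence $\mathrm{pic}(\catD(G;R))\simeq\lim\mathrm{pic}(\catD(H;R))$. The map is induced by the restrictions, since the equivalence is evaluation at $G/G$ followed by restriction. The main obstacle is bookkeeping rather than mathematics. I need to make sure the dg-nerve model of $\catD(G;R)$ yields the same symmetric monoidal $\infty$-category as $\mathsf{Mod}_{\catD(G;R)}(A_H)$ at each $H$, which \Cref{rec-res-ind-monadic} supplies, and I need to keep the connective-cover convention consistent.
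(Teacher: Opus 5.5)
Your proposal is correct and follows the paper's argument: apply $\mathrm{pic}$ to the symmetric monoidal equivalence of \Cref{prop-reduction-pgroups} and use that $\mathrm{pic}$ commutes with limits \cite[Proposition 2.2.3]{MS16}, which the paper simply cites and you additionally sketch. Your caveat that the limit must be taken in connective spectra, i.e.\ as $\tau_{\geq 0}$ of the spectrum-level limit, matches how that result is stated and usefully clarifies the paper's notation; it does not affect the later application, which only uses $\pi_0$ and $\pi_1$.
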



   Let us recall several general facts about the Picard spectrum. First, the homotopy groups of the Picard spectrum $\mathrm{pic}(\calC)$ of a symmetric monoidal stable $\infty$-category $\calC$ are given by
    \[
        \pi_n (\mathrm{pic}(\calC))=
        \left\{
        \begin{array}{ll}
        \mathrm{Pic}(\calC) & \text{if } n=0,\\[4pt]
        \pi_0 \bigl(\mathrm{Hom}_\calC(\bbone,\bbone)\bigr)^\times & \text{if } n=1,\\[4pt]
        \pi_{n-1}(\mathrm{Hom}_\calC(\bbone,\bbone)) & \text{if } n\geq 2,
        \end{array}
        \right.
    \]
    where $\mathrm{Pic}(\calC)$ denotes the Picard group of the homotopy category $\mathrm{ho}(\calC)$; see \cite[Section 2.2]{MS16}.
    
    On the other hand, suppose that a symmetric monoidal stable $\infty$-category $\calC$ is expressed as the limit of a diagram
    \[
        D\colon I^\mathrm{op}\to \mathrm{Cat}^\otimes_{\mathrm{st}}.
    \]
    Then there is an associated spectral sequence
    \begin{equation}\label{spectral sequence for Pic}
        E_2^{p,q} = \on{H}^p\big(I;\pi_q(\mathrm{pic}(D(-)))\big) \Rightarrow
        \pi_{q-p}\!\left(\varprojlim \mathrm{pic}(D)\right)
    \end{equation}
    with differentials of the form $d_r\colon E_r^{p,q}\to E_r^{p+r,q+r-1}$. In particular, this spectral sequence computes the Picard group of $\mathrm{ho}(\calC)$.
    
    We now specialize to the case of $\catD(G;R)$. See e.g. \cite[Section 2.5]{Gro23} for details on homotopy groups of small categories. For this next proposition, the authors thank Isaac Moselle for noticing a serious flaw with a previous version of the proof, and for providing a sketch of part of the following argument. 

    \begin{proposition}\label{prop:pi1_trivial}
        The orbit category $\calO_\Phi(G)$ is contractible. In particular, for any abelian group $A$
        \[
        \on{H}^i(\calO_\Phi(G);A)=0 
        \] 
        for any $i>0$. 
    \end{proposition}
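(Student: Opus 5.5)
The plan is to identify the classifying space of $\calO_\Phi(G)$ with an orbit space and then show that orbit space is simply connected and $\bbZ$-acyclic. Let $\catF = \on{Sub}_\Phi(G)$, which is a family: it is closed under conjugation and subgroups, and contains $1$. Let $X = E_\catF G$ be the universal $G$-CW-complex for $\catF$, so $X^H \simeq \ast$ for $H\in\catF$ and $X^H=\varnothing$ otherwise.

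\textbf{Step 1: identify the classifying space.} I would first show that the classifying space satisfies
\[
B\calO_\Phi(G) \simeq X/G.
\]
The model for $X$ is the homotopy colimit $\on{hocolim}_{G/H\in\calO_\Phi(G)} G/H$ of the tautological functor. Taking $G$-orbits commutes with homotopy colimits, and $(G/H)/G=\ast$. Hence
\[
X/G\simeq \on{hocolim}_{\calO_\Phi(G)}\ast = B\calO_\Phi(G).
\]
This is the standard Elmendorf/Dwyer--Kan description. Once contractibility is established, the cohomology claim follows immediately, since $\on{H}^i(\calO_\Phi(G);A)=\on{H}^i(B\calO_\Phi(G);A)$.

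\textbf{Step 2: $\pi_1(X/G)=1$.} Since $X$ is simply connected (indeed contractible, as $1\in\catF$), Armstrong's theorem on fundamental groups of orbit spaces gives that $\pi_1(X/G)$ is $G$ modulo the normal subgroup generated by elements having a fixed point. An element $g$ has a fixed point in $X$ exactly when $\langle g\rangle\in\catF$, that is, when $g$ has prime power order. Every element of a finite group is a product of commuting elements of prime power order, via the primary decomposition of $\langle g\rangle$. Therefore these elements generate $G$, and $\pi_1(X/G)=1$.

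\textbf{Step 3: $\bbZ$-acyclicity via transfer.} Fix a prime $p$ and a Sylow $p$-subgroup $P$. Working with cellular chains, there are maps
\[
\pi_*\colon C_*(X/P)\to C_*(X/G), \qquad \on{tr}\colon C_*(X/G)\to C_*(X/P).
\]
Here $\pi_*$ is the projection and $\on{tr}$ is the transfer, sending an orbit cell $Ge$ to the sum of the $P$-orbit cells it contains, each counted with multiplicity equal to its stabilizer index. One checks on a single orbit $G/K$, i.e.\ via the double coset formula, that $\pi_*\circ\on{tr}=[G:P]$. This index is a unit in $\bbZ_{(p)}$, so $\tilde{\on{H}}_*(X/G;\bbZ_{(p)})$ is a retract of $\tilde{\on{H}}_*(X/P;\bbZ_{(p)})$. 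Now restrict $X$ to $P$. Every subgroup of $P$ lies in $\catF$, so all fixed point sets $X^Q$ with $Q\le P$ are contractible. By the equivariant Whitehead theorem $X\simeq_P\ast$, hence $X/P\simeq\ast$. Therefore $X/G$ is $\bbZ_{(p)}$-acyclic for every $p$. For primes not dividing $|G|$, take $P=1$ and use $X/1=X\simeq\ast$. Consequently $X/G$ is $\bbZ$-acyclic.

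\textbf{Step 4: conclude.} Steps 2 and 3 together with the Hurewicz and Whitehead theorems show that $X/G$ is contractible, which proves the proposition.

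The main obstacle is making Step 1 precise and correctly oriented, since the variance matters: the nerve of $\calO_\Phi(G)$ itself, not a slice. I would also verify carefully that the multiplicities in the chain-level transfer in Step 3 are right when stabilizers are nontrivial. If Step 1 proves awkward, an alternative is to apply Quillen's Theorem A to the functor $\calO_\Phi(G)\to\calO_\Phi(G)/\!\!/G$, but I would try the orbit-space route first.
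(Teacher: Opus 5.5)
Your proposal is correct and follows essentially the same route as the paper: identify $B\calO_\Phi(G)$ with the orbit space $E_\catF G/G$, get simple connectivity from Armstrong's theorem, get $\bbZ_{(p)}$-acyclicity by a transfer to a Sylow $p$-subgroup, and conclude by Hurewicz--Whitehead. The only differences are cosmetic: you derive the orbit-space identification from the homotopy-colimit model rather than citing Grodal, and you run the transfer on cellular chains of orbit spaces rather than in Bredon cohomology.
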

    \begin{proof}
       First, we show that $\calO_\Phi(G)$ is simply connected.
        We have that $|\calO_\Phi(G))| \simeq |E\calO_\Phi|/G$ (\cite[Proposition 2.10]{Gro02}), where $E\calO_\Phi$ denotes the category of pointed $G$-sets with isotropy in the family $\Phi$, see also \cite[A.1]{Gro23}. Note $|E\calO_\Phi|$ is a classifying space for $\Phi$ in the sense of \cite[Section 7]{DL98} - in particular, $|E\calO_\Phi|^H$ is contractible if $H \in \Phi$ and $|E\calO_\Phi|^H = \varnothing$ otherwise. Since $1 \in \Phi$, in particular $|E\calO_\Phi|^1 = |E\calO_\Phi|$ is contractible, hence simply connected. 

        Therefore, we may apply the main result of \cite{Arm68}, which gives us an isomorphism  $\pi_1(|E\calO_\Phi|/G) \cong G/N$, where $N$ is the normal subgroup of $G$ generated by elements which have fixed points. Note that the conditions of the theorem are satisfied since $E\calO_\Phi$ has the structure of a finite $G$-CW-complex. Every element $g$ of prime power order in $G$ has a fixed point in $|E\calO_\Phi|$, as $E\calO_\Phi^{\langle g\rangle}$ is nonempty. Since every element of $G$ can be written (uniquely) as a product of commuting elements of prime power order, it follows that $N = G$. Thus $\pi_1(|E\calO_\Phi|/G)$ is trivial, therefore $\pi_1(\calO_\Phi(G))$ is as well.

        Now, we claim that $\on{H}^i(\calO_\Phi(G);\mathbb Z)=0$ for all $i>0$. Since $\calO_\Phi(G)$ is a finite category, it's enough to verify that $\on{H}^i(\calO_\Phi(G);\mathbb Z)\otimes \mathbb Z_{(p)}\cong \on{H}^i(\calO_\Phi(G);\mathbb Z_{(p)})=0$ for all primes $p$ dividing the order of $G$. Fix a prime $p$, and a $p$-Sylow subgroup $S$ of $G$.  Recall that the Bredon cohomology $\on{H}_G^\ast(E\calO_\Phi;\mathbb{Z}_{(p)})$ agrees with $\on{H}^\ast(\calO_\Phi(G);\mathbb Z_{(p)})$; see for instance \cite[Section 3.1]{MNN19}. In particular, since the $H$-fixed points of $E\calO_\Phi$ are contractible for any $H\leq S$, we obtain that $\on{H}_S^\ast(E\calO_\Phi;\mathbb{Z}_{(p)})$ is trivial in positive degrees. Now, we may use a transfer argument as in \cite[Section 6]{Dwy96} to deduce that $\on{H}_G^\ast(E\calO_\Phi;\mathbb{Z}_{(p)})$ is trivial in positive degrees. Indeed, this follows since  the latter embeds in the former (cf. \cite[Theorem 6.4]{Dwy96}). 

        Using that $|\calO_\Phi(G)|$ has the structure of a CW-complex, trivial integral cohomology together with simply connectedness implies that the space is contractible. 
    \end{proof}

    As a consequence, we complete the reduction: somewhat miraculously, the limit of Picard groupoids descends to the group level. 
    
    \begin{corollary}\label{cor:limit_of_pic_groups}
        The equivalence of \Cref{coro-lim-picard} descends to an isomorphism of abelian groups \[
            \mathrm{Pic}(\catD(G;R))
            \cong
            \lim_{G/H\in \mathcal{O}_{\Phi}(G)^\mathrm{op}}
            \mathrm{Pic}(\catD(H;R)).
        \]
    \end{corollary}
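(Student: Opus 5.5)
We will read off $\pi_0$ of the limit of Picard spectra from \Cref{coro-lim-picard}, using the descent spectral sequence \eqref{spectral sequence for Pic} for the diagram $D\colon \calO_\Phi(G)^{\op}\to \mathrm{Cat}^\otimes_{\mathrm{st}}$, $G/H\mapsto \catD(H;R)$. The $E_2$-terms contributing to total degree $0$ are $E_2^{p,p}=\on{H}^p(\calO_\Phi(G);\pi_p\,\mathrm{pic}(\catD(-;R)))$ for $p\ge 0$. The $p=0$ term is $\on{H}^0(\calO_\Phi(G);\Pic(\catD(-;R)))=\lim_{\calO_\Phi(G)^{\op}}\Pic(\catD(H;R))$, which is the target. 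Hence it suffices to show two things: all $E_2^{p,p}$ with $p>0$ vanish, and $E_2^{0,0}$ supports no nonzero differentials. The second is the only concern, since nothing can hit $E^{0,0}$. The spectral sequence is concentrated in $p\ge 0$ and $\calO_\Phi(G)$ is finite, so there are no convergence issues.

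\textbf{Step 1: identify the coefficient systems.} For $q\ge 1$ the coefficients are $\pi_q\mathrm{pic}(\catD(H;R))$. Here $\pi_1=\End_{\catD(H;R)}(\bbone)^\times=R^\times$ and, for $q\ge2$, $\pi_q=\pi_{q-1}\Hom(\bbone,\bbone)=\Hom_{\catD(H;R)}(\bbone[q-1],\bbone)$. This group vanishes, because $R[0]$ has no negative self-extensions in the homotopy category of chain complexes. The restriction maps act as the identity on $R^\times$. So each coefficient system for $q\ge1$ is constant: $R^\times$ for $q=1$ and $0$ for $q\ge 2$.

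\textbf{Step 2: kill the higher terms.} By \Cref{prop:pi1_trivial}, $\calO_\Phi(G)$ is contractible. Cohomology of a contractible category with constant coefficients $A$ vanishes in positive degrees, so $E_2^{p,1}=\on{H}^p(\calO_\Phi(G);R^\times)=0$ for $p>0$, and $E_2^{p,q}=0$ for $q\ge2$. In particular the entries $E_2^{p,p}$ with $p\ge1$ all vanish. The only potential differentials out of $E_r^{0,0}$ land in $E_r^{r,r-1}$. For $r=2$ this is $E_2^{2,1}=\on{H}^2(\calO_\Phi(G);R^\times)=0$, and for $r\ge3$ it is $E_r^{r,r-1}$ with $r-1\ge2$, which is zero. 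Thus $E_\infty^{0,0}=E_2^{0,0}$ and the total-degree-$0$ filtration has a single nonzero graded piece. This yields $\pi_0\lim\mathrm{pic}\cong\lim\Pic$.

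\textbf{Step 3: conclude and check the map.} Combined with $\pi_0\mathrm{pic}(\catD(G;R))=\Pic(\catD(G;R))$ and \Cref{coro-lim-picard}, we get the isomorphism. We must also confirm that it is induced by the restriction maps. The edge homomorphism $\pi_0\lim\to E_2^{0,0}=\lim\pi_0$ is the canonical comparison map, so the isomorphism is the one given by restrictions. Finally, since invertible objects are compact, $\Pic(\catD(-;R))=\Pic(\catK(-;R))$.

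The main obstacle is the care needed in Step 1. One must justify that $\pi_1$ forms a \emph{constant} system; this relies on restriction being a unital functor and $R=\End(\bbone)$ for every $H$. One must also handle $\lim^1$-type terms in the $q=1$ row, but contractibility disposes of these. The fact that $\on{H}^2$ with $R^\times$ coefficients vanishes is exactly where \Cref{prop:pi1_trivial} is needed.
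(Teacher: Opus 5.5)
Your proposal is correct and is essentially the paper's argument. The descent spectral sequence has only the rows $q=0,1$, with row $1$ the constant system $R^\times$, and contractibility of $\calO_\Phi(G)$ from \Cref{prop:pi1_trivial} kills both $E_2^{1,1}=\on{H}^1(\calO_\Phi(G);R^\times)$ and $E_2^{2,1}$, the target of $d_2^{0,0}$; this leaves $\Pic(\catD(G;R))\cong E_2^{0,0}=\lim\Pic(\catD(H;R))$.

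One small correction: convergence does not follow from $\calO_\Phi(G)$ being finite, since its nerve has nondegenerate simplices in every dimension (e.g.\ because $\Aut(G/1)=G$). It follows instead from the two-row concentration, which makes the spectral sequence collapse at $E_3$ with only finitely many nonzero terms in each total degree.
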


    \begin{proof}
        Applying \Cref{spectral sequence for Pic} to the equivalence of \Cref{coro-lim-picard}, we obtain a spectral sequence
        \[
            E_2^{p,q} = \on{H}^p(\mathcal{O}_{\Phi}(G);\pi_q(\mathcal{F}))\Rightarrow\pi_{q-p}\!\left(\varprojlim_{\mathcal{O}_{\Phi}(G)^\mathrm{op}}\mathrm{pic}(\catD(H;R))
            \right) \cong\pi_{q-p}\left( \mathrm{pic}(\catD(G;R))\right),
        \]
        where $\mathcal{F}$ denotes the presheaf
        \[
            \mathcal{O}_{\Phi}(G)^\mathrm{op}\to\mathsf{Sp},
            \; G/H\mapsto \mathrm{pic}(\catD(H;R)).
        \]
        Now observe that
        \[
            \pi_{n-1}\big(\mathrm{Hom}_{\catD(G;R)}(\bbone,\bbone)\big)=0
            \; \text{for all } n\geq 2,
        \]
        while
        \[
            \pi_0\bigl(\mathrm{Hom}_{\catD(G;R)}(\bbone,\bbone)\bigr)^\times\cong R^\times.
        \]
        Consequently, the spectral sequence is concentrated in rows $q=0,1$. Therefore, we obtain the following short exact sequence of abelian groups, \[
        0 \to \on{H}^1(\mathcal{O}_{\Phi}(G);R^\times)\to    \mathrm{Pic}(\catD(G;R))
        \to \ker(d^{0,0}_2) \to 0.
        \]
        Now, by \Cref{prop:pi1_trivial} we have $E^{p,1}_2=0$ for all $p>0$, hence both  
        $\on{H}^1(\mathcal{O}_{\Phi}(G);R^\times)$ and $d^{0,0}_r$
        are trivial. Thus we have an isomorphism 
        \[
            \mathrm{Pic}(\catD(G;R)) \cong \on{H}^0(\mathcal{O}_{\Phi}(G);\pi_0(\mathcal{F})),
        \]
        and the right-hand term identifies with  $ \lim_{G/H\in \mathcal{O}_{\Phi}(G)^\mathrm{op}} \mathrm{Pic}(\catD(H;R))$, as desired. 
    \end{proof}

    \section{The Picard group of $\catK(G;R)$ for a finite group}\label{sec:final}
    
    We spell out the consequences of \Cref{cor:limit_of_pic_groups}. Explicitly, to specify an endotrivial $x \in \catK(G;R)$, it suffices to specify a restriction-coherent family $(x_H)$ of endotrivials for each $\catK(H;R)$. 

    \begin{lemma}\label{lem:res_via_h_mks}
        Let $R$ be  commutative connected ring, and  $G$ be a $p$-group and $H \leq G$, and suppose $x \in \Pic(\catK(H;R))$ satisfies that $H$ acts trivially on $\catH(x)$. Then \[x \in \im\big(\Res^G_H\colon\Pic(\catK(G;R)) \to  \Pic(\catK(H;R))\big)\] if and only if for all $\frakp \in \Spec(R)$, $h_\frakp\in \on{CF}_b(H,\ch(k(\frakp)))$ satisfies \[h_{\frakp} \in \mathrm{im}\big(\Res^G_H\colon \on{CF}_b(G) \to \on{CF}_b(H)\big).\] 
    \end{lemma}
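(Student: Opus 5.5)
The forward direction is formal. Suppose $x \cong \Res^G_H(y)$ with $y \in \Pic(\catK(G;R))$. Base change $\lambda_\frakp^\ast$ commutes with restriction, and modular fixed points satisfy $\Psi^K\Res^G_H \cong \Res\,\Psi^K$ for $K \leq H$. Hence the fiberwise h-mark of $x$ at $\frakp$ is the restriction of the fiberwise h-mark of $y$ at $\frakp$. By \Cref{thm:endotriv_classification_over_field} the latter is Borel--Smith on $G$, so $h_\frakp(x) \in \im(\Res^G_H\colon \on{CF}_b(G)\to\on{CF}_b(H))$.

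For the converse I will use the classification of \Cref{thm:picard_grp_p_grps} for both $H$ and $G$, which applies since both are $p$-groups and $R$ is connected (assume Noetherian, as there). The case $pR = R$ is trivial: $\Pic$ is $\bbZ\times\Pic(R)\times\Hom(-,R^\times)$, and the trivial-action hypothesis kills the $\Hom$ part. So assume $pR\neq R$. Write
\[x \cong \catH(x)[0]\otimes z,\]
where $\catH(z) = R$ and $z$ is determined by its reduced fiberwise h-marks $(h_\frakp)_{\frakp\in\tilde\pi_0(\Phi(H;R))}$, a glued tuple. The hypothesis that $H$ acts trivially on $\catH(x)$ forces the $\catM(H;R)$-component to vanish, so $\catH(x) = M \in \Pic(R)$. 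The factor $M[0]$ is visibly the restriction of $M[0]\in\Pic(\catK(G;R))$. It therefore suffices to lift $z$.

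For each component $\frakp$ of $\Spec(R/p)$, choose by hypothesis $f_\frakp\in\on{CF}_b(G)$ with $\Res^G_H f_\frakp = h_\frakp$. The main obstacle is that the tuple $(f_\frakp)$ must be glued, that is, all $f_\frakp(1)$ must agree. This holds automatically, since $f_\frakp(1) = h_\frakp(1)$ and the $h_\frakp(1)$ all agree because $z$ is an endotrivial over connected $R$ (\Cref{prop:etrivhomologyinonedegree}). Then \Cref{thm:gluing_etrivs_p_grps} gives $w\in\Pic(\catK(G;R))$ with fiberwise h-marks $(f_\frakp)$, and we may normalize so that $\catH(w) = R$ by tensoring with $\catH(w)^\ast[0]$. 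This normalization does not change the h-marks. One should double-check that this can be done: $\catH(w)\in\Pic(\mathsf{perm}(G;R)^\natural)$ splits as $\Pic(R)\times\catM(G;R)$, and both factors are invertible as degree-$0$ complexes with zero h-marks.

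Finally, $\Res^G_H(w)$ and $z$ have the same reduced fiberwise h-marks, and both have $\catH = R$, since restriction commutes with $\catH$. By \Cref{thm:ker_h_marks} and \Cref{obs:ker_h_mark_decomp}, $\Res^G_H(w)\otimes z^{-1}\cong N[0]$ with $N\in\Pic(\mathsf{perm}(H;R)^\natural)$ and $\catH = N = R$. Hence $z\cong\Res^G_H(w)$, and therefore $x\cong\Res^G_H(M[0]\otimes w)$.
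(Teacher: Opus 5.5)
Your argument is correct and is essentially the proof the paper has in mind when it calls this lemma straightforward. You split off the $\Pic(R)$-factor, which the trivial-action hypothesis permits, and then use \Cref{thm:ker_h_marks} and \Cref{thm:gluing_etrivs_p_grps} to reduce to lifting glued Borel--Smith tuples, where the agreement at the trivial subgroup is automatic; as you note, this needs $R$ Noetherian, which the statement leaves implicit and which holds wherever the paper applies the lemma.
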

    \begin{proof}
        Straightforward.
    \end{proof}

    \begin{remark}\label{rmk:h_mks_vs_limit}
        Consider an element 
        \[x = (x_H)_{G/H \in \calO_\Phi(G)\op} \in
            \lim_{G/H\in \mathcal{O}_{\Phi}(G)^\mathrm{op}}
            \mathrm{Pic}(\catD(H;R)).\] 
        We may take fiberwise h-marks in each component. Fix $\frakp \in \Spec(R)$ and let $p = \ch(k(\frakp))$, then we obtain a homomorphism 
        \[h_{\frakp}\colon \lim_{G/H\in \mathcal{O}_{\Phi}(G)^\mathrm{op}}
            \mathrm{Pic}(\catK(H;R)) \to \lim_{G/H\in \mathcal{O}_{\Phi}(G)^\mathrm{op}}\on{CF}_b(H,p),\] and it is an easy verification that the group homomorphism induced by restriction of superclass functions,\[ \on{CF}_b(G,p) \xrightarrow{\Res^G_H}\lim_{G/H\in \mathcal{O}_{\Phi}(G)^\mathrm{op}}\on{CF}_b(H,p)\]
        is in fact an isomorphism. Explicitly, the isomorphism is induced by restriction.  Moreover, the following diagram commutes.
        \begin{figure}[H]
            \centering
            \begin{tikzcd}
                \Pic(\catK(G;R)) \ar[r] \ar[d, "h_{\frakp}"]& \lim_{G/H\in \mathcal{O}_{\Phi}(G)\op}\mathrm{Pic}(\catK(H;R))  \ar[d, "h_{\frakp}"] \\
                \on{CF}_b(G;p) \ar[r] & \lim_{G/H\in \mathcal{O}_{\Phi}(G)\op}\on{CF}_b(H,p)
            \end{tikzcd}
        \end{figure}
        
    \end{remark}

    \begin{theorem}\label{thm:img_of_hmk_all_grps}
        Let $G$ be a finite group of order $n$ and $R$ a commutative connected Noetherian ring. Recall the reduced h-mark homomorphism from \Cref{rmk:refined_h_mk_func},
        \[\Pic(\catK(G;R)) \to \prod_{\frakp \in \tilde \pi_0(\Phi(G;R))} \on{CF}_b(G, \ch(k(\frakp))).\] The image is precisely the tuples of Borel-Smith functions which agree on the trivial subgroup of $G$. 
    \end{theorem}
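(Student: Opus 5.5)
The inclusion ``$\subseteq$'' is immediate. Since $R$ is connected, \Cref{prop:etrivhomologyinonedegree} shows that every fiberwise h-mark evaluated at the trivial subgroup equals $b(x)$. The components are Borel--Smith by \Cref{thm:endotriv_classification_over_field}. For the reverse inclusion I would use the limit description of \Cref{cor:limit_of_pic_groups} (applied on compacts, since invertibles are compact) to reduce to the $p$-group case treated in \Cref{thm:gluing_etrivs_p_grps}.

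Fix a tuple $(f_\frakp)_{\frakp\in\tilde\pi_0(\Phi(G;R))}$ of Borel--Smith functions with common value $f_\frakp(1)=c$. For each prime $p\in\Phi(G;R)$ and each component $\frakp$ of $\Spec(R/p)$, the function $f_\frakp$ is a $p$-Borel--Smith function on $G$. For every $p$-subgroup $H\le G$, its restriction $\Res^G_H f_\frakp$ is Borel--Smith on $H$, since the conditions of \Cref{def:borel_smith} only involve subquotients of $H$. The components of $\Spec(R/p)$ are the same whether we view $H$ or $G$. So \Cref{thm:gluing_etrivs_p_grps} yields some $x_H\in\Pic(\catK(H;R))$ with the prescribed restricted h-marks. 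I would normalize $x_H$ so that $\catH(x_H)=R$ with trivial action, using the splitting of \Cref{not:glued_borel_smith}. Concretely, take $x_H$ to be the endotrivial built by gluing in the proof of \Cref{thm:gluing_etrivs_p_grps}, whose local pieces are shifts $R[c]$ and images $\catB(V)$ of representation spheres over the various $R_{a_i}$.

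Next I need these choices to form a compatible family in $\lim_{\calO_\Phi(G)\op}\Pic(\catK(H;R))$. Two things must hold: (i) conjugation by $g\in G$ carries $x_H$ to $x_{{}^gH}$, and (ii) $\Res^H_K x_H\cong x_K$ for $K\le H$. For $p$-groups the kernel of the reduced h-mark homomorphism is $\Pic(R)\times\catM(H;R)$ by \Cref{thm:picard_grp_p_grps}. Hence two endotrivials with equal h-marks and the same homology $R$ (with trivial action) are isomorphic. Restriction and conjugation commute with base change and with $\Psi^H_p$ via \Cref{prop:modularfixedptscommutes}. Therefore $\Res^H_K x_H$ and $x_K$ have the same h-marks $\Res^G_K f_\frakp$ and both have homology $R$ with trivial action, and so they agree; conjugation is handled the same way. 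I would also handle subgroups of $G$ that are $q$-groups for $q$ not in $\Phi(G;R)$, that is, with $q$ invertible in $R$, together with the trivial subgroup. For these, choose $x_H=R[c]$, which is compatible because the value at $1$ is the common $c$. The main obstacle I expect is that an element of the limit of Picard \emph{groups} only needs isomorphism classes to be compatible, so no coherence data is required. This is exactly what \Cref{cor:limit_of_pic_groups} grants, and I would invoke it to produce $x\in\Pic(\catK(G;R))$ restricting to each $x_H$. The subtle point is that the homology normalization must be preserved under restriction, since the trivial module restricts to the trivial module.

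Finally, I would verify that the h-marks of $x$ are the $f_\frakp$. Fiberwise h-marks at a $p$-subgroup $H$ only depend on $\Res^G_{N_G(H)}$, and in fact are computed from $\Res^G_S x$ for a Sylow $p$-subgroup $S\ge H$, because $\Psi^H$ factors through restriction to a $p$-subgroup containing $H$. By \Cref{rmk:h_mks_vs_limit} the h-marks of $x$ are therefore determined by those of the $x_S$, and these equal $\Res^G_S f_\frakp$. Hence $h(x)=(f_\frakp)$, which proves surjectivity onto the glued tuples.
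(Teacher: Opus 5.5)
Your proposal is correct and follows essentially the same route as the paper: it builds endotrivials on the $p$-subgroups from the $p$-group gluing theorem (normalized to have homology $R$), takes shifts $R[c]$ on subgroups of order prime to the singular primes, and assembles them through \Cref{cor:limit_of_pic_groups}. The differences are minor. The paper reduces to tuples that are nonconstant on a single component and transports $x_S$ from a Sylow subgroup using \Cref{lem:res_via_h_mks}, whereas you treat all components at once and obtain compatibility from \Cref{thm:ker_h_marks} (h-marks together with trivial homology $R$ determine the isomorphism class), which is a somewhat cleaner justification of the compatibility step.
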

    \begin{proof}
        As in the proof of \Cref{thm:gluing_etrivs_p_grps}, given some it suffices to specify an endotrivial $x \in \Pic(\catK(G;R))$ for which, given a $\frakp \in \tilde \pi_0(\Phi(G;R))$ and $f \in \on{CF}_b(G, \ch(k(\frakp)))$, $h_{k(\frakq)\otimes_R x} = f$ if $\frakp\sim \frakq$ (i.e., $\frakp$ and $\frakq$ are representatives of the same component in $\tilde \pi_0(\Phi(G;R))$)  and $h_{k(\frakq) \otimes_R x}\equiv f(1)$ if not. If $\ch(k(\frakp)) = 0$ or $\ch(k(\frakp)) \not\in \Phi(G;R)$, one can simply take $x \coloneqq R[f(1)]$, so assume $\ch(k(\frakp)) \in \Phi(G;R)$. Set $p \coloneqq \ch(k(\frakp))$.
        
        Let $H$ be a Sylow $p$-subgroup of $G$, then $\tilde \pi_0(\Phi(H;R))$ consists of representatives of the connected components of $\Spec(R/p)$ and $\Spec(R_p)$. We have a commutative diagram induced from restriction.
        \begin{figure}[H]
            \centering
            \begin{tikzcd}
                \Pic(\catK(G;R)) \ar[r, "h"]\ar[d, "\Res^G_H"] & \prod_{\frakp \in \tilde \pi_0(\Phi(G;R))} \on{CF}_b(G;\on{char}(k(\frakp)) \ar[d, "\Res"]\\
                \Pic(\catK(H;R)) \ar[r, twoheadrightarrow, "h"] & \on{CF}_b^{gl}(H;R)
            \end{tikzcd}
        \end{figure}
        Recall that $\on{CF}_b^{gl}(H;R)$ is the subgroup of $\prod_{\frakp \in \tilde \pi_0(\Phi(H;R))} \on{CF}_b(H;p)$ consisting of tuples of Borel-Smith functions agreeing on the trivial subgroup. The bottom arrow is then surjective by \Cref{thm:p_grp_h_mark_surj}, therefore there exists an endotrivial $x_H \in \Pic(\catK(H;R))$ with $h_{k(\frakq) \otimes_R x_H} = \Res(f)$ if $\frakp \sim \frakq$ and $h_{k(\frakq) \otimes_R x_H} \equiv f(1)$ otherwise. Moreover we may choose $\catH(x_H) = R$. 
        
        Note $\Res(f)$ is $G$-stable. Therefore, by conjugation and restriction, given any $p$-subgroup $K \leq_p H$, we can define $x_K$ analogously via conjugation and restriction. This forms a coherent family of endotrivials $(x_H)$, as every endotrivial satisfies $\catH(x_K) = R$ and their h-marks are compatible via \Cref{lem:res_via_h_mks}. For any $q$-subgroup $K' \leq G$ with $q \neq p$ on the other hand, we set $x_{K'} = R[f(1)]$. Together, this defines an element of $\lim_{G/H\in \mathcal{O}_{\Phi}(G)^\mathrm{op}}\mathrm{Pic}(\catD(H;R))$, hence by \Cref{coro-lim-picard}, an endotrivial $x \in \Pic(\catK(G;R))$. 
    \end{proof}

    We already have by \Cref{obs:ker_h_mark_decomp} that $\ker(h) = \Pic(R)\times \catM(G;R)$, where $\catM(G;R)$ denotes the subgroup of $\Hom(G, R^\times)$ consisting of elements $\varphi$ for which $R_\varphi$ is $\natural$-permutation. \Cref{prop:directsummandlemma} gives a criterion for determining $\catM(G;R)$, which was simplified considerably for $p$-groups by \Cref{thm:rk1ppermsforpgrp}. We now can state a slightly easier criterion for checking whether $R_\varphi$ is $\natural$-permutation. This is reminiscent of the fact that for a finite group $G$ and a field $k$ of prime characteristic $p$, a $kG$-module is a direct summand of a permutation module if and only if it is permutation upon restriction to a Sylow $p$-subgroup.

    \begin{corollary}\label{cor:detect_rk1_pperms_for_all_grps}
        Let $\varphi \in \Hom(G,R^\times)$. Then $R_\varphi$ is $\natural$-permutation if and only if $\Res^G_H (R_\varphi)$ is $\natural$-permutation for all Sylow subgroups $H\leq G$.
    \end{corollary}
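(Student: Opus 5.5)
The forward direction is immediate: restriction sends permutation modules to permutation modules, since $\Res^G_H R(G/K)$ decomposes via the Mackey formula into transitive permutation $RH$-modules. It also preserves direct summands. So if $R_\varphi$ is $\natural$-permutation, so is $\Res^G_H(R_\varphi)$ for every $H$, in particular every Sylow subgroup.

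For the converse I would argue purely with \Cref{prop:directsummandlemma}, avoiding the $\infty$-categorical machinery. Suppose $\Res^G_S(R_\varphi)$ is $\natural$-permutation for each Sylow $p$-subgroup $S$, $p\in\Phi(G)$. By \Cref{thm:rk1ppermsforpgrp}, this gives $R = I^{\varphi}_S + pR$. The plan has three steps.
\begin{enumerate}
\item Use the proof of \Cref{thm:rk1ppermsforpgrp}: raising $1=a_S+pb_S$ to a high power shows $1 \in [S:1]R + (I^\varphi_S)^2$. Write $|S| = p^{n_p}$.
\item Multiply by $[G:S]$, which is coprime to $p$. This gives $[G:S] \in [G:1]R + [G:S](I^\varphi_S)^2$, and the second summand is a term in the sum $\sum_{H\le G}[G:H](I^\varphi_H)^2$.
\item Note that $[G:1]R = [G:1](I^\varphi_1)^2$, since $I^\varphi_1 = R$. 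So each $[G:S]$ lies in the ideal $J \coloneqq \sum_{H\leq G}[G:H](I^\varphi_H)^2$. As $S$ ranges over Sylow subgroups for all $p \mid |G|$, $\gcd_S [G:S] = 1$, exactly as in \Cref{ex-R-retract-sylows}. Hence $1\in J$, and \Cref{prop:directsummandlemma} concludes.
\end{enumerate}

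One point needs care. The ideal in \Cref{prop:directsummandlemma} computed for the group $S$ is $\sum_{K\le S}[S:K](I^\varphi_K)^2$, with indices taken in $S$. Multiplying by $[G:S]$ turns $[S:K]$ into $[G:K]$, so even if I use the $S$-level criterion directly rather than \Cref{thm:rk1ppermsforpgrp}, each term lands in $J$. I would take this route to sidestep the power-expansion step: from $1\in\sum_{K\le S}[S:K](I^\varphi_K)^2$ one gets $[G:S]\in J$ directly. Here $I^\varphi_K$ is the same ideal of $R$ whether $K$ is viewed in $S$ or in $G$, since it depends only on the values of $\varphi$ on $K$.

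The main obstacle is minor. It is checking that the ideals match under restriction, i.e. that $I^\varphi_K$ computed for $\Res^G_S\varphi$ agrees with $I^\varphi_K$ for $\varphi$, and the gcd argument. If $p\in R^\times$ the Sylow condition is vacuous. The argument still works there, because $[S:1](I^\varphi_1)^2 = |S|R = R$ gives $1$ in the $S$-level ideal.
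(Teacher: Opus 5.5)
Your proof is correct, and it takes a different route from the paper. The paper gets the converse from the descent isomorphism $\Pic(\catD(G;R))\cong\lim_{G/H\in\mathcal{O}_\Phi(G)^{\mathrm{op}}}\Pic(\catD(H;R))$ of \Cref{cor:limit_of_pic_groups}. There, the restrictions $\Res^G_H(R_\varphi)$ to prime-power subgroups $H$ form a compatible family and so glue to an endotrivial complex over $G$.

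You instead stay inside the ideal-theoretic criterion of \Cref{prop:directsummandlemma}. Multiplying the $S$-level identity $1\in\sum_{K\le S}[S:K](I^\varphi_K)^2$ by $[G:S]$ lands in the $G$-level ideal $J$, because $[G:S][S:K]=[G:K]$ and $I^\varphi_K$ depends only on $\varphi|_K$. Coprimality of the Sylow indices then gives $1\in J$, exactly as in \Cref{ex-R-retract-sylows}.

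What your route buys:
\begin{itemize}
\item It is elementary, works for an arbitrary commutative ring $R$ with no Noetherian or connectedness hypotheses, and writes down the splitting explicitly.
\item It is the rank-one shadow of the standard transfer argument: $R_\varphi\cong R_\varphi\otimes_R R$ is a direct summand of $\bigoplus_S R_\varphi\otimes_R R(G/S)\cong\bigoplus_S\Ind^G_S\Res^G_S R_\varphi$. That argument proves the same Sylow detection for any $RG$-lattice.
\end{itemize}

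The paper's route is much heavier, and it leaves implicit the step of identifying the glued endotrivial with $R_\varphi[0]$. That identification would go, for instance, through vanishing fiberwise h-marks and \Cref{thm:ker_h_marks}, which needs Noetherian and connectedness assumptions. Its payoff is that it presents the corollary as a formal consequence of the descent result \Cref{prop-reduction-pgroups}.
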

    \begin{proof}
        The forward direction follows since restriction preserves $\natural$-permutation, and the reverse direction follows from \Cref{cor:limit_of_pic_groups}, since the collection of $\Res^G_H (R_\varphi) \in \Pic(\mathsf{perm}(H;R))^\natural$ defines an element of $\lim_{G/H\in \mathcal{O}_{\Phi}(G)^\mathrm{op}}
            \mathrm{Pic}(\catD(H;R))$.
    \end{proof}

    \begin{notation}\label{not:glued_borel_smith_all_grps}
        We extend the notation of \Cref{not:glued_borel_smith} as follows. For a finite group $G$, we let $\on{CF}_b^{gl}$ denote the subgroup of $\prod_{\frakp\in\tilde \pi_0(\Phi(G;R))} \on{CF}_b(G;\ch(k(\frakp)))$ consisting of tuples of Borel-Smith functions agreeing on the trivial subgroup of $G$. Then \Cref{thm:img_of_hmk_all_grps} states that the reduced fiberwise h-mark homomorphism induces a surjective group homomorphism $\Pic(\catK(G;R)) \twoheadrightarrow \on{CF}_b^{gl}(G;R)$.
    \end{notation}

    We conclude with the complete description of $\Pic(\catK(G;R))$. Recall that  $\catM(G;R)$ denotes the subgroup of $ \Hom(G,R^\times)$ consisting of homomorphisms $\varphi$ for which the free $R$-rank 1 $RG$-module $R_\varphi$ is $\natural$-permutation. 

    \begin{theorem}\label{thm:mainthm}
        Let $G$ be a finite group and $R$ a commutative, connected, Noetherian ring. Then we have a split exact sequence \[0 \to \Pic(R) \times \catM(G;R) \to \Pic(\catK(G;R)) \xrightarrow{h} \on{CF}_b^{gl}(G;R) \to 0,\] with the inclusion having canonical section $\catH\colon \Pic(\catK(G;R)) \to \Pic(R) \times \catM(G;R)$. 
    \end{theorem}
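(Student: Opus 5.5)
The plan is to assemble the sequence from three pieces already established: surjectivity of $h$, identification of $\ker(h)$, and the splitting by $\catH$.

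\emph{Surjectivity.} \Cref{thm:img_of_hmk_all_grps} identifies the image of the reduced fiberwise h-mark homomorphism with the tuples of Borel--Smith functions agreeing at the trivial subgroup. By \Cref{not:glued_borel_smith_all_grps}, this image is exactly $\on{CF}_b^{gl}(G;R)$, so $h$ is surjective onto it.

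\emph{Kernel.} By \Cref{thm:ker_h_marks}, valid for any finite $G$ and connected Noetherian $R$, we have $\ker(h)\cong\Pic(\mathsf{perm}(G;R)^\natural)$ via $M\mapsto M[0]$. One point needs checking: \Cref{thm:ker_h_marks} assumes $h_\frakp\equiv 0$ for all $\frakp\in\Spec(R)$, not only on the reduced index set. By \Cref{cor:connected_component_hmks_coincide} and \Cref{prop:connected_generic_locus}, the reduced fiberwise h-mark homomorphism determines all fiberwise h-marks, so the two kernels coincide.

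The decomposition $\Pic(\mathsf{perm}(G;R)^\natural)=\Pic(R)\times\catM(G;R)$ is \Cref{obs:ker_h_mark_decomp}. The observation is phrased for $p$-groups, but this part of its argument does not use that hypothesis. The map $\Pic(R)\to\Pic(\mathsf{perm}(G;R)^\natural)$ (trivial action) has retraction $\Res^G_1$. Given an invertible $\natural$-permutation module $M$, set $L=\Res^G_1 M$. Then $M\otimes_R L^{-1}$ is an invertible $RG$-module whose underlying $R$-module is trivial in $\Pic(R)$, hence free of rank one. So it is $R_\varphi$ for some $\varphi\in\Hom(G,R^\times)$. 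It is $\natural$-permutation because $L^{-1}$, with trivial action, is a summand of a free $R$-module and hence $\natural$-permutation. So $\varphi\in\catM(G;R)$.

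\emph{Splitting.} $\catH$ is a homomorphism by \Cref{def:homologyextraction}. Its target lies in $\Pic(R)\times\catM(G;R)$: $\catH(x)$ is an invertible $RG$-module, hence $R$-projective of rank one, and writing $\catH(x)=L\otimes R_\varphi$ as above, we need $R_\varphi$ to be $\natural$-permutation.

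This is the main obstacle, since a priori the homology of a complex of $\natural$-permutation modules need not be $\natural$-permutation. I would handle it with \Cref{cor:detect_rk1_pperms_for_all_grps}, which reduces to checking after restriction to each Sylow $p$-subgroup $S$. For a $p$-group, \Cref{thm:picard_grp_p_grps} writes $\Res^G_S x$ as $(\text{element of }\Pic(R)\times\catM(S;R))\otimes y$ with $y$ glued from the explicit constructions, so $\catH(y)$ is controlled. If the representation-sphere pieces contribute a sign character, I would instead verify the criterion $R=I^\varphi_S+pR$ of \Cref{thm:rk1ppermsforpgrp} directly. To do this, base-change to each connected component of $R/p$, where the endotrivial has homology concentrated in one degree. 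There, the homology character restricted to $S$ must be trivial modulo $p$: over characteristic $p$, every rank-one $\natural$-permutation $S$-module is trivial by \Cref{ex:p_grp_rk_1_pperm_exs}, and the homology of a characteristic-$p$ endotrivial for a $p$-group is trivial. Triviality of $\varphi$ modulo $p$ on each component should then give $1-\varphi(s)\in pR$ up to nilpotents, which yields $R=I^\varphi_S+pR$.

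Granting this, $\catH(M[0])=M$ shows $\catH$ is a retraction of the inclusion, so the sequence splits and $\Pic(\catK(G;R))\cong\Pic(R)\times\catM(G;R)\times\on{CF}_b^{gl}(G;R)$.
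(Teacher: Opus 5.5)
Your surjectivity and kernel steps match the paper's proof, which cites \Cref{thm:img_of_hmk_all_grps}, \Cref{thm:ker_h_marks} and \Cref{obs:ker_h_mark_decomp} in the same way. One small correction there: $\tilde\pi_0(\Phi(G;R))$ has no representatives for $\Spec(R_n)$. So what makes the reduced h-marks control the fibres of characteristic $0$ or prime to $|G|$ is the constancy of $b(x)$ from \Cref{prop:etrivhomologyinonedegree}, not \Cref{prop:connected_generic_locus}.

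The genuine gap is the splitting step. You correctly identify that $\catH(x)$ would have to be $\natural$-permutation, but this is false, so no argument can supply it. Take $G=C_2$, $R=\bbZ$ and $x=\catB(\bbR_{\mathrm{sgn}})\simeq(\bbZ C_2\to\bbZ)$. The Observation opening \Cref{sec:line_bundles} records $\catH(x)\cong\bbZ_{\mathrm{sgn}}$. On the other hand, $\Pic(\bbZ)\times\catM(C_2;\bbZ)$ is trivial by \Cref{cor:rk1_for_int_domain}. The inference that breaks is `$1-\varphi(s)\in pR$ up to nilpotents yields $R=I^\varphi_S+pR$'. Here $1-\varphi(s)=2\in 2\bbZ$, yet $I^\varphi_{C_2}=\mathrm{Ann}_{\bbZ}(2)=0$. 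The criterion of \Cref{thm:rk1ppermsforpgrp} requires the annihilator of the elements $1-\varphi(s)$ to be comaximal with $p$. Equivalently, $\varphi$ must become trivial in $R_\frakp$ for every prime $\frakp\ni p$. Triviality of $\varphi$ modulo $p$ says nothing about that annihilator.

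Consequently the clause `with canonical section $\catH$' cannot be proved as stated. The paper's proof simply asserts that $\catH$ lands in $\Pic(\mathsf{perm}(G;R)^\natural)$, and your attempt to justify it is exactly where the problem surfaces. What $\catH$ does give is $\catH(M[0])\cong M$, so $M\mapsto M[0]$ is injective. It is not a retraction onto $\ker(h)$.

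The sequence nonetheless splits, for the reason the paper also gives parenthetically. $\on{CF}_b^{gl}(G;R)$ is a subgroup of a finite product of free abelian groups of finite rank, hence free abelian. So lifting a basis along the surjection $h$ gives a non-canonical homomorphic section. Replace your final paragraph with this argument and drop the claim about $\catH$.
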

    \begin{proof}
        \Cref{thm:img_of_hmk_all_grps} deduces $h$ is surjective (and is necessarily split since $\on{CF}_b^{gl}(G;R) $ is free abelian), and \Cref{thm:ker_h_marks} and \Cref{obs:ker_h_mark_decomp} deduce that $\ker(h) \cong \Pic(R) \times \catM(G;R)$. The map $\catH\colon\Pic(\catK(G;R)) \to \Pic(\mathsf{perm}(G;R)^\natural)$ is a section to the inclusion $\ker(h) \cong \Pic(\mathsf{perm}(G;R)^\natural) \to  \Pic(\catK(G;R))$. 
    \end{proof}

    \begin{remark}
        In fact, we have also deduced the Picard groupoid of $\catK(G;R)$, since $\pi_{n}(\mathrm{pic}(\catK(G;R)))$ is trivial for $n > 1$ and $\pi_1(\mathrm{pic}(\catK(G;R))) = R^\times$. 
    \end{remark}

    We remark on finite generation of $\Pic(\catK(G;R))$. It is clear that $\on{CF}_b^{gl}(G;R)$ is a finitely generated free abelian group, as it is a subgroup of a direct product of finitely many copies of $\on{CF}(G)$, which is itself free abelian. Therefore, finite generation of $\Pic(\catK(G;R))$ hinges on finite generation of $\Pic(R)$ and $\catM(G;R)$. However, even with $R$ Noetherian, $\Pic(R)$ need be finitely generated. For instance, if $R$ is an elliptic curve, $\Pic(R)$ is uncountable. However, it turns out that this is the only obstruction.

    \begin{theorem}\label{prop:fingen}
        We have that $\Pic(\catK(G;R))$ is a finitely generated abelian group if and only if $\Pic(R)$ is. In particular, if $R$ is a principal ideal domain, then $\Pic(\catK(G;R))$ is finitely generated.
    \end{theorem}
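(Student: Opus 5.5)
By \Cref{thm:mainthm} we have a split exact sequence, and hence a direct product decomposition
\[\Pic(\catK(G;R)) \cong \Pic(R) \times \catM(G;R) \times \on{CF}_b^{gl}(G;R).\]
Here we may reduce to $R$ connected. A Noetherian ring has finitely many connected components, so $\catK(G;R)$ splits as a finite product of the categories over the components. Consequently both $\Pic(\catK(G;R))$ and $\Pic(R)$ are finite products of the corresponding groups, and finite generation can be checked componentwise. So the plan is to show that the two factors other than $\Pic(R)$ are always finitely generated. The statement then follows formally. A direct factor of a finitely generated abelian group is finitely generated, since it is a quotient. Conversely, a finite product of finitely generated groups is finitely generated.

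For the third factor, $\on{CF}_b^{gl}(G;R)$ is a subgroup of the finite product $\prod_{\frakp\in\tilde\pi_0(\Phi(G;R))}\on{CF}(G,\ch(k(\frakp)))$. The indexing set is finite, as noted in \Cref{rmk:refined_h_mk_func}: each $\Spec(R/p)$ is Noetherian, so it has finitely many components, and only finitely many primes divide $|G|$. Each $\on{CF}(G,\ch(k(\frakp)))$ is free abelian of finite rank. Subgroups of finitely generated free abelian groups are finitely generated, so this factor is handled.

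The main point is $\catM(G;R)\leq\Hom(G,R^\times)$. I would show that $\Hom(G,R^\times)$ is in fact finite. Every $\varphi$ takes values in the torsion subgroup $\mu_n(R)=\{r\in R: r^n=1\}$ with $n=|G|$. It therefore suffices to bound $\mu_n(R)$ for connected Noetherian $R$. My first attempt would be the following injection argument. The nilradical $\calN$ is nilpotent, and $\Spec(R)$ has finitely many minimal primes $\frakq_1,\dots,\frakq_m$. The natural map $\mu_n(R)\to\prod_i\mu_n(R/\frakq_i)\times(1+\calN)$ need not be injective on the nose, so I would instead argue as follows. Each $\mu_n(R/\frakq_i)$ embeds in $\mu_n(\on{Frac}(R/\frakq_i))$, which is cyclic of order at most $n$. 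The kernel of $\mu_n(R)\to\mu_n(R/\calN)$ consists of elements $1+a$ with $a$ nilpotent and $(1+a)^n=1$; moreover $R/\calN$ embeds into $\prod_i R/\frakq_i$. So the question reduces to showing that the group $\{1+a : a\in\calN,\ (1+a)^n=1\}$ is finite. This is the step I expect to be the real obstacle, since over non-reduced rings $1+\calN$ can be huge, for example $\bbF_p[\epsilon]/\epsilon^2$ with infinite $\bbF_p$-dimensional nilradical.

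If finiteness genuinely fails there, I would switch strategy and use \Cref{thm:endotrivials_lift_from_reduction}. Base change along $R\to R/\calN$ induces an isomorphism on $\Pic(\catK(G;-))$, and $\Pic(R)\cong\Pic(R/\calN)$ as well, since line bundles lift uniquely modulo nilpotents. So we may assume $R$ is reduced from the outset. Then $\mu_n(R)$ embeds in $\prod_i\mu_n(\on{Frac}(R/\frakq_i))$, which is a finite group of order at most $n^m$. Hence $\Hom(G,R^\times)=\Hom(G,\mu_n(R))$ is finite, and so is $\catM(G;R)$. Combining the three factors gives the equivalence. For the final claim, a principal ideal domain has $\Pic(R)=0$.
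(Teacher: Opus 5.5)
Your proposal is correct and follows essentially the paper's route: you use the splitting from \Cref{thm:mainthm}, observe that $\on{CF}_b^{gl}(G;R)$ is finitely generated free, reduce to $R$ reduced via \Cref{thm:endotrivials_lift_from_reduction}, and then bound the roots of unity in $R$ by embedding it into a finite product of domains, so that $\catM(G;R)$ is finite. One minor aside: your cautionary example of a huge nilradical should be something like $\bbF_p(t)[\epsilon]/(\epsilon^2)$, since $\bbF_p[\epsilon]/(\epsilon^2)$ has a one-dimensional nilradical, but this plays no role in the argument.
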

    \begin{proof}
        Given the above discussion, it suffices to show that $\catM(G;R)$ is finitely generated, and for this, it suffices to show that the torsion subgroup of $R^\times$ has finitely many elements of order $r$ for all positive integers $r$. Moreover, by \Cref{thm:endotrivials_lift_from_reduction}, it suffices to assume $R$ is reduced. 
        
        Since $R$ is reduced and Noetherian, the 0 ideal is the intersection of finitely many prime ideals $\frakp_1, \dots, \frakp_n$. Therefore, we have an injective ring homomorphism $R \to R/\frakp_1 \times \cdots R/\frakp_n$. Since $R/\frakp_i$ is an integral domain for all $i$, the polynomial $x^r - 1$ has at most $r$ solutions, therefore there are at most $r^n$ units of order $r$ in $R$. 
    \end{proof}

    \subsection{Integral endotrivial complexes} For $R = \bbZ$, we have a nice description of $\Pic(\catK(G;\bbZ))$. We moreover relate these endotrivials to \emph{homotopy representations}, i.e., invertible genuine $G$-spectra. These were first studied in detail by tom Dieck and Petrie \cite{tDP82}, see e.g. \cite{Kra25} for a modern introduction on homotopy representations. We omit details for the sake of brevity. 
    
    Recall that $\on{SH}(G)$ is equivalently the homotopy category of the stable $\infty$-category $\mathsf{Sp}^G$ of genuine $G$-spectra. We have a tt-functor $\on{SH}(G) \to \catD(G;R)$ which factors through $\catD(G;\bbZ)$. Therefore, we obtain a group homomorphism $\Pic(\on{SH}(G)^c) \to \Pic(\catK(G;\bbZ))$. Moreover, the map \[\catB\colon \on{RO}(G) \to \Pic(\catK(G; \bbZ))\] factors through $\Pic(\on{SH}(G)^c)$. In particular, if $G$ is a $p$-group,  \Cref{cor:pgrp_integer_classification} asserts that every integer endotrivial arises from a homotopy representation. Additionally, the dimension homomorphism \[\dim\colon \Pic(\on{SH}(G)^c) \to \on{CF}(G)\] restricted to the family of $p$-subgroups of $G$ factors through $\Pic(\catK(G;\bbZ))$, via the fiberwise h-mark homomorphism at the prime $\frakp = p\bbZ$. See \cite{F25} for details on these connections, in particular the correspondence between modular fixed points for permutation modules and geometric fixed points for spectra. 

    \begin{corollary}\label{cor:mainthmforZ}
        Let $G$ be a finite group. Let $\on{CF}_b(G,\Phi)$ denote the group of Borel-Smith superclass functions valued on prime power subgroups of $G$. Then \[\Pic(\catK(G;\bbZ)) \cong \on{CF}_b(G,\Phi).\] In particular, $\Pic(\catK(G;\bbZ))$ is finitely generated. Moreover, if $G$ is non-nilpotent, the induced map \[\Pic(\on{SH}(G)^c) \to \Pic(\catK(G;\bbZ))\] need not be surjective, that is, not every integral endotrivial complex is induced from a homotopy representation. 
    \end{corollary}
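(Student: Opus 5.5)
We specialize \Cref{thm:mainthm} to $R=\bbZ$ and then identify each factor.

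\textbf{The three factors.} First, $\Pic(\bbZ)$ is trivial, since $\bbZ$ is a principal ideal domain. Second, $\bbZ$ is an integral domain, so \Cref{cor:rk1_for_int_domain} applies: $\varphi\in\Hom(G,\{\pm1\})$ lies in $\catM(G;\bbZ)$ if and only if $[G:\ker\varphi]\in\bbZ^\times$, i.e. $\varphi$ is trivial. Hence $\catM(G;\bbZ)=0$, and the split exact sequence of \Cref{thm:mainthm} gives $\Pic(\catK(G;\bbZ))\cong\on{CF}_b^{gl}(G;\bbZ)$. Third, $\bbZ$ is strongly connected: each $\Spec(\bbF_p)$ is a point. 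So $\tilde\pi_0(\Phi(G;\bbZ))$ has exactly one element for each prime $p\mid|G|$, with residue characteristic $p$. A glued tuple is therefore a family $(f_p)_{p\mid|G|}$ with $f_p\in\on{CF}_b(G,p)$ and all $f_p(1)$ equal. Such a family is the same as a single superclass function on $\on{Sub}_\Phi(G)$ whose restriction to $p$-subgroups is $p$-Borel-Smith for every $p$. This is exactly $\on{CF}_b(G,\Phi)$ in the sense of \Cref{def:borel_smith}, because the trivial subgroup is the only subgroup common to the families of $p$-subgroups for different primes. Finite generation then follows from \Cref{prop:fingen}, or directly because $\on{CF}(G,\Phi)$ is free of finite rank.

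\textbf{Non-surjectivity from homotopy representations.} We need one non-nilpotent group $G$ and an element of $\on{CF}_b(G,\Phi)$ not hit by $\Pic(\on{SH}(G)^c)$. The plan is to compare with the known classification of dimension functions of homotopy representations (tom Dieck--Petrie, \cite{tD87}). These are defined on all subgroups and satisfy the Borel--Smith conditions for all subquotients, including relations involving non-prime-power subgroups. The composite
\[
\Pic(\on{SH}(G)^c)\to\Pic(\catK(G;\bbZ))\cong\on{CF}_b(G,\Phi)
\]
is restriction of the dimension function $\dim\colon\Pic(\on{SH}(G)^c)\to\on{CF}(G)$ to prime-power subgroups, using the compatibility of geometric and modular fixed points \cite{F25} together with \Cref{prop:hmkisdimfunc}. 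It therefore suffices to find $G$ and $f\in\on{CF}_b(G,\Phi)$ that does not extend to the dimension function of any homotopy representation.

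\textbf{Candidate example and main obstacle.} The natural candidate is $G=S_3$. The functions in $\on{CF}_b(S_3,\Phi)$ have values $f(1),f(C_2),f(C_3)$, subject to $f(C_3)\equiv f(1)\bmod 2$ (the Borel-Smith condition for $p=3$) and no condition at $p=2$ beyond the trivial-subgroup agreement. Homotopy representations impose extra constraints: the values at $C_2$ and $C_3$ must be compatible with the values at $S_3$, and the unit-type and degree conditions (tom Dieck's congruences relating $\dim X^H$ across $H\trianglelefteq K$ with $K/H$ cyclic, here $C_3\trianglelefteq S_3$) may force a parity relation between $f(C_2)$ and $f(1)$ or $f(C_3)$. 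I expect this to be the main obstacle: one must show that no choice of value at $S_3$ satisfies all of tom Dieck's conditions. If $S_3$ fails, because $\on{CF}_b(S_3,\Phi)$ is entirely realized, I would move to a group with a rank-two elementary abelian subquotient mixing primes, or to $A_4$. There the condition (c) relation at the Klein four subgroup, together with the $C_3$-action, should produce a genuine obstruction via the classical result that dimension functions of homotopy representations are exactly Borel-Smith functions on all subgroups. The phrasing ``need not be surjective'' only requires one such example.
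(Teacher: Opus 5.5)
Your identification of $\Pic(\catK(G;\bbZ))$ with $\on{CF}_b(G,\Phi)$, and the finite generation, are correct and follow the paper.

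\textbf{Gap: non-surjectivity.} You never exhibit a group and an element outside the image of $\Pic(\on{SH}(G)^c)$, and the candidates you name are not counterexamples. In both of them $\on{RO}(G)$ already surjects onto $\on{CF}_b(G,\Phi)$.
\begin{enumerate}
\item For $S_3$, the real representations $1$, sign and the $2$-dimensional one have dimension functions $(1,1,1)$, $(1,0,1)$, $(2,1,0)$ on $(1,C_2,C_3)$. These span $\{(a,b,c) : a\equiv c \bmod 2\}=\on{CF}_b(S_3,\Phi)$.
\item For $A_4$, the trivial, $3$-dimensional and $2$-dimensional real representations give $(1,1,1,1)$, $(3,1,0,1)$, $(2,2,2,0)$ on $(1,C_2,V_4,C_3)$. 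These span all of $\on{CF}_b(A_4,\Phi)$, which is cut out by $f(1)=3f(C_2)-2f(V_4)$ and $f(C_3)\equiv f(1)\bmod 2$.
\end{enumerate}

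\textbf{The mechanism is the real problem.} Dimension functions of homotopy representations of non-nilpotent groups are not characterized by the Borel--Smith conditions on all subgroups. Moreover, those conditions give no obstruction on prime-power values in the relevant examples, because the values at non-prime-power subgroups can be chosen freely.

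The missing input, which the paper uses, is Bauer's congruence \cite[Proposition 1.2]{Bau89}. Suppose $H\triangleleft K\triangleleft M\le G$ with $K/H\cong C_p$, $H$ a $p$-group, and $M/K\cong C_{q^r}$ acting on $K/H$ with kernel of order $q^l$. Then $\dim X^H\equiv \dim X^K \bmod q^{r-l}$ for every homotopy representation $X$. This condition arises from a $q$-element acting nontrivially on a $p$-subquotient, and Borel--Smith theory does not see it.

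With this congruence, take the Frobenius group of order $21$, $G=C_7\rtimes C_3$, and set $f(1)=2$, $f(C_3)=2$, $f(C_7)=0$.
\begin{enumerate}
\item $f\in\on{CF}_b(G,\Phi)$, since only parity conditions apply and they hold.
\item $f(1)\not\equiv f(C_7)\bmod 3$, which violates Bauer's condition for $1\triangleleft C_7\triangleleft G$. So $f$ is not the restricted dimension function of any homotopy representation.
\end{enumerate}
Setting $f(G)=0$ extends $f$ to a Borel--Smith function on all subgroups. So your proposed obstruction could not detect this example even in principle.
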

    \begin{proof}
        Since $p\bbZ$ is a point for each prime $p$, it easily follows that $\on{CF}_b^{gl}(G;\bbZ) \cong \on{CF}_b(G;\Phi)$. Because $\bbZ$ is a principal ideal domain, $\Pic(\bbZ)$ is trivial. Finally, $\catM(G;R)$ is trivial by \Cref{cor:rk1_for_int_domain}. Finite generation follows from \Cref{prop:fingen}.

        For the final statement, let $X$ be a homotopy representation, i.e., $X \in \Pic(\on{SH}(G)^c)$. Then \cite[Proposition 1.2]{Bau89} implies that $\dim(X)$ restricted to a family of $p$-subgroups satisfies the following \emph{Bauer condition}: for every triple $H \triangleleft K \triangleleft M \leq G$ of subgroups with $K/H \cong C_p$ and $H$ a $p$-subgroup, \[\dim(H) \equiv \dim(K) \mod q^{r-l},\] where $M/K \cong C_{q^r}$ with $q$ prime acts on $K/H$ with kernel of order $q^l$. On the other hand, given an endotrivial $x \in \catK(G;\bbZ)$, $h_{\lambda_{p\bbZ}^*(x)}$ need not satisfy this condition if $G$ is non-nilpotent (if $G$ is nilpotent, the condition is trivial). 

        For an explicit example, let $G$ be the Frobenius group $G = C_3 \rtimes C_7$. Then the superclass function $f\in \on{CF}_b(G; \Phi)$ defined by \[f(1) = 2,\quad f(C_3) = 2, \quad f(C_7) = 0\] can represent the fiberwise h-mark homomorphism of an endotrivial $x \in \Pic(\catK(G;\bbZ))$, but does not satisfy Bauer's condition, since $f(1) \not\equiv f(C_7) \mod 3$, so no homotopy representation for $G$ can induce $x$. 
    \end{proof}
    
    \begin{remark}
        Moreover, $\catB\colon \on{RO}(G) \to \Pic(\catK(G;R))$ is non-surjective for non-nilpotent-groups, as the dimension homomorphism $\dim\colon \on{RO}(G) \to \on{CF}_b(G)$ is in general non-surjective. Here, $\on{CF}_b(G)$ denotes the group of superclass functions on $G$ satisfying the Borel-Smith conditions at every subquotient, i.e., the definition provided in \cite{tD87}. 

        We note that a modified version of Bauer's condition was considered by \cite{GY21}; Gelvin--Yal\c{c}in questioned whether the condition precisely characterized the kernel of the so-called \emph{Bouc homomorphism}. This was confirmed by the second-named author \cite[Theorem 6.13]{Mil25c}.
        
        Where these `new' endotrivials come from, if not homotopy representations or representation spheres, and what they `look like' remains a mystery for now... 
    \end{remark}

    \subsection*{Open questions} We conclude by listing and motivating some pertinent questions.
    \begin{enumerate}
        \item As listed above, do the integral endotrivials which are not induced from homotopy representations have an origin, or are they purely integral permutation module phenomena? 
        \item Given an endotrivial, is there an algorithm to explicitly writing down the modules in each degree and the differentials? Can one deduce whether it can be built from permutation modules, as opposed to $\natural$-permutation modules?
        \item The previous question is relevant for the following question: given an endotrivial built from permutation modules with \emph{effective}, i.e., monotonically decreasing, fiberwise h-marks, can it be upgraded to the structure of a complex of free $R\calO(G)$-modules? Such complexes, \emph{algebraic homotopy representations} are studied in \cite{HY14}. If one can deduce that an endotrivial can be upgraded in this way, and the endotrivial is orientable, then one can construct its forerunners using \cite{Mil25c}.
        \item Let $k$ be a field of characteristic $p$. In \cite{Mil25b}, we deduced that $\Pic(\catK(-;k))$ has the structure of a \emph{rational $p$-biset functor} (in the sense of Bouc, see \cite{Bou10}), with the induction operation arising from the topological norm map on representation spheres (see e.g. \cite[Proposition A.59]{HHR16}). Though it is unlikely the Picard group can be realized as a biset functor in full generality, as deflation, which corresponds to modular fixed points, likely will not be realized for all normal subgroups, does a more general induction operation exist? That is, is $\Pic(\catK(-,R))$ a (global) Mackey functor?  
    \end{enumerate}

    \bibliography{bib}
    \bibliographystyle{alpha}
    
\end{document}